\newcommand{\mb}{\mathbf}
\newcommand{\st}{\operatorname{st}}
\newcommand{\lcirc}{\leftidx^{\circ}\mathbf}
\newtheorem{theorem}[subsection]{Theorem}
\newtheorem{lemma}[subsection]{Lemma}
\newtheorem{proposition}[subsection]{Proposition}
\newtheorem{corollary}[subsection]{Corollary}
\newtheorem*{claim}{Claim}
\numberwithin{equation}{section}
\theoremstyle{definition}
\newtheorem{definition}[subsection]{Definition}
\newtheorem{notation}[subsection]{Notation}
\newtheorem{observation}[subsection]{Observation}
\newtheorem{remark}[subsection]{Remark}
\newtheorem{example}[subsection]{Example}
\begin{document}

\begin{frontmatter}[classification=text]

\title{Semicontinuity of Structure for Small Sumsets in Compact Abelian Groups} 

\author[john]{John T. Griesmer}

\begin{abstract}
We study pairs of subsets $A, B$ of a compact abelian group $G$ where the sumset $A+B:=\{a+b: a\in A, b\in B\}$ is small.  Let $m$ and $m_{*}$ be Haar measure and inner Haar measure on $G$, respectively. Given $\varepsilon>0$, we classify all pairs $A,B$ of Haar measurable subsets of $G$ satisfying $m(A), m(B)>\varepsilon$ and $m_{*}(A+B)\leq m(A)+m(B)+\delta$ where $\delta=\delta(\varepsilon)$ is small.  We also study the case where the $\delta$-popular sumset $A+_{\delta}B:=\{t\in G: m(A\cap (t-B))>\delta\}$ is small. We prove that for all $\varepsilon>0$, there is a $\delta>0$ such that if $A$ and $B$ are subsets of a compact abelian group $G$ having $m(A), m(B)>\varepsilon$ and $m(A+_{\delta}B)\leq m(A)+m(B)+\delta$, then there are sets $S, T\subseteq G$ such that $m(A\triangle S)+m(B\triangle T)<\varepsilon$ and $m(S+T)\leq m(S)+m(T)$.  Appealing to known results, the latter inequality yields strong structural information on $S$ and $T$, and therefore on $A$ and $B$.
\end{abstract}
\end{frontmatter}

\section{Sumsets in compact abelian groups}\label{sec:Intro}

If $G$ is an abelian group and $A$ and $B$ are subsets of $G$, let $A+B$ denote the \emph{sumset}  $\{a+b:a\in A, b\in B\}$  and $A-B$ denote the   \emph{difference set} $\{a-b:a\in A, b\in B\}$.  If $t\in G$ write $A+t$ for the \emph{translate} $\{a+t:a\in A\}$.  Let $-A$ denote the set $\{-a:a\in A\}$. In this article all topological groups are assumed to be Hausdorff, and all finite groups are endowed with the discrete topology.

If $G$ is a compact abelian group with Haar probability measure $m$, let $m_{*}$ denote the corresponding \emph{inner Haar measure}: $m_{*}(A)=\sup\{m(C):C\subseteq A,C\text{ is compact}\}$.  We say that a set $A\subseteq G$ is \emph{Haar measurable} (or \emph{$m$-measurable}) if it lies in the completion of the Borel $\sigma$-algebra of $G$ with respect to $m$.  We are forced to consider inner Haar measure, as the sumset of two $m$-measurable sets can fail to be $m$-measurable. The structure  of Haar measurable sets $A,B\subseteq G$ satisfying $m(A),m(B)>0$ and
\begin{equation}\label{eqn:Critical}
m_{*}(A+B)\leq m(A)+m(B)
\end{equation}
is well understood; see
\S\ref{sec:LCAreview} for a brief (and incomplete) summary or  \cite{GriesmerLCA, GrynkKemperman,GrynkStep, Kneser56,TaoInverse}  for comprehensive exposition.

For $\delta\geq 0$, let $A+_{\delta}B:=\{t\in G: m(A\cap (t-B))>\delta\}$ be the \emph{$\delta$-popular sumset}.  The function $t\mapsto m(A\cap (t-B))$ is continuous, so  $A+_{\delta}B$ is an open subset of $A+B$. Thus the hypothesis $m(A+_{\delta}B)\leq m(A)+m(B)+\delta$ is weaker than Inequality (\ref{eqn:Critical}).  Our first result is the following theorem, which says that for very small $\delta$, pairs of sets satisfying this weaker inequality resemble pairs satisfying (\ref{eqn:Critical}). This resolves the special case of Conjecture 5.1 in \cite{TaoInverse} with the additional assumption that $G$ is abelian.  Here $A\triangle B$ denotes the symmetric difference $(A\setminus B)\cup (B\setminus A)$.

\begin{theorem}\label{thm:Popular}
	For all $\varepsilon>0$ there exists $\delta>0$ such that for every compact abelian group $G$ with Haar probability measure $m$ and all $m$-measurable sets $A,B\subseteq G$ with $m(A), m(B)>\varepsilon$ and $m(A+_{\delta}B)\leq m(A)+m(B)+\delta$, there exist $m$-measurable sets $S,T\subseteq G$ such that $m(A\triangle S)+m(B\triangle T)<\varepsilon$, $S+T$ is $m$-measurable, and $m(S+T)\leq m(S)+m(T)$.
\end{theorem}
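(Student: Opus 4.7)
The plan is to proceed by contradiction using ultraproducts and Loeb measure. Suppose the conclusion fails for some fixed $\varepsilon_{0}>0$. Then for each $n\in\mathbb{N}$ there exist a compact abelian group $G_{n}$ with Haar probability $m_{n}$ and $m_{n}$-measurable sets $A_{n},B_{n}\subseteq G_{n}$ with $m_{n}(A_{n}),m_{n}(B_{n})>\varepsilon_{0}$ and $m_{n}(A_{n}+_{1/n}B_{n})\leq m_{n}(A_{n})+m_{n}(B_{n})+1/n$, yet no $m_{n}$-measurable $S_{n},T_{n}\subseteq G_{n}$ with $m_{n}(A_{n}\triangle S_{n})+m_{n}(B_{n}\triangle T_{n})<\varepsilon_{0}$ can have $S_{n}+T_{n}$ measurable and $m_{n}(S_{n}+T_{n})\leq m_{n}(S_{n})+m_{n}(T_{n})$. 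The goal is to extract a limiting object in which the hypothesis strengthens to the critical inequality \eqref{eqn:Critical}, apply the known structure, and transfer back.

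Fix a nonprincipal ultrafilter $\mathcal{U}$ on $\mathbb{N}$ and form the internal ultraproduct $\mb{G}=\prod_{\mathcal{U}}G_{n}$, equipped with the internal Haar probability $\mb{m}$ and the associated Loeb measure $\mu$. Set $\mb{A}=\prod_{\mathcal{U}}A_{n}$, $\mb{B}=\prod_{\mathcal{U}}B_{n}$; these are internal, hence Loeb measurable, with $\mu(\mb{A}),\mu(\mb{B})\geq\varepsilon_{0}$. Next I build a genuine compact abelian quotient $G^{\circ}$ of $\mb{G}$ by collapsing the ``infinitesimal'' subgroup --- e.g.\ the kernel of every internal character whose standard part is continuous, or equivalently the Kronecker factor of the translation action of $\mb{G}$ on $(\mb{G},\mu)$. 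This produces a standard-part map $\st\colon\mb{G}\to G^{\circ}$ under which Loeb measure pushes forward to Haar measure $m^{\circ}$ on $G^{\circ}$. Define $\lcirc A:=\st(\mb{A})$ and $\lcirc B:=\st(\mb{B})$; these are $m^{\circ}$-measurable with $m^{\circ}(\lcirc A)=\mu(\mb{A})$ and $m^{\circ}(\lcirc B)=\mu(\mb{B})$.

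The crux is to transfer the popular sumset hypothesis into the critical sumset inequality for the standard parts,
\[
m^{\circ}_{*}(\lcirc A+\lcirc B)\;\leq\;m^{\circ}(\lcirc A)+m^{\circ}(\lcirc B).
\]
Intuitively, because the threshold $1/n$ becomes infinitesimal along $\mathcal{U}$, the popular sumset of $\mb{A},\mb{B}$ captures all but a Loeb-null portion of $\lcirc A+\lcirc B$; combined with the ultralimit of $m_{n}(A_{n}+_{1/n}B_{n})\leq m_{n}(A_{n})+m_{n}(B_{n})+1/n$ (valid by \L{}o\'s's theorem), this gives the displayed bound. Having placed $(\lcirc A,\lcirc B)$ in the critical regime, I can invoke the known structure theorems summarized in \S\ref{sec:LCAreview} to obtain $m^{\circ}$-measurable sets $S^{\circ},T^{\circ}$ in $G^{\circ}$ (essentially unions of Bohr-type intervals or cosets of a closed subgroup) close to $\lcirc A,\lcirc B$ and satisfying $m^{\circ}(S^{\circ}+T^{\circ})\leq m^{\circ}(S^{\circ})+m^{\circ}(T^{\circ})$. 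Pulling these back through $\st$ yields internal $\mb{S},\mb{T}$ in $\mb{G}$ with $\mu(\mb{A}\triangle\mb{S})+\mu(\mb{B}\triangle\mb{T})<\varepsilon_{0}$ and $\mb{m}(\mb{S}+\mb{T})\leq\mb{m}(\mb{S})+\mb{m}(\mb{T})$, and \L{}o\'s's theorem then produces the required $S_{n},T_{n}\subseteq G_{n}$ for $\mathcal{U}$-many $n$, contradicting the standing assumption.

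The main obstacle is the construction of $G^{\circ}$ and the verification that the popular sumset inequality descends to the critical inequality on the standard parts. One must identify a quotient large enough that $\st$ is well defined and measure preserving but small enough that $\lcirc A$ and $\lcirc B$ are nontrivial; one must also grapple with the fact that $\mb{A}+\mb{B}$ is internal while $\lcirc A+\lcirc B$ is typically only analytic, so inner regularity must be used to pass from $\delta=1/n$ before the ultralimit to $\delta=0$ afterwards. These measurability issues are also what force the conclusion to be stated via approximating sets $S,T$ rather than as a direct structural statement about $A,B$ themselves.
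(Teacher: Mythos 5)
Your high-level plan — contradiction, ultraproduct, reduce to the critical inequality in a limiting object, apply known inverse theorems, transfer back — does match the paper's architecture. But there are two genuine gaps, one at each end of the argument.

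First, the claim that $m^\circ(\st(\mb A))=\mu(\mb A)$ is false in general. If $\pi:\mb G\to G^\circ$ is a measure-preserving quotient map, then $\mb A\subseteq\pi^{-1}(\pi(\mb A))$, so $m^\circ(\pi(\mb A))\geq\mu(\mb A)$ always, and the inequality is typically strict: an internal set $\mb A$ can meet almost every fiber of $\pi$ even while $\mu(\mb A)=\frac12$, so that $m^\circ(\pi(\mb A))=1$. The paper avoids this by never taking a forward image. Instead it projects the indicator onto the Fourier algebra, $1_{\mb A}\mapsto 1_{\mb A}^{(c)}$, models $1_{\mb A}^{(c)}$ as $\tilde f\circ\pi$ on a compact quotient (Propositions \ref{prop:SzegedyQuotient}, \ref{prop:Kronecker}), and works with the level set $\{\tilde f>0\}$. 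That level set still has $m^\circ$-measure $\geq\mu(\mb A)$ — \emph{not} equal — and the whole point of Corollary \ref{cor:ProjectAndPullBack} is that the subcriticality hypothesis then forces the defects on the $A$-side and $B$-side to cancel. Your ``popular sumset captures all but a Loeb-null portion of $\lcirc A+\lcirc B$'' step is where this cancellation has to be justified, and the argument is genuinely the content of the Claim inside the paper's proof plus Lemma \ref{lem:ProjectAndPullBack}, not an automatic consequence of $1/n\to 0$.

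Second, and more seriously, your ``pull back through $\st$ and apply \L{}o\'s'' step does not close. The set $\pi^{-1}(S^\circ)$ is $\mu$-measurable but not internal, so \L{}o\'s does not apply to it directly. If you approximate it by internal $\mb S=\prod S_n$, $\mb T=\prod T_n$, the resulting Loeb inequality $\mu(\mb S+\mb T)\leq\mu(\mb S)+\mu(\mb T)$ only gives $\lim_{n\to\mathcal U}m_n(S_n+T_n)\leq\lim_{n\to\mathcal U}\bigl(m_n(S_n)+m_n(T_n)\bigr)$, which is entirely consistent with $m_n(S_n+T_n)>m_n(S_n)+m_n(T_n)$ for every $n$ (e.g. with gap $1/n$). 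Closing this equality case is exactly what the paper's Lemma \ref{lem:FirstHalf} does, and it is the heart of the proof: it invokes the full structural classification (Propositions \ref{prop:LiftSubcritical} and \ref{prop:Reduction}, iterated via Corollary \ref{cor:Iterate}), together with the lemmas of \S\ref{sec:IntervalsInG} and \S\ref{sec:FiniteIndex} that translate Bohr intervals, quasi-periodic decompositions, and finite-index subgroups of $\mb G$ into matching objects in the $G_n$, where the sumset inequality is then verified combinatorially (Lemmas \ref{lem:StabilizeInLimit}, \ref{lem:PullQPdown}, \ref{lem:BuildQP}, Corollary \ref{cor:PullPreintervalsDown}). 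This case analysis cannot be replaced by a generic pullback argument.
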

Note: $\delta$ depends only on $\varepsilon$ and not on $G$ in Theorem \ref{thm:Popular}.

Theorem \ref{thm:Popular} generalizes Theorem 1.5 of \cite{TaoInverse}, which imposes the additional assumption that $G$ is connected. A quantitative version of this result in the case where $G$ is a cyclic group of prime order is proved in \cite{MazurZp}. See also \cite{MazurInZ} for a version in $\mathbb Z$, and  more recently \cite{ShaoXu}. As a byproduct of the proof of Theorem \ref{thm:Popular}, we provide in Theorem \ref{thm:Precise} a classification of sets satisfying $m_{*}(A+B)\leq m(A)+m(B)+\delta$, where $\delta$ is very small compared to $m(A)$ and $m(B)$.  This generalizes Theorem 1.3 of \cite{TaoInverse}, which assumes connectedness of $G$.  Theorems \ref{thm:Popular} and \ref{thm:Precise} are new for every infinite compact disconnected abelian group, such as $\mathbb T\times (\mathbb Z/N \mathbb Z)$.

Before stating Theorem \ref{thm:Precise} we introduce some terminology and notation.

\begin{notation}
	Let $(X,\mu)$ be a measure space and $A, B\subseteq X$. We write $A\sim_{\mu} B$ if $\mu(A\triangle B)=0$ and write $A\subset_{\mu} B$ if $\mu(A\setminus B)=0$.  If $f$ and $g$ are functions on $X$, we write $f\equiv_{\mu} g$ if $f(x)=g(x)$ for $\mu$-almost every $x$.
\end{notation}
For any given set $X$, we will consider only one measure $\mu$ on $X$, and hence only one $\sigma$-algebra of measurable sets; we will not write the $\sigma$-algebra explicitly.

If $G$ is an abelian group, a measure $\mu$ on $G$ is \emph{translation invariant} if for every $\mu$-measurable set $A\subseteq G$ and every $t\in G$, $A+t$ is $\mu$-measurable and $\mu(A+t)=\mu(A)$.  For such a measure and $f, g\in L^{2}(\mu)$, we define the \emph{$\mu$-convolution} $f*_{\mu}g$ by $f*_{\mu}g(x):=\int f(t)g(x-t) \,d\mu(t)$.   In the sequel the measure $\mu$ may be understood from context, and we may write $f*g$ in place of $f*_{\mu}g$. Note that if $A:=\{x:f(x)>0\}$ and $B:=\{x:g(x)>0\}$,  then $\{x: f*_{\mu}g(x)>0\}\subseteq A+B$.   Furthermore, the $\delta$-popular sumset defined above can be written in terms of convolution as $A+_{\delta} B=\{x\in G: 1_{A}*_{m}1_{B}(x)>\delta \}$.  Theorem \ref{thm:Popular} thereby yields information on the supports of $f$ and $g$ under a strong assumption on a level set of $f*g$.  For connected groups, the recent article \cite{ChristIlioupoulou} proves more quantitative and detailed results on the relationship between $f$, $g$, and $f*g$.

If $H$ is a compact abelian group and $\pi:G\to H$ is a homomorphism, we say that $\pi$ is \emph{$\mu$-measurable} if $\pi^{-1}(A)$ is $\mu$-measurable for every open set $A\subseteq H$.  If $m$ is Haar probability measure on $H$, we say that $\pi$ \emph{preserves $\mu$} if $\mu(\pi^{-1}(A))=m(A)$ for every $m$-measurable set $A\subseteq H$.  Note that $\pi$ preserves $\mu$ if and only if $\int f\circ \pi \,d\mu= \int f \,dm$ for all $f\in L^{1}(m)$.

If the measures $\mu$ and $m$ are clear from context, we may say that ``$\pi$ is measure preserving'' instead of ``$\pi$ preserves $\mu$''.

\begin{lemma}\label{lem:MeasureIsPreserved} Let $G$ be an abelian group and $\mu$ a translation invariant probability measure on $G$.   If $H$ is a compact abelian group with Haar probability measure $m$ and $\pi:G\to H$ is a surjective $\mu$-measurable homomorphism, then $\pi$ preserves $\mu$.  Furthermore, if $f,g\in L^{2}(m)$ then $(f\circ \pi)*_{\mu}(g\circ \pi) = (f*_{m}g)\circ \pi$.
\end{lemma}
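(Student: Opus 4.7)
The plan is to introduce the pushforward measure $\nu$ on $H$ defined by $\nu(B) = \mu(\pi^{-1}(B))$, show it equals Haar measure $m$ via a uniqueness-of-Haar-measure argument, and then deduce the convolution identity as a routine change of variables. First I would check that $\pi^{-1}(B)$ is $\mu$-measurable for every Borel $B \subseteq H$: the collection of subsets of $H$ whose $\pi$-preimages are $\mu$-measurable is a $\sigma$-algebra (because preimage commutes with complements and countable unions), and it contains every open subset of $H$ by hypothesis, hence all Borel sets. So $\nu$ is a well-defined Borel probability measure on $H$.

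Next I would verify that $\nu$ is translation invariant. Given $t \in H$, surjectivity of $\pi$ produces $s \in G$ with $\pi(s) = t$, and since $\pi$ is a homomorphism one checks directly that $\pi^{-1}(B + t) = \pi^{-1}(B) + s$, so translation invariance of $\mu$ gives $\nu(B + t) = \nu(B)$. To identify $\nu$ with $m$, it is convenient to work at the level of characters: for any nontrivial continuous character $\chi : H \to \mathbb{T}$, translation invariance of $\nu$ yields $\chi(t)\int \chi\,d\nu = \int\chi\,d\nu$ for every $t\in H$, forcing $\int\chi\,d\nu = 0 = \int\chi\,dm$; and $\int 1\,d\nu = 1 = \int 1\,dm$. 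By Stone--Weierstrass, the span of characters is dense in $C(H)$, so $\int f\,d\nu = \int f\,dm$ for all $f \in C(H)$. Urysohn's lemma together with monotone convergence then gives $\nu(U) = m(U)$ for every open $U \subseteq H$, and since open sets form a $\pi$-system generating the Borel $\sigma$-algebra, Dynkin's $\pi$-$\lambda$ theorem yields $\nu = m$ on all Borel sets. Extending to $m$-measurable sets is routine, since $\pi$-preimages of Borel $m$-nullsets are $\mu$-null.

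For the convolution identity, fix $x \in G$. Since $f, g \in L^2(m)$, Cauchy--Schwarz shows that $u \mapsto f(u)g(\pi(x) - u)$ lies in $L^1(m)$, so applying the measure-preserving property just established gives
\[
(f\circ\pi)*_{\mu}(g\circ\pi)(x) = \int f(\pi(t))\, g(\pi(x) - \pi(t))\, d\mu(t) = \int f(u)\, g(\pi(x) - u)\, dm(u) = (f *_{m} g)(\pi(x)),
\]
where we have used that $\pi$ is a homomorphism so $\pi(x - t) = \pi(x) - \pi(t)$.

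The main obstacle is the identification $\nu = m$. One cannot appeal directly to the uniqueness of regular Borel Haar measure, because $G$ carries no topology and $\nu$ is not a priori regular. The character-based route above circumvents this by first matching the linear functionals $f \mapsto \int f\circ\pi\,d\mu$ and $f \mapsto \int f\,dm$ on $C(H)$ and then pinning the measures down on all Borel sets via Dynkin's lemma.
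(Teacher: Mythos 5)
The paper's own proof is two sentences: push $\mu$ forward to a Borel measure $\eta$ on $H$, note that $\eta$ is translation invariant, and invoke ``uniqueness of Haar measure'' to conclude $\eta = m$; the convolution identity is then declared straightforward. Your proof follows the same skeleton but is considerably more careful at the crucial step, and you correctly identify the one genuine subtlety that the paper glosses over: the standard uniqueness theorem for Haar measure is stated for \emph{regular} Borel (Radon) measures, while the pushforward $\nu$ is not a priori regular, since $G$ carries no topology and $\mu$ need not be regular. Your workaround---showing $\int\chi\,d\nu = \int\chi\,dm$ for every character $\chi$ and using Stone--Weierstrass (noting $\widehat{H}$ separates points by Peter--Weyl) to get agreement on $C(H)$---is sound and is the right way to circumvent the regularity issue.

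One step is stated a bit loosely. Passing from agreement on $C(H)$ to agreement on open sets via ``Urysohn's lemma together with monotone convergence'' works cleanly only when $H$ is metrizable, where every open set is $\sigma$-compact and you can build an increasing \emph{sequence} $f_n \uparrow 1_U$ in $C(H)$. For general compact $H$ the family of Urysohn functions below $1_U$ is an uncountable directed system, and the monotone convergence theorem for nets fails for measures that are not already known to be inner regular---which is exactly the property of $\nu$ that is in question. The repair uses regularity of $m$ (not $\nu$) together with complementation: for compact $K\subseteq U$, choose $f\in C(H)$ with $1_K\le f\le 1_U$, so $\nu(U)\ge\int f\,d\nu=\int f\,dm\ge m(K)$, giving $\nu(U)\ge m(U)$ by inner regularity of $m$; dually $\nu(K)\le m(K)$ for compact $K$ by outer regularity of $m$; and since $\nu$ and $m$ are probability measures and $H\setminus U$ is compact, these two one-sided bounds force equality. (Alternatively, invoke the Riesz representation theorem to get a Radon measure $m'$ representing $f\mapsto\int f\circ\pi\,d\mu$, identify $m'=m$, and then compare $\nu$ with $m$ as above.) With that refinement your Dynkin step and the remainder of the argument are fine. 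In the paper's applications this is moot---the lemma is always applied with $H$ metrizable or finite---but you are right that the stated lemma is for general compact $H$, and your proof engages that generality honestly where the paper's does not. The $\sigma$-algebra argument, the use of surjectivity for translation invariance, the extension from Borel to $m$-measurable sets via null-set absorption, and the Cauchy--Schwarz plus change-of-variables derivation of the convolution identity are all correct and match what the paper leaves unstated.
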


\begin{proof}
	Define a measure $\eta$ on the $\sigma$-algebra of Borel subsets of $H$ by $\eta(A):=\mu(\pi^{-1}(A))$.  It is easy to check that $\eta$ is a translation invariant probability measure, so uniqueness of Haar measure implies $\eta(A)=m(A)$ for all $m$-measurable subsets of $H$.  Thus $\pi$ preserves $\mu$.  The last assertion is straightforward to verify from the definition of convolution and the fact that $\pi$ preserves $\mu$.
\end{proof}

Our results relate subsets of a given compact abelian group $G$ to subsets of quotients of $G$ which are isomorphic to $\mathbb T$ or to a finite group; the following definition helps describe such sets.

\begin{definition}[Bohr intervals and cyclic progressions]\label{def:PreInterval}
	Let $\mathbb T:=\mathbb R/\mathbb Z$ with the usual topology.  The elements of $\mathbb T$ are   $a+\mathbb Z$, where $a\in \mathbb R$, so every subset of $\mathbb T$ may be identified with a set $E+\mathbb Z$, where $E\subseteq [0,1]\subseteq \mathbb R$.  An \emph{interval} in $\mathbb T$ is a set of the form $[a,b]+\mathbb Z$, where $a, b\in \mathbb R$ and  $0<b-a\leq 1$.    We will abuse notation and write $[a,b]$ for the set $[a,b]+\mathbb Z$ contained in $\mathbb T$.  Unless otherwise specified, the intervals we consider are closed, as indicated in the definition.
	
	Let $\lambda$ denote Haar probability measure on $\mathbb T$. Note that if $I$ and $J$ are intervals in $\mathbb T$ then $I+J$ is an interval, and $\lambda(I+J)=\min\{1,\lambda(I)+\lambda(J)\}$.

	Let $G$ be an abelian group and $\mu$ a translation invariant probability measure on $G$.  We say that $A\subseteq G$ is a \emph{Bohr interval} if there is a surjective $\mu$-measurable homomorphism $\tau:G\to \mathbb T$ and an interval $I\subseteq \mathbb T$ such that $A=\tau^{-1}(I)$.
	
	If $A, B\subseteq G$ are Bohr intervals, we say that $A$ and $B$ are \emph{parallel} if there is a single surjective $\mu$-measurable homomorphism $\tau:G\to \mathbb T$ and intervals $I, J\subseteq \mathbb T$ such that $A=\tau^{-1}(I)$ and $B=\tau^{-1}(J)$.

	If $N\in\mathbb N$, we say that $A\subseteq G$ is an \emph{$N$-cyclic progression}\footnote{This terminology is not standard, but it satisfies our desire to include the cardinality of the group $\mathbb Z/N\mathbb Z$ in the terminology, as a bound on $N$ is stated in Theorem \ref{thm:Precise}.} if there is a surjective $\mu$-measurable homomorphism $\tau:G\to \mathbb Z/N\mathbb Z$ and an arithmetic progression $P\subseteq \mathbb Z/N\mathbb Z$ of common difference $1$ such that $A=\tau^{-1}(P)$.
	
	If $A$ and $B$ are $N$-cyclic progressions, we say that $A$ and $B$ are \emph{parallel} if there is a single surjective $\mu$-measurable homomorphism $\tau:G\to \mathbb Z/N\mathbb Z$ such that $A=\tau^{-1}(P)$ and $B=\tau^{-1}(Q)$ for arithmetic progressions $P, Q\subseteq \mathbb Z/N\mathbb Z$ of common difference $1$.

	If $K\leq G$ is a  $\mu$-measurable finite index subgroup of $G$, $C'\subseteq K$ is a Bohr interval in $K$, and $a\in G$, we say that $a+C'$ is a \emph{relative Bohr interval} in $G$.  A \emph{relative $N$-cyclic progression}
	is defined similarly.
\end{definition}

\begin{remark}  An $N$-cyclic progression is not an arithmetic progression in $G$, unless $G$ itself is a cyclic group of order $N$.
\end{remark}

\begin{notation}\label{not:CN} We write $C_{N}$ for the subgroup $\{\frac{j}{N}:j\in \mathbb Z\}\subseteq \mathbb T$, so that $C_{N}$ is isomorphic to $\mathbb Z/N\mathbb Z$.  If $\tau:G\to \mathbb T$ is a $\mu$-measurable homomorphism such that $\tau(G)=C_{N}$ and $I\subseteq \mathbb T$ is an interval, then $\tau^{-1}(I)$ is an $N$-cyclic progression.  The $N$-cyclic progressions we encounter in our proofs will have this form, and the next lemma estimates their measure.\end{notation}

\begin{lemma}\label{lem:CyclicMeetInterval}
	If $I\subseteq \mathbb T$ is an interval, then $N\lambda(I)-1\leq |C_{N}\cap I|\leq N\lambda(I)+1$.  Consequently, if $\tau:G\to C_{N}$ is a surjective $\mu$-measurable homomorphism, then $\tilde{I}:=\tau^{-1}(I)$ is an $N$-cyclic progression satisfying $\lambda(I)-\frac{1}{N}\leq \mu(\tilde{I})\leq \lambda(I)+\frac{1}{N}$.
\end{lemma}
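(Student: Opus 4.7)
The plan is to prove the combinatorial estimate $N\lambda(I) - 1 \leq |C_N \cap I| \leq N\lambda(I) + 1$ first, and then derive the measure estimate as an immediate corollary via Lemma \ref{lem:MeasureIsPreserved}.

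For the combinatorial estimate, I would lift $I$ to a real interval $[a, a + L] \subseteq \mathbb R$, where $L = \lambda(I)$. When $L < 1$, the quotient map $\mathbb R \to \mathbb T$ is injective on this lift (up to possible identification of the two endpoints in $\mathbb T$, which can only add or remove one point to the count), so an element $j/N \in C_N$ lies in $I$ precisely when some integer lies in $[Na, Na + NL]$. Since the number of integers in a closed real interval $[x, y]$ equals $\lfloor y \rfloor - \lceil x \rceil + 1$, the inequalities $y - 1 < \lfloor y \rfloor \leq y$ and $x \leq \lceil x \rceil < x + 1$ combine to yield $NL - 1 \leq |C_N \cap I| \leq NL + 1$. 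The degenerate case $L = 1$, where $I = \mathbb T$, gives $|C_N \cap I| = N$ directly, which also fits these bounds.

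For the measure estimate, I would view $\tau$ as a surjective $\mu$-measurable homomorphism from $G$ onto $C_N$ and apply Lemma \ref{lem:MeasureIsPreserved}: $\tau$ pushes $\mu$ forward to the Haar probability measure on $C_N$, which assigns mass $1/N$ to each of the $N$ points. Since $\tau(G) = C_N$ forces $\tau^{-1}(I) = \tau^{-1}(I \cap C_N)$, we obtain $\mu(\tilde I) = |C_N \cap I| / N$. Dividing the combinatorial estimate by $N$ completes the proof.

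There is no genuine obstacle here; the only thing to watch is the endpoint bookkeeping in the counting step, together with ensuring that the degenerate case $L = 1$ (where the lift stops being injective modulo $1$) is handled separately rather than via the floor/ceiling formula.
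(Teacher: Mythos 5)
Your proof is correct and follows essentially the same route as the paper's: reduce to counting $|C_N\cap I|$ via Lemma \ref{lem:MeasureIsPreserved} (Haar on $C_N$ is normalized counting measure), lift $I$ to a real interval, rescale by $N$, and invoke the standard bound on the number of integers in a closed interval. The paper states the counting bound directly as $d-c-1\leq|[c,d]\cap\mathbb Z|\leq d-c+1$ rather than spelling out the floor/ceiling identity, and it does not separately flag the $\lambda(I)=1$ case where the lift stops being injective --- your explicit handling of that endpoint is a small but legitimate extra bit of care; it does not change the conclusion.
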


\begin{proof}
	Haar measure on $C_{N}$ is normalized counting measure, so by Lemma \ref{lem:MeasureIsPreserved} it suffices to estimate the cardinality of $I\cap C_{N}$. To do this we identify $I\cap C_{N}$ with a set of the form $[a,b]\cap \{\frac{j}{N}: j\in \mathbb Z\}$ contained in $\mathbb R$, where $b-a=\lambda(I)$.  The last intersection has the same cardinality as $[Na,Nb]\cap \mathbb Z$, so we use the estimate $d-c-1 \leq |[c,d]\cap \mathbb Z|\leq d-c+1$, which holds for all $c\leq d\in \mathbb R$.  Thus $N(b-a)-1\leq|I\cap C_{N}|\leq N(b-a)+1$, which is equivalent to the estimate stated in the lemma.
\end{proof}

\begin{remark} If $A$ and $B$ are parallel Bohr intervals such that $A=~\tau^{-1}(I)$ and $B=\tau^{-1}(J)$ where $I, J$ are intervals in $\mathbb T$, then Lemma \ref{lem:MeasureIsPreserved} implies $\mu(A)=\lambda(I)$, $\mu(B)=\lambda(J)$, and $\mu(A+B)=\min\{1,\mu(A)+\mu(B)\}$.  Similarly, if $A$ and $B$ are parallel $N$-cyclic progressions then $\mu(A+B)=\min\{1,\mu(A)+\mu(B)-\frac{1}{N}\}$.
\end{remark}

\begin{remark}
	Pairs of parallel Bohr intervals are called ``parallel Bohr sets'' in  \cite{TaoInverse}.  These form a special case of the \emph{Sturmian pairs} considered in \cite{BjorklundCompact,BFKneser}, which deal with the abelian and nonabelian settings.
\end{remark}

\begin{example}[Bohr intervals]
	Let $I\subseteq \mathbb T$ be the interval corresponding to $[0,\frac{1}{3}]$ and define $\tau:\mathbb T\to \mathbb T$ by $\tau(x)=2x$.  Then $\tilde{I}:=\tau^{-1}(I)$ is a Bohr interval.  Here $\tilde{I}$ corresponds to $[0,\frac{1}{6}]\cup [\frac{1}{2}, \frac{2}{3}]$, so a Bohr interval in $\mathbb T$ is not necessarily an interval.

	Let $I\subseteq \mathbb T$ be the interval corresponding to $[0,\frac{1}{3}]$ and  define a homomorphism $\tau:\mathbb T\times \mathbb T\to \mathbb T$ by $\tau(x,y)=x-2y$.  Then  $\tilde{I}:=\tau^{-1}(I)=\{(x,y)\in \mathbb T\times \mathbb T: x-2y\in I\}$ is a Bohr interval in $\mathbb T\times \mathbb T$.
\end{example}

\begin{example}[$N$-cyclic progressions]  If $N, a\in \mathbb N$, with $a\leq N$, then the residues $\{0, 1, \dots, a-1\}$ in $\mathbb Z/N\mathbb Z$ form an $N$-cyclic progression.  If $p$ is prime and  $A\subseteq \mathbb Z/p\mathbb Z$ is an arithmetic progression, i.e.~a set of the form $\{a, a+d, a+2d,\dots, a+kd\}$, where $a, d\in \mathbb Z/p\mathbb Z$, $d\neq 0$, $k\in \mathbb N$, then $A$ is a $p$-cyclic progression.
\end{example}

For the following definitions, fix an abelian group $G$ with a translation invariant probability measure $\mu$, and let $\mu_{*}$ be the associated inner measure.

\begin{definition}[Periodicity]\label{def:Periodic} Given a subgroup $K\leq G$, a set $A\subseteq G$ is a union of cosets of $K$ if and only if $A=A+K$.  The latter equation will  abbreviate the assertion ``$A$ is a union of cosets of $K$''.   We may call such a set $A$ \emph{periodic with respect to} $K$.
	
	If $A\sim_{\mu} A+K$, we say that $A$ is \emph{essentially periodic with respect to} $K$.  When $A$ is essentially periodic with respect to $K$, then $A$ is $\mu$-similar to a union of cosets of $K$, but the converse does not hold. For example, if $G$ is an infinite compact abelian group with Haar probability measure $\mu$, $K\leq G$ is a proper open subgroup, $c\notin K$, and $A=K\cup \{c\}$, then $A\sim_{\mu} K$ but $A\nsim_{\mu} A+K$.
\end{definition}

The following definition describes sets which are close to being periodic.  It is stronger than merely insisting that a given set has small symmetric difference with a periodic set.

\begin{definition}\label{def:EpsPeriodic}
	If $K\leq G$ is a $\mu$-measurable subgroup with $\mu(K)>0$ and $\varepsilon\geq 0$, a set $A\subseteq G$ is \emph{$\varepsilon$-periodic with respect to $K$} if $\mu(A+K)-\mu_{*}(A)\leq \varepsilon\mu(K)$.
\end{definition}

The term ``quasi-periodic'' is introduced in \cite{GrynkKemperman} and \cite{GrynkStep} to describe pairs of sets satisfying $|A+B|\leq |A|+|B|$.  Here is the natural generalization of the term to a group with a translation invariant measure.

\begin{definition}[Quasi-periodicity]\label{def:QP}
	Let $K\leq G$ be a $\mu$-measurable subgroup with $\mu(K)>0$.  We say that $A\subseteq G$ is \emph{quasi-periodic} with respect to $K$ if $A$ can be partitioned into two sets $A_{1}$ and $A_{0}$, where $A_{0}\neq \varnothing$, $(A_{1}+K)\cap A_{0}=\varnothing$, $A_{1}\sim_{\mu} A_{1}+K$, and $A_{0}$ is contained in a coset of $K$.  For such $A_{1}, A_{0}$,  the expression $A_{1}\cup A_{0}$ is called a \emph{quasi-periodic decomposition of $A$ with respect to $K$}, or simply a \emph{quasi-periodic decomposition}, if the group $K$ is clear from context.
	
	If $\varepsilon>0$, we say that $A\subseteq G$ is \emph{$\varepsilon$-quasi periodic with respect to $K$} if $A=A_{1}\cup A_{0}$, where $A_{0}\neq \varnothing$, $(A_{1}+K)\cap A_{0}=\varnothing$, $A_{1}$ is $\varepsilon$-periodic with respect to $K$, and $A_{0}$ is contained in a coset of $K$.
\end{definition}

We allow $A_{1}$ to be empty in the definitions of ``quasi-periodic'' and ``$\varepsilon$-quasi-periodic'', but we do not allow $A_{0}$ to be empty.

If we say that ``$A=A_{1}\cup A_{0}$ is a quasi-periodic (or $\varepsilon$-quasi-periodic) decomposition of $A$ with respect to $K$'', then $A_{1}$ and $A_{0}$ satisfy the conditions listed in Definition \ref{def:QP}.

For a compact abelian group $G$ with Haar measure $m$, Theorem \ref{thm:Precise}  provides detailed structural information about pairs of sets $A, B\subseteq G$ when $\delta$ is very small and $m_{*}(A+B)\leq m(A)+m(B)+\delta$.  Our methods do not provide a specific $\delta$ for which our conclusions hold.  Note that an $m$-measurable subgroup $K\leq G$ has finite index if and only if $K$ is compact and open, if and only if $m(K)>0$.

\begin{theorem}\label{thm:Precise}
	For all $\varepsilon >0$, there exists $\delta>0$ and $d\in \mathbb N$ such that if $G$ is a compact abelian group with Haar probability measure $m$ and $A, B\subseteq G$ have $m(A)>\varepsilon$, $m(B)>\varepsilon$, and   $m_{*}(A+B)\leq m(A)+m(B)+\delta$, then there is a compact open subgroup $K\leq G$ (possibly $K=G$) having index at most $d$ such that exactly one of the following holds.
	\begin{enumerate}
		\item[(I)] $A+B$ is $\varepsilon$-periodic with respect to $K$ and $m(A+B+K) \leq m(A+K)+m(B+K)$.

		\item[(II)]  $A+B$ is not $\varepsilon$-periodic with respect to $K$,
		while $A$ and $B$ have $\varepsilon$-quasi-periodic decompositions $A=A_{1}\cup A_{0}$, $B=B_{1}\cup B_{0}$ with respect to $K$, where at least one of $A_{1}, B_{1}$ is nonempty and \[m_{*}(A_{0}+B_{0})\leq m(A_{0})+m(B_{0})+\varepsilon m(K),\]
		 or

		\item[(III)] $m_{*}(A+B)<(1-\varepsilon)m(K)$, and there are $A', B'\subseteq K$ such that  $m(A')<m(A)+\varepsilon$, $m(B')<m(B)+\varepsilon$ and $a, b\in G$ such that $A\subseteq a+A'$ and $B\subseteq b+B'$, where
		\begin{enumerate}
			\item[(III.1)] $A', B'$ are parallel Bohr intervals in $K$, or
			\item[(III.2)]   $A', B'$ are parallel $N$-cyclic progressions in $K$ for some $N> d$.
	\end{enumerate}  \end{enumerate}
	
\noindent Note: $\delta$ and $d$ are independent of the group $G$.
\end{theorem}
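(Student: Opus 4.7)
The plan is to prove Theorem \ref{thm:Precise} by a compactness argument via ultraproducts, reducing the approximate hypothesis to the exact critical inequality (\ref{eqn:Critical}) and then invoking the known classification for that case summarized in \S\ref{sec:LCAreview}. Suppose for contradiction that the conclusion fails for some $\varepsilon>0$: then for each $n\in\mathbb{N}$ there is a compact abelian group $G_n$ with Haar probability measure $m_n$ and measurable sets $A_n, B_n\subseteq G_n$ with $m_n(A_n), m_n(B_n)>\varepsilon$ and $m_{n,*}(A_n+B_n)\leq m_n(A_n)+m_n(B_n)+1/n$, such that no compact open subgroup of $G_n$ of index at most $n$ witnesses (I), (II), or (III), and no $N>n$ witnesses (III.2).

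Fix a non-principal ultrafilter $\mathcal{U}$ on $\mathbb{N}$, and let $G^* := \prod_n G_n/\mathcal{U}$ be the ultraproduct group, with internal subsets $A^* := \prod_n A_n/\mathcal{U}$ and $B^* := \prod_n B_n/\mathcal{U}$. The Loeb construction equips $G^*$ with a translation-invariant probability measure $\mu$ satisfying $\mu(A^*)=\lim_\mathcal{U} m_n(A_n)\geq\varepsilon$ and $\mu(B^*)=\lim_\mathcal{U} m_n(B_n)\geq\varepsilon$. Standard Loeb-measure arguments, combined with inner regularity of $m_n$ by compact sets and the bound $m_{n,*}(A_n+B_n)\leq m_n(A_n)+m_n(B_n)+1/n$, yield
\[
\mu_*(A^*+B^*)\leq \mu(A^*)+\mu(B^*),
\]
placing $(G^*,\mu,A^*,B^*)$ in the critical-case setting.

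Next, apply the classification theorems for pairs satisfying the critical inequality (\ref{eqn:Critical}) in the general setting of a translation-invariant probability measure (as recalled in \S\ref{sec:LCAreview}) to $(G^*, \mu, A^*, B^*)$. This produces \emph{exactly one} of three exact outcomes: (I$^*$) a measurable finite-index subgroup $K^*\leq G^*$ with $A^*+B^*\sim_\mu (A^*+B^*)+K^*$ and $\mu(A^*+B^*+K^*)\leq \mu(A^*+K^*)+\mu(B^*+K^*)$; (II$^*$) exact quasi-periodic decompositions of $A^*$ and $B^*$ with respect to some such $K^*$; or (III$^*$) containment of $A^*$ and $B^*$ in parallel Bohr intervals or in parallel $N$-cyclic progressions inside such a $K^*$. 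In cases (I$^*$)--(II$^*$) the index $[G^*:K^*]$ is bounded by a constant depending only on $\varepsilon$ (since $\mu(K^*)$ must be comparable to $\mu(A^*),\mu(B^*)>\varepsilon$), and in (III$^*$) the parameter $N$ is similarly bounded below in terms of $\varepsilon$.

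The main obstacle is the descent step: turning the \emph{exact} structural outcome on $(G^*, A^*, B^*)$ into the \emph{approximate} conclusion of the theorem on $(G_n, A_n, B_n)$ for $\mathcal{U}$-almost every $n$. The finite-index subgroup $K^*$ corresponds to a surjective homomorphism $G^*\to F$ onto a finite abelian group $F=G^*/K^*$ of bounded order, and by Pontryagin duality such a homomorphism is encoded by finitely many characters of $G^*$, each represented by characters of the $G_n$; this yields compatible surjective homomorphisms $\pi_n:G_n\to F$ for $\mathcal{U}$-almost every $n$, with compact open kernels $K_n$ of bounded index. A similar character-approximation argument pulls back the maps to $\mathbb{T}$ or to $C_N$ in case (III$^*$). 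One then verifies that the quantitative conclusions of (I), (II), or (III) --- $\varepsilon$-periodicity, $\varepsilon$-quasi-periodicity, and the containment bounds $m_n(A_n')<m_n(A_n)+\varepsilon$ --- all hold for $\mathcal{U}$-almost every $n$, contradicting the choice of the counterexample sequence. Uniformity of $\delta=\delta(\varepsilon)$ and $d=d(\varepsilon)$ in $G$ is then automatic from the compactness argument.
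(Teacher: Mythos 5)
Your high-level strategy matches the paper's exactly: assume the theorem fails, extract a counterexample sequence, pass to an ultraproduct with Loeb measure, observe that the internal sets satisfy the exact critical inequality, classify, and descend. However, there is a genuine gap at the classification step. You propose to ``apply the classification theorems for pairs satisfying the critical inequality (\ref{eqn:Critical}) in the general setting of a translation-invariant probability measure (as recalled in \S\ref{sec:LCAreview}).'' But the theorems of \S\ref{sec:LCAreview} (Theorems \ref{thm:Satz1}, \ref{thm:Satz2}, \ref{thm:LCAInverse}) are stated only for locally compact abelian groups with Haar measure; they do not apply directly to the ultraproduct $G^{*}$, which is not locally compact, and whose Loeb measure is not Haar measure. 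The bulk of the paper (\S\ref{sec:Characters} through \S\ref{sec:LiftLCA}, culminating in Propositions \ref{prop:LiftSubcritical} and \ref{prop:Reduction}) is devoted precisely to transferring this classification to the ultraproduct setting. This transfer is not routine: it requires (a) Lemma \ref{lem:InternalCharacter} and Proposition \ref{prop:SzegedyQuotient} to realize the ``structured part'' of a Loeb-measurable function as the pullback of a Borel function along a measure-preserving quotient onto a genuine compact metrizable group; (b) Lemma \ref{lem:ConvolutionToSumset} and Corollary \ref{cor:ProjectAndPullBack} to pass from the sumset in $G^{*}$ to the sumset in that compact quotient without losing measure; and (c) the whole apparatus of \S\ref{sec:ReducibleGeneral} (tame/reducible/extendible pairs, Lemmas \ref{lem:SimCriticalSub}, \ref{lem:TameSuper}, \ref{lem:Pop1}, etc.) to handle the mismatch between the sets in $G^{*}$ and their images under the quotient, since equality of measures does not give equality of sets. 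Without this intermediate machinery, your claim that the ultraproduct pair $(A^{*},B^{*})$ satisfies an exact structural trichotomy is unjustified.

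A secondary, smaller error: your justification that the index $[G^{*}:K^{*}]$ is bounded ``since $\mu(K^{*})$ must be comparable to $\mu(A^{*}),\mu(B^{*})>\varepsilon$'' is not correct. In conclusion (II) of Proposition \ref{prop:Reduction} the subgroup $K^{*}$ can have measure arbitrarily small compared to $\mu(A^{*})$ (this possibility is explicitly accommodated in Corollary \ref{cor:Iterate}(c)). The reason the index is bounded is simply that the ultraproduct construction yields a \emph{single} finite-index subgroup $\mathbf{K}\leq\mathbf{G}$ of some fixed index $d$, and Corollary \ref{cor:FiniteIndexInternal} pushes that same index down to the constituent subgroups $K_{n}\leq G_{n}$ for $\mathcal{U}$-many $n$; the $d$ in the theorem is this index, not something derived from a measure bound.
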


\begin{remark}
	Theorem \ref{thm:Precise} does not assume $m_{*}(A+B)<1-\varepsilon$ or even $m_{*}(A+B)<1$; the possibility $m_{*}(A+B)\approx 1$ is accounted for in conclusion (I), where the group $K$ may be equal to $G$.  See \S\ref{sec:Niveau} for elaboration.
\end{remark}

Connected groups have no proper open subgroups, so the special  case of Theorem \ref{thm:Precise} where $G$ is connected recovers the  following result from \cite{TaoInverse}.

\begin{theorem}[\cite{TaoInverse}, Theorem 1.3]\label{thm:TaoInverse1}
	For all $\varepsilon>0$, there exists $\delta>0$ such that if $G$ is a connected compact abelian group and $A, B\subseteq G$ have $m(A)>\varepsilon$, $m(B)>\varepsilon$,  $m(A)+m(B)<1-\varepsilon$ and $m_{*}(A+B)\leq m(A)+m(B)+\delta$, then there are parallel Bohr intervals $A', B'\subseteq G$ such that $A\subseteq A'$, $B\subseteq B'$, and $m(A'\setminus A)+m(B'\setminus B)< \varepsilon$.
\end{theorem}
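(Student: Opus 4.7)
The plan is to obtain Theorem~\ref{thm:TaoInverse1} as a direct consequence of Theorem~\ref{thm:Precise}, using connectedness of $G$ to eliminate three of the four structural alternatives. Given $\varepsilon > 0$ as in Theorem~\ref{thm:TaoInverse1}, I would set $\varepsilon' := \varepsilon/2$, invoke Theorem~\ref{thm:Precise} with input $\varepsilon'$ to obtain $\delta' > 0$ and $d \in \mathbb{N}$, and then take $\delta := \min(\delta', \varepsilon/4)$. For any $A, B \subseteq G$ meeting the hypotheses of Theorem~\ref{thm:TaoInverse1}, Theorem~\ref{thm:Precise} produces a compact open subgroup $K \leq G$ of index at most $d$; since $G$ is connected and every open subgroup of a topological group is closed, the only option is $K = G$.

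With $K = G$ I would rule out conclusions (I), (II), and (III.2) in turn. For (I): $\varepsilon'$-periodicity of $A+B$ with respect to $G$ unwinds to $1 - m_{*}(A+B) \leq \varepsilon'$, whereas the hypotheses give $m_{*}(A+B) \leq m(A) + m(B) + \delta < 1 - \varepsilon + \delta \leq 1 - \varepsilon'$, a contradiction. For (II): a quasi-periodic decomposition $A = A_1 \cup A_0$ with respect to $K = G$ requires both $A_0 \neq \varnothing$ and $(A_1 + G) \cap A_0 = \varnothing$; if $A_1 \neq \varnothing$ then $A_1 + G = G$, forcing $A_0 = \varnothing$, a contradiction, so $A_1 = \varnothing$, and by the same argument $B_1 = \varnothing$, contradicting the stipulation in (II) that at least one of $A_1, B_1$ be nonempty. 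For (III.2): an $N$-cyclic progression with $N > d \geq 1$ demands a surjective $m$-measurable homomorphism $\tau \colon G \to \mathbb{Z}/N\mathbb{Z}$, but such a $\tau$ is automatically continuous (a standard fact about measurable homomorphisms of locally compact groups), and its image is then a connected subgroup of the discrete group $\mathbb{Z}/N\mathbb{Z}$, hence trivial, incompatible with surjectivity onto $\mathbb{Z}/N\mathbb{Z}$ for $N \geq 2$.

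The only surviving conclusion is (III.1): there exist $a, b \in G$ and parallel Bohr intervals $A', B' \subseteq G$ with $A \subseteq a + A'$, $B \subseteq b + B'$, $m(A') < m(A) + \varepsilon'$, and $m(B') < m(B) + \varepsilon'$. Writing $A' = \tau^{-1}(I)$ and $B' = \tau^{-1}(J)$ for a common surjective $m$-measurable homomorphism $\tau \colon G \to \mathbb{T}$ and intervals $I, J \subseteq \mathbb{T}$, the translates $A'' := a + A' = \tau^{-1}(I + \tau(a))$ and $B'' := b + B' = \tau^{-1}(J + \tau(b))$ are again parallel Bohr intervals (via the same $\tau$), with $A \subseteq A''$ and $B \subseteq B''$. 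Translation invariance of $m$ yields $m(A'') = m(A')$ and $m(B'') = m(B')$, so $m(A'' \setminus A) + m(B'' \setminus B) = m(A'') + m(B'') - m(A) - m(B) < 2\varepsilon' = \varepsilon$, as required. The main obstacle is not in this reduction, which is routine case-elimination, but upstream in proving the general structural Theorem~\ref{thm:Precise}, where the bulk of the paper's technical work resides.
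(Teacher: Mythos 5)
Your proof is correct, but it follows a genuinely different route from the proof the paper labels as its proof of Theorem~\ref{thm:TaoInverse1}. You deduce the result by case-elimination from Theorem~\ref{thm:Precise}: connectedness forces $K=G$, (I) fails because $A+B$ would then cover all but $\varepsilon'$ of the group while the hypothesis bounds $m_{*}(A+B)$ below $1-\varepsilon+\delta\leq 1-\varepsilon'$, (II) fails because a quasi-periodic decomposition with respect to $K=G$ would force $A_1=B_1=\varnothing$, and (III.2) fails because a surjective $m$-measurable homomorphism $G\to\mathbb{Z}/N\mathbb{Z}$ with $N\geq 2$ would have an $m$-measurable finite-index (hence positive-measure, hence open) kernel, contradicting connectedness. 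That last step can be kept entirely internal to the paper's toolkit --- one does not need the general automatic-continuity theorem for measurable homomorphisms, only the remark preceding Theorem~\ref{thm:Precise} that an $m$-measurable finite-index subgroup is compact and open --- but your citation of the standard fact is also valid. The paper explicitly anticipates this derivation in the sentence immediately preceding the statement of Theorem~\ref{thm:TaoInverse1}, but the proof it actually writes out, in \S\ref{sec:ConnectedProof}, is a different, self-contained ultraproduct argument: assume failure, pass to $\mathbf G=\prod_{n\to\mathcal U}G_n$, observe $\widehat{\mathbf G}$ is torsion free, apply Proposition~\ref{prop:ConnectedLift} (the ultraproduct analogue of Kneser's Satz~2) to find parallel Bohr intervals in $\mathbf G$, and pull them down via Corollary~\ref{cor:PullPreintervalsDown}. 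The trade-off: your argument is shorter once Theorem~\ref{thm:Precise} is in hand, but it imports the full disconnected-group machinery; the paper's \S\ref{sec:ConnectedProof} proof avoids that machinery entirely and is offered as a gentler introduction to the method before the ``profusion of cases'' in the general setting.
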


The following corollary of Theorem \ref{thm:Precise} is a weak version of Theorem 21.8 of \cite{GrynkBook} and Theorem 1.3 of \cite{SerraZemor}.  We obtain it by specializing Theorem \ref{thm:Precise} to the case where $G$ is a cyclic group of prime order and adding the hypothesis $m(A)+m(B)< 1-\varepsilon$, which rules out conclusion (I).  

\begin{corollary}\label{cor:PrimeOrder}
	For all $\varepsilon>0$, there exists $N\in \mathbb N$, $\delta>0$, such that for every prime $p>N$ and all sets $A, B\subseteq \mathbb Z/p\mathbb Z$ satisfying $|A|,|B|\geq \varepsilon p$, $|A|+|B|<(1-\varepsilon)p$, and $|A+B|\leq |A|+|B|+\delta p$, there are arithmetic progressions $I\supseteq A$, $J\supseteq  B$ in $\mathbb Z/p\mathbb Z$ both having common difference $k\neq 0$ such that $|I|\leq (1+\varepsilon)|A|$ and $|J|\leq (1+\varepsilon) |B|$.
\end{corollary}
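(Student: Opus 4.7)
The plan is to apply Theorem \ref{thm:Precise} to $G=\mathbb Z/p\mathbb Z$ (equipped with normalized counting measure $m$) with a suitably chosen $\varepsilon_1<\varepsilon$, and argue that the extra hypothesis $|A|+|B|<(1-\varepsilon)p$, together with the trivial subgroup structure of $\mathbb Z/p\mathbb Z$, forces case (III.2) of the trichotomy. Concretely, set $\varepsilon_1:=\varepsilon^2$, let $(\delta_1,d_1)$ be the constants produced by Theorem \ref{thm:Precise} for $\varepsilon_1$, and set $N:=d_1$ and $\delta:=\min(\delta_1,(\varepsilon-\varepsilon_1)/2)$. For any prime $p>N$, Theorem \ref{thm:Precise} supplies a finite-index subgroup $K\leq\mathbb Z/p\mathbb Z$ of index at most $d_1<p$; since the only subgroups of $\mathbb Z/p\mathbb Z$ are $\{0\}$ (of index $p$) and $G$ itself, we must have $K=G$.

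Once $K=G$, cases (I), (II), and (III.1) are eliminated by elementary arguments. In case (I), $\varepsilon_1$-periodicity of $A+B$ with respect to $G$ gives $1-m_*(A+B)=m((A+B)+G)-m_*(A+B)\leq\varepsilon_1$, so $m_*(A+B)\geq 1-\varepsilon_1$; but the sumset hypothesis combined with $m(A)+m(B)<1-\varepsilon$ yields $m_*(A+B)<1-\varepsilon+\delta<1-\varepsilon_1$, a contradiction. In case (II), any $\varepsilon_1$-quasi-periodic decomposition $A=A_1\cup A_0$ with respect to $K=G$ requires $A_0\neq\varnothing$ and $(A_1+G)\cap A_0=\varnothing$; if $A_1\neq\varnothing$ then $A_1+G=G$ forces $A_0=\varnothing$, so $A_1=\varnothing$, and likewise $B_1=\varnothing$, violating the requirement that at least one of $A_1,B_1$ be nonempty. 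Case (III.1) demands a surjective homomorphism $\mathbb Z/p\mathbb Z\to\mathbb T$, impossible since the former is finite and the latter is not.

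So case (III.2) must hold, yielding some $N_0>d_1$, a surjective homomorphism $\tau:\mathbb Z/p\mathbb Z\to\mathbb Z/N_0\mathbb Z$, arithmetic progressions $P,Q\subseteq\mathbb Z/N_0\mathbb Z$ of common difference $1$, and sets $A'=\tau^{-1}(P)$, $B'=\tau^{-1}(Q)$ with $A\subseteq a+A'$, $B\subseteq b+B'$, $m(A')<m(A)+\varepsilon_1$, and $m(B')<m(B)+\varepsilon_1$. Since $\mathbb Z/p\mathbb Z$ is cyclic of prime order, the existence of a surjection onto $\mathbb Z/N_0\mathbb Z$ forces $N_0\mid p$, and $N_0>d_1\geq 1$ then gives $N_0=p$, so $\tau$ is an automorphism, i.e.\ multiplication by some $c\in(\mathbb Z/p\mathbb Z)^{\times}$. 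Consequently $I:=a+A'$ and $J:=b+B'$ are arithmetic progressions in $\mathbb Z/p\mathbb Z$ containing $A$ and $B$ respectively, each of common difference $c^{-1}\neq 0$. The size bound follows from $|A'|<|A|+\varepsilon_1 p\leq|A|+\varepsilon^2 p\leq|A|+\varepsilon|A|=(1+\varepsilon)|A|$ (using $|A|\geq\varepsilon p$), and symmetrically for $|J|$. All of the substantive structural work is already performed by Theorem \ref{thm:Precise}; the remaining effort is parameter bookkeeping and case-by-case elimination, so there is no real obstacle beyond choosing $\varepsilon_1$ small enough to simultaneously close the sumset gap and the size bound.
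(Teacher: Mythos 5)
Your proof is correct and follows the same route as the paper's (one-sentence) argument: apply Theorem \ref{thm:Precise}, observe that since $\mathbb Z/p\mathbb Z$ has no proper subgroup of index $\le d<p$ we must have $K=G$, and show that only conclusion (III.2) can then hold. The additional value of your write-up is that it makes explicit what the paper leaves to the reader: the elimination of (II) via the coset-disjointness condition when $K=G$, and the parameter choice $\varepsilon_1=\varepsilon^2$ needed to turn the theorem's \emph{absolute} error bound $m(A')<m(A)+\varepsilon_1$ into the corollary's \emph{relative} bound $|I|\le(1+\varepsilon)|A|$ using $|A|\ge\varepsilon p$.
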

Corollary \ref{cor:PrimeOrder} follows immediately from Theorem \ref{thm:Precise}: when $\varepsilon>0$ is fixed, $\delta$ and $d$ are as in the conclusion, and $p>d$, there is no proper subgroup of $\mathbb Z/p\mathbb Z$ having index at most $d$, so only conclusion (III.2) can hold, and the subgroup $K$ therein is equal to $G$.

Theorem \ref{thm:Precise} is new for every infinite disconnected compact abelian group, such as $\mathbb T\times (\mathbb Z/N\mathbb Z)$. The special case of Theorem \ref{thm:Precise} where $G$ is an arbitrary finite abelian group is apparently also new.

\subsection{Acknowledgement}  Two anonymous referees for \href{https://discreteanalysisjournal.com/}{\emph{Discrete Analysis}} contributed many corrections and improvements to this article.

\section{Outline of proofs}\label{sec:ProofOutline}

We prove Theorems \ref{thm:Popular} and \ref{thm:Precise}  in \S\ref{sec:PopularProof} and \S\ref{sec:PreciseProof}, respectively.  The proofs are carried out in three parts, which we outline here. We assume familiarity with ultraproducts and Loeb measure; the reader can refer to Appendix \ref{sec:Ultraproducts} for a synopsis and references.  We now fix a nonprincipal ultrafilter $\mathcal U$ on $\mathbb N$.

\smallskip

\noindent \textbf{Part 1.}   Classify all pairs of $\mu$-measurable sets $A, B\subseteq \mb G$ satisfying $\mu(A)>0$, $\mu(B)>0$, and $\mu_{*}(A+B)\leq \mu(A)+\mu(B)$, where $\mb G$ is an ultraproduct $\prod_{n\to \mathcal U} G_{n}$ of compact abelian groups $G_{n}$ with Haar probability measure $m_{n}$, and $\mu$, $\mu_{*}$ are the corresponding Loeb measure and inner Loeb measure on $\mb G$.  This is done in Propositions \ref{prop:LiftSubcritical} and \ref{prop:Reduction}, whose proofs we outline here.

To prove Propositions \ref{prop:LiftSubcritical} and \ref{prop:Reduction}, we use Corollary \ref{cor:ProjectAndPullBack} to model an arbitrary sumset in $\mb G$ by a sumset in a compact quotient of $\mb G$.  We then apply known inverse theorems in compact abelian groups to these models and transfer the information to the original group $\mb G$. Given $\mu$-measurable sets $A, B\subseteq \mb G$, Corollary \ref{cor:ProjectAndPullBack} provides:
\begin{enumerate}
	\item[$\bullet$]  a compact abelian group $G$ with Haar probability measure $m$,
	
	\item[$\bullet$] a surjective $\mu$-measurable (and measure preserving) homomorphism $\pi:\mb G\to G$, and
	
	\item[$\bullet$] Borel sets $C, D\subseteq G$ such that $C+D$ is Borel,
	\begin{equation}\label{eqn:ModelContainments}
	A\subset_{\mu} A':=\pi^{-1}(C), \quad B\subset_{\mu} B':=\pi^{-1}(D), \quad \text{and} \quad A'+B'\subset_{\mu} A+B.
	\end{equation}
\end{enumerate}
Assuming $A, B\subseteq \mb G$ satisfy $\mu_{*}(A+B)\leq \mu(A)+\mu(B)$, the containments in (\ref{eqn:ModelContainments}) and the fact that $\pi$ preserves $\mu$ imply $m(C+D)\leq m(C)+m(D)$. This inequality allows us to apply Theorems \ref{thm:Satz1} and \ref{thm:LCAInverse} to classify $C$ and $D$.  Some ad hoc arguments then lead to the desired classification of $A$ and $B$, based on the structure of $C$ and $D$ and the  hypothesis $\mu_{*}(A+B)\leq \mu(A)+\mu(B)$.  Some of these arguments are carried out in \S \ref{sec:ReducibleGeneral}, which generalizes some  lemmas from \cite{GriesmerLCA} while providing more efficient proofs.

\smallskip

\noindent \textbf{Part 2.} Prove that if an \emph{internal} set $\mb A=\prod_{n\to \mathcal U} A_{n}$ in $\mb G$ is highly structured (i.e. $\mb A$ is quasi-periodic, or a Bohr interval, etc.), then the constituent sets $A_{n}$ are themselves highly structured.  For example, if $\mb A$ is a $\mu$-measurable internal finite index subgroup of $\mb G$, we will see that for $\mathcal U$-many $n$, the set $A_{n}$ is also a finite index subgroup of $G_{n}$ with the same index as $\mb A$.  This is done in \S\ref{sec:IntervalsInG} and \S\ref{sec:FiniteIndex}.

Parts 1 and 2 both rely on Lemma \ref{lem:InternalCharacter} and Proposition \ref{prop:SzegedyQuotient}, two of the main results of \cite{SzegedyLimits}.  Lemma \ref{lem:InternalCharacter} characterizes the $\mu$-measurable homomorphisms $\chi:\mb G\to \mathcal S^{1}$, and Proposition \ref{prop:SzegedyQuotient} exploits this characterization to find compact quotients of $\mb G$ which are useful for studying convolutions and sumsets.

\smallskip

\noindent \textbf{Part 3.}  Prove Theorem \ref{thm:Precise}  by contradiction, following the strategy of \cite{TaoInverse}.   Assuming Theorem \ref{thm:Precise} is false, then for some $\varepsilon>0$ and each $n\in \mathbb N$ there is a compact abelian group $G_{n}$ with Haar probability measure $m_{n}$ and sets $A_{n}, B_{n}\subseteq G_{n}$ with $m_{n}(A_{n}), m_{n}(B_{n})>\varepsilon$, while $A_{n}+B_{n}$ has inner Haar measure at most $m_{n}(A_{n})+m_{n}(B_{n})+\frac{1}{n}$, and none of the conclusions (I)-(III) in Theorem \ref{thm:Precise} are satisfied by $A_{n}, B_{n}$, and a compact open subgroup $K_{n}\leq G_{n}$ of index at most $n$.  We consider the ultraproduct $\mb G=\prod_{n\to\mathcal U} G_{n}$ with Loeb measure $\mu$ based on $m_{n}$ and the internal sets $\mb A:=\prod_{n\to \mathcal U} A_{n}$ and $\mb B:=\prod_{n\to \mathcal U}B_{n}\subseteq \mb G$.  The definition of Loeb measure implies $\mu(\mb A), \mu(\mb B)\geq \varepsilon$, and $\mu_{*}(\mb A+\mb B)\leq \mu(\mb A)+\mu(\mb B)$, so  we may appeal to Propositions \ref{prop:LiftSubcritical} and \ref{prop:Reduction} to identify the structure of $\mb A$ and $\mb B$. The results of \S\ref{sec:IntervalsInG} and \S\ref{sec:FiniteIndex} then derive the desired structure of $A_{n}$ and $B_{n}$.  This produces a fixed $d\in \mathbb N$ and compact open subgroups $K_{n}\leq G_{n}$ of index at most $d$ such that $A_{n}, B_{n}$, and $K_{n}$ satisfy the conclusion of Theorem \ref{thm:Precise} for $\mathcal U$-many $n$, contradicting the assumption to the contrary.

In \S\ref{sec:ConnectedProof} we provide a separate proof of Theorem \ref{thm:TaoInverse1},  where the main ideas  of our appoach appear and connectedness prevents the profusion of cases seen in the general setting.

The proof of Theorem \ref{thm:Popular} follows an outline similar to the proof of Theorem \ref{thm:Precise}: we assume, to get a contradiction, that for some $\varepsilon>0$ and each $n\in \mathbb N$ there are  $A_{n}, B_{n}\subseteq G_{n}$ such that $m_{n}(A_{n}), m_{n}(B_{n})>\varepsilon$ and $m_{n}(A_{n}+_{\frac{1}{n}} B_{n})<m_{n}(A_{n})+m_{n}(B_{n})+\frac{1}{n}$, while $A_{n}, B_{n}$ do not satisfy the conclusion of Theorem \ref{thm:Precise}.  In the ultraproduct $\mb G$, this produces a pair $\mb A, \mb B$ where $\mu(\mb A+_{0}\mb B)\leq \mu(\mb A)+\mu(\mb B)$. Proposition \ref{prop:Kronecker}, Lemma \ref{lem:ConvolutionToSumset}, and some ad hoc arguments then provide sets $A'$ and $B'$ such that $A'\sim_{\mu} \mb A$ and $B'\sim_{\mu} \mb B$ while $\mu(A'+B')\leq \mu(A')+\mu(B')$.  Lemma \ref{lem:FirstHalf}, a consequence of Propositions \ref{prop:LiftSubcritical} and \ref{prop:Reduction}, then provides sets $S_n$, $T_n\subseteq G_{n}$ having $m_{n}(S_n+T_n)\leq m_{n}(S_n)+m_{n}(T_n)$ and $m_{n}(A_{n}\triangle S_n)+m_{n}(B_{n}\triangle T_n)<\varepsilon$, contradicting our assumption to the contrary.

\section{Summary of known inverse theorems}\label{sec:LCAreview}

We summarize some known inverse theorems for sumsets in a locally compact abelian (LCA) group $G$.  Our proofs proceed by transferring these results to ultraproducts of compact abelian groups.

\begin{definition}\label{def:Stabilizer}
	Let $G$ be an abelian group and $A\subseteq G$.  The group $H(A):=\{g\in G: A+g=A\}$ is called the \emph{stabilizer} of $A$.
\end{definition}
Note that $A+H(A)=A$, and $H(A)$ is the largest set $S$ such that $A+S=A$.  Consequently, if $H(A)$ is open, then $A$ is open as well.

The next three results are due to M.~Kneser.

\begin{theorem}[\cite{Kneser56}, Satz 1]\label{thm:Satz1}
	Let $G$ be a locally compact abelian group with Haar measure $m$.  If $A, B\subseteq G$ are $m$-measurable sets satisfing $m_{*}(A+B)<m(A)+m(B)$   then $H:=H(A+B)$ is compact and open (so $A+B$ is clopen and therefore measurable), and
	\begin{equation}\label{eqn:Kneser}
	m(A+B) = m(A+H)+m(B+H)-m(H).
	\end{equation}
\end{theorem}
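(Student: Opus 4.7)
The plan is to deduce the formula from the discrete case of Kneser's theorem by passing to a suitable quotient $G/H$. First I would use inner regularity of Haar measure to pick compact sets $K_{A}\subseteq A$, $K_{B}\subseteq B$ with $m(K_{A}),m(K_{B})$ so close to $m(A),m(B)$ that $m(K_{A}+K_{B})\leq m_{*}(A+B)<m(K_{A})+m(K_{B))$. Since $K_{A}+K_{B}$ is compact (hence measurable), this reduces the problem to the case where $A$ and $B$ are compact and $m(A+B)<m(A)+m(B)$; a small transfer step will be needed at the end to recover clopenness of the original sumset and the formula for the original $A,B$.

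Next I would show that $A+B$ has nonempty interior. The convolution $f:=1_{A}*_{m}1_{B}$ is continuous by continuity of translation in $L^{1}$ and satisfies $\int f\,dm=m(A)m(B)>0$, so $\{f>0\}$ is a nonempty open subset of $A+B$. This in turn lets me verify that $H:=H(A+B)$ is compact: $H$ is closed because $A+B$ is closed, and by fixing any $x_{0}\in A+B$ one has $H\subseteq(A+B)-x_{0}$, which is compact.

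The main obstacle is proving $H$ is open, and this is the only place where the strict inequality $m_{*}(A+B)<m(A)+m(B)$ really bites. My plan is to run the classical Kneser $e$-transform: for any $x\in A-B$, replace $(A,B)$ by $\bigl(A\cup(B+x),\,B\cap(A-x)\bigr)$, yielding a pair with the same sumset, the same sum of measures, and a nonempty second coordinate. Iterating this transform along a suitable maximal chain, and using the compactness above to extract limit pairs, produces a pair $(A^{\circ},B^{\circ})$ whose components are each invariant under a common compact open subgroup of $G$; that subgroup is necessarily contained in $H$, making $H$ open. Carrying out the iteration carefully in the LCA setting is the technical heart of the argument.

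Once $H$ is known to be compact and open, the quotient $G/H$ is discrete and the rest is routine bookkeeping. Writing $\pi:G\to G/H$, $\bar A:=\pi(A+H)$, $\bar B:=\pi(B+H)$, the identity $A+B=A+B+H$ gives $m(A+B)=|\bar A+\bar B|\,m(H)$, $m(A+H)=|\bar A|\,m(H)$, and similarly for $B$. The hypothesis forces $|\bar A+\bar B|<|\bar A|+|\bar B|$, while the discrete form of Kneser's theorem applied in $G/H$, where the stabilizer of $\bar A+\bar B$ is trivial by construction of $H$, gives $|\bar A+\bar B|\geq|\bar A|+|\bar B|-1$. Combining the two inequalities yields equality, which upon multiplication by $m(H)$ is exactly~(\ref{eqn:Kneser}).
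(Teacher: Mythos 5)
The paper does not prove Theorem~\ref{thm:Satz1}; it is quoted from Kneser ([\cite{Kneser56}, Satz~1]) and used as a black box throughout \S\ref{sec:ReducibleGeneral} and \S\ref{sec:LiftLCA}, so there is no in-paper argument to compare against and your sketch must stand on its own.

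Your outer scaffolding is sound: the reduction to compact $A,B$ by inner regularity, the nonempty interior of $A+B$ via continuity of $1_{A}*1_{B}$, compactness of $H$ from compactness of $A+B$, and the final bookkeeping in the discrete quotient $G/H$ are all correct (modulo finiteness caveats when $G$ is noncompact). But there is a genuine gap exactly at the step you label the ``technical heart,'' and that step \emph{is} the theorem. The sentence claiming that iterating the $e$-transform along a suitable maximal chain and extracting limit pairs produces $(A^{\circ},B^{\circ})$ each periodic under a compact open subgroup contained in $H$ is a restatement of the goal, not an argument. Concretely: (a) in a continuous group the $e$-transform chain need not be well-ordered, the infimum of $m(B)$ along it need not be attained, and some compactness/semicontinuity machinery is required to produce a terminal pair; (b) the $e$-transform only guarantees $A^{\circ}+B^{\circ}\subseteq A+B$, not equality, so even granted that $A^{\circ},B^{\circ}$ are $K$-periodic for a compact open $K$, the assertion that $K\subseteq H(A+B)$ does not follow --- one must still propagate the periodicity up to the original sumset. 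Your ``small transfer step'' from the compact pair $K_{A},K_{B}$ back to the measurable pair $A,B$ has the same character; it is essentially the content of Lemma~\ref{lem:TameSuper} (which presupposes the compact case) and is not a formality. So the plan is a plausible outline, but the essential content is precisely the step left unproved, and it should not be treated as routine.
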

Equation (\ref{eqn:Kneser}) plays a crucial role in our arguments; some of its consequences are developed in \S \ref{sec:ReducibleGeneral}.

If $G$ is a connected LCA group then $G$ has no proper open subgroup, so the next corollary follows from Theorem \ref{thm:Satz1}.

\begin{corollary}\label{cor:ConnectedExpansion}
	If $G$ is a connected LCA group with Haar measure $m$ and $A, B\subseteq  G$ are $m$-measurable sets, then $m_{*}(A+B)\geq \min\{m(G), m(A)+m(B)\}$.
\end{corollary}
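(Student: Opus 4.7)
My plan is to argue by contradiction, reducing immediately to Theorem~\ref{thm:Satz1}. Suppose, for the sake of contradiction, that $m_{*}(A+B) < \min\{m(G), m(A)+m(B)\}$. In particular the strict inequality $m_{*}(A+B) < m(A)+m(B)$ holds, so Theorem~\ref{thm:Satz1} applies and produces a compact open subgroup $H := H(A+B) \leq G$; moreover $A+B$ is clopen, hence measurable with $m_{*}(A+B)=m(A+B)$, and equation~(\ref{eqn:Kneser}) gives $m(A+B) = m(A+H) + m(B+H) - m(H)$.

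The key topological step is the standard fact that a connected LCA group has no proper open subgroup: any open subgroup $H \leq G$ is automatically closed, since $G \setminus H$ is a union of cosets of $H$, each of which is open. Hence $H$ is a nonempty clopen subset of the connected space $G$, forcing $H = G$. Applying this to the $H$ produced by Theorem~\ref{thm:Satz1}, we conclude $H = G$, which in particular forces $G$ itself to be compact.

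Assuming $A$ and $B$ are nonempty (otherwise the claim is trivial, since then $A+B = \varnothing$ and $m(A)+m(B) = 0$ or the intersection with $m(G)$ is vacuous), we have $A+H = B+H = G$, and Kneser's identity collapses to $m(A+B) = m(G) + m(G) - m(G) = m(G)$. This contradicts the standing assumption $m_{*}(A+B) < m(G)$, completing the argument. No step is really the main obstacle: the corollary is essentially a one-line consequence of Theorem~\ref{thm:Satz1} together with the topological fact that open subgroups of a connected group must be the whole group.
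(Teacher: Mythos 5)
Your argument is correct and is exactly the one the paper intends: Theorem~\ref{thm:Satz1} produces a compact open stabilizer $H$, connectedness forces $H=G$, and Kneser's identity~(\ref{eqn:Kneser}) then collapses to $m(A+B)=m(G)$, giving the contradiction. (Your parenthetical about empty $A$ or $B$ is a bit muddled---if $A=\varnothing$ but $m(B)>0$ the stated inequality actually fails---but this degenerate case is implicitly excluded in Theorem~\ref{thm:Satz1} and the corollary alike, so it does not affect the substance of the argument.)
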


When $G$ is compact, the following theorem classifies the pairs of $m$-nonnull sets where equality holds in Corollary \ref{cor:ConnectedExpansion}.  We use terminology from Definition \ref{def:PreInterval} to state it.

\begin{theorem}[\cite{Kneser56}, Satz 2]\label{thm:Satz2}
	If $G$ is a connected compact abelian group with Haar measure $m$ and  $A, B\subseteq G$  are $m$-measurable sets satisfying $m(A), m(B)>0$ and
	\[m_{*}(A+B)=m(A)+m(B)<1,\]  then there are parallel Bohr intervals $A', B'\subseteq G$ with $m(A')=m(A)$, $m(B')=m(B)$, and  $A\subseteq A'$, $B\subseteq B'$.
\end{theorem}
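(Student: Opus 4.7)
My plan is to prove Theorem \ref{thm:Satz2} by extracting a continuous surjective homomorphism $\tau\colon G\to\mathbb T$ from a Fourier analytic bound, and then transferring the equality structure down to the circle. By the inner regularity of Haar measure I may replace $A$ and $B$ by compact inner approximations of arbitrarily close measure, so assume that $A$, $B$, and $A+B$ are all compact with $m(A+B) = m(A)+m(B)<1$. The continuous convolution $f := 1_A * 1_B$ is nonnegative, satisfies $\|f\|_1 = m(A)m(B)$, and is supported in $A+B$. Cauchy--Schwarz gives $\|f\|_1^2 \leq m(A+B)\,\|f\|_2^2$, while Parseval and the convolution theorem expand $\|f\|_2^2 = \sum_{\chi\in\hat G}|\hat{1_A}(\chi)|^2|\hat{1_B}(\chi)|^2$.

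Isolating the contribution $m(A)^2 m(B)^2$ of the trivial character and using $m(A)+m(B)<1$ yields
\begin{equation*}
\sum_{\chi\neq 0}|\hat{1_A}(\chi)|^2|\hat{1_B}(\chi)|^2 \;\geq\; m(A)^2 m(B)^2 \cdot \frac{1-m(A)-m(B)}{m(A)+m(B)} \;>\; 0,
\end{equation*}
so some nontrivial $\chi\in\hat G$ has $\hat{1_A}(\chi)\hat{1_B}(\chi)\neq 0$. Since $G$ is compact and connected, its dual is torsion-free and the image of any nontrivial $\chi$ is a nontrivial connected subgroup of $S^1$, hence all of $S^1$; this gives a continuous surjective homomorphism $\tau\colon G\to\mathbb T$, which preserves Haar measure by Lemma \ref{lem:MeasureIsPreserved}.

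The decisive step is to show that the pushforwards $\tau(A)$ and $\tau(B)$ are essentially contained in intervals $I,J\subseteq\mathbb T$ of lengths $m(A)$ and $m(B)$, so that $A':=\tau^{-1}(I)$ and $B':=\tau^{-1}(J)$ are the desired parallel Bohr intervals. Here I would combine two ingredients: the equality case of Kneser's inequality on $\mathbb T$ itself (classical rectification), which forces measurable sets attaining $\lambda(A'+B')=\lambda(A')+\lambda(B')<1$ to be intervals up to null sets; and a contradiction argument exploiting the extremality $m(A+B)=m(A)+m(B)$. If $A$ failed to lie essentially in a single fiber of $\tau$ over an interval, one could exhibit a small translation whose application thickens $A+B$ strictly, violating the equality hypothesis.

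The principal obstacle is upgrading the soft Fourier existence of $\chi$ into this exact fiber structure: a priori the $L^2$ mass of $f$ could be spread across many nontrivial characters, and one needs the equality $m(A+B)=m(A)+m(B)$ to force saturation by the single rank-one subgroup of $\hat G$ generated by $\chi$. Establishing this saturation, which is the technical core of Kneser's argument, is the step where the strict inequality $m(A)+m(B)<1$ is essential; without it one would be in the trivial case $A+B=G$.
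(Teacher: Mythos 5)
Theorem~\ref{thm:Satz2} is a cited result (Satz 2 of Kneser~\cite{Kneser56}); the paper does not prove it, so there is no proof in the paper to compare against. Evaluated on its own terms, your outline has a genuine gap that you flag yourself, and it is worth making precise.

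Your Fourier step is fine: Cauchy--Schwarz gives $\|1_A*1_B\|_1^2 \leq m(A+B)\,\|1_A*1_B\|_2^2$, and combined with Parseval and $m(A+B)<1$ this produces a nontrivial $\chi$ with $\hat 1_A(\chi)\hat 1_B(\chi)\neq 0$. But that conclusion is far too weak to be useful: it holds for essentially any pair $A,B$ of positive measure with $m(A)+m(B)<1$, extremal or not, and it singles out no preferred character. Nothing in the calculation ties the $\chi$ it finds to the Bohr interval structure the theorem asserts. For a typical extremal pair, say $A=B=[0,\tfrac1{10}]\cup[\tfrac12,\tfrac6{10}]\subseteq\mathbb T$, many characters have nonvanishing $\hat 1_A(\chi)\hat 1_B(\chi)$ while only the generator of one closed subgroup of $\hat G$ produces arcs of the correct lengths; the $L^2$ argument cannot distinguish it from the others. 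What you call ``the decisive step''---proving that the correct $\tau$ carries $A,B$ into arcs of lengths exactly $m(A),m(B)$---is essentially the entire content of Kneser's theorem and is not carried out. Appealing to ``the equality case of Kneser on $\mathbb T$ (classical rectification)'' is circular as stated, since that \emph{is} Satz~2 for $G=\mathbb T$; and your description of the circle case is also not correct: extremal sets in $\mathbb T$ need not be intervals up to null sets (the example above is a proper Bohr interval, $\{x:2x\in[0,\tfrac15]\}$), so the statement you want to invoke is false as written. Two smaller issues: replacing $A,B$ by compact inner approximations destroys the exact equality $m_*(A+B)=m(A)+m(B)$ (compactness and full measure cannot both be achieved), so you need $\sigma$-compact $A_0\subseteq A$, $B_0\subseteq B$ of full measure; and even then you must argue that the Bohr interval $A'\supseteq A_0$ with $m(A')=m(A_0)=m(A)$ contains all of $A$ and not merely $A_0$, which is a genuine use of the rigidity of the conclusion (strict containment, not containment up to a null set) and deserves its own proof.
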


The next result extends Theorem \ref{thm:Satz2} to disconnected groups. It is a consequence of Theorem 1.3 in \cite{GriesmerLCA}, classifying the pairs of subsets $A, B$ of a compact abelian group where $m_{*}(A+B)=m(A)+m(B)$.  We could apply those results from \cite{GriesmerLCA} directly, but we find it easier to use the following theorem, which presents the same result in a more convenient organization. Its derivation from the results of \cite{GriesmerLCA} is explained in Remark \ref{rem:Reformulation}. Here we use terminology and notation from \S\ref{sec:Intro}.

\begin{theorem}\label{thm:LCAInverse}
	Let $G$ be a compact abelian group with Haar probability measure $m$ and let $A, B\subseteq G$ be $m$-measurable sets with $m(A), m(B)>0$ and $m_{*}(A+B)=m(A)+m(B)$.  Then there is a compact open subgroup $K\leq G$ (possibly $K=G$) such that exactly one of the following holds.
	\begin{enumerate}
		\item[(I)]  $A+B\sim_{m} A+B+K$,
		
		\item[(II)]  $A+B\nsim_{m} A+B+K$, and  $A$ and $B$ have quasi-periodic decompositions $A=A_{1}\cup A_{0}$, $B=B_{1}\cup B_{0}$ with respect to $K$, where at least one of $A_{1}$ or $B_{1}$ is nonempty and $m_{*}(A_{0}+B_{0})=m(A_{0})+m(B_{0})$, or
		
		\item[(III)] $A+B\nsim_{m} A+B+K$, and there are parallel Bohr intervals $A', B'\subseteq K$ having $m(A')=m(A)$, $m(B')=m(B)$ and  $a, b\in G$  such that $A\subseteq a+A'$ and $B\subseteq b+B'$.\end{enumerate}
\end{theorem}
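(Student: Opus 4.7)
The plan is to derive Theorem \ref{thm:LCAInverse} by reorganizing the classification of equality-cases in Kneser's inequality established in Theorem 1.3 of \cite{GriesmerLCA}. That theorem, under the hypothesis $m_{*}(A+B)=m(A)+m(B)$, already provides a list of possible structural descriptions of the pair $(A,B)$; the task here is simply to group these descriptions according to the behavior of $A+B$ modulo a single compact open subgroup $K$, and to verify that the regrouping yields exactly the trichotomy (I)/(II)/(III).

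First I would fix the subgroup $K$. The natural candidate is the stabilizer $H(A+B)$ when it is compact and open, and otherwise $K$ is taken from the Bohr interval structure provided by \cite{GriesmerLCA}, namely the finite-index subgroup whose quotient carries the circle-valued homomorphism defining the Bohr interval. Once $K$ is fixed, split on whether $A+B\sim_{m}A+B+K$. If so, set case (I); this is immediate from the definition. Otherwise, the classification forces one of two alternatives: either $A$ admits a nontrivial decomposition $A=A_{1}\cup A_{0}$ with $A_{1}$ essentially periodic modulo $K$, $(A_{1}+K)\cap A_{0}=\varnothing$, and $A_{0}$ contained in a single $K$-coset (and analogously for $B$), giving case (II); or $A$ and $B$ are contained in translates of parallel Bohr intervals inside a coset of $K$, giving case (III).

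To fill in the quantitative claim of case (II), observe that under the stated decompositions $A+B$ is the essentially disjoint union of the four pieces $A_{1}+B_{1}$, $A_{1}+B_{0}$, $A_{0}+B_{1}$, $A_{0}+B_{0}$, the first three of which are $\mu$-similar to unions of cosets of $K$. Apply Kneser's Satz 1 (Theorem \ref{thm:Satz1}) to the periodic union $A_{1}\cup B_{1}$ to compute $m((A+B)+K)$ exactly via (\ref{eqn:Kneser}), then subtract this from $m_{*}(A+B)=m(A)+m(B)$ to isolate the contribution of $A_{0}+B_{0}$. The resulting identity is $m_{*}(A_{0}+B_{0})=m(A_{0})+m(B_{0})$. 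In case (III), the Bohr interval output of \cite{GriesmerLCA} directly provides $A', B'\subseteq K$ with $m(A')=m(A)$, $m(B')=m(B)$, parallel inside $K$, and translates $a,b\in G$ with $A\subseteq a+A'$, $B\subseteq b+B'$.

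The main obstacle is the bookkeeping required to show that a single choice of $K$ suffices across all three alternatives and that the cases are mutually exclusive. In particular, the Bohr interval subgroup produced by \cite{GriesmerLCA} must be reconciled with the stabilizer-based choice so that the condition $A+B\nsim_{m}A+B+K$ cleanly separates (I) from (II) and (III); and one has to exclude the degenerate possibility that both a quasi-periodic decomposition and a Bohr interval description hold simultaneously with nonempty periodic parts $A_{1}, B_{1}$. Both issues are resolved by using Kneser's formula (\ref{eqn:Kneser}) to pin down the finite-index subgroup from the periodic background, after which mutual exclusivity follows from the explicit ``not essentially periodic'' hypothesis in (II) and (III).
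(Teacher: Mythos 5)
Your high-level plan is the same one the paper uses: take the four conclusions (P), (E), (K), (QP) of Theorem~1.3 in \cite{GriesmerLCA} and sort them into the trichotomy (I)/(II)/(III). Where your proposal goes astray is in the details of case (II) and in the silent omission of case (E).

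For case (II), you propose to \emph{re-derive} the identity $m_{*}(A_{0}+B_{0})=m(A_{0})+m(B_{0})$ from Kneser's Satz~1 and a four-part decomposition of $A+B$. This step has two problems. First, the identity is already available verbatim as conclusion (QP.3) in \cite{GriesmerLCA}; there is nothing to prove. Second, the proposed derivation is not sound as written: ``apply Kneser's Satz~1 to the periodic union $A_{1}\cup B_{1}$'' is not a well-formed instance of Satz~1 (which takes a pair of summands, and whose hypothesis is the \emph{strict} inequality $m_{*}(C+D)<m(C)+m(D)$, not the equality case you are in). The ``essentially disjoint union of the four pieces'' claim also secretly uses the unique-expression-element property (QP.2) of \cite{GriesmerLCA}; without invoking that, $A_{0}+B_{0}$ need not be disjoint from $A_{1}+B$ or $A+B_{1}$ up to null sets, and the subtraction argument breaks. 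So either cite (QP.2)--(QP.3) directly (and the re-derivation evaporates), or the step as sketched is wrong.

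You also write that once $A+B\nsim_{m}A+B+K$, ``the classification forces one of two alternatives.'' That is only true once you have shown that conclusions (P) and (E) of \cite{GriesmerLCA} always land in case (I). For (P) this is near-immediate, but for (E) you need the observation (which the paper supplies in Remark \ref{rem:Reformulation}) that the superpair $A'\supseteq A$, $B'\supseteq B$ satisfies $m_{*}(A'+B')<m(A')+m(B')$, so Satz~1 gives a compact open stabilizer $K=H(A'+B')$ and the containment $A+B\subseteq A'+B'$ then yields $A+B\sim_{m}A+B+K$. Without this, there is no justification for the dichotomy you assert. Once these two points are repaired, the argument matches the paper's.
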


While the possibilities (I)-(III) are mutually exclusive for a given subgroup $K$, a pair $A, B$ may satisfy (I) with a given group $K$ and satisfy (II) with a different subgroup $K'$ in place of $K$.

\begin{remark}
	The equation $m_{*}(A_{0}+B_{0})=m(A_{0})+m(B_{0})$ in conclusion (II) leads to Corollary \ref{cor:Iterate}, which seems indispensable to our approach.
\end{remark}

\begin{remark}\label{rem:Reformulation}
	Theorem 1.3 of \cite{GriesmerLCA} has the same hypothesis as Theorem \ref{thm:LCAInverse} here. To see how Theorem \ref{thm:LCAInverse} follows, we list here the four conclusions (P), (E), (K), and (QP) from \cite{GriesmerLCA} and explain how each implies one of (I)-(III) in Theorem \ref{thm:LCAInverse}.
	
	\begin{enumerate}
		\item[(P)]  \textit{There is a compact open subgroup $K\leq G$ with $A\sim_{m}A+K$ and $B\sim_{m}B+K$.}
		
		If $A$ and $B$ satisfy (P) then $A$ and $B$ satisfy (I) in Theorem \ref{thm:LCAInverse}.  To see this, note that if $A\sim_m A+K$, then for every coset $K'$ of $K$ we have $A\cap K'\sim_m K'$ or $A\cap K'=\varnothing$.  Thus $A+B\sim_m A+B+K$, meaning (I) is satisfied in Theorem \ref{thm:LCAInverse}.

		\item[(E)]  \textit{There are measurable sets $A'\supseteq A$ and $B'\supseteq B$ such that}
		\[
		m(A')+m(B') > m(A) + m(B), \textit{ and } m_{*}(A'+B')=m_{*}(A+B).
		\]
		
		In this case $m_{*}(A'+B')<m(A')+m(B')$, so Theorem \ref{thm:Satz1} implies the stabilizer $K:=H(A'+B')$ is compact and open.  The containment $A+B\subseteq A'+B'$ then implies $A+B\sim_{m} A+B+K$, meaning $A, B$, and $K$ satisfy (I) in Theorem \ref{thm:LCAInverse}.

		\item[(K)]  \textit{There is a compact open subgroup $K\leq G$, parallel Bohr intervals $\tilde{I},\tilde{J}\subseteq G$, and $a, b\in G$ such that $A\subseteq a+\tilde{I}$, $B\subseteq b+ \tilde{J}$, while $m(A)=m(\tilde{I})$, $m(B)=m(\tilde{J})$.}
		
		If a pair $A, B$ satisfies (K) and does not satisfy conclusion (I) of Theorem \ref{thm:LCAInverse}, then it is easy to verify that $A, B$ satisfies conclusion (III).

		\item[(QP)] \textit{ There is a compact open subgroup $K\leq G$ and partitions $A=A_{1}\cup A_{0}$, $B=B_{1}\cup B_{0}$ such that $A_{0}\neq \varnothing$, $B_{0}\neq \varnothing$, at least one of $A_{1}\neq \varnothing$, $B_{1}\neq \varnothing$, and
			\begin{enumerate}
				\item[(QP.1)]  $A_{1}+K\sim_{m} A_{1}$, $B_{1}+K\sim_{m} B_{1}$, while $A_{0}$ and $B_{0}$ are each contained in a coset of $K$ and $(A_{1}+K)\cap A_{0}=(B_{1}+K)\cap B_{0}=\varnothing$;
				\item[(QP.2)]  $A_{0}+B_{0}+K$ is a unique expression element of $A+B+K$ in $G/K$;
				\item[(QP.3)] $m_{*}(A_{0}+B_{0})=m(A_{0})+m(B_{0})$.
		\end{enumerate}}
		
		If $A,B$ satisfies (QP) and $A+B\nsim_{m} A+B+K$, then it is easy to check that $A,B$ satisfies conclusion (II) of Theorem \ref{thm:LCAInverse} here.

	\end{enumerate}

\end{remark}

\begin{remark}The pairs of subsets of a discrete abelian group satisfying $|A+B|<|A|+|B|$ are classified in \cite{KempermanAbelian} (see \cite{GrynkKemperman} for exposition).  The pairs satisfying $|A+B|=|A|+|B|$ are classified in \cite{GrynkStep}.  These results can be combined with Theorem \ref{thm:Precise} to yield additional detail, as the inequality $m(A+B+K)\leq m(A+K)+m(B+K)$ in conclusion (I) reduces the study of the pair $A+K, B+K$ to the study of sets $A', B'\subseteq G/K$ satisfying $|A'+B'|\leq |A'|+|B'|$.
\end{remark}

\section{Characters of ultraproduct groups}\label{sec:Characters}
We will use ultraproducts and Loeb measure throughout the remainder of this article; see Appendix \ref{sec:Ultraproducts} for a summary and references.  Here we briefly summarize notation.

If $\mathcal U$ is a nonprincipal ultrafilter on $\mathbb N$ and $(X_{n})_{n\in \mathbb N}$ is a sequence of sets, we write $\prod_{n\to \mathcal U} X_{n}$ for the corresponding ultraproduct, which we denote by $\mathbf X$. Ultraproducts and internal sets will be denoted with boldface letters.  If $Y$ is a compact Hausdorff topological space, $(y_{n})_{n\in \mathbb N}$ is a sequence of elements of $Y$, and $\mathcal U$ is an ultrafilter on $\mathbb N$, we write $\lim_{n\to \mathcal U} y_{n}$ for the unique element $y\in Y$ such that for every neighborhood $V$ of $y$, $\{n\in \mathbb N: y_{n}\in V\}\in \mathcal U$.   For a sequence of functions $f_{n}:X_{n}\to Y$, we denote by $\lim_{n\to \mathcal U} f_{n}$ the function from $\prod_{n\to \mathcal U} X_{n}$ to $Y$ defined by $f(\mathbf x)=\lim_{n\to \mathcal U} f_{n}(x_{n})$, where $(x_{n})_{n\in \mathbb N}$ is a representative of $\mathbf x$.

Note that some authors use ``$\lim_{n\to \mathcal U}$'' for what we call ``$\prod_{n\to \mathcal U}$''.

For this section, fix a sequence of compact abelian groups $G_{n}$ with Haar probability measure $m_{n}$ and a nonprincipal ultrafilter $\mathcal U$ on $\mathbb N$.   Let $\mb G$ be the ultraproduct $\prod_{n\to \mathcal U} G_{n}$ and $\mu$  the Loeb measure corresponding to $(m_{n})_{n\in \mathbb N}$.  As usual $\mathcal S^{1}$ denotes the circle group $\{z\in \mathbb C: |z|=1\}$ with the group operation of multiplication and the usual topology.

We assume familiarity with basic harmonic analysis on compact abelian groups, as introduced in \cite{FollandAbstract} or  \cite{RudinFourier}.  If $G$ is a compact abelian group, $\widehat{G}$ denotes its Pontryagin dual, the group of continuous homomorphisms $\chi:G\to \mathcal S^{1}$ with the discrete topology.

\begin{definition}\label{def:Characters} Call $f:\mb G\to \mathbb C$ a \emph{strong character} of $\mathbf G$ if there are characters $\chi_{n}\in \widehat{G}_{n}$ such that $f=\lim_{n\to \mathcal U} \chi_{n}$. Equivalently, $f$ is a strong character of $\mb G$ if $f=\leftidx^\circ(\prod_{n\to \mathcal U} \chi_{n})$, where $\chi_{n}\in \widehat{G}_{n}$ for $\mathcal U$-many $n$.  Every strong character is $\mu$-measurable by Part (ii) of Proposition \ref{prop:LoebApprox}.  One can easily verify that every strong character is a homomorphism from $\mb G$ to $\mathcal S^{1}$. Write $\widehat{\mb G}$ for the set of strong characters of $\mb G$, and note that $\widehat{\mb G}$ is a group under pointwise multiplication.\footnote{We do not refer to a topology on $\mb G$ when we define $\widehat{\mb G}$, in contrast to the dual $\widehat{G}$ of a locally compact abelian group $G$, which (by definition) grants $\widehat{G}$ the topology of uniform convergence on compact subsets of $G$.}

	A \emph{weak character} of $\mb G$ is a $\mu$-measurable homomorphism $\rho:\mb G\to \mathcal S^{1}$.  Of course every strong character is also weak character, and Lemma \ref{lem:InternalCharacter} says that every weak character of $\mb G$ is a strong character of $\mb G$, motivating our choice of the notation $\widehat{\mb G}$.
\end{definition}

\begin{definition}\label{def:Translate}
	If $G$ is a group, $f$ is a function on $G$, and $t\in G$, we write $f_{t}$ for the \emph{translate} of $f$ by $t$: the function defined by $f_{t}(x):=f(x-t)$.
\end{definition}

\begin{definition}\label{def:FourierAlgebra}Let $\mathcal F(\mathbf G)$ be the uniform closure of the linear span of the strong characters of $\mathbf G$. Note that $\mathcal F(\mb G)$ is closed under pointwise multiplication and complex conjugation, and so is an algebra of functions. Following \cite{SzegedyLimits} we call $\mathcal F(\mathbf G)$ the \emph{Fourier algebra} of $\mathbf G$.
\end{definition}

\begin{definition}\label{def:WM}
	Let $L^{2}(\mu)_{wm}$ be the set of $f\in L^{2}(\mu)$ such that
	\[\int \Bigl|\int f \bar{g}_{\mb t} \,d\mu\Bigr|^{2} \,d\mu(\mb t)=0 \quad \text{for all } g\in L^{2}(\mu).\]  Equivalently, $f\in L^{2}(\mu)_{wm}$ if  $f*g\equiv_{\mu}0$ for all $g\in L^{2}(\mu)$.
\end{definition}
To see the equivalence asserted in the definition, observe that $\int f\bar{g}_{\mb t} \,d\mu = f*h(\mb t)$, where $h(\mb x):=\overline{g(-\mb x)}$, so $f\in L^{2}(\mu)_{wm}$ if and only if $f*g\equiv_{\mu}0$ for all $g\in L^{2}(\mu)$. The subscript ``\emph{wm}'' is to indicate an analogy with weak mixing in dynamics.

The next lemma is  Lemma 6.3 in \cite{SzegedyLimits}.  Readers familiar with \cite{GowersSz} or \cite{GowersQRG} will recognize it as a limiting version of an equivalence between two definitions of uniformity.    Here we write $f\perp \mathcal F(\mathbf G)$ to mean $\int f\bar{g}\, d\mu=0$ for every $g\in \mathcal F(\mb G)$.
\begin{lemma}\label{lem:pseudorandom}
	Let $f:\mb G\to \mathbb C$ be bounded  and $\mu$-measurable.  Then $f\perp \mathcal F(\mathbf G)$ if and only if $f\in L^{2}(\mu)_{wm}$. \end{lemma}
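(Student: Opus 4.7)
The plan is to prove the two directions separately.

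\emph{Easier direction} ($\Leftarrow$). Assume $f \in L^{2}(\mu)_{wm}$ and let $\chi\in\widehat{\mb G}$. Since $\chi$ is a homomorphism to $\mathcal S^{1}$, $\chi_{\mb t}(\mb x) = \chi(\mb x)\overline{\chi(\mb t)}$, so $\int f\,\overline{\chi_{\mb t}}\,d\mu = \chi(\mb t)\langle f,\chi\rangle$. Using $|\chi(\mb t)|=1$,
\[
|\langle f,\chi\rangle|^{2} \;=\; \int\Bigl|\int f\,\overline{\chi_{\mb t}}\,d\mu\Bigr|^{2}\,d\mu(\mb t) \;=\; 0
\]
by hypothesis, so $\langle f,\chi\rangle = 0$ for every strong character. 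Because $\mu$ is a probability measure, $\|\cdot\|_{2}\le\|\cdot\|_{\infty}$, and linearity plus continuity extend orthogonality from $\widehat{\mb G}$ to its uniform span, i.e.\ to all of $\mathcal F(\mb G)$.

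\emph{Harder direction} ($\Rightarrow$). Assume $f \perp \mathcal F(\mb G)$ and introduce the autocorrelation $\psi_{f}(\mb u):=\int f(\mb s+\mb u)\overline{f(\mb s)}\,d\mu(\mb s)$. A Fubini--Tonelli computation (using translation-invariance of $\mu$) gives, for every $g\in L^{2}(\mu)$,
\[
\int\Bigl|\int f\,\overline{g_{\mb t}}\,d\mu\Bigr|^{2}\,d\mu(\mb t) \;=\; \int\psi_{f}\,\overline{\psi_{g}}\,d\mu.
\]
Setting $g=f$ collapses the right side to $\|\psi_{f}\|_{2}^{2}$, so $f\in L^{2}(\mu)_{wm}$ is equivalent to $\|\psi_{f}\|_{2}=0$. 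A parallel Fubini computation produces the useful identity $\int\psi_{f}\,\overline{\chi}\,d\mu = |\langle f,\chi\rangle|^{2}$ for every strong character $\chi$, from which the hypothesis $f\perp\mathcal F(\mb G)$ immediately yields $\psi_{f}\perp\mathcal F(\mb G)$, hence $\psi_{f}$ is orthogonal to the closure $\overline{\mathcal F(\mb G)}^{L^{2}}$ as well.

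\emph{Main obstacle.} The core step is to upgrade $\psi_{f}\perp\mathcal F(\mb G)$ to the vanishing $\|\psi_{f}\|_{2}=0$---an ultraproduct analogue of Wiener's lemma. I would derive it from Proposition \ref{prop:SzegedyQuotient}, which provides a surjective $\mu$-measurable (hence, by Lemma \ref{lem:MeasureIsPreserved}, measure-preserving) homomorphism $\pi:\mb G\to K$ onto a compact abelian group $K$, together with an identification of $\overline{\mathcal F(\mb G)}^{L^{2}}$ with the pullback $\{h\circ\pi:h\in L^{2}(K,m_{K})\}$; Lemma \ref{lem:MeasureIsPreserved} transports convolutions across $\pi$, and Lemma \ref{lem:InternalCharacter} ensures that the characters entering our Fourier coefficients match the Pontryagin dual of $K$ pulled back through $\pi$. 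On the compact model $K$, the classical Plancherel identity reads $\|\psi_{h}\|_{2}^{2} = \sum_{\chi}|\widehat h(\chi)|^{4}$; the plan is to lift this to a bound $\|\psi_{f}\|_{2}^{2}\le \sum_{\chi}|\langle f,\chi\rangle|^{4}$ on $\mb G$ by decomposing $f = E[f\mid\pi]+f_{\perp}$ (where $E[f\mid\pi]=0$ under our hypothesis) and checking that the Kronecker-orthogonal component $f_{\perp}$ contributes only to a continuous-spectrum part that vanishes in the $L^{2}$ average. Making this spectral transfer precise---especially controlling the contribution of $f_{\perp}$ without recourse to a native Pontryagin duality on $\mb G$---is the primary technical hurdle, and is where the ultraproduct machinery of \S\ref{sec:Characters} enters decisively.
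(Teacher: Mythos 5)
The paper itself does not prove this lemma: it simply cites it as Lemma 6.3 of \cite{SzegedyLimits}, so there is no ``paper's own proof'' to match. Judged on its merits, your proposal is correct and clean through the easy direction and the autocorrelation setup: the Fubini identity $\int\bigl|\int f\,\overline{g_{\mb t}}\,d\mu\bigr|^{2}\,d\mu(\mb t)=\int\overline{\psi_{f}}\,\psi_{g}\,d\mu$ and the identity $\int\psi_{f}\,\overline{\chi}\,d\mu=|\langle f,\chi\rangle|^{2}$ are both right, and they correctly reduce the hard direction to showing $\psi_{f}\perp\mathcal F(\mb G)\Rightarrow\psi_{f}\equiv_{\mu}0$.

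The gap is in your proposed resolution of that last step, and it is a genuine one, not a detail. Proposition \ref{prop:SzegedyQuotient} produces a compact model only for functions already in $L^{2}(\mu)_{c}$; under your hypothesis $f$ has \emph{no} component in $L^{2}(\mu)_{c}$, so the decomposition $f=E[f\mid\pi]+f_{\perp}$ degenerates to $f=f_{\perp}$ and the compact model tells you nothing about $f$ itself. Your target inequality $\|\psi_{f}\|_{2}^{2}\le\sum_{\chi}|\langle f,\chi\rangle|^{4}$ is, by the orthogonal splitting of $\psi_{f}$ into its $L^{2}(\mu)_{c}$ and complementary parts, \emph{equivalent} to the assertion that $\psi_{f}$ has no weakly mixing component --- which is the theorem. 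So deriving it from the Kronecker machinery of \S\ref{sec:Characters} alone is circular: the abstract $L^{2}(\mu)_{c}$/$L^{2}(\mu)_{wm}$ decomposition cannot force a generic $f\perp\mathcal F(\mb G)$ to have $\psi_{f}=0$. What the argument actually needs is the \emph{internal} structure of $\mb G$. Approximate $f$ by uniformly bounded internal functions $\leftidx^{\circ}\mathbf f_{k}$ with $\mathbf f_{k}=\prod_{n\to\mathcal U}f_{k,n}$ and $\|f-\leftidx^{\circ}\mathbf f_{k}\|_{2}<1/k$ (Proposition \ref{prop:LoebApprox}). On each compact $G_{n}$, Plancherel gives
\[
\|\psi_{f_{k,n}}\|_{L^{2}(m_{n})}^{2}=\sum_{\chi\in\widehat G_{n}}|\widehat f_{k,n}(\chi)|^{4}\le\Bigl(\sup_{\chi\in\widehat G_{n}}|\widehat f_{k,n}(\chi)|\Bigr)^{2}\|f_{k,n}\|_{2}^{2}.
\]
Lemma \ref{lem:Ergodicity} transports this through the ultralimit, and the hypothesis $f\perp\mathcal F(\mb G)$ together with the $L^{2}$-approximation forces $\lim_{n\to\mathcal U}\sup_{\chi}|\widehat f_{k,n}(\chi)|\le 1/k$ (otherwise pick $\chi_{n}$ nearly attaining the sup and note that $\lim_{n\to\mathcal U}\chi_{n}$ is a strong character with $|\langle\leftidx^{\circ}\mathbf f_{k},\lim\chi_{n}\rangle|>1/k$, contradicting $|\langle f,\chi\rangle|=0$ plus the $1/k$ approximation). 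Letting $k\to\infty$ yields $\psi_{f}\equiv_{\mu}0$. In short: the proof must descend to the constituent groups $G_{n}$, use classical Plancherel there, and climb back up; it cannot be carried out in the Kronecker-factor language alone.
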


The next lemma is part of Proposition 6.1 in \cite{SzegedyLimits}.

\begin{lemma}\label{lem:InternalCharacter}
	If $\chi:\mb G\to \mathcal S^{1}$ is a $\mu$-measurable homomorphism, then $\chi$ is a strong character of $\mb G$.  Consequently, if $\tau: \mb G\to \mathbb T$ is a $\mu$-measurable homomorphism, then there is a sequence of continuous homomorphisms $\tau_{n}:G_{n}\to \mathbb T$ such that $\tau=\lim_{n\to \mathcal U}\tau_{n}$.
\end{lemma}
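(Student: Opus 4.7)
The plan is to apply Lemma \ref{lem:pseudorandom} to $\chi$ itself, extract a strong character $\psi$ that is non-orthogonal to $\chi$, and then use translation invariance of $\mu$ to force $\chi \equiv \psi$ pointwise. The starting observation is the elementary convolution identity
\[
\chi *_{\mu} \chi(\mathbf t) \;=\; \int \chi(s)\chi(\mathbf t - s)\, d\mu(s) \;=\; \int \chi(\mathbf t)\, d\mu(s) \;=\; \chi(\mathbf t),
\]
valid pointwise for every $\mathbf t \in \mathbf G$, because $\chi(s)\chi(\mathbf t-s) = \chi(\mathbf t)$ and $\mu$ is a probability measure. Since $|\chi|\equiv 1$, this convolution is not $\mu$-a.e.\ zero, so $\chi \notin L^{2}(\mu)_{wm}$. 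Applying Lemma \ref{lem:pseudorandom} to the bounded $\mu$-measurable function $\chi$ then yields some $g \in \mathcal F(\mathbf G)$ with $\int \chi \bar g\, d\mu \neq 0$, and hence (by linearity, uniform approximation, and continuity of the integral against bounded functions) a strong character $\psi \in \widehat{\mathbf G}$ with $\int \chi \bar\psi\, d\mu \neq 0$.

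Now set $\phi := \chi \bar\psi$: as a pointwise product of $\mu$-measurable homomorphisms $\mathbf G \to \mathcal S^{1}$ this is itself a $\mu$-measurable homomorphism $\mathbf G \to \mathcal S^{1}$. Translation invariance of $\mu$ together with $\phi(\mathbf x - \mathbf t) = \phi(\mathbf x)\overline{\phi(\mathbf t)}$ gives
\[
\int \phi\, d\mu \;=\; \int \phi(\mathbf x - \mathbf t)\, d\mu(\mathbf x) \;=\; \overline{\phi(\mathbf t)}\int \phi\, d\mu
\]
for every $\mathbf t \in \mathbf G$. Because $\int \phi\, d\mu \neq 0$, this forces $\phi(\mathbf t) = 1$ for \emph{every} $\mathbf t \in \mathbf G$, so $\chi \equiv \psi$ pointwise on $\mathbf G$; in particular $\chi$ is a strong character, proving the first assertion.

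For the consequence about $\tau: \mathbf G \to \mathbb T$, compose with the standard topological-group isomorphism $e^{2\pi i \cdot}: \mathbb T \to \mathcal S^{1}$ to obtain a $\mu$-measurable homomorphism $\chi := e^{2\pi i \tau}: \mathbf G \to \mathcal S^{1}$. By the first assertion there exist characters $\chi_{n} \in \widehat{G_{n}}$ with $\chi = \lim_{n\to \mathcal U} \chi_{n}$; writing each $\chi_{n}$ uniquely as $e^{2\pi i \tau_{n}}$ for a continuous homomorphism $\tau_{n}: G_{n} \to \mathbb T$ and using that ultralimits commute with the homeomorphism $e^{2\pi i \cdot}$ gives $\tau = \lim_{n\to \mathcal U} \tau_{n}$. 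I expect no serious obstacle in executing this plan: the substantive input is Lemma \ref{lem:pseudorandom}, and the whole argument reduces to noticing that the pointwise identity $\chi *_{\mu}\chi = \chi$ rules out the weak-mixing alternative, after which translation invariance of $\mu$ does the remaining work.
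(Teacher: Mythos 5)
The paper itself does not prove this lemma; it cites it as part of Proposition 6.1 of \cite{SzegedyLimits}, so there is no in-paper proof to compare against. What you offer is a self-contained derivation from Lemma \ref{lem:pseudorandom} (which the paper also takes from \cite{SzegedyLimits}), and it is correct.

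Each step checks out. Since $\chi$ is a homomorphism, $\chi(s)\chi(\mb t - s) = \chi(\mb t)$ for all $s$, so $\chi *_{\mu}\chi(\mb t) = \chi(\mb t)\cdot\mu(\mb G) = \chi(\mb t)$ pointwise, which is nowhere zero since $|\chi|\equiv 1$; hence $\chi\notin L^{2}(\mu)_{wm}$. Lemma \ref{lem:pseudorandom} then gives $\chi\not\perp\mathcal F(\mb G)$, and since $\mathcal F(\mb G)$ is the uniform closure of the span of strong characters, the (uniformly continuous, linear) pairing $g\mapsto\int\chi\bar g\,d\mu$ must be nonzero on some strong character $\psi$. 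The remaining step is the classical vanishing-integral argument for nontrivial homomorphisms on a group with a translation-invariant probability measure: if $\phi:=\chi\bar\psi$ has nonzero mean, then $\int\phi\,d\mu = \overline{\phi(\mb t)}\int\phi\,d\mu$ forces $\phi\equiv 1$, so $\chi=\psi$ exactly (not merely $\mu$-a.e.). The deduction for $\tau:\mb G\to\mathbb T$ is also sound: ultralimits into compact Hausdorff spaces commute with continuous maps, so conjugating by the homeomorphism $e^{2\pi i\cdot}:\mathbb T\to\mathcal S^{1}$ transports the strong-character representation of $\chi$ to the required representation $\tau=\lim_{n\to\mathcal U}\tau_n$. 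In short: a correct, pleasantly short proof that fills in a citation and makes the paper's reliance on \cite{SzegedyLimits} more transparent, with the only substantive external input being Lemma \ref{lem:pseudorandom}.
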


The second assertion in Lemma \ref{lem:InternalCharacter} follows from the first as $\mathbb T$ is isomorphic to $\mathcal S^{1}$, and with this isomorphism characters of a compact group $G$ correspond to continuous homomorphisms from $G$ to $\mathbb T$.

We say a character $\chi$ of a group $G$ is \emph{trivial} if $\chi(g)=1$ for all $g\in G$.

\begin{lemma}\label{lem:TrivialCharacter}
	If $\chi_{n}\in \widehat{G}_{n}$ for all $n$, then $\chi:=\lim_{n\to \mathcal U} \chi_{n}$ is trivial if and only if $\chi_{n}$ is trivial for $\mathcal U$-many $n$.
\end{lemma}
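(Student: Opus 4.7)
The plan is to handle the two directions separately; the forward direction (trivial constituents imply trivial limit) is immediate from the definition of $\lim_{n\to\mathcal{U}}$, so all the work is in the converse.

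For the converse, I would argue by contradiction. Suppose $\chi$ is trivial but $\chi_n$ is nontrivial for $\mathcal{U}$-many $n$; by the ultrafilter property I may restrict attention to those indices. The key observation is that the image $\chi_n(G_n)$ is a compact nontrivial subgroup of $\mathcal{S}^1$, hence either all of $\mathcal{S}^1$ or a finite cyclic group of some order $N_n \geq 2$. In either case I can produce an element $h_n \in G_n$ with $\operatorname{Re}(\chi_n(h_n)) \leq 0$: for $\mathcal{S}^1$ this is obvious, and for the cyclic case, writing $\chi_n(g_n) = e^{2\pi i / N_n}$ for a generator $g_n$, the interval $[N_n/4, 3N_n/4]$ has length $\geq 1$ and thus contains an integer $j_n \in \{1,\dots,N_n-1\}$, so $h_n := j_n g_n$ works.

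Now I would form the internal element $\mathbf{h} := \prod_{n\to\mathcal{U}} h_n \in \mb G$. By definition of $\chi = \lim_{n\to\mathcal{U}} \chi_n$, we have $\chi(\mathbf{h}) = \lim_{n\to\mathcal{U}} \chi_n(h_n)$. Since $\{z \in \mathcal{S}^1 : \operatorname{Re}(z) \leq 0\}$ is closed and each $\chi_n(h_n)$ lies in it (for $\mathcal{U}$-many $n$), the ultralimit $\chi(\mathbf{h})$ also satisfies $\operatorname{Re}(\chi(\mathbf{h})) \leq 0$. In particular $\chi(\mathbf{h}) \neq 1$, contradicting the assumed triviality of $\chi$.

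I do not expect a serious obstacle here; the only mildly delicate point is that one cannot simply pick \emph{any} $g_n$ with $\chi_n(g_n) \neq 1$, because the values $\chi_n(g_n)$ could converge to $1$ along $\mathcal{U}$ (for instance, if $\chi_n$ has image of order $N_n \to \infty$ and one chooses $g_n$ with $\chi_n(g_n) = e^{2\pi i / N_n}$). The cyclic-group power trick above is precisely what rules out this scenario by extracting an element whose image is bounded away from $1$ in a uniform half-plane.
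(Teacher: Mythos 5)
Your proposal is correct and takes essentially the same approach as the paper: the paper also observes that a nontrivial subgroup of $\mathcal{S}^1$ must contain an element with nonpositive real part (stating $|\chi_n(x_n)-1|\geq\sqrt 2$), and concludes that the ultralimit is bounded away from $1$. Your write-up simply fills in the elementary case analysis (circle vs.\ finite cyclic) that the paper leaves implicit.
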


\begin{proof}
	If $\chi_{n}$ is trivial for $\mathcal U$-many $n$, then clearly $\chi$ is trivial. To prove the converse, assume $\chi_{n}$ is nontrivial for $\mathcal U$-many $n$.  For these $n$, the image  of $\chi_{n}$ is a nontrivial subgroup of $\mathcal S^{1}$.  Thus there is an $x_{n}\in G_{n}$ such that the real part of $\chi_{n}(x_{n})$ is not positive, meaning $|\chi_{n}(x_{n})-1|\geq \sqrt{2}$.  It follows that $\lim_{n\to \mathcal U} \chi_{n}(x_{n})\neq 1$, so $\chi$ is nontrivial.
\end{proof}

\begin{corollary}\label{cor:IdentifyGhat}
	The group $\widehat{\mb G}$ is isomorphic to the group $\prod_{n\to \mathcal U} \widehat{G}_{n}$.
\end{corollary}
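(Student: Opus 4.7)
The plan is to produce the isomorphism explicitly via the natural map
\[
\Phi : \prod_{n\to \mathcal U} \widehat{G}_n \longrightarrow \widehat{\mb G}, \qquad \Phi\bigl([(\chi_n)]\bigr) := \lim_{n\to \mathcal U} \chi_n,
\]
and verify in turn that it is well defined, a homomorphism, surjective, and injective. For well-definedness, if $(\chi_n)$ and $(\chi_n')$ determine the same element of the ultraproduct, then $\chi_n = \chi_n'$ for $\mathcal U$-many $n$, so their pointwise ultralimits on $\mb G$ coincide. The homomorphism property is immediate from the fact that multiplication in $\widehat{\mb G}$ and in each $\widehat{G}_n$ is pointwise, together with continuity of multiplication on $\mathcal S^1$. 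Surjectivity is exactly the definition of a strong character (Definition \ref{def:Characters}).

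For injectivity I would compute the kernel of $\Phi$. An element $[(\chi_n)]$ lies in the kernel iff $\lim_{n\to\mathcal U}\chi_n$ is the trivial character of $\mb G$. By Lemma \ref{lem:TrivialCharacter}, this happens iff $\chi_n$ is trivial for $\mathcal U$-many $n$, which is precisely the condition that $[(\chi_n)]$ is the identity element of $\prod_{n\to \mathcal U} \widehat{G}_n$. Hence $\ker \Phi$ is trivial and $\Phi$ is an isomorphism.

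The only nontrivial ingredient is the injectivity step, and that has been isolated into Lemma \ref{lem:TrivialCharacter}; every other verification is a bookkeeping check against the definitions of strong character and of the ultraproduct of discrete groups. So in this plan there is no real obstacle beyond writing the map down and quoting the previous lemma.
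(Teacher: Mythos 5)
Your proof is correct and follows essentially the same route as the paper: define $\Phi$ as the ultralimit map, note it is onto $\widehat{\mb G}$ because $\widehat{\mb G}$ is by definition the set of strong characters, and deduce injectivity from Lemma \ref{lem:TrivialCharacter}. The only cosmetic difference is that the paper cites Lemma \ref{lem:InternalCharacter} where you appeal directly to Definition \ref{def:Characters}; your appeal is the cleaner one, since surjectivity onto the group of \emph{strong} characters really is immediate from the definition, and Lemma \ref{lem:InternalCharacter} is needed only to identify strong characters with weak ones.
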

Note: here we are only considering the objects as groups, so the isomorphism is simply a group isomorphism.

\begin{proof}
	By Lemma \ref{lem:InternalCharacter}, a surjective homomorphism $\Phi:  \prod_{n\to \mathcal U} \widehat{G}_{n} \to \widehat{\mb G}$ may be defined by $\Phi(\bm \chi):=\lim_{n\to \mathcal U} \chi_{n}$, where $(\chi_{n})_{n\in \mathbb N}$ is a representative of $\bm \chi$.  Lemma \ref{lem:TrivialCharacter} implies that the kernel of $\Phi$ consists only of a single element, so $\Phi$ is one-to-one.  Thus $\Phi$ is an isomorphism.
\end{proof}

Corollary \ref{cor:IdentifyGhat} allows us to identify the ultraproducts $\mb G$ of compact groups where $\widehat{\mb G}$ is torsion free.

\begin{lemma}\label{lem:GHatTorsion}
	For a fixed $k$, the group $\widehat{\mb G}$ has an element of order $k$ if and only if for $\mathcal U$-many $n$, $\widehat{G}_{n}$ has an element of order $k$.  Consequently, $\widehat{\mb G}$ is torsion free if and only if for all $k\in \mathbb N\setminus \{1\}$ and $\mathcal U$-many $n$, $\widehat{G}_{n}$ does not have an element of order $k$.
\end{lemma}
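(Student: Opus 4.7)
The plan is to reduce the claim to Corollary \ref{cor:IdentifyGhat} and Lemma \ref{lem:TrivialCharacter}, using the fact that an element $g$ of a group has order $k$ if and only if $g^k=e$ while $g^j\neq e$ for $1\leq j<k$, so that statements about torsion can be phrased purely in terms of which powers are trivial.

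First I would prove the main biconditional. Fix $k\in\mathbb N$, and use Corollary \ref{cor:IdentifyGhat} to replace $\widehat{\mb G}$ with $\prod_{n\to\mathcal U}\widehat{G}_n$; represent a candidate element by a sequence $(\chi_n)_{n\in\mathbb N}$ with $\chi_n\in\widehat{G}_n$, so the corresponding strong character is $\chi=\lim_{n\to\mathcal U}\chi_n$. For the ``if'' direction, suppose $\{n:\chi_n\text{ has order }k\}\in\mathcal U$. Then $\chi_n^k$ is trivial and $\chi_n^j$ is nontrivial for $1\leq j<k$ on a $\mathcal U$-large set, and the isomorphism from Corollary \ref{cor:IdentifyGhat} sends $(\chi_n)^j$ to $\chi^j$, so applying Lemma \ref{lem:TrivialCharacter} to each fixed $j\in\{1,\dots,k\}$ shows $\chi^k$ is the trivial character of $\mb G$ while $\chi^j$ is nontrivial for $1\leq j<k$; hence $\chi$ has order exactly $k$.

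For the converse, assume $\widehat{\mb G}$ has an element of order $k$, pulled back to $(\chi_n)_{n\in\mathbb N}$. Then by Lemma \ref{lem:TrivialCharacter} the set $E_k:=\{n:\chi_n^k=1\}$ lies in $\mathcal U$, and for each $j\in\{1,\dots,k-1\}$ the set $F_j:=\{n:\chi_n^j\neq 1\}$ lies in $\mathcal U$. Since $\mathcal U$ is closed under finite intersections, $E_k\cap F_1\cap\cdots\cap F_{k-1}\in\mathcal U$, and for any $n$ in this intersection $\chi_n$ has order exactly $k$. This gives the required $\mathcal U$-many $n$ for which $\widehat{G}_n$ contains an element of order $k$.

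The consequence about torsion-freeness is then immediate: $\widehat{\mb G}$ is torsion free iff it contains no element of order $k$ for any $k\geq 2$, and by the biconditional just proved this happens iff for every $k\geq 2$ the set $\{n:\widehat{G}_n\text{ has an element of order }k\}$ is not in $\mathcal U$, i.e.\ its complement $\{n:\widehat{G}_n\text{ has no element of order }k\}$ is in $\mathcal U$. There is no serious obstacle here; the only subtlety is remembering that ``order equals $k$'' must be encoded as a finite conjunction of statements of the form ``$\chi^j$ is (non)trivial'' so that Lemma \ref{lem:TrivialCharacter}, which addresses one power at a time, can be combined using only finite intersections in $\mathcal U$.
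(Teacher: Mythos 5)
Your proof is correct and follows essentially the same route as the paper, which simply asserts that the lemma ``follows immediately from the definition of $\prod_{n\to\mathcal U}\widehat{G}_n$ and Corollary \ref{cor:IdentifyGhat}.'' You have spelled out exactly what the paper leaves implicit — encoding ``order $k$'' as a finite conjunction of (non)triviality conditions on powers, transporting them through the isomorphism of Corollary \ref{cor:IdentifyGhat} via Lemma \ref{lem:TrivialCharacter}, and closing under finite intersection in $\mathcal U$ — so this is a valid elaboration rather than a different argument.
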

\begin{proof}
	This follows immediately from the definition of  $\prod_{n\to \mathcal U} \widehat{G}_{n}$ and Corollary \ref{cor:IdentifyGhat}.
\end{proof}

\begin{corollary}\label{cor:FiniteIndexInternal}
	Let $H$ be a finite group and $\tau: \mathbf G\to H$ a surjective $\mu$-measurable homomorphism.  Then there are continuous homomorphisms $\tau_{n}: G_{n} \to H$ such that $\tau = \lim_{n\to \mathcal U} \tau_{n}$.  If $K_{n}=\ker \tau_{n}$, then for $\mathcal U$-many $n$, we have $G_{n}/K_{n} \cong H$.  Furthermore $\ker \tau=\prod_{n\to \mathcal U} K_{n}$.
\end{corollary}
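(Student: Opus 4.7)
The plan is to reduce to the case of $\mathcal S^1$-valued characters by embedding $H$, and then to invoke Lemma \ref{lem:InternalCharacter}. First, since $\mathbf G$ is abelian, the image $H=\tau(\mathbf G)$ is a finite abelian group. By the structure theorem, fix an embedding $H \hookrightarrow (\mathcal S^1)^k$, i.e.~characters $\psi_1,\dots,\psi_k$ of $H$ that jointly separate points. Composing $\tau$ with the $\psi_i$ produces $\mu$-measurable homomorphisms $\chi_i:\mathbf G\to \mathcal S^1$, each with finite image and hence taking values in the $n_i$-th roots of unity for some $n_i\in \mathbb N$.

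Next, I apply Lemma \ref{lem:InternalCharacter} to each $\chi_i$, obtaining characters $\chi_{i,n}\in \widehat{G}_n$ with $\chi_i=\lim_{n\to \mathcal U}\chi_{i,n}$. Since $\chi_i^{n_i}$ is trivial, Lemma \ref{lem:TrivialCharacter} forces $\chi_{i,n}^{n_i}$ to be trivial for $\mathcal U$-many $n$; intersecting finitely many such sets in $\mathcal U$, for $\mathcal U$-many $n$ each $\chi_{i,n}$ takes values in the $n_i$-th roots of unity. For these $n$, the continuous homomorphism $\sigma_n(g):=(\chi_{1,n}(g),\dots,\chi_{k,n}(g))$ from $G_n$ to $(\mathcal S^1)^k$ lands inside the embedded image of $H$; for the remaining $n$ define $\sigma_n$ to be the trivial homomorphism. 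Identifying the embedded image with $H$ yields continuous homomorphisms $\tau_n:G_n\to H$.

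To verify $\tau=\lim_{n\to \mathcal U}\tau_n$, I exploit that $H$ is discrete: for any $\mathbf g\in \mathbf G$ with representative $(g_n)_{n\in \mathbb N}$ and each $i$, we have $\psi_i(\tau(\mathbf g))=\chi_i(\mathbf g)=\lim_{n\to \mathcal U}\chi_{i,n}(g_n)$, so because the $n_i$-th roots of unity are a finite discrete subset of $\mathcal S^1$, the equality $\chi_{i,n}(g_n)=\psi_i(\tau(\mathbf g))$ holds for $\mathcal U$-many $n$; intersecting over $i=1,\dots,k$ gives $\tau_n(g_n)=\tau(\mathbf g)$ for $\mathcal U$-many $n$. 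Surjectivity of $\tau_n$ for $\mathcal U$-many $n$ is proved analogously: for each $h\in H$ fix a preimage $\mathbf g_h$ under $\tau$; then $\tau_n(g_{h,n})=h$ for $\mathcal U$-many $n$, and intersecting the finitely many resulting sets in $\mathcal U$ (one per element of $H$) produces a set in $\mathcal U$ on which $\tau_n$ is onto $H$.

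Finally, surjectivity yields $G_n/K_n\cong H$ for $\mathcal U$-many $n$ by the first isomorphism theorem applied to $\tau_n$. The identity $\ker\tau=\prod_{n\to \mathcal U}K_n$ is then immediate from the description of $\tau$ as an ultralimit combined with discreteness of $H$: $\tau(\mathbf g)$ is the identity of $H$ precisely when $\tau_n(g_n)$ equals the identity for $\mathcal U$-many $n$, i.e.~when $g_n\in K_n$ for $\mathcal U$-many $n$, which is the definition of membership in the internal set $\prod_{n\to \mathcal U}K_n$. I do not expect a serious obstacle; the main subtlety is invoking Lemma \ref{lem:TrivialCharacter} to force each $\chi_{i,n}$ into the appropriate finite subgroup of $\mathcal S^1$, so that the image of $\sigma_n$ lies in the embedded copy of $H$ rather than in a larger subgroup of $(\mathcal S^1)^k$.
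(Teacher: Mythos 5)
Your proof is correct and follows the same route as the paper's own: both reduce to characters of finite order via Lemmas \ref{lem:InternalCharacter} and \ref{lem:TrivialCharacter}, with the paper treating one cyclic factor of $H$ at a time while you embed $H$ into $(\mathcal S^1)^k$ and treat all coordinates in parallel, and your direct surjectivity check (a finite intersection of $\mathcal U$-sets, one per element of $H$) makes explicit a step that the paper's ``it suffices to treat $H$ cyclic'' reduction leaves implicit. One caution: the gloss ``characters that jointly separate points'' is not by itself sufficient for the containment $\sigma_n(G_n)\subseteq \Psi(H)$ --- for instance, taking $\psi_2=\psi_1^2$ for $H=\mathbb Z/4\mathbb Z$ gives a separating family with $\Psi(H)$ a proper subgroup of $C_4\times C_2$, and then $\chi_{1,n},\chi_{2,n}$ landing in $C_4$ and $C_2$ respectively would not force $\sigma_n(G_n)\subseteq\Psi(H)$; what you actually need, and what the structure-theorem decomposition you invoke first does provide, is that $\Psi$ maps $H$ isomorphically onto the \emph{full} product $C_{n_1}\times\cdots\times C_{n_k}$, so that the coordinatewise conclusion from Lemma \ref{lem:TrivialCharacter} immediately yields the desired containment.
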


Corollary \ref{cor:FiniteIndexInternal} says that every $\mu$-measurable finite index subgroup of $\mb G$ is internal.

\begin{proof}
	Every finite abelian group is a product of finite cyclic groups, so it suffices to prove the special case of the corollary where $H$ is a finite cyclic group.  In this case we consider $H$ as the subgroup of $\mathcal S^{1}$ generated by $\exp(2\pi i/N)$, where $N$ is the order of $H$.  Note that $H$ is the unique subgroup of $\mathcal S^{1}$ having order $N$, and that if $z\in \mathcal S^{1}$ satisfies $z^{N}=1$, then $z\in H$.  We consider $\tau$ as a homomorphism into $\mathcal S^{1}$ with image $H$.  Lemma \ref{lem:InternalCharacter} then implies $\tau=\lim_{n\to \mathcal U} \chi_{n}$ for some $\chi_{n}\in \widehat{G}_{n}$.  Let $\tau^{N}$ be the homomorphism defined by $\tau^{N}(\mb g):=\tau(\mb g)^{N}$, and similarly define $\chi_{n}^{N}.$  Continuity of the map $z\mapsto z^{N}$ implies $\tau^{N} = \lim_{n\to \mathcal U} \chi_{n}^{N}$, and $\tau^{N}$ is the trivial homomorphism, as the image of $\tau$ is generated by an element of order $N$.  Lemma \ref{lem:TrivialCharacter} then implies $\chi_{n}^{N}$ is trivial for $\mathcal U$-many $n$, meaning $\chi_{n}(G_{n})\subseteq H$ for $\mathcal U$-many $n$. The hypothesis $\tau(\mb G)=H$ then implies $|\tau_{n}(G_{n})|=|H|$ for $\mathcal U$-many $n$, and therefore $\tau_{n}(G_{n})=H$ for $\mathcal U$-many $n$.  Thus $G_{n}/K_{n} \cong H$ for $\mathcal U$-many $n$.

	To see that $\ker \tau=\prod_{n\to \mathcal U} K_{n}$, let $\mb g\in \mb G$, choose a representative $(g_{n})_{n\in \mathbb N}$ of $\mb g$, and note that the finiteness of $H$ implies $\lim_{n\to \mathcal U} \tau_{n}(g_{n})=0$ if and only if $\tau_{n}(g_{n})=0$ for $\mathcal U$-many $n$.
\end{proof}

The next definition provides a useful splitting of $L^{2}(\mu)$.

\begin{definition}\label{def:FC} Let $L^{2}(\mu)_{c}$ denote the closure of $\mathcal F(\mb G)$ in $L^{2}(\mu)$.  If $f\in L^{2}(\mu)$, write $f^{(c)}$ for the orthogonal projection of $f$ onto $L^{2}(\mu)_{c}$, and $f^{(wm)}$ for $f-f^{(c)}$.  Thus $f^{(wm)}\perp \mathcal F(\mb G)$.  Of course $f^{(c)}$ and $f^{(wm)}$ are defined only up to equality on $\mu$-null sets, but for our purposes this ambiguity can be resolved by choosing a genuine function to represent $f^{(c)}$.
\end{definition}

The ``$c$'' in the notation above is to indicate that the translates $\{f_{\mb t}: \mb t\in \mb G\}$ of a bounded function  form a precompact subset of $L^{2}(\mu)$ if and only if $f\in L^{2}(\mu)_{c}$.

The following lemma is a collection of common facts about the orthogonal projection of a function onto a space spanned by an algebra of functions, but they are often not stated explicitly for our setting.

\begin{lemma}\label{lem:ProjectOnF}
	Let $f\in L^{2}(\mu)$.
	\begin{enumerate}
		\item[(i)]  If $f(\mb x)\geq 0$ for $\mu$-almost every $\mb x$, then $f^{(c)}(\mb x)\geq 0$ for $\mu$-almost every $\mb x$.
		\item[(ii)]  If $f:\mb G\to [0,1]$, then $f^{(c)}(\mb x)\in [0,1]$ for $\mu$-almost every $\mb x$.
		\item[(iii)]  $\int f^{(c)} \,d\mu = \int f \,d\mu$.
		\item[(iv)]  If $A\subseteq \mb G$ is $\mu$-measurable, then $A \subset_{\mu} \{\mb x: 1_{A}^{(c)}(\mb x)> 0\}$.
		\item[(v)] If $f:\mb G\to [0,1]$ then $\{\mb x :f(\mb x)>0\}\subset_{\mu}\{\mb x: f^{(c)}(\mb x)>0\}$.
	\end{enumerate}
\end{lemma}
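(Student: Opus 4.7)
The plan is to deduce all five assertions in one stroke by identifying $L^{2}(\mu)_{c}$ with $L^{2}(\mathcal{B}_{c})$ for a sub-$\sigma$-algebra $\mathcal{B}_{c}$ of the Loeb $\sigma$-algebra on $\mb G$; then $f\mapsto f^{(c)}$ is realized as the conditional expectation $\mathbb E[f\mid\mathcal B_{c}]$ and everything follows from standard properties of this operator. Specifically, (i) is positivity of conditional expectation; (ii) is (i) applied to both $f$ and $1-f$; (iii) is the tower property, using that $1\in\mathcal F(\mb G)\subset L^{2}(\mu)_{c}$ gives $\int f^{(c)}\,d\mu=\langle f^{(c)},1\rangle=\langle f,1\rangle=\int f\,d\mu$. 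For (iv) and (v), note from (i) that $f^{(c)}\geq 0$ $\mu$-a.e.\ (applied to $f=1_{A}$ in (iv) and by hypothesis in (v)); setting $E:=\{f^{(c)}=0\}$, which is $\mathcal B_{c}$-measurable, we have $1_{E}\in L^{2}(\mu)_{c}$, so $\int_{E}f\,d\mu=\int 1_{E}f^{(c)}\,d\mu=0$, and since $f\geq 0$ this forces $f\equiv_{\mu}0$ on $E$, which is exactly the containments (iv) and (v) assert.

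To construct $\mathcal B_{c}$, I would first note that $\mathcal F(\mb G)$ is by Definition \ref{def:FourierAlgebra} a uniformly closed subalgebra of $L^{\infty}(\mu)$, closed under complex conjugation (since $\overline{\chi}$ is a strong character whenever $\chi$ is) and containing the constant $1$ (the trivial strong character). The crucial technical step is to extend the algebra property to bounded elements of $L^{2}(\mu)_{c}$: for bounded $f,g\in L^{2}(\mu)_{c}$, pick $g_{n}\in\mathcal F(\mb G)$ with $g_{n}\to g$ in $L^{2}$ and estimate $\|fg-fg_{n}\|_{2}\leq\|f\|_{\infty}\|g-g_{n}\|_{2}\to 0$, reducing the problem to showing $fh\in L^{2}(\mu)_{c}$ for $h\in\mathcal F(\mb G)$; by linearity and uniform density this reduces further to $f\chi\in L^{2}(\mu)_{c}$ for a single strong character $\chi$, which is immediate since $|\chi|=1$ makes multiplication by $\chi$ an $L^{2}$-isometry and $f_{n}\chi\in\mathcal F(\mb G)$ whenever $f_{n}\in\mathcal F(\mb G)$. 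Uniform Weierstrass polynomial approximation of $|x|$ on $[-\|f\|_{\infty},\|f\|_{\infty}]$ then yields $|f|\in L^{2}(\mu)_{c}$ for bounded real $f\in L^{2}(\mu)_{c}$, so the bounded part is a lattice.

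Defining $\mathcal B_{c}$ to be the $\sigma$-algebra generated by the level sets $\{f>a\}$ of bounded real $f\in L^{2}(\mu)_{c}$, standard measure-theoretic bookkeeping gives $L^{2}(\mathcal B_{c})=L^{2}(\mu)_{c}$: indicators of such level sets arise as $L^{2}$-limits of continuous truncations of $f$ (which land in $L^{2}(\mu)_{c}$ by the lattice property), bounded $L^{2}(\mu)_{c}$ functions are $\mathcal B_{c}$-measurable by construction, and truncation $T_{M}(f):=\min(M,\max(-M,f))$ is an $L^{2}$-contraction preserving $L^{2}(\mu)_{c}$, giving density of bounded functions. The main obstacle is the algebra property for bounded elements of $L^{2}(\mu)_{c}$: because $L^{2}$-approximants drawn from $\mathcal F(\mb G)$ need not be uniformly bounded, one cannot naively take products of two approximants; unimodularity of the characters is precisely what lets one push a single bounded factor through the approximation and recover the algebra structure in the closure.
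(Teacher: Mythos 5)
Your proof is correct, but it takes a genuinely different route from the one in the paper. The paper argues directly with the projection operator: for (i) it produces, from any $\mathcal F(\mb G)$-approximating sequence $h_n\to f^{(c)}$, a nonnegative approximating sequence $h_n^+ = \sup\{h_n,0\}$, citing a lattice lemma (that a uniformly closed algebra containing constants is closed under $\sup$) and invoking the minimality characterization of the $L^2$-projection; for (iv) it uses self-adjointness in the form $\int 1_E\, 1_A^{(c)}\,d\mu = \int 1_E^{(c)} 1_A^{(c)}\,d\mu$ together with the pointwise bound $0\leq 1_E^{(c)}\leq 1_A^{(c)}$. You instead realize $L^2(\mu)_c$ as $L^2(\mathcal B_c)$ for a sub-$\sigma$-algebra $\mathcal B_c$, making $f\mapsto f^{(c)}$ a genuine conditional expectation, and then all five parts fall out of standard facts. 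The two approaches draw on roughly the same underlying ingredients — the algebra and lattice structure of $\mathcal F(\mb G)$, with your Weierstrass step playing the role of the paper's cited lattice lemma — but you invest more upfront (extending the algebra/lattice structure to bounded elements of the $L^2$-closure, identifying the $\sigma$-algebra) in exchange for a conceptually cleaner framework in which (i)--(v) are routine. This is essentially the ``Kronecker factor'' point of view familiar from ergodic theory; it is a heavier hammer than the paper wants here, but it is a legitimate and in some ways more illuminating proof. Your treatment of the one genuinely delicate point — that naive products of $L^2$-approximants fail, so one must push a single bounded factor through the unimodular characters — is correct and well identified.
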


\begin{proof}
	(i)  Suppose $f$ is a nonnegative element of $L^{2}(\mu)$.  We will prove that there is a sequence of nonnegative elements of $\mathcal F(\mb G)$ converging in $L^{2}(\mu)$ to $f^{(c)}$, which implies $f^{(c)}$ is $\mu$-almost everywhere nonnegative.  Let $h_{n}$ be any sequence of elements of $\mathcal F(\mb G)$ converging to $f^{(c)}$ in $L^{2}(\mu)$. Since $\mathcal F(\mb G)$ is a uniformly closed algebra of functions containing the constant functions $0$ and $1_{\mb G}$, we have that for all $h\in \mathcal F(\mb G)$, $h^{+}:=\sup\{h,0\}\in \mathcal F(\mb G)$, by Lemma 8 in Chapter 6 of \cite{Bollobas}.  Since $f$ is nonnegative we have $\|f-h_{n}^{+}\|_{L^{2}(\mu)}\leq \|f-h_{n}\|_{L^{2}(\mu)}$ for each $n$, so $h_{n}^{+}$ converges to $f^{(c)}$, as $f^{(c)}$ is the unique element $h$ in $L^{2}(\mu)_{c}$ which minimizes $\|f-h\|_{L^{2}(\mu)}$.
	
	Part (ii) follows from Part (i) and the nonnegativity of the functions $1_{\mb G}-f$ and $f$.
	
	Part (iii) is a consequence of the fact that $\mathcal F(\mb G)$ contains the constant functions on $\mb G$.
	
	To prove Part (iv), we use the identity $\int f\cdot g^{(c)} \,d\mu=\int f^{(c)}\cdot  g^{(c)} \,d\mu$, which follows immediately from the fact that the map $f\mapsto f^{(c)}$ is an orthogonal projection. Let $E=A\setminus \{\mb x: 1_{A}^{(c)}(\mb x)> 0\}$.  Then $1_{E}1_{A}^{(c)}=0$, so that $\int 1_{E}^{(c)}1_{A}^{(c)} \,d\mu=\int 1_{E}1_{A}^{(c)} \,d\mu=0$.  Now $0\leq 1_{E}\leq1_{A}\leq 1_{\mb G}$, so Part (i) implies $0\leq 1_{E}^{(c)}\leq 1_{A}^{(c)}$.  Combining this last inequality with the integral $\int  1_{E}^{(c)}1_{A}^{(c)} \,d\mu=0$, we get that $\int 1_{E}^{(c)} \,d\mu=0$, which by Part (iii) implies $\mu(E)=0$.
	
	Part (v) follows from Part (iv), the linearity of the map $f\mapsto f^{(c)}$, and bounding $f$ below by linear combinations of characteristic  functions supported on $\{x:f(x)>0\}$.
\end{proof}

\begin{lemma}\label{lem:fcgc}
	If $f, g:\mb G\to \mathbb C$ are bounded $\mu$-measurable functions, then $f*g\equiv_{\mu} f^{(c)}*g^{(c)}$.
\end{lemma}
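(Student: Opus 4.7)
The plan is to decompose $f = f^{(c)} + f^{(wm)}$ and $g = g^{(c)} + g^{(wm)}$ via the orthogonal splitting from Definition \ref{def:FC}, expand $f*g$ by bilinearity into four summands, and show that the three cross terms involving a $(wm)$-factor vanish $\mu$-almost everywhere.

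First I would observe that since $f$ and $g$ are bounded and $\mu$ is a probability measure, $f, g \in L^{2}(\mu)$, so the projections $f^{(c)}, g^{(c)} \in L^{2}(\mu)_{c}$ and the orthogonal complements $f^{(wm)}, g^{(wm)}$ lie in $L^{2}(\mu)$ as well. To invoke Lemma \ref{lem:pseudorandom} I need these complements to be bounded; this follows from the boundedness of $f$ and $g$ together with Lemma \ref{lem:ProjectOnF}(ii), applied after decomposing into real and imaginary parts and rescaling so each component maps into $[0,1]$. By construction $f^{(wm)} \perp L^{2}(\mu)_{c}$, so in particular $f^{(wm)} \perp \mathcal F(\mb G)$, and Lemma \ref{lem:pseudorandom} then yields $f^{(wm)} \in L^{2}(\mu)_{wm}$; the same holds for $g^{(wm)}$.

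Bilinearity of convolution gives
\[ f*g \;=\; f^{(c)}*g^{(c)} \;+\; f^{(c)}*g^{(wm)} \;+\; f^{(wm)}*g^{(c)} \;+\; f^{(wm)}*g^{(wm)}. \]
Using the membership $f^{(wm)} \in L^{2}(\mu)_{wm}$ and the definition (equivalently: $f^{(wm)}*h \equiv_{\mu} 0$ for every $h \in L^{2}(\mu)$), the terms $f^{(wm)}*g^{(c)}$ and $f^{(wm)}*g^{(wm)}$ are each $\equiv_{\mu} 0$. For the remaining cross term $f^{(c)}*g^{(wm)}$, I would use commutativity of $*_{\mu}$, which follows from translation invariance of $\mu$ and the abelian group structure on $\mb G$ via the change of variable $\mb s = \mb x - \mb t$, to rewrite it as $g^{(wm)}*f^{(c)}$; this is $\equiv_{\mu} 0$ by the same reasoning applied to $g^{(wm)} \in L^{2}(\mu)_{wm}$ with $h = f^{(c)}$.

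There is no real obstacle here; the only piece of bookkeeping worth being careful about is verifying that $f^{(wm)}$ and $g^{(wm)}$ are bounded so that Lemma \ref{lem:pseudorandom} applies, and that commutativity of $*_{\mu}$ is available on $(\mb G, \mu)$. Both are routine, and the core logic is simply to quarantine every $(wm)$-component inside a convolution and invoke Definition \ref{def:WM}.
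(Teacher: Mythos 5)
Your proof is correct and uses essentially the same strategy as the paper's: decompose into the $L^{2}(\mu)_{c}$ and $L^{2}(\mu)_{wm}$ components, use Lemma \ref{lem:ProjectOnF} and Lemma \ref{lem:pseudorandom} to place the orthogonal complements in $L^{2}(\mu)_{wm}$, and invoke Definition \ref{def:WM} (together with commutativity of $*_{\mu}$, which the paper calls ``by symmetry'') to kill the convolutions involving a $(wm)$ factor. The only cosmetic difference is that you expand both $f$ and $g$ simultaneously into four terms, whereas the paper peels off $f^{(wm)}$ first and then handles $g$ by symmetry; you are also a touch more explicit about the real/imaginary-part bookkeeping needed to apply Lemma \ref{lem:ProjectOnF}(ii) to complex-valued functions, which is fine but routine.
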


\begin{proof}
	Write $f$ as $f^{(c)}+f^{(wm)}$.  Lemma \ref{lem:ProjectOnF} implies $f^{(c)}$ is bounded, so Lemma \ref{lem:pseudorandom} implies  $f^{(wm)}$ is a bounded element of $L^{2}(\mu)_{wm}$, and we get that $f^{(wm)}*g\equiv_{\mu}0$ by Definition \ref{def:WM}. Thus $f*g\equiv_{\mu} f^{(c)}*g$, and by symmetry we get that $f^{(c)}*g\equiv_{\mu}f^{(c)}*g^{(c)}$.
\end{proof}

The conclusion of Lemma \ref{lem:fcgc} cannot be improved to assert $f*g=f^{(c)}*g^{(c)}$. See Remark \ref{rem:AEnotE} for elaboration.

The following proposition is part of Theorem 6 of \cite{SzegedyLimits}, cosmetically modified to deal with two functions on $\mb G$ instead of one.

\begin{proposition}\label{prop:SzegedyQuotient}
	If $f, g\in L^{2}(\mu)_{c}$ are bounded functions, then there is a compact metrizable abelian group $G$, a surjective measure preserving quotient map $\pi:\mb G\to G$, and bounded Borel functions $\tilde{f}, \tilde{g}$ on $G$ such that $f\equiv_{\mu} \tilde{f}\circ \pi$ and $g\equiv_{\mu}\tilde{g}\circ \pi$.
\end{proposition}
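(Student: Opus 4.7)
The plan is to build $G$ and $\pi$ directly from a countable collection of strong characters that jointly $L^{2}$-approximates $f$ and $g$.

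Since $f,g\in L^{2}(\mu)_{c}$, there exist sequences $(p_{n})$, $(q_{n})$ in $\mathcal F(\mb G)$ converging to $f$ and $g$ in $L^{2}(\mu)$. Each element of $\mathcal F(\mb G)$ is a uniform limit of finite linear combinations of strong characters, so after approximating uniformly (which implies $L^{2}$ on a probability space) I may assume each $p_{n}$ and $q_{n}$ is itself such a finite linear combination. Let $\{\chi_{i}:i\in \mathbb N\}\subseteq \widehat{\mb G}$ enumerate every strong character that appears in any of the $p_{n}$ or $q_{n}$, and define $\Phi:\mb G\to (\mathcal S^{1})^{\mathbb N}$ by $\Phi(\mb x):=(\chi_{i}(\mb x))_{i}$. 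The codomain is a compact metrizable abelian group, and $\Phi$ is a $\mu$-measurable homomorphism (each coordinate being so). Let $G:=\overline{\Phi(\mb G)}$, a closed subgroup of $(\mathcal S^{1})^{\mathbb N}$, hence itself compact metrizable abelian; equip $G$ with its Haar probability measure $m_{G}$.

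Next I would show $\Phi$ is onto $G$, so that it can serve as the required $\pi$. By Lemma \ref{lem:InternalCharacter}, each $\chi_{i}$ equals $\st\circ \chi_{i}^{*}$ for some internal character $\chi_{i}^{*}=\prod_{n\to \mathcal U}\chi_{i,n}$ with $\chi_{i,n}\in \widehat{G_{n}}$. Given $\mb z=(z_{i})\in G$, the density of $\Phi(\mb G)$ in $G$ makes the countable set of internal conditions $\{|\chi_{i}^{*}(\mb x)-z_{i}|<1/k:i,k\in \mathbb N\}$ finitely satisfiable in $\mb G$; the $\aleph_{1}$-saturation of the ultraproduct then supplies a single $\mb x\in \mb G$ satisfying all of them, whence $\chi_{i}(\mb x)=z_{i}$ for every $i$ and $\Phi(\mb x)=\mb z$. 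Setting $\pi:=\Phi$, Lemma \ref{lem:MeasureIsPreserved} makes $\pi$ measure preserving onto $G$.

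Each $\chi_{i}$ now factors as $\chi_{i}=\psi_{i}\circ \pi$, where $\psi_{i}:G\to \mathcal S^{1}$ is the $i$-th coordinate projection, a continuous character of $G$; hence $p_{n}=\tilde p_{n}\circ \pi$ for a bounded Borel $\tilde p_{n}$ on $G$, and likewise $q_{n}=\tilde q_{n}\circ \pi$. Because $\pi$ preserves measure, $\|\tilde p_{n}-\tilde p_{m}\|_{L^{2}(m_{G})}=\|p_{n}-p_{m}\|_{L^{2}(\mu)}$, so $(\tilde p_{n})$ is $L^{2}$-Cauchy on $G$ and converges to some $\tilde f\in L^{2}(m_{G})$; similarly define $\tilde g$. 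Then $\tilde f\circ \pi\equiv_{\mu}f$ and $\tilde g\circ \pi\equiv_{\mu}g$. To obtain bounded representatives, truncate $\tilde f$ and $\tilde g$ at the essential suprema of $|f|$ and $|g|$; measure preservation ensures this changes the compositions only on $\mu$-null sets. The main obstacle is securing the surjectivity of $\pi$ onto the compact group $G$: without it, $\Phi(\mb G)$ is merely a dense subgroup of $G$ and the statement fails literally. The $\aleph_{1}$-saturation argument, which crucially uses the representation of strong characters as limits of internal characters (Lemma \ref{lem:InternalCharacter}), is the essential device that converts approximate density into genuine surjectivity.
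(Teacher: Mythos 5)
Your proof is correct, and it fills in exactly the argument that the paper outsources: the paper does not prove this proposition but instead cites Theorem~6 of Szegedy's paper, with the remark that one should take the countable set of strong characters attached to both $f$ and $g$. Your construction — extracting a countable family $\{\chi_i\}$ from the approximants $p_n,q_n$, forming $\Phi:\mb G\to(\mathcal S^1)^{\mathbb N}$, setting $G:=\overline{\Phi(\mb G)}$, and pulling $\tilde f,\tilde g$ back through Lemma~\ref{lem:MeasureIsPreserved} — is the same plan. The one ingredient worth flagging is your appeal to $\aleph_1$-saturation of the ultraproduct to upgrade density of $\Phi(\mb G)$ in $G$ to genuine surjectivity. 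That is a standard property of ultraproducts over a nonprincipal ultrafilter on $\mathbb N$, and your application of it is sound (the conditions $|\chi_i^*(\mb x)-z_i|<1/k$ are internal, as each is the ultraproduct of the sets $\{x\in G_n:|\chi_{i,n}(x)-z_i|<1/k\}$, and finite satisfiability follows from density together with $\chi_i=\st\circ\chi_i^*$); but the paper's appendix on ultraproducts never states countable saturation, so if this were to appear in the paper's style one would want to add that fact to the appendix or cite it. The truncation step at the end is also correct: measure preservation pushes the bound $|f|\le M$ $\mu$-a.e.\ down to $|\tilde f|\le M$ $m_G$-a.e., so the truncated $\tilde f$ still satisfies $\tilde f\circ\pi\equiv_\mu f$.
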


One can prove Proposition \ref{prop:SzegedyQuotient} by imitating the proof in \cite{SzegedyLimits}, with the following minor modification: where that proof considers the set of characters $\chi$ appearing in the Fourier series of $f$, to prove Proposition \ref{prop:SzegedyQuotient} consider instead the set of characters appearing in the Fourier series of $f$ or the Fourier series of $g$ (or both).

\section{Reducing to a compact quotient of \texorpdfstring{$\mb G$}{G}}\label{sec:ToCompact}
Here we collect some results allowing us to study sumsets in an ultraproduct $\mb G$ of compact groups by studying related sumsets in a compact quotient of $\mb G$.

For this section, fix a sequence of compact abelian groups $G_{n}$ with Haar probability measure $m_{n}$ and a nonprincipal ultrafilter $\mathcal U$ on $\mathbb N$.  Let $\mb G$ denote the ultraproduct $\prod_{n\to \mathcal U} G_{n}$ and $\mu$ be the Loeb measure corresponding to $(m_{n})_{n\in \mathbb N}$. We continue to use the notation $\widehat{\mb G}$ of Definition \ref{def:Characters}: the elements of $\widehat{\mb G}$ are the functions $\chi:\mb G\to \mathcal S^{1}$ of the form $\chi=\lim_{n\to \mathcal U} \chi_{n}$, where $\chi_{n}\in \widehat{G}_{n}$ for each $n$; in particular the elements of $\widehat{\mb G}$ are $\mu$-measurable.

The main results of this section are Proposition \ref{prop:Kronecker} and its consequences, Lemma \ref{lem:ProjectAndPullBack} and Corollary \ref{cor:ProjectAndPullBack}.  Here we abbreviate the set $\{x: f(x)>0\}$ as $\{f>0\}$.

\begin{proposition}\label{prop:Kronecker}
	Let $f, g:\mb G\to [0,1]$ be $\mu$-measurable.  Then there is a compact metrizable quotient $G$ of $\mb G$ with Haar measure $m$, $\mu$-measure preserving quotient map $\pi:\mb G\to G$, and Borel functions $\tilde f, \tilde g:G\to[0,1]$ such that
	\begin{align}
	\label{eqn:TildeLevels} &\{f>0\} \subset_{\mu} \{\tilde{f}\circ \pi>0\}, \quad \{g>0\} \subset_{\mu} \{\tilde{g}\circ \pi>0\}, and\\
	\label{eqn:EquivalentConvolutions} &f*_{\mu}g\equiv_{\mu}(\tilde{f}*_{m }\tilde{g})\circ \pi.
	\end{align}
\end{proposition}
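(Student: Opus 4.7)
The plan is to pass from $f,g$ to their projections $f^{(c)}, g^{(c)} \in L^{2}(\mu)_{c}$ and then invoke Proposition \ref{prop:SzegedyQuotient}, which is designed exactly for functions in $L^{2}(\mu)_{c}$. The motivation is Lemma \ref{lem:fcgc}, which gives $f*_{\mu}g \equiv_{\mu} f^{(c)}*_{\mu} g^{(c)}$, so the convolution $f*_{\mu}g$ already ``lives'' in the compact-quotient part of $L^{2}(\mu)$, and Lemma \ref{lem:ProjectOnF}(v), which tells us the support of $f$ is $\mu$-essentially contained in the support of $f^{(c)}$. These two facts together say that the information needed for (\ref{eqn:TildeLevels}) and (\ref{eqn:EquivalentConvolutions}) is preserved under the projection $f\mapsto f^{(c)}$.

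Concretely, I would proceed as follows. First, since $f,g$ take values in $[0,1]$, Lemma \ref{lem:ProjectOnF}(ii) gives representatives of $f^{(c)}$ and $g^{(c)}$ that are also $[0,1]$-valued $\mu$-almost everywhere; these are bounded elements of $L^{2}(\mu)_{c}$. Apply Proposition \ref{prop:SzegedyQuotient} to the pair $f^{(c)}, g^{(c)}$ to obtain a compact metrizable abelian group $G$ with Haar measure $m$, a surjective $\mu$-measure preserving homomorphism $\pi:\mb G\to G$, and bounded Borel functions $\tilde f, \tilde g$ on $G$ with $f^{(c)}\equiv_{\mu}\tilde f \circ \pi$ and $g^{(c)}\equiv_{\mu}\tilde g\circ \pi$. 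Because $\pi$ preserves $\mu$ (Lemma \ref{lem:MeasureIsPreserved}), the functions $\tilde f$ and $\tilde g$ are $[0,1]$-valued $m$-almost everywhere, so after modifying them on an $m$-null Borel set we may assume they map into $[0,1]$ everywhere.

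It remains to verify the three conclusions. For (\ref{eqn:TildeLevels}), Lemma \ref{lem:ProjectOnF}(v) yields $\{f>0\}\subset_{\mu}\{f^{(c)}>0\}$, and the almost-everywhere equality $f^{(c)}\equiv_{\mu}\tilde f\circ \pi$ upgrades this to $\{f>0\}\subset_{\mu}\{\tilde f\circ \pi>0\}$; the same argument handles $g$. For (\ref{eqn:EquivalentConvolutions}), chain together Lemma \ref{lem:fcgc} (to replace $f*_{\mu}g$ by $f^{(c)}*_{\mu} g^{(c)}$), the almost-everywhere identifications $f^{(c)}\equiv_{\mu}\tilde f\circ \pi$ and $g^{(c)}\equiv_{\mu}\tilde g\circ \pi$ (which descend to convolutions since $\mu$-a.e.\ equal inputs produce $\mu$-a.e.\ equal convolutions), and Lemma \ref{lem:MeasureIsPreserved} (to replace $(\tilde f\circ\pi)*_{\mu}(\tilde g\circ \pi)$ by $(\tilde f *_{m}\tilde g)\circ \pi$).

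I do not expect any serious obstacle here: the proposition is essentially an explicit packaging of Proposition \ref{prop:SzegedyQuotient} together with the two \emph{a priori} observations about $f^{(c)}$ contained in Lemmas \ref{lem:ProjectOnF}(v) and \ref{lem:fcgc}. The only points requiring mild care are ensuring that the Borel representatives $\tilde f, \tilde g$ can be chosen $[0,1]$-valued everywhere (handled by trimming on an $m$-null set) and keeping track of the distinction between $\equiv_{\mu}$ and pointwise equality when passing through the convolution identity of Lemma \ref{lem:MeasureIsPreserved}; neither is substantive.
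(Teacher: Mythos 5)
Your proof is correct and follows essentially the same route as the paper's: project to $f^{(c)},g^{(c)}$, invoke Proposition \ref{prop:SzegedyQuotient}, and then use Lemma \ref{lem:ProjectOnF} for the level-set containments and the $[0,1]$-range, and Lemmas \ref{lem:fcgc} and \ref{lem:MeasureIsPreserved} for the convolution identity. The only (correct) detail you add beyond the paper is the explicit trimming of $\tilde f,\tilde g$ on an $m$-null set to make them genuinely $[0,1]$-valued; the paper leaves this implicit.
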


\begin{proof}
	Let $f, g:\mb G\to [0,1]$ be $\mu$-measurable functions. Lemma \ref{lem:ProjectOnF} implies $f^{(c)}$ and $g^{(c)}$ are bounded, so Proposition \ref{prop:SzegedyQuotient} provides a compact metrizable group $G$, a $\mu$-measure preserving quotient map $\pi:\mb G\to G$, and Borel functions $\tilde{f},\tilde{g}:G\to \mathbb C$ such that
	\begin{equation}\label{eqn:tildes}
	f^{(c)}\equiv_{\mu} \tilde{f}\circ \pi, \quad g^{(c)}\equiv_{\mu} \tilde{g}\circ \pi.
	\end{equation}We will show that $\tilde{f}$ and $\tilde{g}$ satisfy the conclusion of Proposition \ref{prop:Kronecker}.
	
	Lemma \ref{lem:ProjectOnF} implies $0\leq f^{(c)}(\mb x), g^{(c)}(\mb x)\leq 1 $ for $\mu$-a.e.~$\mb x$, so $0\leq \tilde{f}(x), \tilde{g}(x)\leq 1$ for $m$-a.e.~$x$. Lemma \ref{lem:ProjectOnF} also implies $\{f>0\}\subset_{\mu} \{f^{(c)}>0\}$ and $\{g>0\}\subset_{\mu} \{g^{(c)}>0\}$. Since $f^{(c)}\equiv_{\mu} \tilde{f}\circ \pi$, we get the essential containments stated in (\ref{eqn:TildeLevels}).
	
	To prove Equation (\ref{eqn:EquivalentConvolutions}),  first note that Lemma \ref{lem:fcgc} implies $f*_{\mu}g\equiv_{\mu} f^{(c)}*_{\mu}g^{(c)}$.  It therefore suffices to show that $f^{(c)}*_{\mu}g^{(c)}\equiv_{\mu} (\tilde{f}*_{m}\tilde{g})\circ \pi$, and this follows immediately from Lemma \ref{lem:MeasureIsPreserved}.
\end{proof}

\begin{remark}\label{rem:AEnotE}
	To be clear, Proposition \ref{prop:Kronecker} asserts that $(f*g)(\mb x) = (\tilde{f}*\tilde{g})(\pi(\mb x))$ for $\mu$-almost every $\mb x\in \mb G$.   This cannot be improved to replace ``$\mu$-almost every $\mb x$'' with ``every $\mb x$''. Examples exhibiting the obstruction are discussed in \S\ref{sec:Exceptional}.
\end{remark}

The next lemma lets us pass between convolutions and sumsets; the main application will take an estimate on the size of a sumset in $\mb G$ and obtain a similar estimate on the size of a related sumset in a compact quotient of $\mb G$.

\begin{lemma}\label{lem:ConvolutionToSumset}
	Let $G$ be a compact metrizable abelian group with Haar probability measure $m$.  If $f, g: G\to [0,1]$ are $m$-measurable functions let $C_{0}:=\{x\in G: f(x)>0\}$ and $D_{0}:=\{x\in G:g(x)>0\}$. Then there are Borel sets $C\subseteq C_{0}$, $D\subseteq D_{0}$ such that $m(C)=m(C_{0})$, $m(D)=m(D_{0})$, $C+D$ is Borel, and $C+D\subseteq \{x\in G:f*g(x)>0\}$.
\end{lemma}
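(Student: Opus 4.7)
The plan is a two-stage construction: first produce Borel full-measure subsets $C^{*}\subseteq C_{0}$ and $D^{*}\subseteq D_{0}$ with $C^{*}+D^{*}\subseteq E:=\{f*g>0\}$ via a Lebesgue density argument, and then pass to $F_{\sigma}$ subsets to make the sumset Borel. I would begin by replacing $f,g$ with Borel representatives equal to them $m$-almost everywhere, so that $C_{0},D_{0}$ become Borel. Since $f,g\geq 0$, one has $\{f*g>0\}=\{1_{C_{0}}*1_{D_{0}}>0\}$, so I may assume $f=1_{C_{0}}$ and $g=1_{D_{0}}$. The function $h:=1_{C_{0}}*1_{D_{0}}$ is continuous (as $1_{C_{0}},1_{D_{0}}\in L^{2}(m)$), so $E$ is open, and $h(x)=m(C_{0}\cap(x-D_{0}))$.

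Next, I would equip $G$ with a translation-invariant metric and fix a decreasing symmetric neighborhood basis $(U_{n})_{n\in\mathbb{N}}$ of $0$ with $U_{n}=-U_{n}$, $m(U_{n})>0$, and $\bigcap_{n}U_{n}=\{0\}$. Appealing to the Lebesgue differentiation theorem for Haar measure on $G$ (which in this generality can be established via the inverse-limit presentation of $G$ as a sequence of compact abelian Lie groups, or by a suitable Vitali/maximal-function argument using an invariant metric), for every Borel $A\subseteq G$ and $m$-a.e.~$c$ one has $m(A\cap(c+U_{n}))/m(U_{n})\to 1_{A}(c)$. I would then set
\[ C^{*}:=\{c\in C_{0}:\lim_{n}m(C_{0}\cap(c+U_{n}))/m(U_{n})=1\}, \]
which is a Borel subset of $C_{0}$ (each map $c\mapsto m(C_{0}\cap(c+U_{n}))=(1_{C_{0}}*1_{U_{n}})(c)$ is continuous), with $m(C^{*})=m(C_{0})$; define $D^{*}$ analogously.

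The heart of the argument is the claim $C^{*}+D^{*}\subseteq E$. Given $c\in C^{*}$ and $d\in D^{*}$, fix $\varepsilon<1/2$ and choose $n$ so that both $m(C_{0}\cap(c+U_{n}))\geq (1-\varepsilon)m(U_{n})$ and $m(D_{0}\cap(d+U_{n}))\geq (1-\varepsilon)m(U_{n})$. Using $U_{n}=-U_{n}$, a direct calculation yields $(c+d-D_{0})\cap(c+U_{n})=(c+d)-(D_{0}\cap(d+U_{n}))$, so this set has measure at least $(1-\varepsilon)m(U_{n})$. Inclusion--exclusion inside the ambient set $c+U_{n}$ (of measure $m(U_{n})$) then gives
\[ m\bigl(C_{0}\cap(c+U_{n})\cap(c+d-D_{0})\bigr)\geq (1-2\varepsilon)m(U_{n})>0, \]
so $h(c+d)>0$ and hence $c+d\in E$.

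Finally, by inner regularity of $m$, I would choose compact $K_{n}\subseteq C^{*}$ with $m(\bigcup_{n}K_{n})=m(C_{0})$ and compact $L_{m}\subseteq D^{*}$ with $m(\bigcup_{m}L_{m})=m(D_{0})$. Setting $C:=\bigcup_{n}K_{n}$ and $D:=\bigcup_{m}L_{m}$, the sumset $C+D=\bigcup_{n,m}(K_{n}+L_{m})$ is $F_{\sigma}$ and hence Borel, and $C+D\subseteq C^{*}+D^{*}\subseteq E$. The main obstacle is securing the Lebesgue differentiation step in the generality of a compact metric abelian group, since doubling of Haar measure with respect to metric balls need not hold; some structural input (such as the inverse-limit decomposition of $G$, or a careful choice of invariant basis) is needed to make the density-point argument go through. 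Everything else is elementary inclusion--exclusion and inner regularity.
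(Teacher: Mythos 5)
Your proof follows the same density-point route that the paper's remark sketches for $G=\mathbb T^d$ and otherwise defers to Lemma 2.13 of \cite{GriesmerUBD}. The calculations are correct: the reduction to indicator functions, the identity $\{f*g>0\}=\{1_{C_0}*1_{D_0}>0\}$ (positivity of the integrand on a positive-measure set), the translation identity and inclusion--exclusion inside $c+U_n$ giving $h(c+d)\geq(1-2\varepsilon)m(U_n)>0$, and the $F_\sigma$ construction at the end are all sound.

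The step you flag is indeed the crux, and it is not a formality. A compact metrizable abelian group need not carry a doubling invariant metric (consider for instance $\prod_{n\geq 1}\mathbb Z/(n!)\mathbb Z$ with the product metric, where the ratio $m(U_{n-1})/m(U_n)$ along the natural clopen neighborhood basis is unbounded), so the usual Hardy--Littlewood maximal-function proof of the Lebesgue differentiation theorem does not apply off the shelf. The differentiation theorem does hold along a well-chosen neighborhood basis: write $G$ as an inverse limit of compact abelian Lie groups over a decreasing chain of compact subgroups $K_j$ with $\bigcap_j K_j=\{0\}$, and combine martingale convergence along the filtration generated by the quotient maps $G\to G/K_j$ with the Euclidean density theorem inside each Lie quotient; or invoke the Edwards--Hewitt theory of differentiation sequences. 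As written, the proof has a hole at exactly this point, and it needs either a citation or this hybrid argument supplied. Everything else goes through.

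One small repair: replacing $f,g$ by Borel versions could enlarge $\{f>0\}$ and $\{g>0\}$ by a null set, whereas the lemma requires $C\subseteq C_0$ and $D\subseteq D_0$ for the \emph{original} $m$-measurable sets. The cleaner move is to shrink $C_0,D_0$ to Borel subsets $C_0',D_0'$ of the same measure (possible since $m$ is the completion of a Borel measure), note the pointwise inequality $1_{C_0'}*1_{D_0'}\leq 1_{C_0}*1_{D_0}$ so that $\{1_{C_0'}*1_{D_0'}>0\}\subseteq E$, and then run the density argument with $C_0',D_0'$ in place of $C_0,D_0$.
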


For a proof of Lemma \ref{lem:ConvolutionToSumset}, see Lemma 2.13 of \cite{GriesmerUBD}.  When $G=\mathbb T^{d}$ for some $d\in \mathbb N$, we can take $C$ and $D$ to be countable unions of compact subsets of  the points of Lebesgue density of $\{x:f(x)>0\}$ and $\{x:g(x)>0\}$, respectively.

\begin{lemma}\label{lem:ProjectAndPullBack}
	Let $f, g:\mb G\to [0,1]$ be $\mu$-measurable functions and $E:=\{\mb x\in \mb G: f*_{\mu}g(\mb x)>0\}$.  If $A:=\{\mb x\in \mb G: f(\mb x)>0\}$ and $B:=\{\mb x\in \mb G: g(\mb x)>0\}$,  there is a compact quotient group $G$ with Haar probability measure $m$, a $\mu$-measure preserving quotient map $\pi:\mb G\to G$, and Borel sets  $C, D\subseteq G$ such that $A\subset_{\mu} A':=\pi^{-1}(C)$, $B\subset_{\mu} B':=\pi^{-1}(D)$ and $A'+B'\subset_{\mu} E$.  Furthermore $C+D$ is Borel and $m(C+D)\leq \mu(E)$.
	
\end{lemma}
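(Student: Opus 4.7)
The plan is to prove Lemma \ref{lem:ProjectAndPullBack} by composing Proposition \ref{prop:Kronecker} with Lemma \ref{lem:ConvolutionToSumset}. First I would apply Proposition \ref{prop:Kronecker} to the pair $f, g$ to obtain a compact metrizable abelian group $G$ with Haar measure $m$, a $\mu$-measure preserving quotient map $\pi:\mb G\to G$, and Borel functions $\tilde f,\tilde g:G\to [0,1]$ satisfying $\{f>0\}\subset_{\mu}\{\tilde f\circ \pi>0\}$, $\{g>0\}\subset_{\mu}\{\tilde g\circ \pi>0\}$, and $f*_{\mu}g\equiv_{\mu}(\tilde f*_{m}\tilde g)\circ \pi$.

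Next I would apply Lemma \ref{lem:ConvolutionToSumset} on the metrizable group $G$ to the pair $\tilde f,\tilde g$, obtaining Borel sets $C\subseteq C_{0}:=\{\tilde f>0\}$ and $D\subseteq D_{0}:=\{\tilde g>0\}$ with $m(C)=m(C_{0})$, $m(D)=m(D_{0})$, $C+D$ Borel, and $C+D\subseteq\{x\in G:\tilde f*_{m}\tilde g(x)>0\}$. I would then set $A':=\pi^{-1}(C)$ and $B':=\pi^{-1}(D)$.

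To verify the conclusions, I would argue as follows. Since $\pi$ preserves $\mu$ and $m(C_{0}\setminus C)=0$, the set $\pi^{-1}(C_{0})\setminus A'$ is $\mu$-null; combined with $A\subset_{\mu}\pi^{-1}(C_{0})$ from Proposition \ref{prop:Kronecker}, this gives $A\subset_{\mu}A'$, and symmetrically $B\subset_{\mu}B'$. Because $\pi$ is a homomorphism, $A'+B'\subseteq \pi^{-1}(C+D)$, and the latter is contained in $\pi^{-1}(\{\tilde f*_{m}\tilde g>0\})=\{(\tilde f*_{m}\tilde g)\circ\pi>0\}$, which is $\mu$-equivalent to $E$; hence $A'+B'\subset_{\mu}E$. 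Finally, the bound $m(C+D)\leq \mu(E)$ follows from the containment $C+D\subseteq \{\tilde f*_{m}\tilde g>0\}$ together with $\mu$-equivalence of $\{(\tilde f*_{m}\tilde g)\circ \pi>0\}$ and $E$ and the fact that $\pi$ is measure preserving.

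Essentially no step is a real obstacle: the two tools (Proposition \ref{prop:Kronecker} and Lemma \ref{lem:ConvolutionToSumset}) were designed precisely for this reduction, so the only work is bookkeeping of null sets and exploiting that $\pi$ is both a group homomorphism and measure preserving. The one place where a small subtlety appears is in the containment $A'+B'\subset_{\mu}E$, where one must use that the exceptional null set in $f*_{\mu}g\equiv_{\mu}(\tilde f*_{m}\tilde g)\circ\pi$ (cf. Remark \ref{rem:AEnotE}) does not enlarge into a non-null obstruction; this is automatic because the comparison is made via a single $\mu$-null set pulled back from $G$, which remains $\mu$-null since $\pi$ preserves $\mu$.
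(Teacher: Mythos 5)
Your proof is correct and takes essentially the same approach as the paper: apply Proposition \ref{prop:Kronecker} to reduce to Borel $\tilde f, \tilde g$ on a compact quotient, then apply Lemma \ref{lem:ConvolutionToSumset} to the level sets, and pull everything back via the measure-preserving map $\pi$. The bookkeeping of null sets is handled the same way as the paper's argument.
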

\begin{proof}
	Let $G$, $\pi:\mb G\to G$, and $\tilde{f}, \tilde{g}:G\to [0,1]$ be as in Proposition \ref{prop:Kronecker}.   Consider the level sets $C_{0}:=\{t\in G:\tilde{f}(t)>0\}$, $D_{0}:=\{t\in G:\tilde{g}(t)>0\}$, and apply Lemma \ref{lem:ConvolutionToSumset} to choose Borel sets $C\subseteq C_{0}$, $D\subseteq D_{0}$ having $C\sim_{m} C_{0}$, $D\sim_{m} D_{0}$ such that $C+D$ is Borel and
	\begin{equation}\label{eqn:CDinV}
	C+D\subseteq V:=\{t\in G:\tilde{f}*_{m}\tilde{g}(t)>0\}.
	\end{equation}  Let $A':= \pi^{-1}(C)$ and $B':=\pi^{-1}(D)$.  Now Proposition \ref{prop:Kronecker} implies $(\tilde{f}*_{m}\tilde{g})\circ \pi \equiv_{\mu} f*_{\mu}g$, so $E\sim_{\mu}\pi^{-1}(V)$, and (\ref{eqn:CDinV}) implies $A'+B'\subset_{\mu} E$.
	
	The essential containment $A\subset_{\mu} A'$ follows from the similarity $A'\sim_{\mu}\{\mb x\in \mb G: \tilde{f}\circ \pi(\mb x)>0\}$ and Proposition \ref{prop:Kronecker}.  Likewise $B\subset_{\mu} B'$. The inequality $m(C+D)\leq \mu(E)$ follows from the containments $\pi^{-1}(C+D)=A'+B'\subset_{\mu} E$, since $\pi$ preserves $\mu$.
\end{proof}

Specializing Lemma \ref{lem:ProjectAndPullBack} to the case where $f=1_{A}$ and $g=1_{B}$ are characteristic functions of sets and observing that $A+_{0}B:=\{\mb x\in \mb G: 1_{A}*1_{B}(\mb x)>0\}\subseteq A+B$, we obtain the following corollary.

\begin{corollary}\label{cor:ProjectAndPullBack}
	If $A, B\subseteq \mb G$ are $\mu$-measurable sets then there is a compact quotient $G$ with Haar measure $m$, a $\mu$-measure preserving quotient map $\pi:\mb G\to G$, and Borel sets $C, D\subseteq G$ such that
	
	\begin{enumerate}
		\item[(i)]
		$A\subset_{\mu} A':=\pi^{-1}(C)$, $B\subset_{\mu} B':=\pi^{-1}(D)$ and $A'+B'\subset_{\mu} A+_{0}B \subseteq A+B$.   Furthermore $C+D$ is Borel (so that $A'+B'$ is $\mu$-measurable) and $m(C+D)\leq \mu_{*}(A+B)$.
		\item[(ii)] If $A''\sim_{\mu}A$ and $B''\sim_{\mu} B$, then $A'+B'\subset_{\mu} A''+B''$.
	\end{enumerate}
\end{corollary}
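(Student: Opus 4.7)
The plan is to derive the corollary by specializing Lemma \ref{lem:ProjectAndPullBack} to the characteristic functions $f := 1_A$ and $g := 1_B$, then matching the resulting data with the objects named in the statement. Unwinding the definition of $\mu$-convolution gives $(1_A *_\mu 1_B)(\mb x) = \mu(A \cap (\mb x - B))$, so the level set $E := \{f *_\mu g > 0\}$ appearing in Lemma \ref{lem:ProjectAndPullBack} is precisely the $0$-popular sumset $A +_0 B$ as defined in \S\ref{sec:Intro}.

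For part (i), Lemma \ref{lem:ProjectAndPullBack} supplies $G$, $m$, a $\mu$-measure-preserving map $\pi: \mb G \to G$, and Borel sets $C, D \subseteq G$ satisfying $A \subset_\mu \pi^{-1}(C) =: A'$, $B \subset_\mu \pi^{-1}(D) =: B'$, $A' + B' \subset_\mu E$, with $C + D$ Borel and $m(C+D) \leq \mu(E)$. What remains is to verify $A +_0 B \subseteq A + B$ and to upgrade the bound to $m(C+D) \leq \mu_*(A+B)$. The first is a one-line check: positivity of $\mu(A \cap (\mb x - B))$ makes the intersection nonempty, exhibiting $\mb x$ as a sum $\mb a + \mb b$ with $\mb a \in A$, $\mb b \in B$. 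For the second, note that $E = A +_0 B$ is $\mu$-measurable (by the proof of Lemma \ref{lem:ProjectAndPullBack}, $E \sim_\mu \pi^{-1}(V)$ for a Borel $V \subseteq G$) and contained in $A + B$, so by inner regularity of Loeb measure, $\mu(E) \leq \mu_*(A+B)$.

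For part (ii), the key observation is that $1_A *_\mu 1_B$, and hence $A +_0 B$, depends only on the $\mu$-equivalence classes of $A$ and $B$. Concretely, if $A'' \sim_\mu A$ and $B'' \sim_\mu B$, a short triangle-inequality estimate gives
\[
\bigl|(1_A *_\mu 1_B)(\mb x) - (1_{A''} *_\mu 1_{B''})(\mb x)\bigr| \leq \mu(A \triangle A'') + \mu(B \triangle B'') = 0
\]
for every $\mb x$, so $A +_0 B = A'' +_0 B''$. Combining $A' + B' \subset_\mu A +_0 B$ from part (i) with the elementary inclusion $A'' +_0 B'' \subseteq A'' + B''$ yields $A' + B' \subset_\mu A'' + B''$.

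There is no substantive obstacle: the corollary is a direct repackaging of Lemma \ref{lem:ProjectAndPullBack} under the translation $\{f*g > 0\} \leftrightarrow A +_0 B$. The most delicate bookkeeping is the measurability of $A +_0 B$ needed to pass from $\mu(E)$ to $\mu_*(A+B)$, but this is already implicit in the construction that produces Lemma \ref{lem:ProjectAndPullBack}.
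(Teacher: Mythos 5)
Your proof is correct and takes essentially the same approach as the paper: specialize Lemma \ref{lem:ProjectAndPullBack} to $f=1_A$, $g=1_B$, identify $E=A+_0B$, and use the observation that the convolution $1_A*_\mu 1_B$ (hence $A+_0 B$) depends only on the $\mu$-equivalence classes of $A$ and $B$ to obtain part (ii). The paper states these steps without elaboration; your write-up simply fills in the routine verifications.
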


\begin{proof}
	Part (i) follows immediately from Lemma \ref{lem:ProjectAndPullBack}.  Part (ii) follows from Part (i) and  the observation that $A''+_{0}B''=A+_{0}B$.
\end{proof}

\section{Bohr intervals and sumsets}\label{sec:IntervalsInG}

For this section fix a sequence of compact abelian groups $G_{n}$ with Haar probability measure $m_{n}$ and a nonprincipal ultrafilter $\mathcal U$ on $\mathbb N$.  Let $\mb G$ be the ultraproduct $\prod_{n\to \mathcal U} G_{n}$ and $\mu$ the corresponding Loeb measure. Write $\widehat{\mb G}$ for the group of strong characters of $\mb G$, as defined in \S\ref{sec:Characters}. Lemma \ref{lem:InternalIntervals} and Corollary \ref{cor:PullPreintervalsDown}  characterize the internal Bohr intervals in $\mb G$ in terms of their constituent sets.  Lemma \ref{lem:MoveInterval} and Corollary \ref{cor:EssentialMoveInterval} show that certain sets related to Bohr intervals are themselves Bohr intervals.

Recall (Definition \ref{def:PreInterval}) that if $G$ is a group and $\mu$ is a translation invariant probability measure on $G$, a Bohr interval in $G$ is a set of the form $\tau^{-1}(I)$, where $\tau:G\to \mathbb T$ is a surjective $\mu$-measurable homomorphism and $I\subseteq \mathbb T$ is an interval.  Furthermore, we consider only closed intervals in $\mathbb T$.

Here we characterize those sequences of sets $A_{n}\subseteq G_{n}$ such that $\prod_{n\to \mathcal U} A_{n}$ is a Bohr interval in $\mb G$; we will see that this forces the $A_{n}$ to be contained in Bohr intervals or $N$-cyclic progressions (Definition \ref{def:PreInterval}) not much larger than $A_{n}$. We use the notation $C_{N}$ (Notation \ref{not:CN}) and continue to write $\lambda$ for Lebesgue measure ($=$ normalized Haar measure) on $\mathbb T$.

\begin{lemma}\label{lem:InternalIntervals}
	For each $n\in \mathbb N$, let $\tau_{n}:G_{n}\to \mathbb T$ be a continuous homomorphism, and define $\tau:\mb G\to \mathbb T$ as $\lim_{n\to \mathcal U} \tau_{n}$. Assume that $\tau(\mb G)=\mathbb T$.  Then
	
	\begin{enumerate}
		\item[(i)] Either
		\begin{enumerate}
			\item[$\cdot$]$ \tau_{n}(G_{n})=\mathbb T$ for $\mathcal U$-many $n$, or
			
			\item[$\cdot$]
			$\tau_{n}(G_{n})=C_{N_{n}}$ for $\mathcal U$-many $n$, where $\lim_{n\to \mathcal U} N_{n}=\infty$.
		\end{enumerate}
		
		\item[(ii)]    If $I\subseteq \mathbb T$ is an interval let $\tilde{I}_{n}:=\tau_{n}^{-1}(I)$.  Then either $\tilde{I}_{n}$ is a Bohr interval of $m_{n}$-measure $\lambda(I)$ for $\mathcal U$-many $n$, or $\tilde{I}_{n}$ is an $N_{n}$-cyclic progression of $m_{n}$-measure at most  $\lambda(I)+\frac{1}{N_{n}}$, where $\lim_{n\to \mathcal U} N_{n}=\infty$.

		\item[(iii)] If $\mb A=\prod_{n\to \mathcal U} A_{n}$ is an internal set and $\mb A\subseteq \tau^{-1}(I)$, then there are intervals $I_{n}'\subseteq \mathbb T$ such that $\lim_{n\to \mathcal U} \lambda(I_{n}')\leq \lambda(I)$ and $A_{n}\subseteq \tau_{n}^{-1}(I_{n}')$ for $\mathcal U$-many $n$.
	\end{enumerate}
\end{lemma}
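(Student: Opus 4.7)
The plan is to dispatch (i) and (ii) quickly using the classification of closed subgroups of $\mathbb T$, then spend the bulk of the argument on (iii), which requires an ultralimit trapping argument.

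For (i), I would begin by noting that each image $\tau_n(G_n)$ is a compact subgroup of $\mathbb T$, and the closed subgroups of $\mathbb T$ are precisely $\mathbb T$ itself and the finite subgroups $C_N$ for $N \in \mathbb N$. By the ultrafilter dichotomy, either $\tau_n(G_n) = \mathbb T$ for $\mathcal U$-many $n$, which finishes the first case, or else $\tau_n(G_n) = C_{N_n}$ for some $N_n$ for $\mathcal U$-many $n$. In the second case I must rule out $\lim_{n\to\mathcal U} N_n < \infty$. If some $M$ satisfied $\{n : N_n \leq M\} \in \mathcal U$, then pigeonhole on $\{1,\dots,M\}$ selects a fixed $k \leq M$ with $\tau_n(G_n) \subseteq C_k$ for $\mathcal U$-many $n$. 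Since $C_k$ is closed in $\mathbb T$, $\tau(\mb g) = \lim_{n\to\mathcal U} \tau_n(g_n) \in C_k$ for every $\mb g \in \mb G$, forcing $\tau(\mb G) \subseteq C_k \subsetneq \mathbb T$ and contradicting the hypothesis $\tau(\mb G) = \mathbb T$. Part (ii) is then immediate: in the first case $\tilde I_n = \tau_n^{-1}(I)$ is a Bohr interval in $G_n$ with $m_n(\tilde I_n) = \lambda(I)$ by Lemma \ref{lem:MeasureIsPreserved}; in the second case $\tilde I_n$ is an $N_n$-cyclic progression, and the bound $m_n(\tilde I_n) \leq \lambda(I) + 1/N_n$ is exactly Lemma \ref{lem:CyclicMeetInterval}.

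For (iii) the idea is to take $I_n' \subseteq \mathbb T$ to be a closed interval of minimal length containing $\tau_n(A_n)$ (set $I_n' = \mathbb T$ if $\tau_n(A_n)$ is dense, and pick any short interval if $A_n = \varnothing$). The containment $A_n \subseteq \tau_n^{-1}(I_n')$ is then tautological, so the real task is to show $\lim_{n\to\mathcal U} \lambda(I_n') \leq \lambda(I)$. Fixing $\varepsilon > 0$ with $\lambda(I) + 2\varepsilon < 1$ and writing $I_\varepsilon := \{x \in \mathbb T : d(x, I) \leq \varepsilon\}$ for the closed $\varepsilon$-thickening of $I$ (an interval of length $\lambda(I)+2\varepsilon$), it suffices to prove that $\tau_n(A_n) \subseteq I_\varepsilon$ for $\mathcal U$-many $n$; minimality of $I_n'$ then forces $\lambda(I_n') \leq \lambda(I) + 2\varepsilon$, and letting $\varepsilon \to 0$ closes the proof. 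To obtain this trapping, suppose instead that $\tau_n(A_n) \not\subseteq I_\varepsilon$ for $\mathcal U$-many $n$; on this set I would pick $a_n \in A_n$ with $d(\tau_n(a_n), I) > \varepsilon$, and any $a_n \in A_n$ elsewhere. Then $\mb a := [a_n] \in \mb A$, so by hypothesis $\tau(\mb a) \in I$, yet continuity of $d(\cdot, I)$ together with the definition of the ultralimit yields $d(\tau(\mb a), I) = \lim_{n\to\mathcal U} d(\tau_n(a_n), I) \geq \varepsilon > 0$, a contradiction.

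I expect the main obstacle to be the somewhat finicky bookkeeping in (iii): defining $I_n'$ unambiguously on a circle (rather than a line, where ``smallest interval containing $E$'' is automatic), and handling the degenerate cases $A_n = \varnothing$ and $\overline{\tau_n(A_n)} = \mathbb T$ without disturbing the ultralimit estimate. The conceptual ingredients, namely the structure of closed subgroups of $\mathbb T$, Lemmas \ref{lem:MeasureIsPreserved} and \ref{lem:CyclicMeetInterval}, and the fact that ultrafilter limits respect closed sets, are already in place.
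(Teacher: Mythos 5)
Your proof is correct and follows essentially the same route as the paper: the classification of closed subgroups of $\mathbb T$ for (i), Lemmas \ref{lem:MeasureIsPreserved} and \ref{lem:CyclicMeetInterval} for (ii), and for (iii) taking $I_n'$ to be a minimal closed interval containing $\tau_n(A_n)$ and trapping $\tau_n(A_n)$ in a slightly enlarged copy of $I$ by the ultralimit-of-closed-sets argument. The only cosmetic difference is that the paper quantifies over arbitrary open intervals $J\supseteq I$ where you use the $\varepsilon$-thickening $I_\varepsilon$; the paper also elides the degenerate cases for $I_n'$ that you explicitly flag, correctly noting they are confined to a $\mathcal U$-small set.
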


\begin{proof}
	(i) The continuity of $\tau_{n}$ implies that $\tau_{n}(G_{n})$ is either $\mathbb T$ or a finite cyclic subgroup of $\mathbb T$, as these are the only compact subgroups of $\mathbb T$.  The hypothesis that $\tau$ is surjective implies that for all $x\in \mathbb T$, there is a sequence of elements $g_{n}\in G_{n}$ such that $\lim_{n\to \mathcal U} \tau_{n}(g_{n})=x$. Thus if $\tau_{n}(G_{n})\neq\mathbb T$ for $\mathcal U$-many $n$, then $\tau_{n}(G_{n})=C_{N_{n}}$, where $\lim_{n\to \mathcal U} N_{n}=\infty$.
	
	(ii) If $\tau_{n}(G_{n})=\mathbb T$, then $\tau_{n}^{-1}(I)$ is a Bohr interval of $m_{n}$-measure $\lambda(I)$, by definition.  If $\tau_{n}(G_{n})\neq \mathbb T$ for $\mathcal U$-many $n$, then Part (i) implies that $\tau_{n}(G_{n})=C_{N_{n}}$ for $\mathcal U$-many $n$, and $\lim_{n\to \mathcal U} N_{n}=\infty$. For such  $n$,  $\tilde{I}_{n}$ is an $N$-cyclic progression, and the estimate of $m_{n}(\tilde{I}_{n})$ then follows from Lemma \ref{lem:CyclicMeetInterval}.
	
	(iii) The hypothesis $\mb A\subseteq \tau^{-1}(I)$ is equivalent to the condition $\lim_{n\to \mathcal U} \tau_{n}(a_{n}) \in I$ whenever $a_{n}\in A_{n}$. Let $J\subseteq \mathbb T$ be an open interval containing $I$ and $\bar{J}$ its closure.   Observe that $\tau_{n}(A_{n})\subseteq \bar{J}$ for $\mathcal U$-many $n$: assuming otherwise, there is a sequence $a_{n}\in A_{n}$ such that $\tau_{n}(a_{n})\notin \bar{J}$ for $\mathcal U$-many $n$, contradicting the condition $\lim_{n\to \mathcal U} \tau_{n}(a_{n})\in I$.   If we let $I_{n}'$ be the smallest closed interval containing $\tau_n(A_{n})$, we then have $I_{n}'\subseteq \bar{J}$ for $\mathcal U$-many $n.$ Thus $L:=\lim_{n\to \mathcal U} \lambda(I_{n}')\leq \lambda(\bar{J})$.  Since $J$ is an arbitrary open interval containing the interval $I$, this implies $L\leq \lambda(I)$, so the $I_{n}'$ are the desired intervals.
\end{proof}

\begin{corollary}\label{cor:PullPreintervalsDown}
	If $\mb A=\prod_{n\to \mathcal U} A_{n}, \mb B=\prod_{n\to \mathcal U} B_{n}\subseteq \mb G$ are internal sets and $\tilde{I}, \tilde{J}\subseteq \mb G$ are parallel Bohr intervals such that $\mb A\subseteq \tilde{I}$, $\mb B\subseteq \tilde{J}$, and $\mu(\tilde{I})=\mu(\mb A)$, $\mu(\tilde{J})=\mu(\mb B)$ then there are $\tilde{I}_{n}, \tilde{J}_{n}\subseteq  G_{n}$ such that
	\begin{equation}\label{eqn:Containers}
	A_{n}\subseteq \tilde{I}_{n}, \quad B_{n}\subseteq \tilde{J}_{n}, \quad \lim_{n\to \mathcal U} m_{n}(\tilde{I}_{n}\setminus A_{n})+m_{n}(\tilde{J}_{n}\setminus B_{n})=0,
	\end{equation}and $\tilde{I}_{n}$ and $\tilde{J}_{n}$ are either parallel Bohr intervals or parallel $N_{n}$-cyclic progressions, where $\lim_{n\to \mathcal U} N_{n}=\infty$.
\end{corollary}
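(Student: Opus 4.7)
The plan is to unpack the parallelism hypothesis, apply Lemma \ref{lem:InternalIntervals} descent to produce candidate intervals downstairs, and then use the measure-equality assumption to show the excesses $\tilde{I}_n\setminus A_n$ and $\tilde{J}_n\setminus B_n$ are $\mathcal U$-null in measure.

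First I would use Definition \ref{def:PreInterval} to extract a single surjective $\mu$-measurable homomorphism $\tau:\mb G\to \mathbb T$ and closed intervals $I, J\subseteq \mathbb T$ with $\tilde I=\tau^{-1}(I)$ and $\tilde J=\tau^{-1}(J)$.  Lemma \ref{lem:InternalCharacter} then produces continuous homomorphisms $\tau_n:G_n\to \mathbb T$ with $\tau=\lim_{n\to\mathcal U}\tau_n$.  By Lemma \ref{lem:InternalIntervals}(i), either $\tau_n(G_n)=\mathbb T$ for $\mathcal U$-many $n$, or $\tau_n(G_n)=C_{N_n}$ for $\mathcal U$-many $n$ with $\lim_{n\to\mathcal U} N_n=\infty$.

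Next, apply Lemma \ref{lem:InternalIntervals}(iii) separately to $\mb A\subseteq \tau^{-1}(I)$ and $\mb B\subseteq \tau^{-1}(J)$ to obtain closed intervals $I_n', J_n'\subseteq \mathbb T$ with $A_n\subseteq \tau_n^{-1}(I_n')$, $B_n\subseteq \tau_n^{-1}(J_n')$ for $\mathcal U$-many $n$, and
\[\lim_{n\to\mathcal U}\lambda(I_n')\leq \lambda(I),\qquad \lim_{n\to\mathcal U}\lambda(J_n')\leq \lambda(J).\]
Set $\tilde I_n:=\tau_n^{-1}(I_n')$, $\tilde J_n:=\tau_n^{-1}(J_n')$.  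In the first case of Lemma \ref{lem:InternalIntervals}(i) these are parallel Bohr intervals (via the common homomorphism $\tau_n$) with $m_n(\tilde I_n)=\lambda(I_n')$ and $m_n(\tilde J_n)=\lambda(J_n')$; in the second case they are parallel $N_n$-cyclic progressions, and Lemma \ref{lem:CyclicMeetInterval} gives $|m_n(\tilde I_n)-\lambda(I_n')|\leq 1/N_n$ and $|m_n(\tilde J_n)-\lambda(J_n')|\leq 1/N_n$, so the error vanishes since $N_n\to\infty$ along $\mathcal U$.

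The final step is to upgrade the inequalities above to equalities and conclude.  By the definition of Loeb measure, $\mu(\mb A)=\lim_{n\to\mathcal U}m_n(A_n)$, and by hypothesis $\mu(\mb A)=\mu(\tilde I)=\lambda(I)$.  Since $A_n\subseteq \tilde I_n$ for $\mathcal U$-many $n$,
\[\lambda(I)=\lim_{n\to\mathcal U}m_n(A_n)\leq \lim_{n\to\mathcal U} m_n(\tilde I_n)\leq \lim_{n\to\mathcal U}\lambda(I_n')\leq \lambda(I),\]
so equality holds throughout, forcing $\lim_{n\to\mathcal U} m_n(\tilde I_n\setminus A_n)=0$.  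The same argument applied to $\mb B$, $\tilde J$, $J_n'$, $\tilde J_n$ gives $\lim_{n\to\mathcal U} m_n(\tilde J_n\setminus B_n)=0$, which together yields (\ref{eqn:Containers}).  No step is genuinely difficult; the only care needed is keeping the two cases of Lemma \ref{lem:InternalIntervals}(i) in lockstep so that $\tilde I_n,\tilde J_n$ are \emph{parallel} (both coming from the same $\tau_n$) and so that the $1/N_n$ slack from Lemma \ref{lem:CyclicMeetInterval} does not spoil the squeezing argument, which is handled by $N_n\to\infty$.
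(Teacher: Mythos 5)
Your proof is correct and takes essentially the same route as the paper's: extract the common homomorphism $\tau$ and intervals $I,J$, descend to $\tau_n$ via Lemma \ref{lem:InternalCharacter}, invoke Lemma \ref{lem:InternalIntervals} to produce $\tilde I_n, \tilde J_n$, and then squeeze on measures using $\mu(\tilde I)=\mu(\mb A)$. The paper's proof is terser---it cites parts (ii) and (iii) of Lemma \ref{lem:InternalIntervals} and leaves the measure-squeezing and the $1/N_n$ slack implicit---whereas you have spelled out exactly those details, which is a reasonable and accurate elaboration of the same argument.
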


\begin{proof}
	By the definition of ``parallel Bohr intervals'', we can write $\tilde{I}$ and $\tilde{J}$ as $\tau^{-1}(I)$ and $\tau^{-1}(J)$ for some surjective $\mu$-measurable homomorphism $\tau: \mb G\to \mathbb T$ and intervals $I, J$ contained in $\mathbb T$.  Lemma \ref{lem:InternalCharacter}  implies $\tau=\lim_{n\to \mathcal U} \tau_{n}$ for some continuous homomorphisms $\tau_{n}:G_{n}\to \mathbb T$. Parts (ii) and (iii) of Lemma \ref{lem:InternalIntervals} then guarantee the existence of Bohr intervals or $N$-cyclic progressions $\tilde{I}_{n}, \tilde{J}_{n}$ satisfying (\ref{eqn:Containers}).
\end{proof}

We often need to infer information about a group element $b$ based on the fact that a translate $A+b$ is contained in a certain Bohr interval. This possible under strong hypotheses, according to Lemma \ref{lem:MoveInterval}.  The following definition helps abbreviate the statement.

\begin{definition}\label{def:Movers}
	Let $G$ be a group and  $A, B\subseteq G$.  Define
	\[
	\bar{A}_{B}:=\{g\in G: g+B\subseteq A+B\}.
	\]
	If $\nu$ is a measure on $G$, let $\bar{A}_{B,\nu}:=\{g\in G: g+B\subset_{\nu} A+B\}$.
\end{definition}

The following lemma is elementary and well known in the special case where $G$ is the torus $\mathbb T$.

\begin{lemma}\label{lem:MoveInterval}
	If $G$ is an abelian group with translation invariant probability measure $\nu$ and $A, B\subseteq G$ are parallel Bohr intervals with $\nu(A)+\nu(B)<1$, then $\bar{A}_{B}=\bar{A}_{B,\nu}=A$ and $\bar{B}_{A}=\bar{B}_{A,\nu}=B$.
\end{lemma}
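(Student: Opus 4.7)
The plan is to reduce to an elementary fact about intervals in $\mathbb T$ using the common defining homomorphism. Since $A$ and $B$ are parallel Bohr intervals, I would write $A=\tau^{-1}(I)$ and $B=\tau^{-1}(J)$ for a single surjective $\nu$-measurable homomorphism $\tau:G\to\mathbb T$ and closed intervals $I,J\subseteq\mathbb T$. By Lemma \ref{lem:MeasureIsPreserved}, $\tau$ preserves $\nu$, so $\lambda(I)=\nu(A)$, $\lambda(J)=\nu(B)$, and $A+B=\tau^{-1}(I+J)$, where $I+J$ is a closed arc of length $\lambda(I)+\lambda(J)=\nu(A)+\nu(B)<1$.

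By symmetry it suffices to prove $\bar{A}_B=\bar{A}_{B,\nu}=A$. The containments $A\subseteq\bar{A}_B\subseteq\bar{A}_{B,\nu}$ are immediate from the definitions: if $a\in A$ then $a+B\subseteq A+B$ by the definition of sumset, and genuine containment implies essential containment. For the reverse containment $\bar{A}_{B,\nu}\subseteq A$, fix $g\in\bar{A}_{B,\nu}$ and set $h:=\tau(g)\in\mathbb T$. Then $g+B=\tau^{-1}(h+J)$, and the condition $g+B\subset_{\nu}A+B$ pushes down via the measure-preserving map $\tau$ to $\lambda((h+J)\setminus(I+J))=0$. Both $h+J$ and $I+J$ are closed arcs in $\mathbb T$, and the hypothesis $\lambda(I+J)<1$ makes $\mathbb T\setminus(I+J)$ nonempty and open. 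Consequently, if any point $x\in h+J$ lay outside $I+J$, then an open subarc of $\mathbb T\setminus(I+J)$ around $x$ would intersect the closed arc $h+J$ (which has positive length $\lambda(J)$) in a nondegenerate subarc of positive $\lambda$-measure, contradicting the vanishing above. Hence $h+J\subseteq I+J$. Comparing these two closed arcs of lengths $\lambda(J)$ and $\lambda(I)+\lambda(J)$ by lifting to $\mathbb R$ then forces $h\in I$, so $g\in\tau^{-1}(I)=A$.

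The main subtlety, and really the only place where all the hypotheses are used, is the implication $\lambda((h+J)\setminus(I+J))=0\Rightarrow h+J\subseteq I+J$. Without the assumption $\nu(A)+\nu(B)<1$ one could have $I+J=\mathbb T$, in which case $A+B=G$ and $\bar{A}_{B,\nu}=G\neq A$ in general; the strict inequality is precisely what produces the nonempty open complement of $I+J$ needed to rule out exceptional points of $h+J$ lying outside $I+J$.
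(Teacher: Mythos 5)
Your proof is correct and follows exactly the reduction the paper sketches: the paper omits the proof, noting only that the general case follows from the torus case via the surjective measure-preserving homomorphism $\tau$, which is precisely your strategy. You additionally fill in the elementary torus argument (using the nonempty open complement of $I+J$ to upgrade $\lambda$-a.e.\ containment of arcs to genuine containment, then lifting to $\mathbb R$), which the paper regards as well known.
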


We omit the proof.  The general case follows from the case where $G$ is $\mathbb T$, since the homomorphism $\tau:G\to \mathbb T$ in the definition of ``Bohr interval'' is always surjective and measure preserving.

\begin{corollary}\label{cor:EssentialMoveInterval}
	With $G$ as in Lemma \ref{lem:MoveInterval}, let $A, B\subseteq G$ satisfy $\nu_{*}(A+B)\leq \nu(A)+\nu(B)$, and suppose $\tilde{I}, \tilde{J}\subseteq G$ are parallel Bohr intervals such that $\nu(\tilde{I})+\nu(\tilde{J})<1$,  $A\sim_{\nu} \tilde{I}$, and $B\sim_{\nu}\tilde{J}$.  Then $A\subseteq \tilde{I}$ and $B\subseteq \tilde{J}$.
\end{corollary}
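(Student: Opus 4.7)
Let me outline how I would approach this proof. The plan is to fix an arbitrary $a \in A$ and prove $a \in \tilde{I}$ (symmetry handles $B \subseteq \tilde{J}$). By Lemma \ref{lem:MoveInterval}, $\bar{\tilde{I}}_{\tilde{J},\nu} = \tilde{I}$, so it suffices to show that $a + \tilde{J} \subset_\nu \tilde{I} + \tilde{J}$. The rest of the proof shows this essential containment by squeezing measure bounds.

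First I would set $A_0 := A \cap \tilde{I}$ and $B_0 := B \cap \tilde{J}$, so $A_0 \sim_\nu \tilde{I}$, $B_0 \sim_\nu \tilde{J}$, and $\nu(A_0) = \nu(\tilde{I})$, $\nu(B_0) = \nu(\tilde{J})$. For each fixed $x$, translation invariance gives $A_0 \cap (x - B_0) \sim_\nu \tilde{I} \cap (x - \tilde{J})$, hence $1_{A_0} *_\nu 1_{B_0}(x) = 1_{\tilde{I}} *_\nu 1_{\tilde{J}}(x)$ as a pointwise identity. The support $S := \{1_{A_0} * 1_{B_0} > 0\}$ is $\nu$-measurable (Fubini), contained in $A_0 + B_0$, and equal to $\{1_{\tilde{I}} * 1_{\tilde{J}} > 0\}$, which is $\sim_\nu \tilde{I} + \tilde{J}$ (pushing down to $\mathbb{T}$ via the defining homomorphism $\tau$, using $\nu(\tilde{I}) + \nu(\tilde{J}) < 1$). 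Thus $\nu_*(A_0 + B_0) \geq \nu(S) = \nu(\tilde{I}) + \nu(\tilde{J})$.

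Combining this with the hypothesis $\nu_*(A + B) \leq \nu(A) + \nu(B) = \nu(\tilde{I}) + \nu(\tilde{J})$ and the containment $A_0 + B_0 \subseteq A + B$, equality holds throughout; in particular $\nu_*(A + B) = \nu(\tilde{I} + \tilde{J})$. Now for the chosen $a \in A$, observe that $(A_0 + B_0) \cup (a + B_0) \subseteq A + B$. The measurable subset $S \cup (a + B_0)$ of the left side has measure
\[
\nu(S \cup (a + B_0)) = \nu(\tilde{I} + \tilde{J}) + \nu(\tilde{J}) - \nu(S \cap (a + B_0)),
\]
and this must be at most $\nu_*(A + B) = \nu(\tilde{I} + \tilde{J})$. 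Rearranging forces $\nu(S \cap (a + B_0)) \geq \nu(\tilde{J}) = \nu(a + B_0)$, so $a + B_0 \subset_\nu S \sim_\nu \tilde{I} + \tilde{J}$. Since $B_0 \sim_\nu \tilde{J}$, translation invariance gives $a + \tilde{J} \subset_\nu \tilde{I} + \tilde{J}$, and Lemma \ref{lem:MoveInterval} then yields $a \in \tilde{I}$.

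The main obstacle I anticipate is the pointwise equality $1_{A_0} * 1_{B_0} \equiv 1_{\tilde{I}} * 1_{\tilde{J}}$ everywhere (not merely $\mu$-a.e.), which is what ensures that the support $S$ is a genuinely measurable subset of $A_0 + B_0$ with full measure $\nu(\tilde{I} + \tilde{J})$; without this, one cannot rigorously extract a measurable lower-bound set on which to run the inclusion-exclusion estimate. Every other step is straightforward manipulation of measures and the cited Lemma \ref{lem:MoveInterval}.
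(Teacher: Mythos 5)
Your proof is correct and follows essentially the same route as the paper's: reduce via Lemma \ref{lem:MoveInterval} to showing $a+\tilde{J}\subset_{\nu}\tilde{I}+\tilde{J}$, use pointwise equality of the convolutions $1_{A}*1_{B}=1_{\tilde{I}}*1_{\tilde{J}}$ to show the level set $E\sim_{\nu}\tilde{I}+\tilde{J}$ sits inside $A+B$ with measure equal to $\nu_{*}(A+B)$, then conclude that any measurable subset of $A+B$ (in particular $a+B$) is essentially contained in $\tilde{I}+\tilde{J}$. Your extra bookkeeping with $A_{0}=A\cap\tilde{I}$, $B_{0}=B\cap\tilde{J}$ and the explicit inclusion-exclusion estimate just unpacks the paper's shorter observation that $\nu_{*}(A+B)=\nu(\tilde{I}+\tilde{J})$ already forces every measurable $C\subseteq A+B$ to satisfy $C\subset_{\nu}\tilde{I}+\tilde{J}$, so the anticipated ``obstacle'' is not a real one.
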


\begin{proof}
	By Lemma \ref{lem:MoveInterval}, to prove $A\subseteq \tilde{I}$ it suffices to show that for all $a\in A$, $a+\tilde{J}\subset_{\nu}\tilde{I}+\tilde{J}$.  Note that the similarities $A\sim_{\nu} \tilde{I}$ and $B\sim_{\nu}\tilde{J}$ imply $1_{A}*1_{B}=1_{\tilde{I}}*1_{\tilde{J}}$, and the level set $E:=\{x:1_{\tilde{I}}*1_{\tilde{J}}(x)>0\}$ satisfies $E\sim_{\nu} \tilde{I}+\tilde{J}$.  Hence $\tilde{I}+\tilde{J}\subset_{\nu} A+B$, so the hypothesis $\nu_{*}(A+B)\leq \nu(A)+\nu(B)$ implies $\nu_{*}(A+B)=\nu(\tilde{I}+\tilde{J})$.  In particular if $C$ is a $\nu$-measurable subset of $A+B$, then $C\subset_{\nu} \tilde{I}+\tilde{J}$. Thus $a+B\subset_{\nu} \tilde{I}+\tilde{J}$, so the similarity $B\sim_{\nu} \tilde{J}$ implies $a+\tilde{J}\subset_{\nu} \tilde{I}+\tilde{J}$ for all $a\in A$, as desired.  This establishes $A\subseteq \tilde{I}$, and the containment $B\subseteq \tilde{J}$ follows by symmetry. \end{proof}

\section{Proof of Theorem \ref{thm:TaoInverse1}}\label{sec:ConnectedProof}

We prove Theorem \ref{thm:TaoInverse1} at the end of this section.  The main ingredient of the proof is Proposition \ref{prop:ConnectedLift}, the natural analogue of Theorem \ref{thm:Satz2} in the setting of ultraproducts of connected compact groups. However, we will not assume in Proposition \ref{prop:ConnectedLift} that the constituent groups $G_{n}$ are connected; we merely assume that $\widehat{\mb G}$ is torsion free.  In fact $\widehat{\mb G}$ is torsion free whenever the $G_{n}$ are connected, but this is so for other ultraproducts, such as $\prod_{n\to \mathcal U} \mathbb Z/p_{n}\mathbb Z$, where $p_{n}$ is the $n^{\text{th}}$ prime.

Fix a sequence of compact abelian groups $G_{n}$ with Haar probability measure $m_{n}$ and a nonprincipal ultrafilter $\mathcal U$ on $\mathbb N$.  Let $\mb G=\prod_{n\to \mathcal U} G_{n}$ with corresponding Loeb measure $\mu$.   Let $\widehat{\mb G}$ denote the group of strong characters of $\mb G$, as in \S\ref{sec:Characters}.  Recall the term ``Bohr interval'' from Definition \ref{def:PreInterval}.

\begin{proposition}\label{prop:ConnectedLift}
	Let $\mb G$ be as above and assume $\widehat{\mb G}$ is torsion free.  If $A$, $B\subseteq \mb G$ are $\mu$-measurable sets satisfying $\mu(A), \mu(B)>0$ and $\mu_{*}(A+B)\leq \mu(A)+\mu(B)<1$, then there are parallel Bohr intervals $\tilde{I}, \tilde{J}\subseteq \mb G$ such that $A\subseteq \tilde{I}$, $B\subseteq \tilde{J}$, and $\mu(A)=\mu(\tilde{I})$, $\mu(B)=\mu(\tilde{J})$.
\end{proposition}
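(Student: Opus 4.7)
The plan is to reduce the proposition to Kneser's Satz 2 (Theorem \ref{thm:Satz2}) on a compact metrizable quotient of $\mb G$, and then lift the conclusion back via Corollary \ref{cor:EssentialMoveInterval}.

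First I would apply Corollary \ref{cor:ProjectAndPullBack} to the pair $A, B$ to obtain a compact metrizable quotient $G$ of $\mb G$, a $\mu$-measure preserving surjective homomorphism $\pi: \mb G \to G$ with Haar probability measure $m$ on $G$, and Borel sets $C, D \subseteq G$ such that $A \subset_\mu \pi^{-1}(C)$, $B \subset_\mu \pi^{-1}(D)$, $C+D$ is Borel, and $m(C+D) \leq \mu_*(A+B)$. The key observation at this stage is that $G$ must be \emph{connected}: every continuous $\chi \in \widehat{G}$ gives rise to a $\mu$-measurable homomorphism $\chi \circ \pi: \mb G \to \mathcal S^1$, which by Lemma \ref{lem:InternalCharacter} lies in $\widehat{\mb G}$, and the map $\chi \mapsto \chi \circ \pi$ is injective because $\pi$ is surjective. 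Thus $\widehat G$ embeds into the torsion-free group $\widehat{\mb G}$, making $\widehat G$ torsion free, which is equivalent to $G$ being connected.

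Next I would extract the relevant equalities of measures. Since $\pi$ preserves $\mu$ we have $m(C) \geq \mu(A) > 0$ and $m(D) \geq \mu(B) > 0$, and on the other hand
\[
m(C+D) \leq \mu_*(A+B) \leq \mu(A) + \mu(B) < 1.
\]
Because $G$ is connected, Corollary \ref{cor:ConnectedExpansion} gives $m(C+D) \geq \min\{1, m(C)+m(D)\}$. The upper bound $m(C+D) < 1$ then forces $m(C) + m(D) < 1$ and $m(C+D) \geq m(C) + m(D)$. Combining with $m(C+D) \leq \mu(A)+\mu(B) \leq m(C)+m(D)$, equality holds throughout:
\[
m(C) = \mu(A), \quad m(D) = \mu(B), \quad m(C+D) = m(C)+m(D) < 1.
\]

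Now I would invoke Theorem \ref{thm:Satz2} on the connected compact group $G$ to obtain parallel Bohr intervals $C', D' \subseteq G$ with $C \subseteq C'$, $D \subseteq D'$, and $m(C') = m(C)$, $m(D') = m(D)$. Writing $C' = \tau^{-1}(I)$, $D' = \tau^{-1}(J)$ for a surjective $m$-measurable homomorphism $\tau: G \to \mathbb T$ and intervals $I, J \subseteq \mathbb T$, I would set $\tilde\tau := \tau \circ \pi: \mb G \to \mathbb T$, which is a surjective $\mu$-measurable homomorphism, and define $\tilde I := \tilde\tau^{-1}(I) = \pi^{-1}(C')$ and $\tilde J := \tilde\tau^{-1}(J) = \pi^{-1}(D')$. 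These are parallel Bohr intervals in $\mb G$ with $\mu(\tilde I) = m(C') = \mu(A)$ and $\mu(\tilde J) = m(D') = \mu(B)$, and from $A \subset_\mu \pi^{-1}(C) \subseteq \tilde I$ together with matching measures we get $A \sim_\mu \tilde I$ and similarly $B \sim_\mu \tilde J$.

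The last step, which I expect to be the main obstacle since essential containment is a priori weaker than actual containment, is to upgrade $A \sim_\mu \tilde I$, $B \sim_\mu \tilde J$ to $A \subseteq \tilde I$, $B \subseteq \tilde J$. For this I would apply Corollary \ref{cor:EssentialMoveInterval} with $\nu = \mu$: the hypotheses $\mu_*(A+B) \leq \mu(A)+\mu(B)$ and $\mu(\tilde I)+\mu(\tilde J) = \mu(A)+\mu(B) < 1$ are exactly what that corollary requires, and it yields the desired actual containments, completing the proof.
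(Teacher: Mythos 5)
Your proposal is correct and follows essentially the same route as the paper: reduce to a compact metrizable quotient $G$ via Corollary \ref{cor:ProjectAndPullBack}, observe $G$ is connected because $\widehat{G}$ embeds in the torsion-free $\widehat{\mb G}$, force equality in the measure chain using Corollary \ref{cor:ConnectedExpansion}, apply Kneser's Satz 2, pull back, and finish with Corollary \ref{cor:EssentialMoveInterval}. The only difference is presentational: the paper states at the outset (rather than at the end) that Corollary \ref{cor:EssentialMoveInterval} reduces the problem to finding Bohr intervals $\mu$-similar to $A$ and $B$.
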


\begin{proof}
	By Corollary \ref{cor:EssentialMoveInterval}, it suffices to find Bohr intervals $\tilde{I}, \tilde{J}\subseteq \mb G$ such that $A\sim_{\mu} \tilde{I}$ and $B\sim_{\mu}\tilde{J}$.

	First we show that every compact quotient $G$ of $\mb G$ with $\mu$-measurable quotient map $\pi:\mb G\to G$ is connected.  To see this, note that the map $\pi^{*}: \widehat{G}\to \widehat{\mb G}$, $\pi^{*}(\chi):=\chi\circ \pi$ is an injective homomorphism, so that $\widehat{G}$ is isomorphic to a subgroup of $\widehat{\mb G}$.  Hence, $\widehat{G}$ is torsion free, and we conclude that $G$ is connected by Theorem 2.5.6(c) of \cite{RudinFourier}.

	Now apply Corollary \ref{cor:ProjectAndPullBack} to find a compact metrizable quotient $G$ of $\mb G$ and a $\mu$-measurable quotient map $\pi$ and Borel sets $C, D\subseteq G$, $A':=\pi^{-1}(C)$, $B':=\pi^{-1}(D)$, such that $C+D$ is Borel,  $A'+B'\subset_{\mu} A+B$, and $A\subset_{\mu} A'$, $B\subset_{\mu} B'$.  Then $\mu(A')\geq \mu(A)$ and $\mu(B')\geq \mu(B)$, so
	\begin{align*}
	m(C+D)=\mu(A'+B')&\leq \mu_{*}(A+B)\\ &\leq \mu(A)+\mu(B)\leq \mu(A')+\mu(B')=m(C)+m(D).
	\end{align*}Then $m(C+D)\leq m(C)+m(D)$ and $m(C+D)<1$, so Corollary \ref{cor:ConnectedExpansion} and the connectedness of $G$ imply $m(C+D)=m(C)+m(D)$.  The inequalities displayed above are therefore all equalities, and in particular $\mu(A)+\mu(B)=\mu(A')+\mu(B')$. The containments $A\subset_{\mu} A'$ and $B\subset_{\mu} B'$ then imply $\mu(A)=\mu(A')$ and $\mu(B)=\mu(B')$, so \begin{equation}\label{eqn:AsimA'}
	A\sim_{\mu}A' \quad \text{and} \quad B\sim_{\mu} B'.
	\end{equation} Theorem \ref{thm:Satz2} implies that there are parallel Bohr intervals $I', J'\subseteq G$ such that $C\subseteq I'$, $D\subseteq J'$ and $m(C)=m(I')$, $m(D)=m(J')$.  Then $\tilde{I}:=\pi^{-1}(I')$ and $\tilde{J}:=\pi^{-1}(J')$ are parallel Bohr intervals such that $A'\subseteq \tilde{I}$, $B'\subseteq \tilde{J}$, and $\mu(A')=\mu(\tilde{I})$, $\mu(B')=\mu(\tilde{J})$.   The similarities in (\ref{eqn:AsimA'}) then imply $A\sim_{\mu} \tilde{I}$ and $B\sim_{\mu}\tilde{J}$, as desired.
\end{proof}

\begin{proof}[Proof of Theorem \ref{thm:TaoInverse1}]
	Suppose, to get a contradiction, that Theorem \ref{thm:TaoInverse1} fails for a given $\varepsilon>0$.  Then for all sufficiently large $n$, there is a connected compact abelian group $G_{n}$ with Haar probability measure $m_{n}$, inner Haar measure $m_{n*}$, and sets $A_{n}, B_{n} \subseteq G_{n}$ having $m_{n}(A_{n}), m_{n}(B_{n})>\varepsilon$ such that
	\begin{enumerate}
		\item[(a.1)] $m_{n*}(A_{n}+B_{n})\leq m_{n}(A_{n})+m_{n}(B_{n})+\frac{1}{n}<1-\frac{
			\varepsilon}{2}$,   and
		
		\item[(a.2)] for every pair of parallel Bohr intervals $\tilde{I}_{n}$, $ \tilde{J}_{n}\subseteq G_{n}$ having $m_{n}(\tilde{I}_{n})< m(A_{n})+\frac{\varepsilon}{2}$, $m_{n}(\tilde{J}_{n})< m(B_{n})+\frac{\varepsilon}{2}$, we have $A_{n}\nsubseteq\tilde{I}_{n}$ or $B_{n}\nsubseteq \tilde{J}_{n}$.
	\end{enumerate}
	Let $\mathcal U$ be a nonprincipal ultrafilter on $\mathbb N$, and form the ultraproduct $\mb G=\prod_{n\to \mathcal U} G_{n}$ with Loeb measure $\mu$ corresponding to $m_{n}$. Consider the internal sets $\mb A=\prod_{n\to \mathcal U} A_{n}$, $\mb B=\prod_{n\to \mathcal U} B_{n}\subseteq \mb G$.  Then
	\[\mu(\mb A)=\lim_{n\to \mathcal U} m_{n}(A_{n})\geq \varepsilon,\  \mu(\mb B)=\lim_{n\to\mathcal U} m_{n}(B_{n})\geq \varepsilon,\] and $\mu_{*}(\mb A+\mb B)=\lim_{n\to \mathcal U} m_{n*}(A_{n}+B_{n})$ by Lemma \ref{lem:InnerLoeb},  so  $\mu_{*}(\mb A+\mb B)\leq \mu(\mb A)+\mu(\mb B)~<~1$.
	
	Observe that $\widehat{\mb G}$ is torsion free by Lemma \ref{lem:GHatTorsion}, as  each $\widehat{G}_{n}$ is torsion free due to the connectedness of $G_{n}$. Proposition \ref{prop:ConnectedLift} implies that there are parallel Bohr intervals $\tilde{I}, \tilde{J}\subseteq \mb G$ with $\mb A\subseteq \tilde{I}$, $\mb B\subseteq \tilde{J}$, and $\mu(\mb A)=\mu(\tilde{I})$, $\mu(\mb B)=\mu(\tilde{J})$.  Corollary \ref{cor:PullPreintervalsDown} and the connectedness of $G_{n}$ then imply that for $\mathcal U$-many $n$, there are parallel Bohr intervals $\tilde{I}_{n}, \tilde{J}_{n}\subseteq \mathbb T$ containing $A_{n}$ and $B_{n}$, respectively, such that $m_{n}(\tilde{I}_{n}\setminus A_{n})+m_{n}(\tilde{J}_{n}\setminus B_{n})<\frac{\varepsilon}{2}$.  These inequalities and containments contradict assumption (a.2). \end{proof}

\section{Periodic and quasi-periodic sets in ultraproducts}\label{sec:FiniteIndex}
Here we study periodic and quasi-periodic subsets of ultraproducts (Definitions \ref{def:Periodic} and \ref{def:QP}).
Let $(G_{n})_{n\in \mathbb N}$ be a sequence of compact abelian groups with Haar probability measure $m_{n}$ and inner Haar measure $m_{n*}$.  Let $\mathcal U$ be a nonprincipal ultrafilter on $\mathbb N$, and fix the ultraproduct $\mb G=\prod_{n\to \mathcal U} G_{n}$  with Loeb measure $\mu$ corresponding to $m_{n}$.  Let $\mu_{*}$ denote the inner measure associated to $\mu$. If $K\leq \mb G$ is  a $\mu$-measurable finite index subgroup, Corollary \ref{cor:FiniteIndexInternal} implies $K$ is internal: there is a sequence of compact open subgroups $K_{n}\leq G_{n}$ such that $K=\mb K:=\prod_{n\to \mathcal U} K_{n}$.  With a view toward the proofs of Theorems \ref{thm:Popular} and \ref{thm:Precise}, we investigate how decompositions of an internal set $\mb A$ by cosets of $\mb K$ are related to coset decompositions of the constituent sets $A_{n}$ by cosets of $K_{n}$.

For the remainder of the section, fix coset representatives $\mb x^{(1)}, \dots, \mb x^{(k)}$ for $\mb K$, so that $\mb G$ is the disjoint union  $\bigcup_{j=1}^{k} (\mb x^{(j)}+\mb K)$.  For each $j\leq k$ and $n\in \mathbb N$, choose  $x_{n}^{(j)}\in G_{n}$ so that $(x_{n}^{(j)})_{n\in \mathbb N}$ represents $\mb x^{(j)}$ (in the sense of \S\ref{sec:UPconstruction}).  We write $\mb K^{(j)}$ for $\mb x^{(j)}+\mb K$ and write $K^{(j)}_{n}$ for $x^{(j)}_{n}+K_{n}$.

\begin{lemma}\label{lem:FiniteIsomorphism}
	Suppose $\mb G$, $\mb K$, $\mb K^{(j)}$, and $K_{n}^{(j)}$ are as defined in the preceding paragraph.  Let $\mb A=\prod_{n\to \mathcal U} A_{n}\subseteq \mb G$ be an internal set.  Then the following hold for $\mathcal U$-many $n$.  To be clear, this means that there is a $U\in \mathcal U$ such that for all $n\in U$, the following are true.
	
	\begin{enumerate}
		\item[(i)]    The map $\phi_{n}: G_{n}/K_{n}\to \mb G/\mb K$ defined by $\phi_{n}(K_{n}^{(j)})=\mb K^{(j)}$ is an isomorphism.
		
		\item[(ii)]  $A_{n}\cap K_{n}^{(j)}=\varnothing$ iff $\mb A \cap \mb K^{(j)}=\varnothing$, while $A_{n}\cap K_{n}^{(j)}=K_{n}^{(j)}$ iff $\mb A \cap \mb K^{(j)}=\mb K^{(j)}$.
		
		\item[(iii)]   $\phi_{n}(A_{n}+K_{n})=\mb A+\mb K$; consequently $\mu(\mb A+\mb K)=m_{n}(A_{n}+K_{n})$.
		
	\end{enumerate}
	
	\noindent Furthermore,
	
	\begin{enumerate}
		\item[(iv)] For all $j$, $\lim_{n\to \mathcal U} m_{n*}(A_{n}\cap K_{n}^{(j)})=\mu_{*}(\mb A\cap \mb K^{(j)})$; if the sets $A_{n}$ are $m_{n}$-measurable we have $\lim_{n\to \mathcal U} m_{n}(A_{n}\cap K_{n}^{(j)})=\mu(\mb A\cap \mb K^{(j)})$.
	\end{enumerate}
	
\end{lemma}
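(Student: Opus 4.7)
The plan is to reduce every assertion to basic properties of the ultraproduct construction together with Corollary \ref{cor:FiniteIndexInternal}, which already identifies $[\mathbf{G} : \mathbf{K}]$ with $[G_n : K_n] = k$ for $\mathcal{U}$-many $n$. Because only finitely many cosets and pairs of cosets are involved, any property that holds for $\mathcal{U}$-many $n$ for each fixed $j \leq k$ (or pair $(i,j)$) also holds for $\mathcal{U}$-many $n$ uniformly in $j$, by intersecting finitely many sets in $\mathcal{U}$.

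For (i), the first observation is that $\mathbf{K}^{(i)} \neq \mathbf{K}^{(j)}$ means $\mathbf{x}^{(i)} - \mathbf{x}^{(j)} \notin \mathbf{K}$, which by the definition of ultraproducts is equivalent to $x_n^{(i)} - x_n^{(j)} \notin K_n$ for $\mathcal{U}$-many $n$, i.e.\ $K_n^{(i)} \neq K_n^{(j)}$ for such $n$. Combined with $|G_n/K_n| = k$ for $\mathcal{U}$-many $n$, this shows $K_n^{(1)}, \dots, K_n^{(k)}$ are distinct and exhaust $G_n/K_n$, so $\phi_n$ is a set-bijection on coset spaces. For each pair $(i,j)$ there is a unique $\ell(i,j)$ with $\mathbf{x}^{(i)} + \mathbf{x}^{(j)} - \mathbf{x}^{(\ell(i,j))} \in \mathbf{K}$, and the same ultraproduct argument gives $x_n^{(i)} + x_n^{(j)} - x_n^{(\ell(i,j))} \in K_n$ for $\mathcal{U}$-many $n$, so $\phi_n$ is in fact a group isomorphism.

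For (ii), both $\mathbf{A} \cap \mathbf{K}^{(j)}$ and $\mathbf{K}^{(j)} \setminus \mathbf{A}$ are internal, represented by the sequences $(A_n \cap K_n^{(j)})_{n \in \mathbb{N}}$ and $(K_n^{(j)} \setminus A_n)_{n \in \mathbb{N}}$; an ultraproduct of sets is empty if and only if the constituent sets are empty for $\mathcal{U}$-many $n$, giving both equivalences. Part (iii) follows from (i) and (ii) by writing $A_n + K_n$ and $\mathbf{A} + \mathbf{K}$ as the unions of those cosets which meet $A_n$ and $\mathbf{A}$, respectively: (ii) identifies these unions under $\phi_n$. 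The measure equality $m_n(A_n + K_n) = \mu(\mathbf{A} + \mathbf{K})$ is then immediate because $m_n(K_n^{(j)}) = 1/k = \mu(\mathbf{K}^{(j)})$ for $\mathcal{U}$-many $n$, and both sums count the same cosets.

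Part (iv) is the internal characterization of Loeb measure: since $A_n \cap K_n^{(j)}$ is internal, the measurable case gives $\mu(\mathbf{A} \cap \mathbf{K}^{(j)}) = \lim_{n \to \mathcal{U}} m_n(A_n \cap K_n^{(j)})$, and the inner-measure version is Lemma \ref{lem:InnerLoeb}. There is no substantive obstacle here; the only care required is the bookkeeping of choosing a single $U \in \mathcal{U}$ that works simultaneously for all $j \leq k$ and all pairs $(i,j)$, which is possible because $k$ is finite.
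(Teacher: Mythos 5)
Your proof is correct and follows essentially the same route as the paper's (which is quite terse). In part (i) the paper passes through the ultraproduct map $\Phi := \prod_{n\to\mathcal U}\phi_n$ and then transfers back to individual $\phi_n$, whereas you verify bijectivity and the homomorphism identity for the $\phi_n$ directly for $\mathcal U$-many $n$; these are presentational variants of the same idea, and your direct check of the multiplication table via the indices $\ell(i,j)$ spells out what the paper leaves implicit. Parts (ii)–(iv) match the paper's one-line justifications, made explicit.
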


\begin{proof}
	To prove (i) first observe that by Corollary \ref{cor:FiniteIndexInternal} we have $|G_{n}/K_{n}|=k$ for $\mathcal U$-many $n$.  Thus it suffices to prove that $\Phi:=\prod_{n\to \mathcal U} \phi_{n}$ is an isomorphism from $\prod_{n\to \mathcal U} (G_{n}/K_{n})$ to $\mb G/\mb K$.  That $\Phi$ is a homomorphism follows from the identity $(x_{n}^{(i)}+x_{n}^{(j)})\sim_{\mathcal U} (x_{n}^{(i)})+(x_{n}^{(j)})$ for each $i, j\leq k$.  The injectivity of $\Phi$ follows from the fact that if $x_{n}^{(i)}-x_{n}^{(j)} \in K_{n}$ for $\mathcal U$-many $n$, then $\mb x^{(i)}-\mb x^{(j)}\in \mb K$, and surjectivity then follows from the equality of cardinalities of the domain and codomain of $\Phi$.  So $\Phi$ is an isomorphism, and hence $\phi_{n}$ is an isomorphism for $\mathcal U$-many $n$.

	Part (ii) follows from the definition of the ultraproduct and our choice of $(x_{n}^{(j)})_{n\in \mathbb N}$.  Part (iii) is an immediate consequence of Part (ii).  Part (iv) follows from the definition of Loeb measure and Lemma \ref{lem:InnerLoeb}. \end{proof}

Before stating the next lemma, we remark that if $\mb A\subseteq \mb G$ is internal, $\mb K$ is an internal subgroup, and $\mb A=A_{1}\cup A_{0}$ is a quasi-periodic decomposition of $\mb A$ with respect to $\mb K$, then the sets $A_{1}$ and $A_{0}$ are internal: $A_{0}$ is the intersection of $\mb A$ with a coset of $\mb K$, while $A_{1}$ is the difference $\mb A\setminus A_{0}$.

\begin{lemma}\label{lem:PullQPdown}
	Suppose $\mb G, \mb K$, $\mb A$, $K_{n}$, and $\phi_{n}$ are as in Lemma \ref{lem:FiniteIsomorphism}, and $\mb A$ has a quasi-periodic decomposition $\mb A=\mb A_{1}\cup \mb A_{0}$ with respect to $\mb K$. If $\varepsilon>0$ then for $\mathcal U$-many $n$, $A_{n}$ has an $\varepsilon$-quasi-periodic decomposition $A_{n}=A_{n,1}\cup A_{n,0}$ with respect to $K_{n}$, such that $\phi_{n}(A_{n,l}+K_{n})=\mb A_{l}+\mb K$ for $l=0, 1$.
\end{lemma}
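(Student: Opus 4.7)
My approach is to read off the decomposition of $A_n$ from the coset pattern of $\mb A$ relative to $\mb K$, and to use Lemma \ref{lem:FiniteIsomorphism}(iv) to upgrade the essential periodicity of $\mb A_{1}$ to $\varepsilon$-periodicity of the analogous piece $A_{n,1}$ for $\mathcal U$-many $n$.  First I would locate the unique $j_{0}\in\{1,\dots,k\}$ with $\mb A_{0}\subseteq \mb K^{(j_{0})}$; since $(\mb A_{1}+\mb K)\cap \mb A_{0}=\varnothing$ and $\mb A_{0}\neq\varnothing$, this forces $\mb A_{1}\cap \mb K^{(j_{0})}=\varnothing$, so $\mb A_{0}=\mb A\cap \mb K^{(j_{0})}$ and $\mb A_{1}=\mb A\setminus \mb K^{(j_{0})}$.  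Mirroring this, I set
\[ A_{n,0}:=A_{n}\cap K_{n}^{(j_{0})}, \qquad A_{n,1}:=A_{n}\setminus K_{n}^{(j_{0})}, \]
observing that $A_{n,0}$ lies in a coset of $K_{n}$ and $(A_{n,1}+K_{n})\cap A_{n,0}=\varnothing$ automatically.  Applying Lemma \ref{lem:FiniteIsomorphism}(ii) at each of the finitely many cosets and intersecting the resulting ultrafilter sets, I can assume for $\mathcal U$-many $n$ that $A_{n}\cap K_{n}^{(j)}\neq\varnothing$ exactly when $\mb A\cap \mb K^{(j)}\neq\varnothing$.  Nonemptiness of $A_{n,0}$ and the identity $\phi_{n}(A_{n,l}+K_{n})=\mb A_{l}+\mb K$ for $l=0,1$ follow immediately.

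The heart of the argument is verifying that $A_{n,1}$ is $\varepsilon$-periodic with respect to $K_{n}$.  Let $S':=\{j\neq j_{0}:\mb A\cap \mb K^{(j)}\neq\varnothing\}$, so that $\mb A_{1}+\mb K=\bigsqcup_{j\in S'}\mb K^{(j)}$.  The similarity $\mb A_{1}\sim_{\mu}\mb A_{1}+\mb K$ then forces $\mu(\mb A\cap \mb K^{(j)})=\mu(\mb K)=1/k$ for every $j\in S'$.  Since each $\mb A\cap \mb K^{(j)}$ is internal, its $\mu$-measure agrees with its inner $\mu$-measure, and Lemma \ref{lem:FiniteIsomorphism}(iv) gives $\lim_{n\to\mathcal U}m_{n*}(A_{n}\cap K_{n}^{(j)})=1/k$ for each such $j$.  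For $\mathcal U$-many $n$ we have $A_{n,1}+K_{n}=\bigsqcup_{j\in S'}K_{n}^{(j)}$, whence $m_{n}(A_{n,1}+K_{n})=|S'|\, m_{n}(K_{n})$.  Additivity of inner measure across the finite clopen partition of $\bigsqcup_{j\in S'}K_{n}^{(j)}$ by the sets $K_{n}^{(j)}$ gives $m_{n*}(A_{n,1})=\sum_{j\in S'}m_{n*}(A_{n}\cap K_{n}^{(j)})$, so
\[ m_{n}(A_{n,1}+K_{n})-m_{n*}(A_{n,1})=\sum_{j\in S'}\bigl(m_{n}(K_{n})-m_{n*}(A_{n}\cap K_{n}^{(j)})\bigr), \]
a finite sum tending to $0$ along $\mathcal U$, hence eventually at most $\varepsilon\, m_{n}(K_{n})$, as needed.

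The main obstacle I anticipate is that the constituent sets $A_{n}$ are not assumed to be $m_{n}$-measurable, which is precisely why $\varepsilon$-periodicity is phrased using inner measure.  My workaround is the slicing by clopen cosets of $K_{n}$ described above: on each slice the inner measure is well behaved with respect to the finite disjoint union, and Lemma \ref{lem:FiniteIsomorphism}(iv) supplies the inner-measure convergence along $\mathcal U$ that lets an arbitrary $\varepsilon>0$ absorb the vanishing defect.
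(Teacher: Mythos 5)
Your proof is correct and follows essentially the same route as the paper: mirror the coset structure of the quasi-periodic decomposition of $\mb A$ down to $A_n$, use Lemma \ref{lem:FiniteIsomorphism}(i)--(iii) for the combinatorial conditions and nonemptiness, and use Part (iv) for the measure estimate yielding $\varepsilon$-periodicity. The paper invokes Part (iv) in one terse line to get $\lim_{n\to\mathcal U}\bigl(m_n(A_{n,1}+K_n)-m_{n*}(A_{n,1})\bigr)=0$; your coset-by-coset bookkeeping via additivity of inner measure across the finite clopen partition is precisely the argument that line leaves implicit, so the two are the same in substance.
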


\begin{proof}
	Write $\mb A_{l}$ as $\prod_{n\to \mathcal U} A_{n,l}$ for $l=0,1$. Parts (i) and (ii) of Lemma \ref{lem:FiniteIsomorphism} imply that for $\mathcal U$-many $n$,  $A_{n,0}$ is contained in a coset of $K_{n}$, $A_{n,0}+K_{n}$ is disjoint from $A_{n,1}$, and  $m_{n}(A_{n}+K_{n})=\mu(\mb A+\mb K)$. Part (iv) then implies $\lim_{n\to \mathcal U}m_{n}(A_{n,1}+K_{n})- m_{n*}(A_{n,1})=0$, and in particular $m_{n}(A_{n,1}+K_{n})-m_{n*}(A_{n,1})<\varepsilon \mu(K_{n})$ for $\mathcal U$-many $n$.
\end{proof}

\begin{lemma}\label{lem:StabilizeInLimit}
	With $\mb K$ and $K_{n}$ as in the preceding lemmas,  let $\mb A=\prod_{n\to \mathcal U} A_{n}$ and $\mb B=\prod_{n\to \mathcal U} B_{n}\subseteq \mb G$ be internal sets such that $\mb A+\mb B=\mb A+\mb B+\mb K$. Then
	\begin{enumerate}
		\item[(i)]
		$A_{n}+B_{n}=A_{n}+B_{n}+K_{n}$ for $\mathcal U$-many $n$.
		
		\item[(ii)] If, in addition, $\mu(\mb A+\mb B)=\mu(\mb A+\mb K)+\mu(\mb B+\mb K)-\mu(\mb K)$, then \[m_{n}(A_{n}+B_{n})=m_{n}(A_{n}+K_{n})+m_{n}(B_{n}+K_{n})-m_{n}(K_{n}) \quad \text{for $\mathcal U$-many $n$.}\]
		
	\end{enumerate}
\end{lemma}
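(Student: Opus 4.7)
My plan is to prove both parts by applying Lemma \ref{lem:FiniteIsomorphism} to the internal set $\mb A+\mb B$. First I need to observe that $\mb A+\mb B$ is itself internal: unpacking representatives shows
\[
\mb A+\mb B = \prod_{n\to\mathcal U}(A_n+B_n),
\]
since any element of the left-hand side has a representative of the form $(a_n+b_n)$ with $a_n\in A_n$, $b_n\in B_n$ for $\mathcal U$-many $n$, and conversely. This identification lets us feed $\mb A+\mb B$ into Lemma \ref{lem:FiniteIsomorphism} in place of $\mb A$.

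For part (i), the hypothesis $\mb A+\mb B=\mb A+\mb B+\mb K$ says that $\mb A+\mb B$ is a union of cosets of $\mb K$, i.e.\ for each of the finitely many cosets $\mb K^{(j)}$ ($j=1,\dots,k$), either $\mb K^{(j)}\subseteq \mb A+\mb B$ or $\mb K^{(j)}\cap(\mb A+\mb B)=\varnothing$. Applying Lemma \ref{lem:FiniteIsomorphism}(ii) to the internal set $\mb A+\mb B$, for each fixed $j$ we get, for $\mathcal U$-many $n$, the equivalences ``$K_n^{(j)}\subseteq A_n+B_n$ iff $\mb K^{(j)}\subseteq \mb A+\mb B$'' and ``$K_n^{(j)}\cap(A_n+B_n)=\varnothing$ iff $\mb K^{(j)}\cap(\mb A+\mb B)=\varnothing$''. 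Since $\mathcal U$ is closed under finite intersections and $j$ ranges over a finite set, these equivalences hold for all $j$ simultaneously for $\mathcal U$-many $n$. For such $n$, every coset $K_n^{(j)}$ is either entirely inside or entirely outside $A_n+B_n$, which is exactly $A_n+B_n=A_n+B_n+K_n$.

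For part (ii), I will use part (i) together with the measure identity in Lemma \ref{lem:FiniteIsomorphism}(iii). For $\mathcal U$-many $n$ we have $A_n+B_n=A_n+B_n+K_n$, so applying Lemma \ref{lem:FiniteIsomorphism}(iii) to the internal set $\mb A+\mb B$ gives $m_n(A_n+B_n)=m_n((A_n+B_n)+K_n)=\mu((\mb A+\mb B)+\mb K)=\mu(\mb A+\mb B)$. Similarly, Lemma \ref{lem:FiniteIsomorphism}(iii) applied to $\mb A$ and to $\mb B$ yields $m_n(A_n+K_n)=\mu(\mb A+\mb K)$ and $m_n(B_n+K_n)=\mu(\mb B+\mb K)$ for $\mathcal U$-many $n$. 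Finally, Corollary \ref{cor:FiniteIndexInternal} gives $|G_n/K_n|=k$ for $\mathcal U$-many $n$, and hence $m_n(K_n)=1/k=\mu(\mb K)$. Intersecting the finitely many sets in $\mathcal U$ on which these equalities hold and substituting into the hypothesis
\[
\mu(\mb A+\mb B)=\mu(\mb A+\mb K)+\mu(\mb B+\mb K)-\mu(\mb K)
\]
yields the desired identity for $\mathcal U$-many $n$.

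There is no serious obstacle here; the only subtlety is the bookkeeping of ``$\mathcal U$-many $n$'' across the finitely many cosets and across the three measure identifications, which is handled by the fact that $\mathcal U$ is closed under finite intersections.
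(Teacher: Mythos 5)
Your proof is correct and takes the same route as the paper, which simply cites Lemma \ref{lem:FiniteIsomorphism} and the identification of Haar measure on $G_n/K_n$ with counting measure; you have spelled out exactly the details that the paper's one-sentence proof leaves implicit, including the observation that $\mb A+\mb B=\prod_{n\to\mathcal U}(A_n+B_n)$ is internal so that Lemma \ref{lem:FiniteIsomorphism} can be applied to it.
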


\begin{proof}
	Parts (i) and (ii) follow from Lemma \ref{lem:FiniteIsomorphism} and the fact that Haar measure on $G_{n}/K_{n}$ and Loeb measure on $\mb G/\mb K$ are both normalized counting measure.
\end{proof}

\section{Reducing to tame critical pairs}\label{sec:ReducibleGeneral}

Here we collect some technical lemmas for \S\ref{sec:LiftLCA}, where the majority of our proofs are carried out.  These are mainly consequences of the \emph{conclusion} of Theorem \ref{thm:Satz1}.  The results of this section do not require countable additivity  of the relevant measures, so we work in a slightly more general setting than in the other sections.

Call a measure $\nu$ on a group $G$ \emph{symmetric} if $\nu(-A)=\nu(A)$ for all $\nu$-measurable sets $A$.  Note that Haar measure $m$ on a compact abelian group is symmetric, so the associated Loeb measure on an ultraproduct of compact abelian groups is also symmetric.

For the remainder of the section, we fix an abelian group $G$ and a \emph{finitely} additive (or countably additive), symmetric, translation invariant measure $\nu$ on $G$.  We do not assume that $\nu(G)$ is finite.  We continue denote by $\nu_{*}$ the inner measure associated to $\nu$: $\nu_{*}(A)=\sup\{\nu(C): C\subseteq A \text{ is } \nu\text{-measurable}\}$.

\begin{lemma}\label{lem:LargeAB}
	If $A, B\subseteq G$ have finite measure and satisfy $\nu(A)+\nu(B)> \nu(G)$, then $A+B=G$.  If $K\leq G$ is a $\nu$-measurable subgroup such that $0<\nu(K)<\infty$ and $\nu((a+K)\cap A)+\nu((b+K)\cap B)>\nu(K)$ for some $a\in A, b\in B$, then $a+b+K\subseteq A+B$.
\end{lemma}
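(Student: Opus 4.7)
The two assertions are linked: the second is essentially a translated, restricted version of the first applied inside the subgroup $K$. So my plan is to prove the first assertion directly, then deduce the second from it.

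For the first assertion, I would argue by contradiction. Fix an arbitrary $g \in G$ and consider the translate $g - B = \{g - b : b \in B\}$. By translation invariance and symmetry of $\nu$, we have $\nu(g - B) = \nu(-B) = \nu(B)$, and this set has finite measure since $B$ does. If $g \notin A+B$, then $A \cap (g - B) = \varnothing$: for any $a \in A$ and $b \in B$ we would have $a \neq g-b$, i.e. $a+b \neq g$. Then $A$ and $g-B$ are disjoint $\nu$-measurable subsets of $G$, so finite additivity yields
\[
\nu(A) + \nu(B) = \nu(A) + \nu(g-B) = \nu(A \cup (g-B)) \leq \nu(G),
\]
contradicting the hypothesis $\nu(A) + \nu(B) > \nu(G)$. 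Hence $g \in A+B$, and since $g$ was arbitrary, $A+B = G$.

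For the second assertion, let $A' := ((a+K) \cap A) - a$ and $B' := ((b+K) \cap B) - b$. Both are $\nu$-measurable subsets of $K$ (using translation invariance to preserve measurability and measure), and
\[
\nu(A') + \nu(B') = \nu((a+K) \cap A) + \nu((b+K) \cap B) > \nu(K).
\]
The restriction of $\nu$ to the $\nu$-measurable subsets of $K$ is a finitely additive, symmetric, translation invariant measure on the group $K$, with $\nu(K) < \infty$. Applying the first assertion inside $K$ to $A'$ and $B'$ therefore gives $A' + B' = K$. Unwinding the translations, this says that every element of $a+b+K$ can be written as the sum of an element of $(a+K) \cap A \subseteq A$ and an element of $(b+K) \cap B \subseteq B$, so $a+b+K \subseteq A+B$.

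The only subtlety worth checking is that finite additivity is enough for the disjoint union estimate in part one (it is) and that the restriction of $\nu$ to $K$ inherits the relevant properties to invoke part one a second time; both are routine. No substantial obstacle is anticipated.
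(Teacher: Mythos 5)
Your proof is correct and follows essentially the same approach as the paper: both proofs reduce the first claim to the observation that $\nu(A)+\nu(g-B)>\nu(G)$ forces $A\cap(g-B)$ to be nonempty (the paper states this directly via $\nu(A\cap(t-B))>0$, you reach the same conclusion by contradiction using finite additivity of disjoint unions), and both deduce the second claim by restricting $\nu$ to the subgroup $K$ and applying the first part to the translated intersections $(A-a)\cap K$ and $(B-b)\cap K$.
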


\begin{proof}
	If $\nu(A)+\nu(B)>\nu(G)$  then for all $t\in G$, we have $\nu(A\cap ( t-B))>0$, and in particular $t\in A+B$.  Thus $A+B=G$.  To prove the second assertion, define the measure $\nu_{K}$ as $\nu$ restricted to $K$. Then $\nu_{K}((A-a)\cap K)+\nu_{K}((B-b)\cap K)>\nu(K)$, so by the first part of the lemma we have $K\subseteq A+B-a-b$, meaning $a+b+K\subseteq A+B$.
\end{proof}

\begin{definition}\label{def:Solid}
	If $K$ is a $\nu$-measurable finite index subgroup of $G$ and $A\subseteq G$, we say that $A$ is \emph{$K$-solid} if for all $g\in G$, either $\nu((g+K)\cap A)>0$ or $(g+K)\cap A=\varnothing$.
\end{definition}

\begin{lemma}\label{lem:SolidMove}
	Let $ K\leq  G$ be a $\nu$-measurable finite index subgroup of $G$, $A\subseteq  G$, and let  $B\subseteq G$ be $ K$-solid. If $A+B\sim_{\nu} A+B+ K$ and $g\in G$, then $g+B\subset_{\nu} A+B+K$ if and only if $g+B+K\subseteq A+B+K$.
\end{lemma}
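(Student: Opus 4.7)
The plan is to prove both directions of the biconditional. The reverse implication is the easy one: if $g+B+K \subseteq A+B+K$, then since $g+B \subseteq g+B+K$, we have $g+B \subseteq A+B+K$ as a set, and in particular $\nu((g+B) \setminus (A+B+K)) = 0$. All the work is in the forward direction, where the goal is to upgrade the measure-theoretic containment $g+B \subset_{\nu} A+B+K$ to the set-theoretic containment $g+B+K \subseteq A+B+K$. The central idea is that $A+B+K$ is a union of cosets of $K$, so in order to show $h+K \subseteq A+B+K$ it suffices to produce a single point in $(h+K) \cap (A+B+K)$, and $K$-solidity of $B$ is the right tool to turn positive $\nu$-measure into a nonempty intersection.

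To carry this out, I would fix an arbitrary $h \in g+B+K$ and write $h = g+b+k$ with $b \in B$ and $k \in K$, so that $h+K = g+b+K$. Applying the $K$-solidity of $B$ at the point $b$ (note that $b \in (b+K)\cap B$, so this intersection is nonempty and therefore has positive $\nu$-measure) gives $\nu((b+K) \cap B) > 0$. Translation invariance of $\nu$ then yields $\nu((h+K) \cap (g+B)) = \nu((b+K) \cap B) > 0$. Combining this with the hypothesis $\nu((g+B) \setminus (A+B+K)) = 0$, I would conclude $\nu((h+K) \cap (g+B) \cap (A+B+K)) > 0$, so in particular $(h+K) \cap (A+B+K) \ne \varnothing$. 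Since $A+B+K$ is periodic with respect to $K$, any single point of intersection with the coset $h+K$ forces $h+K \subseteq A+B+K$, whence $h \in A+B+K$.

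The main technical concern is that $\nu$ is only assumed to be finitely additive and $B$, $g+B$ need not themselves be $\nu$-measurable, so one must verify that every quantity appearing in the argument is well-defined. This is covered by the explicit hypotheses: $K$-solidity of $B$ is defined directly in terms of $\nu$-measures of sets $(b+K) \cap B$, and the hypothesis $g+B \subset_{\nu} A+B+K$ by definition supplies $\nu((g+B) \setminus (A+B+K)) = 0$, so all the measure manipulations pass through cleanly. Interestingly, the hypothesis $A+B \sim_{\nu} A+B+K$ does not seem to enter this specific argument; it is presumably included because the lemma will be invoked in a context where that essential periodicity is already in force.
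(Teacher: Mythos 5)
Your proof is correct and follows essentially the same route as the paper: the key step in both is that $K$-solidity of $B$ converts a nonempty intersection of $g+B$ with a $K$-coset into a positive-measure one, which is then played off against $\nu\bigl((g+B)\setminus(A+B+K)\bigr)=0$. The paper organizes the forward implication as a contrapositive (assume $g+B\nsubseteq A+B+K$, find a coset $h+K$ disjoint from $A+B+K$ but meeting $g+B$, and derive $g+B\not\subset_{\nu} A+B+K$), whereas you argue directly by fixing $h\in g+B+K$; the content is the same. Your observation that the hypothesis $A+B\sim_{\nu} A+B+K$ is unused is also accurate for the lemma as literally stated; it enters only when one rewrites the conclusion as $\bar{A}_{B,\nu}=\bar{A}_{B+K}$ (as the paper does, both in the note after the lemma and throughout its proof), since $\bar{A}_{B,\nu}$ is defined via $g+B\subset_{\nu} A+B$ rather than $g+B\subset_{\nu} A+B+K$, and the similarity is precisely what makes these two conditions interchangeable.
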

In the notation of Definition \ref{def:Movers}, the conclusion of the lemma says   $\bar{A}_{B,\nu}=\bar{A}_{B+K}$.
\begin{proof}
	The inclusion $\bar{A}_{B+K}\subseteq \bar{A}_{B,\nu}$ follows from the similarity $A+B\sim_{\nu} A+B+K$. To prove the reverse inclusion, let $g\in \bar{A}_{B,\nu}$, so that  $g+B\subset_{\nu} A+B$.  If $g+B\nsubseteq A+B+K$, then for some coset $h+K$ disjoint from $A+B+K$, $(g+B)\cap(h+K)\neq \varnothing$, so the $K$-solidity of $B$ implies $\nu((g+B)\cap (h+K))>0$, and we get that $g+B\not\subset_{\nu} A+B+K$, and therefore $g\notin \bar{A}_{B,\nu}$.  So we conclude that $g+B\subseteq A+B+K$, meaning $g\in \bar{A}_{B+K}$.
\end{proof}

In the next definition we use the term ``stabilizer'' from Definition \ref{def:Stabilizer}.

\begin{definition}\label{def:TameCritical}  We say that two $\nu$-measurable subsets $A, B$ of $G$ form a \emph{critical pair} if $\nu_{*}(A+B)<\nu(A)+\nu(B)$.  We say they form a \emph{tame critical pair} if, in addition, $A+B$ is $\nu$-measurable, the stabilizer $H:=H(A+B)$ is $\nu$-measurable, and
	\begin{equation}\label{eqn:PushInequality}
	\nu(A+B)=\nu(A+H)+\nu(B+H)-\nu(H).
	\end{equation} In other words, $A, B$ is a tame critical pair if it satisfies the conclusion of Theorem \ref{thm:Satz1}.  \end{definition}
Observe that the criticality of $A,B$ and Equation (\ref{eqn:PushInequality}) imply $\nu(H)>0$.

The following lemmas provide useful information for tame pairs and related pairs of subsets of $G$.  The first of these uses notation from Definition \ref{def:Movers} and the following.

\begin{notation}
	If $H\leq G$ is a subgroup and $g\in G$, we write $H_{g}$ for the coset $g+H$.
\end{notation}

\begin{lemma}\label{lem:PushSubCritical}
	Let $A, B\subseteq G$ be a tame critical pair and $H:=H(A+B)$  the stabilizer of $A+B$. Then
	\begin{enumerate}
		\item[(i)]
		for all $a\in A$, $b\in B$
		\begin{equation}\label{eqn:FillCosets} \begin{split}
		\nu(A\cap H_{a})+\nu(A+B)\geq \nu(A)+\nu(B), \\
		\nu(B\cap H_{b})+\nu(A+B) \geq \nu(A)+\nu(B), \end{split}
		\end{equation}
		In particular, $A$ and $B$ are $H$-solid.
		\item[(ii)]  We have
		\begin{equation}\label{eqn:CompleteSummand}
		\bar{A}_{B,\nu} =\bar{A}_{B}= A+H,\  \bar{B}_{A,\nu} = \bar{B}_{A}=B+H.
		\end{equation} 
 \item[(iii)]  Furthermore, if $g\in G$ and $A+g\nsubseteq A+B$, then $\nu((A+g)\cup (A+B))\geq \nu(A)+\nu(B).$	
\end{enumerate}
\end{lemma}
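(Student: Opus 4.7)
The plan is to establish the parts in the order (i), (iii), (ii). Let $H := H(A+B)$, set $\alpha := \nu(A+H) - \nu(A)$ and $\beta := \nu(B+H) - \nu(B)$ (both nonnegative), and note that the tameness identity (\ref{eqn:PushInequality}) rearranges to
\[
c := \nu(A) + \nu(B) - \nu(A+B) = \nu(H) - \alpha - \beta,
\]
which is strictly positive by criticality. The entire lemma will be driven by a coset pigeonhole on $A + H$ and $B + H$.

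Part (i) follows from a single pigeonhole step. Writing $A + H$ as the disjoint union of the cosets of $H$ it meets, the total deficit $\sum_{i} \nu(H_{a_i} \setminus A) = \alpha$ forces $\nu(H_{a_i} \setminus A) \leq \alpha$ in each coset, and hence $\nu(A \cap H_a) \geq \nu(H) - \alpha \geq c$ for every $a \in A$. The symmetric inequality $\nu(B \cap H_b) \geq c$ follows identically, proving (\ref{eqn:FillCosets}); since $c > 0$, both $A$ and $B$ are $H$-solid. Part (iii) is then immediate from (i): if $A + g \nsubseteq A+B$, pick $a \in A$ with $a + g \notin A + B$, so that the coset $H_{a+g}$ is disjoint from $A + B$ because $A + B = A + B + H$. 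The slice $(A + g) \cap H_{a+g} = (A \cap H_a) + g$ has measure at least $c$ by (i), and
\[
\nu((A+g) \cup (A+B)) \geq \nu(A+B) + c = \nu(A) + \nu(B).
\]

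For Part (ii), the containments $A + H \subseteq \bar{A}_B \subseteq \bar{A}_{B,\nu}$ are immediate from $A + B = A + B + H$ and the definitions, so the key task is $\bar{A}_{B,\nu} \subseteq A + H$. Let $g + B \subset_\nu A + B$. The $A, B$-swapped form of (iii), whose proof is identical but uses $H$-solidity of $B$, shows that if $g + B \nsubseteq A + B$ then $\nu((g + B) \setminus (A + B)) \geq c > 0$, contradicting essential containment. Hence $g + B \subseteq A + B$. Projecting to $G/H$ via the quotient map $\pi$ and setting $A^* := \pi(A+H)$, $B^* := \pi(B+H)$, the Kneser identity (\ref{eqn:PushInequality}) divided by $\nu(H)$ yields $|A^* + B^*| = |A^*| + |B^*| - 1$ with trivial stabilizer in $G/H$, and the inclusion $g + B \subseteq A + B$ descends to $\pi(g) + B^* \subseteq A^* + B^*$. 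If $\pi(g) \notin A^*$, then $A^* \cup \{\pi(g)\}$ has $|A^*| + 1$ elements and the same sumset with $B^*$; Kneser's inequality applied in $G/H$ to $(A^* \cup \{\pi(g)\}, B^*)$ (trivial stabilizer, since its sumset equals $A^* + B^*$) forces $|A^* + B^*| \geq |A^*| + |B^*|$, a contradiction. Thus $\pi(g) \in A^*$, i.e., $g \in A + H$. The companion equalities for $\bar{B}_{A,\nu}$ and $\bar{B}_A$ follow by interchanging $A$ and $B$.

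The main obstacle is the closing step of (ii), which invokes Kneser's inequality in the discrete quotient $G/H$ applied to the finite sets $A^* \cup \{\pi(g)\}$ and $B^*$; this is a special case of Theorem \ref{thm:Satz1} with counting measure. An alternative is to lift the augmentation back to $G$ by considering $A \cup H_g$ and reading a contradiction off a Kneser identity for the new pair, but tameness of $(A \cup H_g, B)$ is not automatic in the finitely-additive setting of Section \ref{sec:ReducibleGeneral}, so the quotient route is the cleanest.
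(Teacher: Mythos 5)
Your proof is correct and follows essentially the same route as the paper's: part (i) by the coset-deficit bound (equivalent to the paper's algebraic rearrangement of the tameness identity), part (iii) as a direct consequence of (i) and the $H$-periodicity of $A+B$, and part (ii) by first upgrading $\subset_\nu$ to $\subseteq$ and then passing to $G/H$ with counting measure and reading off a contradiction from Theorem \ref{thm:Satz1} applied to the augmented pair. The one cosmetic difference is that the paper establishes $\bar{A}_{B,\nu}=\bar{A}_{B}$ via Lemma \ref{lem:SolidMove} while you derive it from the swapped form of (iii); both are essentially the same $H$-solidity observation.
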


\begin{proof}  To prove the first inequality in Part (i), fix $a\in A$. Let $A':=A\cap H_{a}$, $A'':=A\setminus A'$, so that $A''\cap H_{a}=\varnothing$ and $\nu(A)=\nu(A')+\nu(A'')$.  Then $A+H$ is the disjoint union $(A''+H)\cup H_{a}$, so $\nu(A+H)-\nu(H)=\nu(A''+H)$.  Replacing $\nu(A+H)-\nu(H)$ with $\nu(A''+H)$ in Equation (\ref{eqn:PushInequality}) and adding $\nu(A')$ to both sides results in the following:
	\[\nu(A')+\nu(A+B)=\nu(A')+\nu(A''+H)+\nu(B+H)\geq \nu(A')+\nu(A'')+\nu(B)= \nu(A)+\nu(B),\] meaning $\nu(A')+\nu(A+B)\geq \nu(A)+\nu(B)$.  This is the desired inequality. The second inequality in Part (i) follows by symmetry. Note that the criticality of $A,B$ and (\ref{eqn:FillCosets}) imply that $A$ and $B$ are $H$-solid.

	To prove Part (ii) we first establish $\bar{A}_{B,\nu}=\bar{A}_{B}$. Note that the tameness of $A+B$ implies $A+B$ is a union of cosets of $H$, so the $H$-solidity of $B$ and Lemma \ref{lem:SolidMove} imply $\bar{A}_{B,\nu}=\bar{A}_{B}$.  To prove that $\bar{A}_{B,\nu}=A+H$, we reduce the problem to the special case where $\nu$ is counting measure. Lemma \ref{lem:SolidMove} implies $\bar{A}_{B,\nu} = \bar{A}_{B+H}$, and the last set is clearly a union of cosets of $H$. So all sets we are currently considering in the proof are unions of cosets of $H$, and we may work in the quotient $G/H$.  We consider $G/H$ as a discrete group with counting measure, so we may apply Theorem \ref{thm:Satz1} in this setting.  Replacing $A$ with $A+H$ and $B$ with $B+H$ in $G/H$, the problem is to prove that $\bar{A}_{B}=A$, assuming $|A+B|=|A|+|B|-1$ and the stabilizer $H$ of $A+B$ is trivial.  We clearly have $A\subseteq \bar{A}_{B}$, so we will prove the reverse inclusion. Assume, to get a contradiction, that there is a $c\notin A$ such that $c+B\subseteq A+B$.  Setting $\tilde{A}=A\cup \{c\}$, we get that $\tilde{A}+B=A+B$, so $H(\tilde{A}+B)=H(A+B)$.  Our choice of $\tilde{A}$ then implies $|\tilde{A}+B|= |A+B| = |\tilde{A}|+|B|-2$, which is strictly less than $|\tilde{A}+H|+|B+H|-|H|$, as $|H|=1$. This contradicts Inequality (\ref{eqn:Kneser}) in Theorem \ref{thm:Satz1} applied to $\tilde{A},B$.  Thus $\bar{A}_{B}=A$.  

Part (iii) is a consequence of Part (i) and the fact that $A+B$ is a union of cosets of $H$: under the hypothesis of (iii) we have that $(A+g)\setminus (A+B)$ contains a set of the form $(A\cap H_a)+g$ which is disjoint from $A+B$.  \end{proof}

\begin{lemma}\label{lem:Overflow}
	Suppose $H$ is a subgroup of $G$ having $0<\nu(H)<\infty$ and $A,B\subseteq G$ satisfy
	\begin{equation}\label{eqn:PseudoTame}
	\nu(A+H)+\nu(B+H) - \nu(H) <\nu(A)+\nu(B).
	\end{equation}
	Then for all $a\in A$, $b\in B$, we have $\nu(A\cap H_{a})+\nu(B\cap H_{b})>\nu(H)$.
\end{lemma}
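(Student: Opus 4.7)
The plan is to obtain lower bounds for $\nu(A+H)$ and $\nu(B+H)$ that separate out the contributions of the cosets $H_a$ and $H_b$, then add them and rearrange using the hypothesis.

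First I would fix $a \in A$ and $b \in B$ and decompose $A$ as the disjoint union $(A\cap H_a)\cup (A\setminus H_a)$. Because any element of $A\setminus H_a$ lies in a coset of $H$ different from $H_a$, we have $(A\setminus H_a)+H$ disjoint from $H_a$, and hence
\[
A+H = H_a \,\sqcup\, \bigl((A\setminus H_a)+H\bigr).
\]
Taking measures and using $\nu((A\setminus H_a)+H)\geq \nu(A\setminus H_a)=\nu(A)-\nu(A\cap H_a)$, this gives
\[
\nu(A+H) \geq \nu(H) + \nu(A) - \nu(A\cap H_a),
\]
and the analogous bound
\[
\nu(B+H) \geq \nu(H) + \nu(B) - \nu(B\cap H_b)
\]
holds by the same reasoning.

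Next I would add these two inequalities and rearrange to get
\[
\nu(A\cap H_a)+\nu(B\cap H_b) \geq 2\nu(H) + \nu(A) + \nu(B) - \nu(A+H) - \nu(B+H).
\]
Applying the hypothesis $\nu(A+H)+\nu(B+H)-\nu(H)<\nu(A)+\nu(B)$, the right-hand side strictly exceeds $\nu(H)$, yielding the desired conclusion $\nu(A\cap H_a)+\nu(B\cap H_b)>\nu(H)$.

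There is no real obstacle here; the only thing to watch is that the decomposition $A+H = H_a \sqcup ((A\setminus H_a)+H)$ is a genuine disjoint union (it is, because cosets of $H$ are either equal or disjoint), and that we only use finite additivity of $\nu$ together with translation invariance, which is what the standing hypotheses on $\nu$ provide. The argument does not require $\nu(A)$, $\nu(B)$ or $\nu(A+H)$, $\nu(B+H)$ to be finite a priori, but the strict inequality in the hypothesis forces the relevant quantities to be finite automatically.
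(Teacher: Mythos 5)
Your proof is correct and takes essentially the same approach as the paper. The paper rewrites the hypothesis directly as $\nu((A+H)\setminus A)+\nu((B+H)\setminus B)<\nu(H)$ and then observes $H_a\setminus A\subseteq (A+H)\setminus A$; your decomposition $A+H = H_a\sqcup((A\setminus H_a)+H)$ together with the bound $\nu((A\setminus H_a)+H)\geq\nu(A\setminus H_a)$ is exactly that observation rearranged, and both amount to the localization $\nu(A+H)-\nu(A)\geq\nu(H)-\nu(A\cap H_a)$.
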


\begin{proof}
	Inequality (\ref{eqn:PseudoTame}) can be written as $\nu(A+H)-\nu(A)+\nu(B+H)-\nu(B) < \nu(H)$, meaning 
	\begin{equation}\label{eqn:TooSmall}
		\nu((A+H)\setminus A)+\nu((B+H)\setminus B)<\nu(H).
	\end{equation}
	Now $A+H=\bigcup_{a\in A} H_{a}$ and $B+H=\bigcup_{b\in B} H_{b}$, so (\ref{eqn:TooSmall}) implies $\nu(H_{a}\setminus A)+\nu(H_{b}\setminus B)<\nu(H)$ for all $a, b\in H$.  The last inequality can be written as $\nu(H)-\nu(A\cap H_{a})+\nu(H)-\nu(B\cap H_{b})<\nu(H)$, which can be rearranged to yield the desired conclusion.
\end{proof}

The next two lemmas form a major portion of the proof of Proposition \ref{prop:LiftSubcritical},  generalizing  Theorem \ref{thm:Satz1} to ultraproducts of compact abelian groups.

\begin{lemma}\label{lem:SimCriticalSub}
	Suppose $A,B\subseteq G$ is a critical pair and $A', B'$ is a tame critical pair such that   $A\subseteq A'$, $B\subseteq B'$, and $\nu_{*}(A+B)=\nu(A'+B')$.  Let $H=H(A'+B')$. Then for all $a\in A', b\in B'$, we have
	\begin{equation}\label{eqn:SolidPrime}
	\nu(A\cap H_{a})+\nu(B\cap H_{b})>\nu(H).
	\end{equation}
	Consequently,
	\begin{equation}\label{eqn:SimCritical}
	A+H=A'+H, \quad B+H=B'+H,  \quad A+B=A'+B'+H,
	\end{equation}
	and $A, B$ is a tame critical pair with $H(A+B)=H$.
\end{lemma}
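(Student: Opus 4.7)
The plan is to first verify the hypothesis of Lemma \ref{lem:Overflow} for $(A,B)$, deduce that $A+B$ is $H$-invariant and equals $A'+B'$, and then pass to the discrete quotient $G/H$ to pin down the cosets.

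First I would chain the hypotheses to get
\[\nu(A+H)+\nu(B+H)-\nu(H) \leq \nu(A'+H)+\nu(B'+H)-\nu(H) = \nu(A'+B') = \nu_{*}(A+B) < \nu(A)+\nu(B),\]
using $A\subseteq A'$, $B \subseteq B'$, tameness of $(A',B')$, and the critical pair hypothesis for $(A,B)$. Lemma \ref{lem:Overflow} then yields (\ref{eqn:SolidPrime}) for every $a\in A$, $b\in B$, and Lemma \ref{lem:LargeAB} upgrades this to $a+b+H\subseteq A+B$, so $A+B$ is $H$-invariant (hence measurable). Since $A+B$ and $A'+B'$ are now both finite unions of cosets of $H$ with the same measure, $A+B=A'+B'$. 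This yields $A+B=A'+B'+H$ (since $A'+B'$ is $H$-invariant by tameness) and gives $H(A+B)=H(A'+B')=H$ for free.

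Next I would pass to $G/H$ via the quotient map $\pi:G\to G/H$ equipped with counting measure, writing $\tilde{A}:=\pi(A)$ and similarly for $B, A', B'$. Since $\nu(A)\leq |\tilde{A}|\nu(H)$, $\nu(B)\leq |\tilde{B}|\nu(H)$, and $\nu(A+B)=|\tilde{A}+\tilde{B}|\nu(H)$, the criticality of $(A,B)$ forces $|\tilde{A}+\tilde{B}|<|\tilde{A}|+|\tilde{B}|$, so $(\tilde{A},\tilde{B})$ is itself a critical pair in $G/H$. The stabilizer of $\tilde{A}+\tilde{B}=\tilde{A}'+\tilde{B}'$ in $G/H$ is $H(A'+B')/H=\{0\}$, so Theorem \ref{thm:Satz1} applied with counting measure gives $|\tilde{A}+\tilde{B}|=|\tilde{A}|+|\tilde{B}|-1$; the same reasoning applied to $(A',B')$ gives $|\tilde{A}'+\tilde{B}'|=|\tilde{A}'|+|\tilde{B}'|-1$. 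Combining these with $\tilde{A}+\tilde{B}=\tilde{A}'+\tilde{B}'$ yields $|\tilde{A}|+|\tilde{B}|=|\tilde{A}'|+|\tilde{B}'|$, which together with $\tilde{A}\subseteq \tilde{A}'$, $\tilde{B}\subseteq \tilde{B}'$ forces $\tilde{A}=\tilde{A}'$ and $\tilde{B}=\tilde{B}'$, i.e., $A+H=A'+H$ and $B+H=B'+H$.

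Tameness of $(A,B)$ with $H(A+B)=H$ then follows from
\[\nu(A+B)=\nu(A'+B')=\nu(A'+H)+\nu(B'+H)-\nu(H)=\nu(A+H)+\nu(B+H)-\nu(H).\]
For the remaining case of (\ref{eqn:SolidPrime}) with $a\in A'$, $b\in B'$, I would use $A+H=A'+H$ to pick $a_{0}\in A$ with $H_{a}=H_{a_{0}}$ and similarly $b_{0}\in B$; then $\nu(A\cap H_{a})+\nu(B\cap H_{b})=\nu(A\cap H_{a_{0}})+\nu(B\cap H_{b_{0}})>\nu(H)$ by the case already handled. The main obstacle is not conceptual—each step is a short invocation of a prior lemma—but rather the bookkeeping of moving between $G$ with the finitely additive measure $\nu$ and the quotient $G/H$ with counting measure, in particular verifying the criticality of $(\tilde{A},\tilde{B})$ cleanly so that Theorem \ref{thm:Satz1} applies in the discrete setting.
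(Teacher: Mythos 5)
Your argument is correct, but it follows a genuinely different route from the paper's, and the difference is worth noting. The paper proves the inequality (\ref{eqn:SolidPrime}) directly for \emph{all} $a\in A'$, $b\in B'$ in one step, by observing that since $A\subseteq A'+H$ and $B\subseteq B'+H$ one has $\nu(H_{a}\setminus A)\leq \nu(A'+H)-\nu(A)$ for every $a\in A'$ (not just $a\in A$), and similarly for $B$; plugging this into the inequality $\nu(A'+H)-\nu(A)+\nu(B'+H)-\nu(B)<\nu(H)$ yields (\ref{eqn:SolidPrime}) immediately. Having the full strength of (\ref{eqn:SolidPrime}) at the outset then makes the rest of the proof trivial: $A\cap H_{a}\neq\varnothing$ for every $a\in A'$ gives $A+H=A'+H$ with no further work, and Lemma \ref{lem:LargeAB} gives $A+B=A'+B'+H$.

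You instead apply Lemma \ref{lem:Overflow} as a black box to $(A,B)$, which only gives (\ref{eqn:SolidPrime}) for $a\in A$, $b\in B$. To upgrade this to the coset equalities $A+H=A'+H$, $B+H=B'+H$, you pass to the quotient $G/H$ with counting measure and invoke Theorem \ref{thm:Satz1} to compute $|\tilde{A}+\tilde{B}|=|\tilde{A}|+|\tilde{B}|-1$, and likewise for the primed sets, then compare cardinalities. Each step is sound (and your caution about finitely additive $\nu$ is unnecessary here, since everything in sight is a finite union of $H$-cosets by $\nu(A'+B')<\infty$, $\nu(H)>0$), but the quotient detour is considerably longer than the paper's direct calculation. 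A small simplification within your own route: for $(A',B')$ you do not need Theorem \ref{thm:Satz1} in $G/H$ at all --- the identity $|\tilde{A}'+\tilde{B}'|=|\tilde{A}'|+|\tilde{B}'|-1$ is just the tameness equation (\ref{eqn:PushInequality}) for $(A',B')$ divided by $\nu(H)$. The upshot of the comparison is that the paper's version of the Overflow argument, applied with $A'$ and $B'$ in the roles that determine the cosets but with $A$ and $B$ supplying the masses, does in one inequality what your approach does in several.
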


\begin{proof}
	The proof of Inequality (\ref{eqn:SolidPrime}) is  similar to the proof of Lemma \ref{lem:Overflow}. We have
	\[\nu(A)+\nu(B)>\nu_{*}(A+B)=\nu(A'+B')=\nu(A'+H)+\nu(B'+H)-\nu(H),\] so $\nu(A'+H)-\nu(A)+\nu(B'+H)-\nu(B)<\nu(H)$. Since $A\subseteq A'+H$ and $B\subseteq B'+H$, this inequality implies $\nu(H_{a}\setminus A)+\nu(H_{b} \setminus B)<\nu(H)$ for all $a\in A', b\in B'$.  As in the proof of Lemma \ref{lem:Overflow}, the latter inequality  can be rearranged to yield (\ref{eqn:SolidPrime}).
	
	Now Inequality (\ref{eqn:SolidPrime}) implies that $A\cap H_{a}\neq \varnothing$ for all $a\in A'$. Together with the assumption $A\subseteq A'$ this implies $A+H=A'+H$, and by symmetry we get $B+H=B'+H$.  The equation $A+B+H=A'+B'+H$ follows from these equations, and the equation $A+B=A'+B'+H$ then follows from (\ref{eqn:SolidPrime}) and Lemma \ref{lem:LargeAB}.  We therefore have $H(A+B)=H(A'+B'+H)$, while the definition of $H$ implies $A'+B'=A'+B'+H$, and we may conclude $H(A+B)=H(A'+B')$.  The tameness of $A,B$ then follows from the tameness of $A',B'$ and the equations in (\ref{eqn:SimCritical}).
\end{proof}

\begin{lemma}\label{lem:TameSuper}
	Suppose $A, B\subseteq G$ is a critical pair and $A',B'$ is a tame critical pair with $A'\subseteq A, B'\subseteq B$, and $\nu(A)=\nu(A')$, $\nu(B)=\nu(B')$.  Then $A,B$ is a tame critical pair with $H(A+B)=H(A'+B')$.
\end{lemma}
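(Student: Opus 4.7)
The plan is to prove the set equality $A+B = A'+B'$; once this is established, tameness of $A,B$ and the stabilizer identity $H(A+B)=H(A'+B')$ drop out immediately from tameness of $A',B'$ together with $A'\subseteq A$, $B'\subseteq B$, and $\nu(A)=\nu(A')$, $\nu(B)=\nu(B')$.  Note that the hypotheses give $A\sim_{\nu} A'$ and $B\sim_{\nu} B'$, so point-set containments such as $A+b\subseteq A'+B'$ cannot be read off from measure-theoretic comparisons alone; the argument must drive the criticality of $A,B$ against the structural information provided by Lemma \ref{lem:PushSubCritical}.

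The first key step will be to show that $A'+b\subseteq A'+B'$ for every $b\in B$.  I would argue by contradiction: if $A'+b\not\subseteq A'+B'$, then Lemma \ref{lem:PushSubCritical}(iii), applied to the tame critical pair $A',B'$, yields $\nu((A'+b)\cup(A'+B'))\geq \nu(A')+\nu(B')$.  Since $A'+b$ is a translate of the measurable set $A'$ and $A'+B'$ is measurable by tameness, their union is a $\nu$-measurable subset of $A+B$, forcing $\nu_{*}(A+B)\geq \nu(A')+\nu(B')=\nu(A)+\nu(B)$ and contradicting criticality of $A,B$.  The symmetric argument, applied with the roles of $A',B'$ reversed, gives $A+B'\subseteq A'+B'$.

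Setting $H:=H(A'+B')$, the inclusion $A+B'\subseteq A'+B'$ says that every $a\in A$ lies in $\bar{A'}_{B'}$ (in the notation of Definition \ref{def:Movers}), which equals $A'+H$ by Lemma \ref{lem:PushSubCritical}(ii).  Thus $A\subseteq A'+H$, and symmetrically $B\subseteq B'+H$.  Since $A'+B'$ is a union of cosets of $H$, I then conclude $A+B\subseteq (A'+H)+(B'+H)=A'+B'+H=A'+B'$, and combined with the trivial reverse inclusion this gives $A+B=A'+B'$.  Measurability of $A+B$ and the equality $H(A+B)=H$ are immediate; moreover, the chain $A'\subseteq A\subseteq A'+H$ (and its analogue for $B$) forces $A+H=A'+H$ and $B+H=B'+H$, so the tameness equation of Definition \ref{def:TameCritical} for $A,B$ transfers verbatim from that of $A',B'$.

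The subtlest step is the very first one: the measure bound on the excess $(A'+b)\setminus(A'+B')$ produced by Lemma \ref{lem:PushSubCritical}(iii) is exactly $\nu(A')+\nu(B')-\nu(A'+B')$, which matches the critical deficit on the $A,B$ side with no slack, so this is where the hypotheses $\nu(A)=\nu(A')$, $\nu(B)=\nu(B')$ are truly used.  Everything afterwards is routine manipulation of $H$-periodicity and the identity $\bar{A'}_{B'}=A'+H$.
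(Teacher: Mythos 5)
Your proof is correct and follows essentially the same route as the paper's: establish via Lemma \ref{lem:PushSubCritical}(iii) that translates of $A'$ by elements of $B$ (and symmetrically of $B'$ by elements of $A$) cannot leave $A'+B'$ without violating criticality of $A,B$; then invoke Lemma \ref{lem:PushSubCritical}(ii) to conclude $A\subseteq A'+H$, $B\subseteq B'+H$; and read off $A+B=A'+B'$ together with the Kneser-type equation. The one minor difference is cosmetic: you prove the genuine set containment $A'+b\subseteq A'+B'$, while the paper records only the essential containment $A'+b\subset_\nu A'+B'$ (assuming $\nu(C)>0$ rather than $C\neq\varnothing$ for the contradiction), which is all it needs since Lemma \ref{lem:PushSubCritical}(ii) delivers both $\bar{A'}_{B',\nu}$ and $\bar{A'}_{B'}$ anyway; both versions run on the same fuel.
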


\begin{proof}
	We must prove that $H:=H(A'+B')$ is the stabilizer of $A+B$ and and that Equation (\ref{eqn:PushInequality}) is satisfied.  To prove this, we will show that $A+B=A'+B'$, $A+H=A'+H$, and $B+H=B'+H$.
	
	We first show that 
	\begin{equation}\label{eqn:essentialA'b}
	A'+b\subset_{\nu} A'+B' \quad \text{ for all } b\in B.
	\end{equation} To prove (\ref{eqn:essentialA'b}) we assume otherwise and derive a contradiction by proving that $\nu_*(A+B)\geq \nu(A)+\nu(B)$, contrary to the hypothesis.  So we assume there is a $b\in B$  such that $C:=(A'+b)\setminus (A'+B')$ satisfies $\nu(C)>0$.  Then Part (iii) of Lemma \ref{lem:PushSubCritical} implies $\nu((A'+b)\cup (A'+B'))\geq \nu(A')+\nu(B')$.  The containments in the hypothesis imply $(A'+b)\cup(A'+B')\subseteq A+B$, so  $\nu_*(A+B)\geq \nu(A')+\nu(B')=\nu(A)+\nu(B)$, which is the desired contradiction.
	
Having proved (\ref{eqn:essentialA'b}),  Part (ii) of Lemma \ref{lem:PushSubCritical} and (\ref{eqn:essentialA'b}) together imply $B\subseteq B'+H$.  Likewise we have $A\subseteq A'+H$.  The containments $A'\subseteq A$ and $B'\subseteq B$ then imply $A+H=A'+H$ and $B+H=B'+H$, and the equation $A'+B'=A'+B'+H$ implies $A+B=A'+B'+H$. Finally, the tameness of $A',B'$  implies $\nu(A+B)=\nu(A'+B')=\nu(A+H)+\nu(B+H)-\nu(H)$, as desired. \end{proof}

The next definition  is useful in dealing with quasi-periodic pairs (Definition \ref{def:QP}).

\begin{definition}\label{def:UniqueExpression}
	If $H$ is a subgroup of $G$, $A, B\subseteq G$, and $a\in A, b\in B$, we say that $a+b+H$ is a \emph{unique expression element of $A+B+H$ in $G/H$} if $a'\in A$, $b'\in B$ and $a'+b'+H=a+b+H$ imply $a' \in a+H$ and $b'\in b+H$.
\end{definition}

The next lemma deals with a case arising frequently  in the proofs of our main results.

\begin{lemma}\label{lem:Pop1}
	Assume $A, B\subseteq G$ satisfy $\nu_{*}(A+B)=\nu(A)+\nu(B)$, $B'\subseteq B$ satisfies $\nu(B')=\nu(B)$, and $A,B'$ is a tame critical pair in $G$.  Let $H=H(A+B')$.  Then
	\begin{enumerate}
		\item[(i)] $(A+B+H)\setminus (A+B')$ is a coset of $H$,
		\item[(ii)]  $B'\sim_{\nu} B'+H$,
		\item[(iii)] $A$ has a quasi-periodic decomposition $A_{1}\cup A_{0}$ with respect to $H$, where \[\nu(A_{0})=\nu_{*}((A+B)\setminus (A+B')).\]
		\item[(iv)] If $A+B\nsim_{\nu} A+B+H$, then setting $B_{1}:=B\cap (B'+H)$, $B_{0}:=B\setminus B_{1}$ yields a quasi-periodic decomposition of $B$ with respect to $H$ where $\nu(B_{0})=0$, and $A_{0}+B_{0}+H$ is a unique expression element of $A+B+H$.  Furthermore, $\nu_{*}(A_{0}+B_{0})=\nu(A_{0})+\nu(B_{0})$.
	\end{enumerate}
\end{lemma}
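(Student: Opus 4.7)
The proof rests on careful measure accounting across cosets of $H$. Set $\delta_A := \nu(A+H) - \nu(A)$, $\delta_{B'} := \nu(B'+H) - \nu(B')$, and $M := \nu_*((A+B) \setminus (A+B'))$. Since $A+B'$ is measurable with $A+B' \subseteq A+B$ and $\nu_*(A+B) = \nu(A)+\nu(B')$,
\[
M = \nu_*(A+B) - \nu(A+B') = \nu(H) - \delta_A - \delta_{B'} > 0,
\]
the last equality using the tameness identity for $A,B'$ and strict positivity coming from criticality of $A,B'$. By Lemma \ref{lem:PushSubCritical}, $A$ and $B'$ are $H$-solid and $\bar{B'}_A = B'+H$, so $A+b \subseteq A+B'$ precisely when $b \in B'+H$. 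Define $B_1 := B \cap (B'+H)$ and $B_0 := B \setminus B_1$; the hypothesis $\nu(B) = \nu(B')$ forces $\nu(B_0) = 0$, and $B_0 \neq \varnothing$ since otherwise $A+B = A+B'$, contradicting $M > 0$.

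The heart of the argument is a deficiency count for a fixed $b \in B_0$. Since $\nu(A+H) < \infty$, $A$ meets only finitely many cosets $H_{a_1}, \ldots, H_{a_k}$ of $H$; define $S_b := \{i : H_{a_i+b} \not\subseteq A+B'\}$. Lemma \ref{lem:PushSubCritical}(iii) gives $\nu((A+b) \cup (A+B')) \geq \nu(A)+\nu(B')$, and combined with $(A+b) \cup (A+B') \subseteq A+B$ and $\nu_*(A+B) = \nu(A)+\nu(B')$ this forces equality, so $\nu((A+b) \setminus (A+B')) = M$. Because $A+B'$ is $H$-periodic and $A$ is $H$-solid, this difference decomposes disjointly as $\bigsqcup_{i \in S_b}(A+b) \cap H_{a_i+b}$ with summand measures $\nu(A \cap H_{a_i})$, so $\sum_{i \in S_b} \nu(A \cap H_{a_i}) = M$. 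The complementary partial deficiency is bounded by $\delta_A$:
\[
(|S_b|-1)\nu(H) + \delta_A + \delta_{B'} = |S_b|\nu(H) - M \leq \delta_A,
\]
so $(|S_b|-1)\nu(H) + \delta_{B'} \leq 0$. Since $|S_b| \geq 1$ and $\delta_{B'} \geq 0$, both terms vanish: $|S_b| = 1$ and $\delta_{B'} = 0$, establishing Part (ii). Writing $S_b = \{i_b\}$, we read off $\nu(H) - \nu(A \cap H_{a_{i_b}}) = \delta_A$, concentrating all of $A$'s deficiency in the single coset $H_{a_{i_b}}$; setting $A_0 := A \cap H_{a_{i_b}}$ and $A_1 := A \setminus A_0$ yields the quasi-periodic decomposition of Part (iii), with $\nu(A_0) = M$.

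For Part (i), a two-shift comparison shows all $b \in B_0$ contribute the same new coset: distinct new cosets from $b_1, b_2 \in B_0$ would give $\nu((A+b_1) \cup (A+b_2) \cup (A+B')) \geq \nu(A+B') + 2M > \nu_*(A+B)$, a contradiction. Hence $(A+B+H) \setminus (A+B')$ is a single coset, call it $H_c$. In Part (iv), the hypothesis $A+B \nsim_\nu A+B+H$ rules out $\delta_A = 0$ (else $A \sim_\nu A+H$ would force $A+B \sim_\nu A+B+H$), so the coset of positive deficiency is unique ($i_b = i_0$ for all $b \in B_0$), and solving $a_{i_0}+b \equiv c \pmod{H}$ places $B_0$ in the single coset $c - a_{i_0} + H$, yielding the quasi-periodic decomposition of $B$. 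The unique expression property of $A_0 + B_0 + H = H_c$ holds because any $a' \in A, b' \in B$ with $a'+b' \in H_c$ must have $b' \in B_0$ (else $a'+b' \in A+B'$, which is disjoint from $H_c$) and $a' \in A_0$ (else $a' \in A_1 \subseteq \bigcup_{i \neq i_0} H_{a_i}$ would place $a'+b'$ in $H_{a_i - a_{i_0} + c} \neq H_c$). The identification $A_0 + B_0 = (A+B) \cap H_c$ then yields $\nu_*(A_0 + B_0) = M = \nu(A_0) + \nu(B_0)$.

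The main obstacle is the deficiency count itself: the single inequality $(|S_b|-1)\nu(H) + \delta_{B'} \leq 0$ must simultaneously deliver three conclusions — $|S_b|=1$, $\delta_{B'}=0$, and concentration of $\delta_A$ in one coset — each of which feeds a distinct part of the lemma.
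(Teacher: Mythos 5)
Your proof is correct and follows essentially the same strategy as the paper's: combine the Kneser identity from tameness with the "fill" inequalities of Lemma \ref{lem:PushSubCritical} to force $\delta_{B'}=0$ and concentrate $A$'s deficiency in a single coset, then use disjointness from $A_1+B$ to prove the unique-expression claim in (iv). The only difference is bookkeeping: where the paper fixes one $a\in A$ with $a+b\notin A+B'$ and works directly with $A_0=A\cap(a+H)$, you sum the deficiency over the full index set $S_b$ of new cosets, which delivers $|S_b|=1$ and $\delta_{B'}=0$ simultaneously from the single inequality $(|S_b|-1)\nu(H)+\delta_{B'}\leq 0$.
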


\begin{proof}
	Throughout this proof we let $C$ denote $(A+B+H)\setminus (A+B')$.  We first observe that $C$ is nonempty, as $A+B'$ is a union of cosets of $H$ and is a proper subset of $A+B$.
	
	We prove Part (i) by contradiction: assume that $C$ has nonempty intersection with at least two cosets of $H$.  We may then choose $a_{1}, a_{2}\in A$ and $b_{1}, b_{2}\in B$ such that $a_{1}+b_{1}$ and $a_{2}+b_{2}$ lie in $C$ and do not occupy the same coset of $H$.  Let $A_{1}=A\cap (a_{1}+H)$, $A_{2}=A\cap (a_{2}+H)$.  Then $A_{1}+b_{1}$, $A_{2}+b_{2}$, and $A+B'$ are mutually disjoint subsets of $A+B$, while Lemma \ref{lem:PushSubCritical} (i) implies $\nu(A_{i}+b_{i})+\nu(A+B')\geq \nu(A)+\nu(B')$ for each $i$.  The last inequality, the criticality of $A,B'$, and the disjointness of $C$ from $A+B'$ imply $\nu_{*}(A+B)>\nu(A)+\nu(B')=\nu(A)+\nu(B)$, contradicting our assumption to the contrary.
	
	To prove Parts (ii) and (iii),  choose $a\in A$ and $b\in B$ such that $a+b\in (A+B)\setminus (A+B')$.  Let $A_{0}:=A\cap (a+H)$, so that Lemma \ref{lem:PushSubCritical} (i) implies  $\nu(A_{0})+\nu(A+B')\geq \nu(A)+\nu(B')$, while the containment $(A_{0}+b)\cup (A+B')\subseteq A+B$ and the hypothesis on $\nu(A+B)$ imply the reverse inequality.  Thus
	\begin{equation}\label{eqn:A0andC}
	\nu(A_{0})+\nu(A+B')=\nu(A)+\nu(B'),
	\end{equation} and the assumption that $A, B'$ is a tame critical pair allows us to use Equation (\ref{eqn:PushInequality}) with $B'$ in place of $B$. Combining that equation with (\ref{eqn:A0andC}) and rearranging, we obtain
	\begin{equation}\label{eqn:Penultimate}
	\nu(A+H)-\nu(A)+\nu(B'+H)-\nu(B') = \nu(H)-\nu(A_{0}).
	\end{equation}
	Since $A_{0}$ is contained in a coset of $H$, the right hand side above is equal to $\nu((A_{0}+H)\setminus A_{0})$. So we can rewrite (\ref{eqn:Penultimate}) as
	\begin{equation}\label{eqn:ReallyPenultimate}
	\nu((A+H)\setminus A)+\nu((B'+H)\setminus B') = \nu((A_{0}+H)\setminus A_{0}).
	\end{equation}
	Since $(A_{0}+H)\setminus A_{0}\subseteq (A+H)\setminus A$, we have $\nu((A+H)\setminus A)\geq  \nu((A_{0}+H)\setminus A_{0})$, and (\ref{eqn:ReallyPenultimate}) then implies
	\begin{equation}\label{eqn:AisQP}
	\nu(A+H)-\nu(A) = \nu(H)-\nu(A_{0})
	\end{equation} and $\nu(B'+H)-\nu(B')=0$. The last equation implies (ii). Setting $A_{1}=A\setminus A_{0}$, Equation (\ref{eqn:AisQP}) implies $A_{1}\sim_{\nu} A_{1}+H$.  Thus $A_{1}\cup A_{0}$ is the desired quasi-periodic decomposition of $A$.  To complete the proof of (iii), note that the right hand side of Equation (\ref{eqn:A0andC}) is $\nu_{*}(A+B)$, so that equation can be rewritten as $\nu(A_{0})=\nu_{*}(A+B)-\nu(A+B')$.
	
	We prove Part (iv) using the following claim.
	\begin{claim} The additional hypothesis (iv) implies $C+H$ is a unique expression element of $(A+H)+(B+H)$ in $G/H$.
	\end{claim}  To prove the Claim, we will show that $C+H$ is disjoint from $A_1+B$, meaning that if $a\in A, b\in B$ satisfy $a+b+H=C+H$, then $a\in A_0+H$ and $b\in C-A_0+H$. Assume, to get a contradiction, that $C+H$ is not disjoint from $A_1+B$.   Choose $a_{1}\in A_{1}$ and $b\in B$ such that $a_{1}+b\in C+H$.  Since $A_{1}\sim_{\nu} A_{1}+H$, we conclude that $C+H\subset_{\nu} A+B$.  Together with Part (i) and the equation $A+B'=A+B'+H$, this implies $A+B\sim_{\nu} A+B+H$, contradicting the hypothesis in (iv).  This proves the Claim.
	
	Now let $B_{1}$ be as in the statement of Part (iv), and let $A_{0}$ be as defined in the proof of Part (iii). In particular $A_{0}+b_{0}\subseteq C+H$ for some $b_{0}\in B$.  Let $B_{0}=B\cap (b_{0}+H)$. We will show that $B=B_{1}\cup B_{0}$, which by Part (ii) yields the desired quasi-periodic decomposition. If $A+b\subseteq A+B'$, we will show that $b\in B'+H$, and otherwise we will show that $b\in B_{0}$.  Under the first assumption,  we apply Part (ii) of Lemma \ref{lem:PushSubCritical} to $A, B'$ and conclude $b\in B'+H$.  If $A+b\nsubseteq A+B'$, the containment $A+B\subseteq C\cup (A+B')$ implies $(A+b)\cap C\neq \varnothing$.  Then there is an $a\in A$ such that $a+b\in C$.  The Claim then implies $a\in A_{0}$, so $b\in B_{0}$.  The last equation in the conclusion of (iv) follows from (iii) and the fact that $\nu(B_{0})=0$.     \end{proof}

The following lemma is used only in the proof of Lemma \ref{lem:FirstHalf}.

\begin{lemma}\label{lem:BuildQP}
	Let $K$ be a $\nu$-measurable subgroup of $G$ with $0<\nu(K)<\infty$.  Suppose $C, D\subseteq G$ are periodic with respect to $K$, and $\nu(C+D)=\nu(C)+\nu(D)-\nu(K)$.  Assume further that $C_{0}, D_{0}$ are cosets of $K$ contained in $C$ and $D$, respectively, and $C_{0}+D_{0}$ is a unique expression element of $C+D+K$ in $G/K$.  Let $A_{1}=C\setminus C_{0}$, $B_{1}=D\setminus D_{0}$, and suppose $A_{0}\subseteq C_{0}$ and $B_{0}\subseteq D_{0}$ are nonempty and satisfy $\nu_{*}(A_{0}+B_{0})\leq \nu(A_{0})+\nu(B_{0})$.  If $A=A_{1}\cup A_{0}$ and $B=B_{1}\cup B_{0}$, then $\nu_{*}(A+B)\leq \nu(A)+\nu(B)$.
\end{lemma}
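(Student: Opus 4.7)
The strategy is to decompose $A+B$ into a $K$-periodic (hence $\nu$-measurable) part that avoids the coset $C_0+D_0$, plus a remainder $A_0+B_0$ sitting inside that single coset, and then to bound each piece using the hypotheses.

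First, I would note that $A_1 = C \setminus C_0$ and $B_1 = D \setminus D_0$ are unions of cosets of $K$ because $C$ and $D$ are, and $C_0, D_0$ are themselves cosets of $K$; in particular $A_1+K=A_1$ and $B_1+K=B_1$. Writing
\begin{equation*}
A+B=(A_1+B_1)\cup(A_1+B_0)\cup(A_0+B_1)\cup(A_0+B_0),
\end{equation*}
and using that $A_1+K=A_1$ together with $\varnothing \neq B_0 \subseteq D_0$, one sees $A_1+B_0=A_1+D_0$, and similarly $A_0+B_1=C_0+B_1$. Set
\begin{equation*}
E:=(A_1+B_1)\cup(A_1+D_0)\cup(C_0+B_1).
\end{equation*}
Then $E$ is a union of cosets of $K$, hence $\nu$-measurable, and $E\subseteq C+D$. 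The unique-expression hypothesis then forces $E$ to be disjoint from $C_0+D_0$: if for instance some $a_1+d_0 \in C_0+D_0$ with $a_1 \in A_1 \subseteq C$ and $d_0 \in D_0 \subseteq D$, then by Definition \ref{def:UniqueExpression} we would have $a_1 \in C_0$, contradicting $a_1 \in C \setminus C_0$; the other two intersections are ruled out analogously.

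Since $C+D$ is a union of cosets of $K$, the coset $C_0+D_0$ contributes $\nu(K)$ to $\nu(C+D)$, so the hypothesis $\nu(C+D)=\nu(C)+\nu(D)-\nu(K)$ and the identities $\nu(C)=\nu(A_1)+\nu(K)$, $\nu(D)=\nu(B_1)+\nu(K)$ give
\begin{equation*}
\nu(E)\leq \nu(C+D)-\nu(K)=\nu(A_1)+\nu(B_1).
\end{equation*}
Now $A+B=E\cup(A_0+B_0)$ with $E$ measurable and disjoint from $C_0+D_0 \supseteq A_0+B_0$. For any $\nu$-measurable $F\subseteq A+B$, write $F=(F\cap E)\cup(F\cap(A_0+B_0))$ as a disjoint union of measurable sets to obtain $\nu(F)\leq \nu(E)+\nu_*(A_0+B_0)$; taking the supremum over $F$ yields $\nu_*(A+B)\leq \nu(E)+\nu_*(A_0+B_0)$. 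Combining this with the hypothesis $\nu_*(A_0+B_0)\leq \nu(A_0)+\nu(B_0)$ and the bound on $\nu(E)$ above gives
\begin{equation*}
\nu_*(A+B)\leq \nu(A_1)+\nu(B_1)+\nu(A_0)+\nu(B_0)=\nu(A)+\nu(B),
\end{equation*}
as required.

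I do not anticipate any serious obstacle: the only nontrivial ingredients are the careful use of the unique-expression hypothesis to show disjointness of $E$ from $C_0+D_0$, and the elementary inner-measure inequality $\nu_*(E\sqcup F) \leq \nu(E)+\nu_*(F)$ for a measurable $E$ disjoint from $F$, which follows immediately from the definition of $\nu_*$. Everything else is bookkeeping with coset decompositions.
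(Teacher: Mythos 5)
Your proof is correct and follows essentially the same route as the paper: you isolate the same set $E$ (the paper writes it as $(A+B_1)\cup(A_1+B)$, which expands to exactly your three terms), use the unique-expression hypothesis to get $E$ disjoint from $C_0+D_0$, compute the measure of $E$ from the Kneser-type identity, and finish with the elementary inner-measure inequality $\nu_*(E\sqcup F)\leq\nu(E)+\nu_*(F)$ for measurable $E$. The only cosmetic difference is that the paper asserts the equality $E=(C+D)\setminus(C_0+D_0)$ whereas you only prove the inclusion $E\subseteq(C+D)\setminus(C_0+D_0)$ and the bound $\nu(E)\leq\nu(C+D)-\nu(K)$, which is all that is needed.
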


\begin{proof}
	Let $E=(A+B_{1})\cup(A_{1}+B)$.  The definitions of $A$ and $B$ and the fact that $C_{0}+D_{0}$ is a unique expression element of $C+D+K$ then imply $E=(C+D)\setminus (C_{0}+D_{0})$, so $\nu(E)=\nu(C+D)-\nu(K)$.  Note that $\nu(C+D)-\nu(K)$ simplifies to \[\nu(C)-\nu(K)+\nu(D)-\nu(K)=\nu(A_{1})+\nu(B_{1}).\] Then
	\[\nu_{*}(A+B)=\nu(E)+\nu_{*}(A_{0}+B_{0})\leq \nu(A_{1})+\nu(A_{0})+\nu(B_{1})+\nu(B_{0})=\nu(A)+\nu(B),\]
	as desired.
\end{proof}

\section{Lifting Theorems \ref{thm:Satz1} and \ref{thm:LCAInverse} to ultraproducts}\label{sec:LiftLCA}
The main results of this section are Propositions \ref{prop:LiftSubcritical} and \ref{prop:Reduction},  the natural generalizations of Theorems \ref{thm:Satz1} and \ref{thm:LCAInverse} to ultraproducts of compact abelian groups.

For this section fix a sequence of compact abelian groups $G_{n}$ with Haar probability measure $m_{n}$ and a nonprincipal ultrafilter $\mathcal U$ on $\mathbb N$. Let $\mb G$ be the ultraproduct $\prod_{n\to \mathcal U}G_{n}$ and $\mu$ the Loeb measure on $\mb G$ corresponding to $(m_{n})_{n\in \mathbb N}$.  Let $\mu_{*}$ be the inner measure on $\mb G$ corresponding to $\mu$. Recall Definition \ref{def:Stabilizer}: the stabilizer of a set $C\subseteq \mb G$ is the subgroup \[H(C):=\{g\in \mb G:C+g=C\}.\]

\begin{proposition}\label{prop:LiftSubcritical}
	If $A, B\subseteq \mb G$ are $\mu$-measurable sets with $\mu_{*}(A+B)< \mu(A)+\mu(B)$, then the stabilizer $\mb H:=H(A+B)$ is an internal $\mu$-measurable  finite index subgroup of $\mb G$ and
	\begin{equation}\label{eqn:HsubUltra}
	\mu(A+B)=\mu(A+\mb H)+\mu(B+\mb H)-\mu(\mb H).
	\end{equation}  \end{proposition}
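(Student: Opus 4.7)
My plan is to reduce the problem to a sumset question in a compact metrizable quotient of $\mb G$, apply the classical Kneser Theorem \ref{thm:Satz1} there, and then transfer the conclusion back to $\mb G$ using the general lemmas collected in Section \ref{sec:ReducibleGeneral}.

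First I would invoke Corollary \ref{cor:ProjectAndPullBack} for the pair $A, B$ to obtain a compact metrizable abelian group $G$ with Haar probability measure $m$, a measure-preserving surjective homomorphism $\pi:\mb G \to G$, and Borel sets $C, D \subseteq G$ with $C+D$ Borel, such that the pullbacks $A':=\pi^{-1}(C)$ and $B':=\pi^{-1}(D)$ satisfy $A \subset_{\mu} A'$, $B \subset_{\mu} B'$, $A'+B' \subset_{\mu} A+B$, and $m(C+D) \leq \mu_{*}(A+B)$. Measure preservation and the hypothesis then chain into
\[
m(C+D) \leq \mu_{*}(A+B) < \mu(A)+\mu(B) \leq \mu(A')+\mu(B') = m(C)+m(D),
\]
so Theorem \ref{thm:Satz1} applies in $G$: the stabilizer $H := H(C+D)$ is compact and open, and $m(C+D) = m(C+H)+m(D+H)-m(H)$. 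Setting $\mb H' := \pi^{-1}(H)$, the composition $\mb G \xrightarrow{\pi} G \to G/H$ is a surjective $\mu$-measurable homomorphism to a finite abelian group, so Corollary \ref{cor:FiniteIndexInternal} makes $\mb H'$ an internal $\mu$-measurable finite index subgroup of $\mb G$ with $\mu(\mb H') = m(H)$. A short check, using $A' + \mb H' = \pi^{-1}(C+H)$ and the analogous identity for $B'$ together with the surjectivity of $\pi$ to identify $H(A'+B') = \pi^{-1}(H) = \mb H'$, then shows that $(A', B')$ is a tame critical pair with stabilizer $\mb H'$ in the sense of Definition \ref{def:TameCritical}.

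Next I would transfer tameness to $(A, B)$ through an intermediate pair, since Lemmas \ref{lem:SimCriticalSub} and \ref{lem:TameSuper} require honest set inclusions rather than the essential inclusions produced by Corollary \ref{cor:ProjectAndPullBack}. Let $\tilde A := A \cap A'$ and $\tilde B := B \cap B'$, so that $\tilde A \subseteq A'$, $\tilde B \subseteq B'$, $\tilde A \sim_{\mu} A$, and $\tilde B \sim_{\mu} B$. Part (ii) of Corollary \ref{cor:ProjectAndPullBack} yields $A'+B' \subset_{\mu} \tilde A + \tilde B$, which combined with the trivial inclusion $\tilde A + \tilde B \subseteq A'+B'$ forces $\mu_{*}(\tilde A + \tilde B) = \mu(A'+B')$. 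Lemma \ref{lem:SimCriticalSub}, applied with $(\tilde A, \tilde B)$ as the critical pair and $(A', B')$ as the tame critical pair, then upgrades $(\tilde A, \tilde B)$ to a tame critical pair with stabilizer $\mb H'$. Finally, since $\tilde A \subseteq A$, $\tilde B \subseteq B$, $\mu(\tilde A)=\mu(A)$, and $\mu(\tilde B)=\mu(B)$, Lemma \ref{lem:TameSuper} applied with $(\tilde A, \tilde B)$ as the tame pair upgrades $(A, B)$ itself to a tame critical pair with $H(A+B) = H(\tilde A + \tilde B) = \mb H'$. This delivers the measurability of $A+B$, the internality and finite index of $\mb H := H(A+B) = \mb H'$, and the identity (\ref{eqn:HsubUltra}).

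The main obstacle I anticipate is precisely this gap between $\subset_{\mu}$ and $\subseteq$: Corollary \ref{cor:ProjectAndPullBack} delivers only essential containments, whereas the lemmas in Section \ref{sec:ReducibleGeneral} that package Kneser's conclusion in a group with a finitely additive translation invariant measure are phrased for genuine inclusions. Passing through the intersection pair $(\tilde A, \tilde B)$, and leaning on part (ii) of that corollary to control $\mu_{*}(\tilde A + \tilde B)$, is what bridges this gap; identifying $H(A'+B')$ as $\pi^{-1}(H(C+D))$ is what ultimately pins down the stabilizer as an internal subgroup.
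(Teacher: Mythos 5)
Your proposal is correct and follows essentially the same route as the paper: model the pair via Corollary \ref{cor:ProjectAndPullBack}, apply Theorem \ref{thm:Satz1} downstairs, and transfer tameness back to $(A,B)$ through the intersection pair $(A\cap A', B\cap B')$ using Lemmas \ref{lem:SimCriticalSub} and \ref{lem:TameSuper}. The only cosmetic difference is that you inline the verification that $(A',B')$ is tame (with stabilizer $\pi^{-1}(H(C+D))$), whereas the paper packages that step as Lemma \ref{lem:LiftTame}.
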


The proof of Proposition \ref{prop:LiftSubcritical} is presented after the proof of Lemma \ref{lem:LiftTame}.

In the next proposition we use the terms ``Bohr interval'' and ``quasi-periodic'', from Definitions \ref{def:PreInterval} and \ref{def:QP}.

\begin{proposition}\label{prop:Reduction}Let $A, B\subseteq \mb G$ be $\mu$-measurable sets satisfying $\mu(A)>0$, $ \mu(B)>0$, and $\mu_{*}(A+B)=\mu(A)+\mu(B)$.  Then there is an internal $\mu$-measurable finite index subgroup $\mb K\leq \mb G$ such that exactly one of the following holds.
	\begin{enumerate}
		\item[(I)] $A+B\sim_{\mu} A+B+\mb K$.

		\item[(II)] $A+B\nsim_{\mu} A+B+\mb K$  and $A$ and $B$ have quasi-periodic decompositions $A=A_{1}\cup A_{0}$, $B=B_{1}\cup B_{0}$ with respect to $\mb K$ where at least one of $A_{1}$, $B_{1}$ is nonempty and $\mu_{*}(A_{0}+B_{0})=\mu(A_{0})+\mu(B_{0})$.

		\item[(III)]  $A+B\nsim_{\mu} A+B+\mb K$ and there are parallel Bohr intervals $A', B'\subseteq \mb K$ and $\mb a, \mb b\in \mb G$ such that $\mu(A)=\mu(A')$, $\mu(B)=\mu(B')$, and $A\subseteq \mb a+A'$, $B\subseteq \mb b+ B'$.
	\end{enumerate}
\end{proposition}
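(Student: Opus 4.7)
The strategy parallels the proof of Proposition \ref{prop:ConnectedLift}: descend to a compact metrizable quotient of $\mathbf G$ via Corollary \ref{cor:ProjectAndPullBack}, apply the classical inverse theorem there, and lift the resulting structure back to $\mathbf G$. The adjustments for the disconnected setting are that Theorem \ref{thm:LCAInverse} replaces Theorem \ref{thm:Satz2}, and when the descent produces a strict Kneser gap in $G$, Theorem \ref{thm:Satz1} directly supplies a stabilizing subgroup. Concretely, Corollary \ref{cor:ProjectAndPullBack} yields a compact metrizable $G$ with Haar measure $m$, a $\mu$-measure-preserving quotient map $\pi\colon \mathbf G\to G$, and Borel sets $C,D\subseteq G$ with $C+D$ Borel, satisfying $A\subset_\mu A':=\pi^{-1}(C)$, $B\subset_\mu B':=\pi^{-1}(D)$, $A'+B'\subset_\mu A+B$, and $m(C+D)\le \mu_*(A+B)$. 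Using $m(C)=\mu(A')\ge \mu(A)$ and $m(D)=\mu(B')\ge \mu(B)$ gives the chain
\[
m(C+D)\le \mu_*(A+B)=\mu(A)+\mu(B)\le m(C)+m(D),
\]
and the argument splits according to whether the leftmost inequality is strict.

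If $m(C+D)=m(C)+m(D)$, the chain collapses, forcing $\mu(A)=m(C)$, $\mu(B)=m(D)$, and hence $A\sim_\mu A'$, $B\sim_\mu B'$. The plan is to apply Theorem \ref{thm:LCAInverse} to $C,D$ in $G$ to obtain a compact open $K\le G$ and one of its conclusions (I)--(III), and to set $\mathbf K:=\pi^{-1}(K)$, which is an internal $\mu$-measurable finite-index subgroup by Corollary \ref{cor:FiniteIndexInternal}. Each of the three alternatives transfers to $A,B,\mathbf K$: the periodicity $C+D\sim_m C+D+K$ pulls back to $A'+B'\sim_\mu A'+B'+\mathbf K$, which together with $A\sim_\mu A'$ and $B\sim_\mu B'$ yields $A+B\sim_\mu A+B+\mathbf K$; a quasi-periodic decomposition $C=C_1\cup C_0$, $D=D_1\cup D_0$ pulls back through $\pi^{-1}$, and intersecting with $A,B$ produces corresponding decompositions of $A,B$ with the equality $\mu_*(A_0+B_0)=\mu(A_0)+\mu(B_0)$ inherited from $m_*(C_0+D_0)=m(C_0)+m(D_0)$ via Proposition \ref{prop:Kronecker}; parallel Bohr intervals $I,J\subseteq K$ pull back through $\pi|_{\mathbf K}\colon \mathbf K\to K$ to parallel Bohr intervals $\tilde I,\tilde J\subseteq \mathbf K$, and Corollary \ref{cor:EssentialMoveInterval}, applied inside the coset of $\mathbf K$ containing $A$ (respectively $B$) after translation, upgrades the essential containment $A\sim_\mu \mathbf a+\tilde I$ to the literal inclusion $A\subseteq \mathbf a+\tilde I$.

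The main obstacle is the strict case $m(C+D)<m(C)+m(D)$. Here Theorem \ref{thm:Satz1} supplies $K:=H(C+D)$, compact open in $G$, with $m(C+D)=m(C+K)+m(D+K)-m(K)$; again $\mathbf K:=\pi^{-1}(K)$ is internal of finite index, and $\pi^{-1}(C+D)$ is $\mathbf K$-periodic. The plan is to establish conclusion (I), namely $A+B\sim_\mu A+B+\mathbf K$. The delicate point is that $\mu(A)<\mu(A')$ or $\mu(B)<\mu(B')$ is possible, so $A\sim_\mu A'$ need not hold and the easy pullback of the previous paragraph fails. The approach goes through the essential sumset $A+_{0} B:=\{\mathbf x:1_A*_\mu 1_B(\mathbf x)>0\}$: since $1_A\le 1_{A'}$ and $1_B\le 1_{B'}$ hold $\mu$-almost everywhere, Lemma \ref{lem:MeasureIsPreserved} combined with Proposition \ref{prop:Kronecker} gives $A+_{0} B\sim_\mu \pi^{-1}(C+_{0} D)$; the inequality chain then pins $m(C+_{0} D)=m(C+D)$, so $A+_{0} B$ is essentially $\mathbf K$-periodic of $\mu$-measure $m(C+D)$. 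A measure-theoretic argument exploiting the Kneser equation in $\mathbf G/\mathbf K$ rules out any $\mathbf K$-coset that meets $A+B$ but not $A+_{0} B$ up to null sets, completing the verification of $A+B\sim_\mu A+B+\mathbf K$. Exclusivity of the three conclusions in the proposition follows from the dichotomy between $A+B\sim_\mu A+B+\mathbf K$ and $A+B\nsim_\mu A+B+\mathbf K$, combined with the corresponding exclusivity in Theorem \ref{thm:LCAInverse}.
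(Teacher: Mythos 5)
Your overall framing — descend through Corollary \ref{cor:ProjectAndPullBack}, apply the classical inverse theorem in the compact quotient, and lift the structure back — is indeed the paper's strategy.  However, the lift-back step is where all the difficulty lies, and your treatment of it has genuine gaps in both branches.

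The most concrete error is in the strict branch $m(C+D) < m(C) + m(D)$, where you claim conclusion (I) always holds.  It does not.  Take $\mb G$ with an internal finite-index subgroup $\mb K$ of index $k\ge 3$, pick $\mb g\in\mb G$ with $\mb g,\,2\mb g\notin\mb K$, choose single points $\mb a_0,\mb b_0\in\mb g+\mb K$, and set $A=\mb K\cup\{\mb a_0\}$, $B=\mb K\cup\{\mb b_0\}$.  Then $\mu_*(A+B)=2/k=\mu(A)+\mu(B)$, while $1_A\equiv_\mu 1_{\mb K}\equiv_\mu 1_B$ forces $G\cong\mb G/\mb K$, $C=D=\{0\}$, so $m(C+D)=1/k<2/k=m(C)+m(D)$ (your strict case) with $H(C+D)$ trivial and $\mb K=\pi^{-1}(H(C+D))=\ker\pi$.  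But $A+B=\mb K\cup(\mb g+\mb K)\cup\{\mb a_0+\mb b_0\}$ has measure $2/k$ while $A+B+\mb K$ has measure $3/k$, so $A+B\nsim_\mu A+B+\mb K$.  The pair $A,B$ has type (II), not type (I); the ``measure-theoretic argument'' you allude to cannot exist as described.

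The equality branch also has a gap.  You write $A\sim_\mu A'$, $B\sim_\mu B'$ and assert the structure of $C,D$ pulls back by intersecting $\pi^{-1}$ with $A,B$.  But $A\sim_\mu A'$ does not give $A\subseteq A'$: the null set $A\setminus A'$ can meet arbitrary cosets of $\mb K$.  If $C=C_1\cup C_0$ is a quasi-periodic decomposition of $C$, setting $A_1=A\cap\pi^{-1}(C_1)$, $A_0=A\cap\pi^{-1}(C_0)$ does not partition $A$; and if you instead put the stray points into $A_1$ by declaring $A_1=A\setminus(\pi^{-1}(C_0)+\mb K)$, then $A_1+\mb K$ can cover extra cosets and $A_1\sim_\mu A_1+\mb K$ fails.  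Similarly $A+B\sim_\mu A+B+\mb K$ does not follow from $A'+B'\sim_\mu A'+B'+\mb K$ together with $A\sim_\mu A'$, $B\sim_\mu B'$ — a null perturbation of a summand can produce a non-null perturbation of the sumset.  This is exactly the difficulty the paper isolates and resolves through the notions of extendible/reducible pairs (\S\ref{sec:ReducibleGeneral}) and Lemma \ref{lem:TameSimilar}, whose proof is a multi-case argument of real substance (Lemmas \ref{lem:SimCriticalSub}, \ref{lem:TameSuper}, \ref{lem:Extendible}, \ref{lem:Reducible}, \ref{lem:RedicubleQP}, \ref{lem:ExtendMove}, \ref{lem:LongCase}).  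The paper first reduces, via Lemmas \ref{lem:Extendible} and \ref{lem:Reducible}, to the case where $A,B$ is nonextendible and irreducible, and only then invokes Corollary \ref{cor:ProjectAndPullBack}; this ordering is what makes the transfer go through.  Without that machinery (or an equivalent substitute) your lift-back step is incomplete.
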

The proof of Proposition \ref{prop:Reduction} is presented following the proof of Lemma \ref{lem:TameSimilar}.

\begin{remark}\label{rem:UEE}
	The condition $A+B\nsim_{\mu} A+B+\mb K$ in conclusion (II) above implies that $A_{0}+B_{0}+\mb K$ is a unique expression element of $A+B+\mb K$ in $\mb G/\mb K$ (Definition \ref{def:UniqueExpression}):  assuming otherwise we would have $A+B\sim_{\mu} A+B+\mb K$.   Consequently,
	\begin{align*}
	\mu_{*}((A+B)\setminus (A_{0}+B_{0}))&=\mu_{*}(A+B)-\mu_{*}(A_{0}+B_{0})\\
	&=\mu(A)+\mu(B)-\mu(A_{0})-\mu(B_{0})\\
	&=\mu(A_{1})+\mu(B_{1}).
	\end{align*}
	
	Thus if $A, B$ satisfies conclusion (II) of Proposition \ref{prop:Reduction}, then
	\begin{equation}\label{eqn:MeasureC1}
	\mu_{*}((A+B)\setminus (A_{0}+B_{0}))=\mu(A_{1})+\mu(B_{1}).
	\end{equation}
	We thereby deduce the following corollary (cf. Corollary 3.1 of \cite{GriesmerLCA}).
\end{remark}

\begin{corollary}\label{cor:StabilizerUltra}
	If $G$, $A$, and $B$ are as in Proposition \ref{prop:Reduction} and $A, B$, and $\mb K$ satisfy conclusion (II) therein, then $\mu(A+B+\mb K)=\mu(A+\mb K)+\mu(B+\mb K)-\mu(\mb K).$
\end{corollary}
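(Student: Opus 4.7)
The plan is to unpack $\mu(A+\mb K)$, $\mu(B+\mb K)$, and $\mu(A+B+\mb K)$ using the quasi-periodic decomposition and the unique expression property from Remark \ref{rem:UEE}, reducing the target identity to the assertion $\mu(E+\mb K)=\mu_{*}(E)$, where $E:=(A+B)\setminus(A_{0}+B_{0})$. I will then verify this by showing each coset of $\mb K$ contained in $E+\mb K$ is essentially filled by $E$.

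First, since $A_{0}$ is nonempty and sits in a single coset of $\mb K$ disjoint from $A_{1}+\mb K$, and $A_{1}\sim_{\mu}A_{1}+\mb K$, we have $\mu(A+\mb K)=\mu(A_{1})+\mu(\mb K)$, and analogously $\mu(B+\mb K)=\mu(B_{1})+\mu(\mb K)$. The target identity therefore becomes $\mu(A+B+\mb K)=\mu(A_{1})+\mu(B_{1})+\mu(\mb K)$. Remark \ref{rem:UEE} tells us that $A_{0}+B_{0}+\mb K$ is a single coset of $\mb K$ disjoint from every other coset of $\mb K$ meeting $A+B$, so $A+B+\mb K=(E+\mb K)\sqcup(A_{0}+B_{0}+\mb K)$ and thus $\mu(A+B+\mb K)=\mu(E+\mb K)+\mu(\mb K)$. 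Combined with (\ref{eqn:MeasureC1}), the problem reduces to proving $\mu(E+\mb K)=\mu_{*}(E)=\mu(A_{1})+\mu(B_{1})$.

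The key step is to show $\mu_{*}(C\cap E)=\mu(\mb K)$ for every coset $C$ of $\mb K$ contained in $E+\mb K$. Since $C\subseteq A+B+\mb K$ and $C\neq A_{0}+B_{0}+\mb K$, we can write $C=a_{1}+b+\mb K$ for some $a_{1}\in A_{1}$, $b\in B$ (or, symmetrically, $C=a+b_{1}+\mb K$ for some $a\in A$, $b_{1}\in B_{1}$). Essential periodicity gives $\mu(A_{1}\cap(a_{1}+\mb K))=\mu(\mb K)$, so $(A_{1}\cap(a_{1}+\mb K))+b$ is a measurable subset of $C\cap(A+B)$ of measure $\mu(\mb K)$. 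Because $C$ is disjoint from $A_{0}+B_{0}+\mb K$, we have $C\cap(A+B)=C\cap E$, and hence $\mu_{*}(C\cap E)=\mu(\mb K)$.

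Since $\mb K$ has finite index (Proposition \ref{prop:Reduction}), $E+\mb K$ decomposes into finitely many cosets $C_{1},\dots,C_{N}$ of $\mb K$; finite additivity of inner measure over this partition then yields $\mu_{*}(E)=\sum_{i}\mu_{*}(C_{i}\cap E)=N\mu(\mb K)=\mu(E+\mb K)$, completing the proof. The main obstacle is the third step, where one must simultaneously invoke the essential periodicity of $A_{1}$ or $B_{1}$ and the unique expression property to certify that every non-exceptional coset is essentially filled by $E$; the remaining accounting of inner measures is routine.
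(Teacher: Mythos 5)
Your proof is correct and takes essentially the same route as the paper: rewrite $\mu(A+\mb K)$ and $\mu(B+\mb K)$ via the quasi-periodic decompositions, strip off $A_0+B_0+\mb K$ using the unique expression property, and invoke Equation~(\ref{eqn:MeasureC1}). The one step the paper leaves implicit — that $\mu(E+\mb K)=\mu_{*}(E)$ — you verify carefully coset-by-coset: for each coset $C\subseteq E+\mb K$, you exhibit a $\mu(\mb K)$-measure subset of $C\cap E$ by translating the essentially full slice $A_1\cap(a_1+\mb K)$ (or its $B_1$ analogue), then sum using finite additivity of inner measure over the finitely many cosets of $\mb K$. This is exactly the content the paper compresses into a single ``implies'' and is the right level of rigor; your argument is sound throughout, including the observation that $a+b\in C$ with $C$ disjoint from $A_0+B_0+\mb K$ forces $a\in A_1$ or $b\in B_1$ by the unique-expression property.
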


\begin{proof}  Note that the quasi-periodic decompositions of $A$ and $B$ with respect to $\mb K$ yield $\mu(A+\mb K)=\mu(A_{1})+\mu(\mb K)$, and $\mu(B+\mb K)=\mu(B_{1})+\mu(\mb K)$.  Adding these and rearranging yields
	\begin{equation}\label{eqn:RearrangedOnes}
	\mu(A_{1})+\mu(B_{1})+\mu(\mb K)=\mu(A+\mb K)+\mu(B+\mb K)-\mu(\mb K).
	\end{equation} Equation (\ref{eqn:MeasureC1}) and the fact that $A_{0}+B_{0}$ is a unique expression element of $A+B+\mb K$ in $\mb G/\mb K$ implies $\mu(A+B+\mb K)=\mu(A_{1})+\mu(B_{1})+\mu(\mb K)$, whereby (\ref{eqn:RearrangedOnes}) provides the desired simplification.
\end{proof}

The remainder of this section is dedicated to the proofs of Propositions \ref{prop:LiftSubcritical} and \ref{prop:Reduction}.  The next definitions introduce some useful terminology for these proofs.

For the following definitions let $G$ be an abelian group and $\nu$ a finitely additive translation invariant  measure on $G$.  We call a pair of subsets $A, B\subseteq G$ \emph{critical}  (as in \S\ref{sec:ReducibleGeneral}) if $\nu_{*}(A+B)<\nu(A)+\nu(B)$, or \emph{subcritical} if $\nu_{*}(A+B)=\nu(A)+\nu(B)$.

We continue to use the term ``tame'' (Definition \ref{def:TameCritical}) when considering critical pairs.  For subcritical pairs we extend the definition as follows.

\begin{definition}[Tameness and type]\label{def:Tame}
	A pair of sets $A, B\subseteq \mb G$ is a \emph{tame subcritical pair} if $\nu_{*}(A+B)=\nu(A)+\nu(B)$ and there is a $\nu$-measurable finite index subgroup $ K\leq G$ such that $A,B$, and $ K$ satisfy one of the conclusions (I)-(III) of Proposition \ref{prop:Reduction}.  If $A, B$, and $K$ satisfy conclusion (I), we say that the triple $A, B,  K$ \emph{has type (I)}, and similarly for the other conclusions. We say that $A, B$ has type (I) if there exists a finite index subgroup $ K\leq  G$ such that $A, B,  K$ has type (I), and similarly for the other types.
	
	When the ambient group is compact, the terms ``tame'' and ``type'' can be applied, simply replacing Proposition \ref{prop:Reduction} with Theorem \ref{thm:LCAInverse} in the definition.
\end{definition}

\begin{lemma}\label{lem:LiftTame}
	Let $G$ be a compact  quotient of $\mb G$ with $\mu$-measurable quotient map $\pi: \mb G\to G$.  If $C, D\subseteq G$ is a subcritical pair (critical pair) in $G$, then $\pi^{-1}(C)$, $ \pi^{-1}(D)$ is a tame subcritical pair (tame critical pair) in $\mb G$.
\end{lemma}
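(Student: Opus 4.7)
The approach is to pull back the downstairs structure through the surjective, $\mu$-measure-preserving homomorphism $\pi$ (Lemma \ref{lem:MeasureIsPreserved}).  Set $A := \pi^{-1}(C)$ and $B := \pi^{-1}(D)$; measure preservation gives $\mu(A) = m(C)$ and $\mu(B) = m(D)$, and surjectivity of $\pi$ yields $A + B = \pi^{-1}(C+D)$.  The task is then to verify the defining conditions of \emph{tame critical pair} (Definition \ref{def:TameCritical}) or \emph{tame subcritical pair} (Definition \ref{def:Tame}) for $(A, B)$ in $\mb G$ from the corresponding downstairs structure.

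For the critical case, Theorem \ref{thm:Satz1} furnishes the compact open stabilizer $H := H(C+D)$, with $C+D$ clopen and $m(C+D) = m(C+H) + m(D+H) - m(H)$.  Set $\mb H := \pi^{-1}(H)$; this is $\mu$-measurable (preimage of an open set) and of finite index, hence internal by Corollary \ref{cor:FiniteIndexInternal}.  The identification $\mb H = H(A+B)$ follows in one direction from $\pi^{-1}(C+D+H) = \pi^{-1}(C+D)$, and in the other from surjectivity of $\pi$ (any $\mb g$ stabilizing $A+B$ satisfies $\pi(\mb g) \in H$).  The Kneser equation then transfers via the identities $\pi^{-1}(X+H) = \pi^{-1}(X) + \mb H$ and measure preservation, making $(A, B)$ a tame critical pair with stabilizer $\mb H$.

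For the subcritical case, Theorem \ref{thm:LCAInverse} yields a compact open $K \leq G$ satisfying exactly one of (I)--(III), and I would set $\mb K := \pi^{-1}(K)$, an internal $\mu$-measurable finite-index subgroup.  The lower bound $\mu_*(A+B) \geq m_*(C+D) = \mu(A) + \mu(B)$ is immediate from pulling back compact subsets of $C+D$.  In case (I), $C+D \sim_m C+D+K$ supplies an $m$-null set $N$ containing the symmetric difference; its preimage $\pi^{-1}(N)$ is $\mu$-null, so $A+B \sim_\mu A+B+\mb K$, and $\mu(A+B+\mb K) = m(C+D+K) = \mu(A)+\mu(B)$ closes the matching upper bound.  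In case (III), the parallel Bohr intervals $C', D' \subseteq K$ lift to parallel Bohr intervals in $\mb K$ via composition of their defining homomorphism $K \to \mathbb T$ with $\pi|_{\mb K}$; containments and measures transfer, and the inclusion $A+B \subseteq \pi^{-1}(c+d+C'+D')$ furnishes the needed upper bound.  In both (II) and (III), the required $A+B \nsim_\mu A+B+\mb K$ is obtained by pulling back a positive-$m$-measure subset of $(C+D+K) \setminus (C+D)$, whose $\pi$-preimage has positive $\mu$-measure.

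Case (II) is where the main obstacle lies.  The quasi-periodic decompositions $C = C_1 \cup C_0$, $D = D_1 \cup D_0$ pull back coordinate-wise, and essential periodicity ($C_i \sim_m C_i + K$), coset containment, disjointness, and nonemptiness all transfer through $\pi^{-1}$ without incident.  The delicate point is the inner-measure equality $\mu_*(A_0+B_0) = \mu(A_0) + \mu(B_0)$: the downstairs hypothesis $m_*(C_0+D_0) = m(C_0)+m(D_0)$ is one-sided and pulls back only as the lower bound $\mu_*(A_0+B_0) \geq m(C_0)+m(D_0)$.  To obtain the matching upper bound I would need an $m$-measurable cover of $C_0+D_0$ of measure $m(C_0)+m(D_0)$, and would produce one by iterating Theorem \ref{thm:LCAInverse} on $(C_0, D_0)$: cases (I) and (III) each terminate the iteration with such a cover (via the set $C_0+D_0+K'$ or a parallel Bohr interval of the right measure), while a continuation into case (II) forces a strict descent $m(K') < m(K)$, since a nontrivial quasi-periodic decomposition of $C_0$ requires $C_0$ to spread across multiple cosets of $K'$ and therefore $m(K') \leq m(C_0) < m(K)$.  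Combined with the observation that $(A_1+B) \cup (A+B_1)$ is essentially periodic with respect to $\mb K$ and hence $\mu$-measurable, this yields the overall equality $\mu_*(A+B) = \mu(A)+\mu(B)$ and completes the subcritical case.
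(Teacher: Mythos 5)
Your treatment of the critical case is exactly the paper's argument: realize the stabilizer as $\mb H = \ker(\rho\circ\pi) = \pi^{-1}(H)$, invoke Corollary \ref{cor:FiniteIndexInternal} to get that $\mb H$ is internal, and transfer the Kneser identity through $\pi$ by measure preservation. For the subcritical case, which the paper declines to write out (calling it ``a similar routine analysis''), you correctly isolate the one step that is not routine: in conclusion (II), the downstairs equality $m_*(C_0+D_0)=m(C_0)+m(D_0)$ pulls back only as the \emph{lower} bound $\mu_*(A_0+B_0)\ge\mu(A_0)+\mu(B_0)$ since $C_0+D_0$ need not be $m$-measurable, and the matching upper bound (which is also what yields $\mu_*(A+B)\le\mu(A)+\mu(B)$ in this case) needs a separate argument. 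However, your proposed fix has a genuine gap. Iterating Theorem \ref{thm:LCAInverse} on $(C_0,D_0)$ to manufacture an $m$-measurable cover of measure $m(C_0)+m(D_0)$ requires the iteration to terminate, or at least to have a controllable tail. The descent $m(K')<m(K)$ you derive gives neither: measures of compact open subgroups lie in $\{1/j : j\in\mathbb N\}$, which admits infinite strictly decreasing chains, and your argument does not show $K'\subseteq K$ (which would give the quantitative $\mu(K')\le\tfrac{1}{2}\mu(K)$) nor rule out an unending run of case (II). The paper's own Corollary \ref{cor:Iterate} must work around precisely this obstacle with an infimum argument rather than bare descent.

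The gap can be closed without any iteration, by proving the identity $\mu_*(\pi^{-1}(E))=m_*(E)$ for \emph{every} $E\subseteq G$. Your compact pull-back gives $\ge$. For $\le$, use Lemma \ref{lem:InnerLoeb} to exhaust any $\mu$-measurable $F\subseteq\pi^{-1}(E)$ from inside by internal sets $\mb C=\prod_{n\to\mathcal U}C_n$ with each $C_n$ compact, and observe that $\pi(\mb C)$ is a compact subset of $G$. This is where Lemma \ref{lem:InternalCharacter} earns its keep again: $G$ is metrizable, so it embeds in $(\mathcal S^1)^{\mathbb N}$; each coordinate $\chi\circ\pi$, $\chi\in\widehat{G}$, is a $\mu$-measurable hence strong character, so $\pi=\lim_{n\to\mathcal U}\Pi_n$ for continuous $\Pi_n:G_n\to(\mathcal S^1)^{\mathbb N}$, and a diagonal argument shows $\{\lim_{n\to\mathcal U}\Pi_n(c_n):c_n\in C_n\}$ is closed. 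Then $\mu(\mb C)\le\mu(\pi^{-1}(\pi(\mb C)))=m(\pi(\mb C))\le m_*(E)$, whence $\mu_*(\pi^{-1}(E))\le m_*(E)$. Applied to $E=C_0+D_0$ and to $E=C+D$, this supplies every missing upper bound uniformly across conclusions (I)--(III), and makes the ``routine'' claim genuinely routine.
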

\begin{proof}
	We only prove the assertion for critical pairs.  The assertion for subcritical pairs follows from a similar routine analysis, using Theorem \ref{thm:LCAInverse} in place of Theorem \ref{thm:Satz1}.  Let $m$ denote Haar measure on $G$.
	
	Assume $C, D$ is a critical pair in $G$.  Let $A=\pi^{-1}(C)$, $B=\pi^{-1}(D)$, so the $\mu$-measurability of $\pi$ and Lemma \ref{lem:MeasureIsPreserved} imply $\mu(A)=m(C)$, $\mu(B)=m(D)$, and
	\[\mu(A+B)=m(C+D)<m(C)+m(D)=\mu(A)+\mu(B),\] so $A, B$ is a critical pair.    Theorem \ref{thm:Satz1} guarantees the existence of a compact open (and therefore finite index) subgroup $H\leq G$ such that $C+D=C+D+H$ and $m(C+D)=m(C+H)+m(D+H)-m(H)$.  The quotient $G/H$ is finite, so if $\rho:G\to G/H$ denotes the quotient map, then $\tau:=\rho\circ \pi$ is a $\mu$-measurable homomorphism from $\mb G$ onto the finite group $G/H$. Corollary \ref{cor:FiniteIndexInternal} implies the kernel $\mb H$ of $\tau$ is an internal $\mu$-measurable finite index subgroup of $\mb G$.  Now $\mb H$ is the stabilizer of $A+B$  and the equation $\mu(A+B)=\mu(A+\mb H)+\mu(B+\mb H)-\mu(\mb H)$ follows as $\pi$ preserves $\mu$.   Thus $A, B$ is a tame critical pair in $\mb G$. \end{proof}

Perhaps there is a direct proof of Proposition \ref{prop:LiftSubcritical} imitating the proof of Theorem \ref{thm:Satz1} in \cite{Kneser56}, but we do not pursue this approach.  Instead we lift the result from the compact setting using an argument from \cite{JinUBDInverse}, where an analogue of Theorem \ref{thm:Satz1} is proved for upper Banach density in $\mathbb Z$.  For this argument we require Lemmas \ref{lem:SimCriticalSub} and \ref{lem:TameSuper}.

\begin{proof}[Proof of Proposition \ref{prop:LiftSubcritical}] Consider $\mu$-measurable sets $A, B\subseteq \mb G$ satisfying $\mu_{*}(A+B)<\mu(A)+\mu(B)$. Our goal, in the terminology of Definition \ref{def:TameCritical}, is to prove that $A,B$ is a tame critical pair.  Apply Corollary \ref{cor:ProjectAndPullBack} to find a compact abelian quotient $G$ of $\mb G$ with Haar probability measure $m$, a $\mu$-measure preserving  quotient map $\pi: \mb G\to G$, and sets $C, D\subseteq G$, such that $A\subset_{\mu} A':=\pi^{-1}(C), B\subset_{\mu} B':=\pi^{-1}(D)$,  $A'+B'$ is $\mu$-measurable  and $A'+B'\subset_{\mu}A+B$.  Then
	\begin{align*}
	m(C+D)=\mu(A'+B')&\leq \mu_{*}(A+B) <\mu(A)+\mu(B)\leq \mu(A')+\mu(B')=m(C)+m(D).
	\end{align*}Thus $m(C+D)<m(C)+m(D)$, so $C, D$ is a critical pair in $G$.  Then Theorem \ref{thm:Satz1} implies $C,D$ is a tame critical pair, so Lemma \ref{lem:LiftTame} implies $A', B'$ is a tame critical pair.  Let $A''=A\cap A'$ and $B''=B\cap B'$.  We will prove that the pair $A'', B''$ is tame.  First, observe that the essential containment $A\subset_{\mu} A'$ implies $A''\sim_{\mu} A$, and likewise $B''\sim_{\mu} B$.  We now show that $\mu_{*}(A''+B'')=\mu(A'+B')$. Note that $A''+B''\subseteq A'+B'$, so $\mu_{*}(A''+B'')\leq \mu(A'+B')$.  The reverse inequality follows from Part (ii) of Corollary \ref{cor:ProjectAndPullBack} and the similarities noted above.
	
	We have shown that $\mu_{*}(A''+B'')=\mu(A'+B')$, while the inequality $\mu_{*}(A''+B'')<\mu(A'')+\mu(B'')$ follows from the similarities $A''\sim_{\mu}A$ and $B''\sim_{\mu} B$, the containments $A''\subseteq A$, $B''\subseteq B$, and the fact that $A,B$ is a critical pair.  We may therefore apply Lemma \ref{lem:SimCriticalSub} to conclude that $A'', B''$ is a tame critical pair. Now Lemma \ref{lem:TameSuper} implies $A,B$ is a tame critical pair, completing the proof of Proposition \ref{prop:LiftSubcritical}. \end{proof}

The next definition introduces some  pairs of sets for the proof of Proposition \ref{prop:Reduction}.

\begin{definition}\label{def:ExtendibleReducible}
	A subcritical pair of sets $A, B\subseteq \mb G$ is
	\begin{enumerate}
		\item[$\bullet$] \emph{extendible} if there are $A'\supseteq A$, $B'\supseteq B$ such that $\mu(A')+\mu(B')>\mu(A)+\mu(B)$, while $\mu_{*}(A'+B')=\mu_{*}(A+B)$,
		
		\smallskip
		
		\item[$\bullet$] \emph{reducible} if there are sets $A''\subseteq A$, $B''\subseteq B$ such that $\mu(A'')=\mu(A)$, $\mu(B'')=\mu(B)$, and $\mu_{*}(A''+B'')<\mu_{*}(A+B)$.  In this case we say $A, B$ \emph{reduces to} $A'', B''$.
	\end{enumerate}
	If the pair $A, B$ is not extendible, we say it is \emph{nonextendible}, and if it is not reducible, we say that it is \emph{irreducible}.  \end{definition}

In subsequent proofs it will be convenient to assume that a pair $A,B$ is nonextendible and irreducible. The following three lemmas allow us to do so, by proving that if a subcritical pair is extendible or reducible, then it is a tame subcritical pair of type (I) or type (II) (in the terminology of Definition \ref{def:Tame}).

\begin{lemma}\label{lem:Extendible}
	If $A, B\subseteq \mb G$ satisfy $\mu(A), \mu(B)>0$, $\mu_{*}(A+B)=\mu(A)+\mu(B)$, and $A,B$ is extendible, then $A, B$ has type (I).
\end{lemma}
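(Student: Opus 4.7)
The plan is to exploit extendibility to apply the freshly-proved Proposition \ref{prop:LiftSubcritical} to the enlarged pair $A',B'$, and then show that $A+B$ is essentially equal to $A'+B'$, so that the stabilizer of $A'+B'$ serves as the witness $\mb K$ for type (I).

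First I would unpack the hypotheses. Since $A,B$ is subcritical and extendible, choose $A'\supseteq A$, $B'\supseteq B$ with
\[
\mu_{*}(A'+B')=\mu_{*}(A+B)=\mu(A)+\mu(B)<\mu(A')+\mu(B').
\]
Thus $A',B'$ is a \emph{critical} pair in $\mb G$, so Proposition \ref{prop:LiftSubcritical} applies: the stabilizer $\mb H:=H(A'+B')$ is an internal $\mu$-measurable finite-index subgroup of $\mb G$, and $A'+B'$ is $\mu$-measurable (being a union of cosets of the clopen group $\mb H$). In particular $\mu_{*}(A'+B')=\mu(A'+B')$, so $\mu(A'+B')=\mu(A)+\mu(B)=\mu_{*}(A+B)$.

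Next I would show $A+B\sim_{\mu}A'+B'$. Because Loeb measure is inner-regular, there is a $\mu$-measurable set $E\subseteq A+B$ with $\mu(E)=\mu_{*}(A+B)=\mu(A'+B')$. Then $E\subseteq A+B\subseteq A'+B'$ and $\mu(E)=\mu(A'+B')$, so $(A'+B')\setminus E$ is $\mu$-measurable with measure $0$. Consequently $(A'+B')\setminus(A+B)\subseteq (A'+B')\setminus E$ lies in a measurable null set and, by completeness of Loeb measure, is itself measurable and null.

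Finally I would sandwich: since $A+B\subseteq A+B+\mb H\subseteq A'+B'+\mb H=A'+B'$, the symmetric difference
\[
(A+B+\mb H)\triangle(A+B)=(A+B+\mb H)\setminus(A+B)\subseteq(A'+B')\setminus(A+B)
\]
is contained in the null set just identified, hence is itself null. Therefore $A+B\sim_{\mu}A+B+\mb H$, which is precisely condition (I) of Proposition \ref{prop:Reduction} with $\mb K:=\mb H$. The only mildly subtle step is the inner-measure bookkeeping at the beginning, ensuring that the passage from $\mu_{*}(A+B)$ to a genuine measurable approximant $E$ cooperates with the set-theoretic sandwich; nothing deeper is needed once Proposition \ref{prop:LiftSubcritical} is in hand.
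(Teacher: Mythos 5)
Your proof is correct and follows essentially the same route as the paper's: apply Proposition \ref{prop:LiftSubcritical} to the critical pair $A',B'$ to extract $\mb H$, then use $A+B\subseteq A'+B'$ together with $\mu_*(A+B)=\mu(A'+B')$ to conclude $A+B\sim_\mu A+B+\mb H$. The paper states this last step more tersely; your explicit inner-regularity/completeness bookkeeping with the measurable approximant $E$ is exactly the detail the paper leaves implicit.
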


\begin{proof}
	If $A, B$ is extendible, then there exist sets $A', B'\subseteq \mb G$ such that $A\subseteq A'$, $B\subseteq B'$, $\mu(A')+\mu(B')> \mu(A)+\mu(B)$, and $\mu_{*}(A'+B')=\mu_{*}(A+B)$.  Thus $\mu_{*}(A'+B')<\mu(A')+\mu(B')$.  Then Proposition \ref{prop:LiftSubcritical} implies that there is an internal $\mu$-measurable finite index subgroup $\mb H\leq \mb G$ such that $A'+B'=A'+B'+\mb H$.  The assumption $\mu_{*}(A'+B')=\mu_{*}(A+B)$ and the containment $A+B\subseteq A'+B'$ now imply $A+B\sim_{\mu} A'+B'+\mb H$, so $A+B\sim_{\mu} A+B+\mb H$, meaning $A, B$ has type (I).
\end{proof}

\begin{lemma}\label{lem:RedicubleQP}
	Suppose that $A,B\subseteq G$ satisfies $\mu_{*}(A+B)=\mu(A)+\mu(B)$, and $A,B$ reduces to a tame critical pair $A', B'$.  Let $\mb H=H(A'+B')$, and assume that $A+B\nsim_{\mu} A+B+\mb H$.  Then $A$ and $B$ have quasi-periodic decompositions with respect to $\mb H$ where at least one of $\mu(A_{0})=0$ or $\mu(B_{0})=0$, and
	\begin{equation}\label{eqn:EndSum}
	\mu_{*}(A_{0}+B_{0})=\mu(A_{0})+\mu(B_{0}).
	\end{equation}
\end{lemma}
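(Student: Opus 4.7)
The proof will set up the hypotheses of Lemma~\ref{lem:Pop1}(iv) and invoke it directly. The idea is that the given tame critical reduction $(A',B')$ can be enlarged inside $(A,B)$ until one coordinate equals $A$ or $B$, at which point Lemma~\ref{lem:Pop1} applies.

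First I would enlarge the reduction by setting $A^{*} := A \cap (A' + \mb H)$ and $B^{*} := B \cap (B' + \mb H)$. Because $A \setminus (A'+\mb H) \subseteq A\setminus A'$ is $\mu$-null, $A^{*}\subseteq A$ with $\mu(A^{*}) = \mu(A)$, and similarly $\mu(B^{*}) = \mu(B)$. Since $\mb H = H(A'+B')$ stabilizes $A'+B'$, one has $A^{*}+B^{*} \subseteq (A'+\mb H)+(B'+\mb H) = A'+B'$, and the reverse containment $A^{*}+B^{*} \supseteq A'+B'$ is trivial. Hence $A^{*}+B^{*} = A'+B'$, so $(A^{*},B^{*})$ is a tame critical pair with the same stabilizer $\mb H$.

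Next I would show that at least one of $(A,B^{*})$ or $(A^{*},B)$ is itself a tame critical pair (and has stabilizer $\mb H$), so that the hypotheses of Lemma~\ref{lem:Pop1} are available. Both pairs satisfy $\mu_{*}(A+B^{*})\leq \mu(A)+\mu(B^{*})$ and $\mu_{*}(A^{*}+B)\leq \mu(A^{*})+\mu(B)$. If both inequalities were equalities, I would pass to the finite quotient $H := \mb G/\mb H$ and track coset-by-coset: each extra coset contributed by $\pi(A)\setminus\pi(A')$ or $\pi(B)\setminus\pi(B')$ costs mass in $A+B$, and an accounting using Lemma~\ref{lem:PushSubCritical}(i) for $(A^{*},B^{*})$ together with the subcriticality assumptions forces the cosets of $A+B+\mb H$ to be essentially filled by $A+B$, i.e.\ $A+B\sim_{\mu}A+B+\mb H$, contradicting our hypothesis. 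Thus, possibly after swapping $A\leftrightarrow B$ and $A'\leftrightarrow B'$, I may assume $(A,B^{*})$ is tame critical. Comparing $A+B^{*}\supseteq A'+B'$ with the Kneser equation for $(A,B^{*})$, and using Proposition~\ref{prop:LiftSubcritical}, one checks $H(A+B^{*}) = \mb H$.

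Finally I would apply Lemma~\ref{lem:Pop1}(iv) to the subcritical pair $(A,B)$ with the tame critical subpair $(A,B^{*})$ (identifying $B^{*}$ with the ``$B'$'' of that lemma). The hypothesis $A+B\nsim_{\mu}A+B+\mb H$ is exactly the premise of (iv), so the conclusion hands us quasi-periodic decompositions $A = A_{1}\cup A_{0}$ and $B = B_{1}\cup B_{0}$ with respect to $\mb H$ with $\mu(B_{0}) = 0$ and $\mu_{*}(A_{0}+B_{0}) = \mu(A_{0})+\mu(B_{0})$, which is exactly the desired conclusion (with at least one of $\mu(A_{0})$, $\mu(B_{0})$ equal to zero). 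The main obstacle is the dichotomy step ruling out the case when both $(A,B^{*})$ and $(A^{*},B)$ are merely subcritical: the difficulty is that $A\setminus A^{*}$ and $B\setminus B^{*}$ are Loeb-null yet can still contribute positive-measure pieces to sumsets, so the coset-wise bookkeeping modulo $\mb H$ must be done carefully and is where most of the technical work lies.
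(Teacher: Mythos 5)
The dichotomy in your step 2 is false, and this is not merely a technical hurdle you could work out with care --- it is a wrong statement with a concrete counterexample. You claim that if both $(A,B^{*})$ and $(A^{*},B)$ are subcritical, then a coset-by-coset accounting modulo $\mb H$ forces $A+B\sim_{\mu}A+B+\mb H$, contradicting the hypothesis. Consider $\mb G/\mb H\cong\mathbb Z/4\mathbb Z$ with cosets $C_{0},C_{1},C_{2},C_{3}$. Let $A=(C_{0}\cup C_{1})\cup\{a_{0}\}$ with $a_{0}\in C_{2}$, and $B=C_{0}\cup\{b_{0}\}$ with $b_{0}\in C_{1}$. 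Then $A'=C_{0}\cup C_{1}$, $B'=C_{0}$ is a tame critical pair with $H(A'+B')=\mb H$, $\mu_{*}(A+B)=3\mu(\mb H)=\mu(A)+\mu(B)$, and $A+B=C_{0}\cup C_{1}\cup C_{2}\cup\{a_{0}+b_{0}\}$, so $A+B\nsim_{\mu}A+B+\mb H$. Here $A^{*}=A'$ and $B^{*}=B'$, and you can check directly that $\mu_{*}(A+B')=\mu_{*}(A'+B)=3\mu(\mb H)$, so both pairs $(A,B^{*})$ and $(A^{*},B)$ are subcritical. No contradiction arises: the ``extra cosets'' $C_{2}\in\pi(A)\setminus\pi(A')$ and $C_{1}\in\pi(B)\setminus\pi(B')$ reuse each other's contributions inside $A+B$, so the budget is not exhausted in the way your sketch assumes, and $C_{3}$ remains essentially empty.

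The paper handles exactly this case (its Case 2, where $\mu_{*}(A+B')=\mu_{*}(A'+B)=\mu_{*}(A+B)$) not by a contradiction but by applying Lemma~\ref{lem:Pop1}(iv) \emph{twice with swapped roles}: once with $A'$ in place of ``$A$'' to extract a quasi-periodic decomposition $B=B_{1}\cup B_{0}$ with $\mu(B_{0})=0$, and once with $B'$ in place of ``$A$'' to extract a quasi-periodic decomposition $A=A_{1}\cup A_{0}$ with $\mu(A_{0})=0$, then combining these with the hypothesis $A+B\nsim_{\mu}A+B+\mb H$ to deduce $\mu_{*}(A_{0}+B_{0})=0$. (In the other case, where one of $\mu_{*}(A+B')$, $\mu_{*}(A'+B)$ is strictly smaller, the paper first uses Lemmas~\ref{lem:LargeAB} and~\ref{lem:Overflow} to show the relevant sumset equals $A'+B'$ before invoking Pop1(iv).) So your final step 3 is the right move, but you need to arrive at it without assuming one of the pairs is strictly critical --- instead you must allow the doubly subcritical case and handle it by the double application of Pop1.
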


\begin{proof}  We consider cases based on $\mu_{*}(A+B')$ and $\mu_{*}(A'+B)$.
	
	\noindent \textbf{Case 1:}  $\mu_{*}(A+B')<\mu_{*}(A+B)$ or $\mu_{*}(A'+B)<\mu_{*}(A+B)$.  By symmetry we need only consider the first inequality. In this case Proposition \ref{prop:LiftSubcritical} implies  $A,B'$ is a tame critical pair.   Lemma \ref{lem:LargeAB} and Lemma \ref{lem:Overflow} now imply $A+B'=A'+B'$, so $H(A+B')=H(A'+B')$.  Part (iv) of Lemma \ref{lem:Pop1} then implies the desired conclusion.
	
	\smallskip
	
	\noindent \textbf{Case 2:} Neither $\mu_{*}(A+B')<\mu_{*}(A+B)$ nor $\mu_{*}(A'+B)<\mu_{*}(A+B)$.  In this case we have $\mu_{*}(A'+B)=\mu_{*}(A+B')=\mu_{*}(A+B)$.  We apply Lemma \ref{lem:Pop1} with $A'$ in place of $A$, and thus obtain a quasi-periodic decomposition $B=B_{1}\cup B_{0}$ with respect to $\mb H$ where $\mu(B_{0})=0$ and $B_{1}=B\cap (B'+\mb H)$.    Similarly we apply Lemma \ref{lem:Pop1} with $B'$ in place of $A$ and $A$ in place of $B$.  We thereby obtain a quasi-periodic decomposition $A=A_{1}\cup A_{0}$ with respect to $\mb H$ such that $\mu(A_{0})=0$.  The assumption that $A+B\nsim_{\mu} A+B+\mb H$ then implies $A_{0}+B_{0}+\mb H$ is a unique expression element of $A+B+\mb H$.  In particular, $A_{0}+B_{0}\subseteq D:=(A+B)\setminus (A+B_{1})$.  Now our assumption that $\mu_{*}(A+B)=\mu_{*}(A+B')$ implies $\mu_{*}(A+B_{1})=\mu_{*}(A+B)$.  Then $\mu_{*}(D)=0$, so that $\mu_{*}(A_{0}+B_{0})=0$, which implies (\ref{eqn:EndSum}). \end{proof}

\begin{lemma}\label{lem:Reducible}
	If $A, B\subseteq \mb G$ has $\mu_{*}(A+B)=\mu(A)+\mu(B)$ and $A,B$ is reducible and does not satisfy conclusion (I) in Proposition \ref{prop:Reduction}, then $A, B$ satisfies conclusion (II) of Proposition \ref{prop:Reduction} with at least one of $\mu(A_{0})=0$ or $\mu(B_{0})=0$.
\end{lemma}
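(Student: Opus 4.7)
The plan is to extract a tame critical pair inside $(A,B)$ via the reducibility hypothesis, then feed it into Lemma~\ref{lem:RedicubleQP} and check the remaining nonemptiness condition required by conclusion~(II).

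First I would unpack the reducibility hypothesis: choose $A''\subseteq A$, $B''\subseteq B$ with $\mu(A'')=\mu(A)$, $\mu(B'')=\mu(B)$, and $\mu_{*}(A''+B'')<\mu_{*}(A+B)=\mu(A)+\mu(B)=\mu(A'')+\mu(B'')$. Thus $A'',B''$ is a critical pair, and Proposition~\ref{prop:LiftSubcritical} says $A'',B''$ is in fact a tame critical pair. Let $\mb H:=H(A''+B'')$; this is an internal $\mu$-measurable finite index subgroup, and tameness gives $\mu(A''+B'')=\mu(A''+\mb H)+\mu(B''+\mb H)-\mu(\mb H)$.

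Next I would verify that the non-type-(I) hypothesis forces the critical input needed for Lemma~\ref{lem:RedicubleQP}, namely $A+B\nsim_{\mu} A+B+\mb H$. Indeed, if $A+B\sim_{\mu} A+B+\mb H$, then the triple $(A,B,\mb H)$ would satisfy conclusion~(I) of Proposition~\ref{prop:Reduction}, contradicting the hypothesis that $A,B$ does not satisfy~(I). With this in hand, Lemma~\ref{lem:RedicubleQP} applied with $A'=A''$, $B'=B''$ produces quasi-periodic decompositions $A=A_{1}\cup A_{0}$, $B=B_{1}\cup B_{0}$ with respect to $\mb H$ such that $\mu_{*}(A_{0}+B_{0})=\mu(A_{0})+\mu(B_{0})$ and at least one of $\mu(A_{0})=0$ or $\mu(B_{0})=0$. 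All that remains is to check that at least one of $A_{1},B_{1}$ is nonempty.

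The main obstacle is this last step, since the definition of quasi-periodic decomposition permits $A_{1}$ or $B_{1}$ to be empty, and conclusion~(II) requires that not both vanish. I would rule this out by contradiction: suppose $A_{1}=B_{1}=\varnothing$, so that $A=A_{0}\subseteq \mb a+\mb H$ and $B=B_{0}\subseteq \mb b+\mb H$ for some $\mb a,\mb b$. Then $A+B\subseteq \mb a+\mb b+\mb H$, so $\mu_{*}(A+B)\leq \mu(\mb H)$. On the other hand $A''\subseteq \mb a+\mb H$ and $B''\subseteq \mb b+\mb H$ give $\mu(A''+\mb H)=\mu(B''+\mb H)=\mu(\mb H)$, so tameness yields $\mu(A''+B'')=\mu(\mb H)$. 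Criticality of $A'',B''$ then forces
\[
\mu(\mb H)=\mu(A''+B'')<\mu(A'')+\mu(B'')=\mu(A)+\mu(B)=\mu_{*}(A+B)\leq \mu(\mb H),
\]
a contradiction. Hence at least one of $A_{1},B_{1}$ is nonempty, and together with the output of Lemma~\ref{lem:RedicubleQP} this is exactly conclusion~(II) of Proposition~\ref{prop:Reduction} with the extra property that $\mu(A_{0})=0$ or $\mu(B_{0})=0$.
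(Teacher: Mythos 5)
Your proof follows the same route as the paper: reducibility gives a critical pair $A'',B''$, Proposition~\ref{prop:LiftSubcritical} upgrades it to a tame critical pair, non-type-(I) gives $A+B\nsim_{\mu}A+B+\mb H$, and Lemma~\ref{lem:RedicubleQP} then supplies the quasi-periodic decompositions. The one place where you do more work than the paper is the final nonemptiness check, and you are right to flag it: the literal statement of Lemma~\ref{lem:RedicubleQP} does not assert that one of $A_{1},B_{1}$ is nonempty, so conclusion~(II) is not formally delivered without this step (the paper's one-line proof tacitly relies on the fact that the decompositions produced in the proof of Lemma~\ref{lem:RedicubleQP} have $B_{1}\supseteq B'$). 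Your contradiction argument for this step is correct but heavier than necessary: since Lemma~\ref{lem:RedicubleQP} gives at least one of $\mu(A_{0})=0$ or $\mu(B_{0})=0$, and $\mu(A),\mu(B)>0$ in the context where the lemma is invoked, the corresponding $A_{1}$ or $B_{1}$ automatically has positive measure and is therefore nonempty.
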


\begin{proof}
	Suppose the pair $A, B$ is as above and $A, B$ reduces to $A',B'$, where $\mu_{*}(A'+B')<\mu(A)+\mu(B)$.  Let $\mb H=H(A'+B')$, so that $\mb H$ is an internal $\mu$-measurable  finite index subgroup of $\mb G$, by Proposition \ref{prop:LiftSubcritical}.  Our hypothesis implies $A+B\nsim_{\mu} A+B+\mb H$, so  Lemma \ref{lem:RedicubleQP}  yields the desired conclusion.
\end{proof}
The next lemma uses notation from Definition \ref{def:Movers}.

\begin{lemma}\label{lem:ExtendMove}
	Let $A, B\subseteq \mb G$ be $\mu$-measurable sets having $\mu(A),\mu(B)>0$ such that $\mu_{*}(A+B)\leq \mu(A)+\mu(B)$, and assume the stabilizer $\mb H$ of $A+B$ is a $\mu$-measurable finite index subgroup of $\mb G$.  Then either $\bar{A}_{B}=A+\mb H$, or $\bar{A}_{B}=(A+\mb H)\cup (\mb g+\mb H)$ for some $\mb g\in \mb G\setminus (A+\mb H)$.
\end{lemma}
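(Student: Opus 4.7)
The plan is to work directly in $\mb G$ and derive a contradiction by assuming $\bar{A}_B$ contains two distinct cosets of $\mb H$ outside $A + \mb H$; the main tool will be Proposition \ref{prop:LiftSubcritical}, the ultraproduct analogue of Kneser's theorem.

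First I would record two preliminary facts. Since $\mb H = H(A+B)$ is a $\mu$-measurable finite-index subgroup, $A+B$ is a finite (hence $\mu$-measurable) union of $\mb H$-cosets, so the hypothesis sharpens to $\mu(A+B) \leq \mu(A)+\mu(B)$. Next, the set $\bar{A}_B$ contains $A + \mb H$ and is itself $\mb H$-periodic: the containment $A \subseteq \bar{A}_B$ is immediate from the definition, and $\mb H$-periodicity of $A+B$ gives $(\mb g+\mb h)+B \subseteq A+B+\mb h = A+B$ whenever $\mb g \in \bar{A}_B$ and $\mb h \in \mb H$. So $\bar{A}_B$ is a union of cosets of $\mb H$, and the conclusion we need is exactly that at most one such coset falls outside $A+\mb H$.

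For the contradiction argument, suppose $\bar{A}_B$ contains two distinct cosets $\mb g_1+\mb H$ and $\mb g_2+\mb H$, neither contained in $A+\mb H$. Since $A + \mb H$ is the union of all $\mb H$-cosets that meet $A$, each $\mb g_i + \mb H$ must be entirely disjoint from $A$. Form $A' := A \cup (\mb g_1+\mb H) \cup (\mb g_2+\mb H)$, a disjoint union, so $\mu(A') = \mu(A) + 2\mu(\mb H)$. Because $\mb g_i + \mb H \subseteq \bar{A}_B$, we have $(\mb g_i+\mb H)+B \subseteq A+B$, whence $A'+B = A+B$ and $H(A'+B) = \mb H$. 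Moreover,
\[
\mu_*(A'+B) = \mu(A+B) \leq \mu(A)+\mu(B) < \mu(A') + \mu(B),
\]
so $(A',B)$ is a critical pair. Proposition \ref{prop:LiftSubcritical} then yields
\[
\mu(A+B) = \mu(A'+\mb H) + \mu(B+\mb H) - \mu(\mb H).
\]
Since $A'+\mb H = (A+\mb H) \cup (\mb g_1+\mb H) \cup (\mb g_2+\mb H)$ is a disjoint union, $\mu(A'+\mb H) = \mu(A+\mb H) + 2\mu(\mb H)$, and substituting gives $\mu(A+B) = \mu(A+\mb H) + \mu(B+\mb H) + \mu(\mb H) \geq \mu(A)+\mu(B)+\mu(\mb H)$, contradicting $\mu(A+B) \leq \mu(A)+\mu(B)$ since $\mu(\mb H) > 0$.

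The only step that merits care is confirming that the enlarged pair is genuinely critical, i.e.\ that $\mu(A') > \mu(A)$; this is where the observation that any $\mb H$-coset not contained in $A+\mb H$ is disjoint from $A$ is essential. Beyond that, the proof is a clean application of Proposition \ref{prop:LiftSubcritical} after enlarging $A$ by two hypothetical extra cosets.
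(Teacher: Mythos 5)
Your proof is correct, and it takes a slightly different route from the paper's. The paper's proof adds a \emph{single} extra coset $\mb g + \mb H$ to $A$, forming $C = A \cup (\mb g + \mb H)$, and then invokes Lemma~\ref{lem:PushSubCritical}(ii) — which gives $\bar{C}_B = C + \mb H$ for a tame critical pair — to immediately conclude $\bar{A}_B = \bar{C}_B = (A+\mb H) \cup (\mb g+\mb H)$. You instead assume for contradiction that there are \emph{two} extra cosets, enlarge $A$ by both of them, and derive the contradiction directly from the Kneser-type equality of Proposition~\ref{prop:LiftSubcritical}, without passing through Lemma~\ref{lem:PushSubCritical} at all. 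Your version is a bit more self-contained, since Lemma~\ref{lem:PushSubCritical}(ii) is itself a nontrivial consequence of the Kneser equality (requiring a reduction to the discrete case), and your argument makes the underlying arithmetic transparent: each additional coset of $\mb H$ outside $A+\mb H$ contributes an extra $\mu(\mb H)$ to $\mu(A'+\mb H)$, and the sum $\mu(A)+\mu(B)$ can absorb at most one such contribution. The paper's route is shorter on the page given the machinery it has already built, but yours would work even in a setting where Lemma~\ref{lem:PushSubCritical}(ii) had not been stated. All the supporting observations you make — that $A+B$ is measurable as a finite union of $\mb H$-cosets, that $\bar{A}_B$ is $\mb H$-periodic and contains $A+\mb H$, and that a coset of $\mb H$ not contained in $A+\mb H$ is disjoint from $A$ — are correct and necessary for the measure count to go through.
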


\begin{proof}
	Since $\mb H$ is the stabilizer of $A+B$, we have $A+\mb H\subseteq \bar{A}_{B}$, and we aim to establish the reverse containment. If $\bar{A}_{B}\neq A+\mb H$, then there is a  $\mb g\in \mb G\setminus (A+\mb H)$ with $\mb g+B\subseteq A+B$. Setting $C:=A\cup (\mb g+\mb H)$, we have $\mu(C)=\mu(A)+\mu(\mb H)>\mu(A)$, while  $C+B= A+B$, so $\mu(C+B)=\mu(A+B)< \mu(C)+\mu(B)$.  Furthermore $H(C+B)=H(A+B)$, so Proposition \ref{prop:LiftSubcritical} implies the hypotheses of Lemma \ref{lem:PushSubCritical} are satisfied with $C$ in place of $A$. Then $\bar{C}_{B}=C+\mb H$.  Now $\bar{C}_{B}=\bar{A}_{B}$, so $\bar{A}_{B}=C+\mb H=(A+\mb H)\cup (\mb g+\mb H)$.\end{proof}

The next lemma deals with a case arising in the proof of Proposition \ref{prop:Reduction}.

\begin{lemma}\label{lem:LongCase}
	Suppose $A, B\subseteq \mb G$ have $\mu(A), \mu(B)>0$, and $\mu_{*}(A+B)=\mu(A)+\mu(B)$.   Furthermore assume there are sets $A'\subseteq A$ and $B'\subseteq B$ and an internal $\mu$-measurable finite index subgroup $\mb H\leq \mb G$ such that
	\begin{enumerate}
		\item[$\bullet$]   $A'\sim_{\mu}A, B'\sim_{\mu} B$, and $\mu_{*}(A'+B')=\mu(A')+\mu(B')$ (so $\mu_{*}(A'+B')=\mu_{*}(A+B)$),
		
		\item[$\bullet$] $A'$ and $B'$ are both $\mb H$-solid (Definition \ref{def:Solid}),
		
		\item[$\bullet$]   $A'+B'\sim_{\mu}A'+B'+\mb H$, and $\mb H$ is the stabilizer of $A'+B'+\mb H$.
		
	\end{enumerate}
	Then $A, B$ satisfies  conclusion (I) or conclusion (II) of Proposition \ref{prop:Reduction}.
\end{lemma}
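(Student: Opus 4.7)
The plan is to push the analysis down to the finite quotient $\mb G/\mb H$ (which is finite by Corollary \ref{cor:FiniteIndexInternal}) and invoke the classical Kneser theorem, i.e.\ Theorem \ref{thm:Satz1} applied in $\mb G/\mb H$. Let $\pi:\mb G\to\mb G/\mb H$ be the quotient homomorphism, and let $\eta$ denote normalized counting measure on $\mb G/\mb H$. Set $U:=\pi(A')$, $V:=\pi(B')$, $U^+:=\pi(A)$, $V^+:=\pi(B)$, so $U\subseteq U^+$ and $V\subseteq V^+$. The $\mb H$-solidity of $A'$ together with $A'\sim_\mu A$ gives $\mu(A\cap(g+\mb H))>0$ when $g\in U$ and $\mu(A\cap(g+\mb H))=0$ when $g\notin U$; in particular $\mu(A)\leq\eta(U)$, with equality iff $A'\sim_\mu\pi^{-1}(U)$. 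Using $A'+B'\sim_\mu A'+B'+\mb H=\pi^{-1}(U+V)$ together with the hypothesis $\mu_*(A'+B')=\mu(A')+\mu(B')$ yields $\eta(U+V)=\mu(A)+\mu(B)\leq\eta(U)+\eta(V)$, and the assumption that $\mb H$ is the stabilizer translates to $H(U+V)=\{0\}$ in $\mb G/\mb H$.

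The key observation is that $\mu_*(A+B)=\eta(U+V)$ and every $\mu$-measurable $E\subseteq A+B$ satisfies $E\subset_\mu\pi^{-1}(U+V)$. Indeed, $A'+B'$ is $\mu$-measurable (differing from $\pi^{-1}(U+V)$ by a subset of a null set) with $\mu(A'+B')=\eta(U+V)$, and replacing $E$ by the measurable set $E\cup(A'+B')\subseteq A+B$ forces $\mu(E\setminus(A'+B'))=0$. Since $A+B+\mb H=\pi^{-1}(U^++V^+)$, this yields the clean dichotomy: $A+B\sim_\mu A+B+\mb H$ iff $U^++V^+=U+V$. In the former case conclusion (I) is immediate, so I now assume $U^++V^+\supsetneq U+V$ and derive conclusion (II).

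The main technical obstacle is to constrain the structure of $U^+$ and $V^+$ under this assumption. For any $g\in U^+$, choose $a\in A\cap(g+\mb H)$; the translate $a+B'$ is $\mu$-measurable, contained in $A+B$, and therefore essentially contained in $\pi^{-1}(U+V)$ by the previous paragraph. The $\mb H$-solidity of $B'$ ensures $a+B'$ has positive measure in each coset $g+v+\mb H$ with $v\in V$, forcing $g+v\in U+V$; hence $U^+\subseteq\bar U_V$ (in the notation of Definition \ref{def:Movers}), and symmetrically $V^+\subseteq\bar V_U$. Now apply Theorem \ref{thm:Satz1} to $(\bar U_V,V)$ in $\mb G/\mb H$: since $\bar U_V+V=U+V$ has trivial stabilizer, a short calculation combining that theorem with $|U+V|\leq|U|+|V|$ gives $|\bar U_V\setminus U|\leq 1$, and similarly $|\bar V_U\setminus V|\leq 1$. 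Moreover $|U+V|=|U|+|V|-1$ would force $\bar U_V=U$ and $\bar V_U=V$, hence $U^+=U$, $V^+=V$, contradicting our assumption. So $|U+V|=|U|+|V|$, hence $\mu(A)=\eta(U)$ and $\mu(B)=\eta(V)$; writing $U^+=U\cup\{g_0\}$, $V^+=V\cup\{h_0\}$, the identity $U^++V^+=(U+V)\cup\{g_0+h_0\}$ (using $g_0+V,\,U+h_0\subseteq U+V$) forces $g_0+h_0\notin U+V$.

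Given this rigid structure the quasi-periodic decompositions write themselves: set $A_0:=A\cap(g_0+\mb H)$, $A_1:=A\cap\pi^{-1}(U)$, and analogously $B_0,B_1$. All axioms of Definition \ref{def:QP} are routine consequences of $U^+=U\sqcup\{g_0\}$, the $\mb H$-solidity of $A'$, and $\mu(A)=\eta(U)$: in particular $A_1\sim_\mu\pi^{-1}(U)=A_1+\mb H$, $(A_1+\mb H)\cap A_0=\varnothing$, and $A_1\neq\varnothing$ because $U\neq\varnothing$. Since $\mu(A_0)=\mu(B_0)=0$, verifying $\mu_*(A_0+B_0)=\mu(A_0)+\mu(B_0)$ reduces to $\mu_*(A_0+B_0)=0$; but $A_0+B_0\subseteq(g_0+h_0+\mb H)$ is a subset of $A+B$, so by the observation of the second paragraph every $\mu$-measurable subset of $A_0+B_0$ is essentially contained in $(g_0+h_0+\mb H)\cap\pi^{-1}(U+V)=\varnothing$. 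This establishes conclusion (II).
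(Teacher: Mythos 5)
Your proof is correct. It takes essentially the same mathematical route as the paper but packages it differently: you push everything to the finite quotient $\mb G/\mb H$ at the outset and do the combinatorics there with counting measure, invoking Kneser (Theorem \ref{thm:Satz1}) directly to get the rigid picture $|U+V|\in\{|U|+|V|-1,\ |U|+|V|\}$ and $|U^{+}\setminus U|,|V^{+}\setminus V|\le 1$. The paper instead stays in $\mb G$ and splits on whether $\mu(A'+\mb H)+\mu(B'+\mb H)>\mu(A')+\mu(B')$, reaching the same constraint on $\bar{A'}_{B'+\mb H}$ via Lemma \ref{lem:ExtendMove} (and Lemma \ref{lem:PushSubCritical} (ii)), which is exactly your ``$|\bar{U}_V\setminus U|\le 1$'' in disguise. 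Your initial case split ($U^{+}+V^{+}=U+V$ versus a proper containment) is the cleaner dichotomy, since it is literally equivalent to ``conclusion (I) holds''; the paper's split into a critical and a subcritical case in the quotient requires an extra beat to see that the critical case also collapses to conclusion (I). The trade-off is that your argument implicitly re-derives a special case of what Lemma \ref{lem:ExtendMove} encapsulates, whereas the paper's proof is designed to reuse those lemmas elsewhere. The key step in both is the ``key observation'' (every measurable subset of $A+B$ is essentially inside $A'+B'$), which you state more explicitly; the paper reaches it through the line ``$a+B'\subset_{\mu}A'+B'$ for all $a\in A$''. Both proofs land on the same decomposition $A_{0}=A\cap(g_{0}+\mb H)$, $A_{1}=A\setminus A_{0}$ with $\mu(A_{0})=\mu(B_{0})=0$.
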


\begin{proof}
	First note that for all $a\in A$, $a+B'\subset_{\mu} A'+B'$.  This follows from the assumption $\mu_{*}(A'+B')=\mu_{*}(A+B)$ and the containment $A+B'\subseteq A+B$.  Thus $A\subseteq \bar{A'}_{B',\mu}$ in the notaion of Definition \ref{def:Movers}. Furthermore, the $\mb H$-solidity of $B'$ implies, via Lemma \ref{lem:SolidMove}, that $\bar{A'}_{B',\mu}=\bar{A'}_{B'+\mb H}$.  We consider two cases.
	
	\noindent \textbf{Case 1:} $\mu(A'+\mb H)+\mu(B'+\mb H)> \mu(A')+\mu(B')$.   In this case  we will prove that $A\subseteq A'+\mb H$ and $B\subseteq B'+\mb H$.   Let $C=A'+\mb H$, $D=B'+\mb H$.  Then $\mu(C+D)<\mu(C)+\mu(D)$, so we may apply Proposition \ref{prop:LiftSubcritical} to the pair $C,D$.  Our hypothesis on $A', B'$ implies that $\mb H$ is the stabilizer of $C+D$.  Then Lemma \ref{lem:PushSubCritical} implies $\bar{C}_{D}=C+\mb H$.  In other words: if $a+B'+\mb H\subseteq A'+B'+\mb H$, then $a\in A'+\mb H$.  So we have shown that $\bar{A'}_{B'+\mb H}=A'+\mb H$, and the previously established identities and containments imply $A\subseteq A'+\mb H$.  The containment $B\subseteq B'+\mb H$ follows by symmetry.
	
	Now $A\subseteq C$ and $B\subseteq D$, while $\mu_{*}(C+D)=\mu_{*}(A+B)$.  The identity $C+D=C+D+\mb H$ and containment $A+B\subseteq C+D$ then imply $A+B\sim_{\mu} A+B+\mb H$.  Thus $A, B$ satisfy conclusion (I) of Proposition \ref{prop:Reduction}.

	\noindent \textbf{Case 2:} $A'+\mb H\sim_{\mu}A'$ and $B'+\mb H\sim_{\mu}B'$.  In this case Lemma \ref{lem:LargeAB} implies $A'+B'=A'+B'+\mb H$, so $\mb H$ is the stabilizer of $A'+B'$. Then  $\bar{A'}_{B',\mu}=\bar{A'}_{B'}=\bar{A'}_{B'+\mb H}$, so that $A\subseteq \bar{A'}_{B'+\mb H}$. Lemma \ref{lem:ExtendMove} implies $\bar{A'}_{B'+\mb H}$ is either $A'+\mb H$, or $(A'+\mb H)\cup(\mb g+\mb H)$ for some $\mb g\notin A'+\mb H$.  We therefore have the following disjunction: \begin{equation}\label{eqn:Alt1}A \subseteq A'+\mb H,\end{equation}
	or
	\begin{equation}\label{eqn:Alt2} A\subseteq (A'+\mb H)\cup(\mb g+\mb H) \text{ for some } \mb g\notin A'+\mb H.\end{equation}
	If one of the containments $A\subseteq A'+\mb H$, $B\subseteq B'+\mb H$, or $A+B\subseteq A'+B'+\mb H$ holds, then we are done:  the similarities $A\sim_{\mu} A'$ and $B\sim_{\mu} B'$  would imply $A+B\sim_{\mu} A+B+\mb H$, so that $A$ and $B$ satisfy Conclusion (I) of Proposition \ref{prop:Reduction}.  We therefore assume these containments fail, so that (\ref{eqn:Alt2}) holds, and likewise $B\subseteq (B'+\mb H)\cup (\mb g+\mb H)$ for some $\mb g \notin B'+\mb H$.   We aim to produce quasi-periodic decompositions $A=A_{1}\cup A_{0}$, $B=B_{1}\cup B_{0}$ satisfying conclusion (II) of Proposition \ref{prop:Reduction}. Based on the containment (\ref{eqn:Alt2}), we set $A_{1}:=A\cap (A'+\mb H)$ and $A_{0}:=A\setminus A_{1}$.  Then $A_{1}\cup A_{0}$ is a quasi-periodic decomposition of $A$.  By symmetry we produce a quasi-periodic decomposition of $B$ as $B_{1}\cup B_{0}$, where $B_{0}=B\setminus (B'+\mb H)$.  Since we are assuming $A+B\nsubseteq A'+B+\mb H$, we have that $A_{0}+B_{0}\subseteq (A+B)\setminus (A'+B')$.  The hypothesis $\mu_{*}(A+B)=\mu_{*}(A'+B')$ then implies $\mu_{*}(A_{0}+B_{0})=0$, whence $\mu(A_{0})=\mu(B_{0})=0$.  Thus $A=A_{1}\cup A_{0}$ and $B=B_{1}\cup B_{0}$ are decompositions of $A$ and $B$ satisfying conclusion (II) of Proposition \ref{prop:Reduction}. \end{proof}

The next lemma forms the majority of our proof of Proposition \ref{prop:Reduction}.  It says that if $\mu_{*}(A+B)=\mu(A)+\mu(B)$, $\mu_{*}(A'+B')=\mu(A')+\mu(B')$, $A'\sim_{\mu}A$, $B'\sim_{\mu}B$, and the pair $A',B'$ satisfies the conclusion of Proposition \ref{prop:Reduction}, then so does  the pair $A,B$.

\begin{lemma}\label{lem:TameSimilar}
	Let $A, B\subseteq \mb G$ have $\mu(A)$, $\mu(B)>0$ and  \[\mu_{*}(A+B)=\mu(A)+\mu(B).\] If $A', B'\subseteq \mb G$ is a tame subcritical pair such that $A\sim_{\mu}A'$ and $B\sim_{\mu}B'$,  then $A, B$ is also a tame subcritical pair.
\end{lemma}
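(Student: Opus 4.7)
My approach is case analysis on which of the three conclusions of Proposition \ref{prop:Reduction} is satisfied by the tame pair $A', B'$ with respect to its associated internal $\mu$-measurable finite index subgroup $\mb K$. The uniform starting observation in every case is that the similarities $A\sim_{\mu} A'$ and $B\sim_{\mu} B'$ force $1_A*_{\mu}1_B\equiv_{\mu}1_{A'}*_{\mu}1_{B'}$, so the popular sumsets satisfy $A+_0 B\sim_{\mu} A'+_0 B'$. Together with $\mu_*(A+B)=\mu(A)+\mu(B)=\mu(A')+\mu(B')=\mu_*(A'+B')$, this is the bridge for transferring structural information from $A',B'$ to $A,B$.

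Case (III) is the cleanest. If $A'\subseteq \mb a+\tilde I$ and $B'\subseteq \mb b+\tilde J$ for parallel Bohr intervals $\tilde I,\tilde J\subseteq \mb K$ with $\mu(\tilde I)=\mu(A)$ and $\mu(\tilde J)=\mu(B)$, then the similarities immediately yield $A\sim_{\mu} \mb a+\tilde I$ and $B\sim_{\mu} \mb b+\tilde J$. Since Case (III) forces $\mu(\tilde I)+\mu(\tilde J)<\mu(\mb K)$ (otherwise $A+B$ would be essentially the single coset $\mb a+\mb b+\mb K$, contradicting $A+B\nsim_\mu A+B+\mb K$), I would invoke Corollary \ref{cor:EssentialMoveInterval} in the subgroup $\mb K$ (after translating by $-\mb a, -\mb b$) to upgrade these essential similarities to genuine set containments $A\subseteq \mb a+\tilde I$ and $B\subseteq \mb b+\tilde J$, giving Conclusion (III) for $A, B$.

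Cases (I) and (II) I would handle by passing to the intersections $A^\flat:=A\cap A'$ and $B^\flat:=B\cap B'$, which satisfy $A^\flat\sim_{\mu}A\sim_{\mu}A'$, $B^\flat\sim_{\mu}B\sim_{\mu}B'$, and are subsets of all four of $A,B,A',B'$. An inner-measure computation combining $A^\flat+B^\flat\subseteq A'+B'$ with the popular-sumset identity $A^\flat +_0 B^\flat \sim_\mu A'+_0B'$ yields $\mu_*(A^\flat+B^\flat)=\mu(A^\flat)+\mu(B^\flat)$, so $A^\flat, B^\flat$ is itself a critical-or-subcritical pair inheriting structure from $A',B'$. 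In Case (I), $A^\flat+B^\flat$ inherits essential $\mb K$-periodicity and $\mb K$-solidity of $A^\flat, B^\flat$ (via the sandwiching between $A'+_0B'$ and $A'+B'$), so I would apply Lemma \ref{lem:LongCase} with $A^\flat, B^\flat$ playing the role of $A', B'$ there, concluding that $A,B$ satisfies Conclusion (I) or (II). In Case (II), the quasi-periodic decompositions of $A', B'$ restrict to quasi-periodic decompositions of $A^\flat, B^\flat$, which I would then lift to decompositions $A=A_1\cup A_0$, $B=B_1\cup B_0$ by setting $A_1:=A\cap (A_1'+\mb K)$, $A_0:=A\setminus A_1$, and similarly for $B$; the identity $\mu_*(A_0+B_0)=\mu(A_0)+\mu(B_0)$ follows from the unique-expression property of $A_0'+B_0'+\mb K$ in $A'+B'+\mb K$ (Remark \ref{rem:UEE}) combined with the rigidity $\mu_*(A+B)=\mu(A)+\mu(B)$.

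The main obstacle I expect is verifying in Case (II) both that $\mu_*(A_0+B_0)=\mu(A_0)+\mu(B_0)$ and that the alternative ``at least one of $A_1, B_1$ nonempty'' is preserved under the null-set perturbation: null changes in $A$ or $B$ can in principle enlarge a sumset substantially, so the argument must leverage both the unique-expression structure from $A',B'$ and the rigidity of $\mu_*(A+B)$ rather than any naive set identity, particularly on the coset $A_0'+B_0'+\mb K$ where all the ``criticality'' is concentrated.
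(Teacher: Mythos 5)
Your high-level strategy — casing on the type of the tame pair $A', B'$, passing to $A^\flat = A\cap A'$, $B^\flat = B\cap B'$, and using Corollary~\ref{cor:EssentialMoveInterval} for type~(III) and Lemma~\ref{lem:LongCase} for type~(I) — matches the skeleton of the paper's proof. But the load-bearing step in your Cases~(I) and~(II) is incorrect: the popular-sumset identity does \emph{not} yield $\mu_*(A^\flat+B^\flat)=\mu(A^\flat)+\mu(B^\flat)$.

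Here is a concrete counterexample. Take $\mb K\leq\mb G$ a proper finite-index subgroup with $\mu(\mb K)<\tfrac12$, and elements $\mb c\neq\mb c'$ with $\mb c, \mb c'\notin\mb K$ and $2\mb c, 2\mb c'\notin\mb K\cup(\mb c+\mb K)$. Set $A=B=\mb K\cup\{\mb c\}$ and $A'=B'=\mb K\cup\{\mb c'\}$. Both pairs are subcritical with $\mu_*(A+B)=\mu_*(A'+B')=2\mu(\mb K)$, the pair $A',B'$ is tame of type~(II), and $A\sim_\mu A'$, $B\sim_\mu B'$. But $A^\flat=B^\flat=\mb K$, so $\mu_*(A^\flat+B^\flat)=\mu(\mb K)<2\mu(\mb K)=\mu(A^\flat)+\mu(B^\flat)$. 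Your sandwich only gives $\mu(A'+_0 B')\leq\mu_*(A^\flat+B^\flat)\leq\mu(A')+\mu(B')$, and here $\mu(A'+_0 B')=\mu(\mb K)$ is strictly smaller than the right-hand side: the popular sumset of a tame subcritical pair need not have measure $\mu(A')+\mu(B')$, precisely because in type~(II) the ``criticality'' is carried by null coset pieces $A_0', B_0'$ that contribute nothing to the convolution. The same phenomenon also means $\mb K$-solidity of $A^\flat, B^\flat$ is not automatic, so Lemma~\ref{lem:LongCase} cannot be applied to them without further preparation.

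The missing ingredient is the reduction to the \emph{irreducible, nonextendible} case, which the paper performs as its very first step via Lemmas~\ref{lem:Extendible} and~\ref{lem:Reducible}. Once one assumes irreducibility, the failure $\mu_*(A^\flat+B^\flat)<\mu(A^\flat)+\mu(B^\flat)$ is forbidden by definition (otherwise $A,B$ reduces to $A^\flat,B^\flat$), and in the reducible case Lemma~\ref{lem:Reducible} directly produces a type~(II) structure with a null $A_0$ or $B_0$ — which is exactly what happens in the example above. Your Case~(II) sketch also stops at naming the difficulty without resolving it; the paper's actual resolution (Subcase~1.2) involves a further split on whether $\mu(A_0'),\mu(B_0')>0$, an argument that $A_1=A\setminus(A_0'+\mb K)$ is essentially $\mb K$-periodic via a nonextendibility lemma, plus a Claim isolating the unique-expression coset. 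So the overall route is the same as the paper's, but the initial reduction via Lemmas~\ref{lem:Extendible} and~\ref{lem:Reducible} is not optional — it is what makes your ``inner-measure computation'' legitimate, and without it the argument breaks on the example above.
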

Note: in Lemma \ref{lem:TameSimilar} we do not assume $A'+B'\sim_{\mu} A+B$.

\begin{proof}
	By Lemmas \ref{lem:Extendible} and \ref{lem:Reducible}, we may assume $A,B$ is irreducible and nonextendible.
	
	Since $A',B'$ is a tame subcritical pair, there is a $\mu$-measurable finite index subgroup $\mb K'\leq \mb G$ such that $A', B', \mb K'$ satisfies one of the conclusions of Proposition \ref{prop:Reduction}.
	Our goal is to prove that there is a $\mu$-measurable finite index subgroup $\mb K$ (not necessarily equal to $\mb K'$) such that the triple $A, B, \mb K$ satisfies one of the conclusions of Proposition \ref{prop:Reduction}.  In the terminology of Definition \ref{def:Tame}, we want to show that $A,B, \mb K$ has type (I), type (II), or type (III).

	The additional hypothesis that $A,B$ is irreducible implies
	\begin{equation}\label{eqn:StarTriangle}
	\mu_{*}((A'+B')\triangle(A+B))=0
	\end{equation}
	since otherwise setting $A''=A\cap A'$ and $B''=B\cap B'$ would yield a pair with $\mu_{*}(A''+B'')<\mu(A)+\mu(B)$ and $A\sim_{\mu} A''$, $B\sim_{\mu} B''$.
	
	We consider three cases.

	\noindent \textbf{Case 1: $A'\subseteq A, B'\subseteq B$.}   We consider several subcases based on the type of $A', B', \mb K'$.

	\noindent \textbf{Subcase 1.1:} $A', B', \mb K'$ has type (I).  Here $A'+B'\sim_{\mu}A'+B'+\mb K'$. Then the stabilizer $\mb H$ of $A'+B'+\mb K'$ contains $\mb K'$, which is an internal $\mu$-measurable finite index subgroup of $\mb G$.  It follows that $\mb H$ is also such a subgroup, and $A'+B'\sim_{\mu} A'+B'+\mb H$.  We will derive one of two possibilities: either $A+B\sim_{\mu} A+B+\mb H$, or $A$ and $B$ have quasi-periodic decompositions with respect to $\mb H$ such that $A, B$, and $\mb H$ satisfy conclusion (II) of Proposition \ref{prop:Reduction}.  Note that $A', B'$ and $\mb H$ satisfy all of the hypotheses of Lemma \ref{lem:LongCase}, except possibly the assumption that $A'$ and $B'$ are $\mb H$-solid.  Let $A''\subseteq A'$ and $B''\subseteq B'$ be $\mb H$-solid sets with $\mu(A'')=\mu(A')$ and $\mu(B'')=\mu(B')$.  We will show that $A'', B''$ and $\mb H$ do satisfy the hypothesis of Lemma \ref{lem:LongCase}.  To see this, note that the irreducibility of $A,B$ and the containments $A''\subseteq A$, $B''\subseteq B$ imply $\mu_{*}(A''+B'')=\mu_{*}(A+B)$, and therefore $\mu_{*}(A''+B'')=\mu_{*}(A'+B')$.  It follows that $A''+B''\sim_{\mu} A''+B''+\mb H$, and we have verified the hypotheses of Lemma \ref{lem:LongCase} with $A''$ and $B''$ in place of $A'$ and $B'$.  We may now conclude that $A,B, \mb H$ has type (I) or type (II).

	\noindent \textbf{Subcase 1.2:} $A', B', \mb K'$ has type (II).    In this case $A'+B'\nsim_{\mu} A'+B'+\mb K'$, and there are quasi-periodic decompositions $A'=A_{1}'\cup A_{0}'$, $B'=B_{1}'\cup B_{0}'$  with respect to $\mb K'$ such that $\mu_{*}(A_{0}'+B_{0}')=\mu(A_{0}')+\mu(B_{0}')$, and at least one of $A_{1}', B_{1}'$ is nonempty.  Without loss of generality we assume $A_{1}'\neq \varnothing$. We consider two further subcases.

	\noindent \textbf{Subcase 1.2.1:}  $\mu(A_{0}')>0$ and $\mu(B_{0}')>0$.  Here we will show that $A, B, \mb K'$ has type (II).  We may assume $A', B'$ is nonextendible, as otherwise Lemma \ref{lem:Extendible} implies $A'+B'\sim_{\mu} A'+B'+\mb H$ for some $\mu$-measurable finite index subgroup $\mb H$ and we could argue as in Subcase 1.1.   Define $A_{0}:=A\cap (A_{0}'+\mb K')$ and $A_{1}:=A\setminus A_{0}$.  We will show that $A_{1}\sim_{\mu} A_{1}+\mb K'$.  To do so, we will show that for all $a\in A_{1}$, $a+\mb K'\subset_{\mu} A_{1}$. Fix $a\in A_{1}$.  First observe that irreducibility of $A, B$ implies $a+B'\subset_{\mu} A'+B'$.
	\begin{claim}
		With the preceding choice of $a$, we have $a+B'\subset_{\mu} (A_{1}'+B')\cup (A'+B_{1}')$.
	\end{claim}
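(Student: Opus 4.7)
The plan is to use the irreducibility of $A,B$ together with the unique expression element structure of the tame pair $A',B',\mb K'$ to force $a+B'$ to essentially avoid the exceptional coset $E := A_0'+B_0'+\mb K'$. Irreducibility of $A,B$ gives $a+B'\subset_\mu A'+B'$, and the decomposition
\[
A'+B' = (A_1'+B') \cup (A'+B_1') \cup (A_0'+B_0')
\]
together with the unique expression property of $E$ from Remark \ref{rem:UEE} (and the quasi-periodic disjointness $A_1'\cap(A_0'+\mb K') = B_1'\cap(B_0'+\mb K') = \varnothing$) implies $(A_1'+B')\cap E = (A'+B_1')\cap E = \varnothing$. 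Thus the claim reduces to showing $(a+B')\cap E$ is $\mu$-null.

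Writing $(a+B')\cap E = a + (B'\cap (E-a))$, one sees that $E-a$ is a single coset of $\mb K'$; the hypothesis $a\notin A_0'+\mb K'$ prevents $E-a$ from equalling $B_0'+\mb K'$, so $B_0'\cap(E-a) = \varnothing$. Since $B_1'\sim_\mu B_1'+\mb K'$, the intersection $B_1'\cap(E-a)$ has $\mu$-measure either $0$, in which case the claim follows immediately, or $\mu(\mb K')$. The main obstacle will be to rule out the latter alternative.

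Suppose for contradiction that $\mu(B_1'\cap(E-a)) = \mu(\mb K')$. Then $(a+B_1')\cap E\sim_\mu E$, and combining with $a+B_1'\subset_\mu A'+B'$ and the disjointness above forces $A_0'+B_0'\sim_\mu E$, hence $\mu(A_0'+B_0') = \mu(\mb K')$. To derive a contradiction with the Subcase 1.2 hypothesis $A'+B'\nsim_\mu A'+B'+\mb K'$, I would first establish that $A_1'+B'$ and $A'+B_1'$ are each periodic with respect to $\mb K'$: for any coset $L$ meeting $A_1'+B'$, write $L = L_A + L_B$ where $L_A$ is a coset on which $A_1'$ has essentially full measure and $L_B$ is a coset where $B'$ has positive measure; then Lemma \ref{lem:LargeAB} applied inside $L$ (after translation to $\mb K'$) gives $(A_1'\cap L_A)+(B'\cap L_B) = L$, so $L\subseteq A_1'+B'$. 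Consequently
\[
A'+B'+\mb K' = (A_1'+B')\cup(A'+B_1')\cup E,
\]
and this set differs from $A'+B'$ only in $E\setminus(A_0'+B_0')$, which is $\mu$-null under our assumption. Hence $A'+B'\sim_\mu A'+B'+\mb K'$, contradicting Subcase 1.2 and establishing the claim.
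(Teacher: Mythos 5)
Your proof is correct and takes essentially the same route as the paper's: negate the claim to find that $a+B'$ hits the exceptional coset $E=A_0'+B_0'+\mb K'$, observe that any witness in $B'$ must lie in $B_1'$ (since $a\notin A_0'+\mb K'$ rules out $B_0'$), use $B_1'\sim_\mu B_1'+\mb K'$ to upgrade this to $E\subset_\mu a+B'\subset_\mu A'+B'$, and conclude $A'+B'\sim_\mu A'+B'+\mb K'$, contradicting the Subcase 1.2 hypothesis. The only real difference is that you make explicit, via Lemma \ref{lem:LargeAB}, why the remaining cosets of $A'+B'+\mb K'$ are already essentially filled by $A'+B'$, a step the paper asserts without elaboration in its final sentence.
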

	To prove the Claim, assume otherwise. Then $(a+B')\cap (A_{0}'+B_{0}'+\mb K')\neq \varnothing$, so there is a $b\in B'$ such that $a+b\in A_{0}+B_{0}+\mb K'$.   Since $a\notin A_{0}+\mb K'$, $b\notin B_{0}'$, so $b\in B_{1}'$.  Since $B_{1}'\sim_{\mu} B_{1}'+\mb K'$, we then have $A_{0}'+B_{0}'+\mb K'\subset_{\mu} a+B'$, and the observation preceding the Claim then implies $A_{0}'+B_{0}'+\mb K'\subset_{\mu} A'+B'$.  But then $A'+B'\sim_{\mu} A'+B'+\mb K'$, contradicting our present hypothesis on $A',B'$ and establishing the Claim.
	
	The similarities $A_{1}'\sim_{\mu} A_{1}'+\mb K'$ and $B_{1}'\sim_{\mu} B_{1}+\mb K'$, the $\mb K'$-solidity of $B'$, and the Claim yield the essential containment $a+B'+\mb K'\subset_{\mu} A'+B'$.  Then $a+\mb K'\subset_{\mu} A'$ by the nonextendibility of $A', B'$: otherwise setting $C:=A'\cup (a+\mb K')$ we would have $\mu(C)>\mu(A')$ and $\mu_{*}(C+B')=\mu_{*}(A'+B')$.
	
	We have shown that for all $a\in A_{1}$, $a+\mb K'\subset_{\mu} A'$.  The similarity $A\sim_{\mu} A'$ then implies $A_{1}\sim_{\mu} A_{1}+\mb K'$.  Likewise we obtain $B_{1}\sim_{\mu} B_{1}+\mb K'$.
	
	We have shown that $A_{1}\cup A_{0}$ and $B_{1}\cup B_{0}$ are quasi-periodic decompositions of $A$ and $B$ with respect to $\mb K'$.  To show $\mu_{*}(A_{0}+B_{0})=\mu(A_{0})+\mu(B_{0})$, observe that $A_{0}'+B_{0}'$ is disjoint from $(A_{1}'+B')\cup (A'+B_{1}')$; otherwise we would have $A'+B'\sim_{\mu} A'+B'+\mb K'$. Irreducibility of $A,B$ implies $\mu_{*}(A+B)=\mu_{*}(A'+B')$, so the desired equation follows from the similarities $A_{0}\sim_{\mu} A_{0}'$, $B_{0}\sim_{\mu} B_{0}'$ and the assumption that $\mu_{*}(A_{0}'+B_{0}') = \mu(A_{0}')+\mu(B_{0}')$.

	\noindent \textbf{Subcase 1.2.2:}  $\mu(A_{0}')=0$ or $\mu(B_{0}')=0$.   Assume, without loss of generality, that $\mu(A_{0}')=0$.  In this case let $A'':=A'\setminus A_{0}'$.   Then $A''\sim_{\mu} A''+\mb K'$, so $A''+B'\sim_{\mu}A''+B'+\mb K'$. Irreducibility of $A, B$ implies $\mu_{*}(A''+B')=\mu_{*}(A+B)=\mu(A'')+\mu(B')$, so  Subcase 1.1 applies with the pair $A'', B'$ in place of the pair $A' ,B'$.  We conclude that $A, B$ has type (I) or type (II).

	\noindent \textbf{Subcase 1.3:} $A', B', \mb K'$ has type (III).  By translating $A'$ and $B'$, we may assume $A'$ and $B'$ are contained in parallel Bohr intervals $\tilde{I}, \tilde{J}\subseteq \mb K'$, respectively, with $\mu(\tilde{I})=\mu(A')$, $\mu(\tilde{J})=\mu(B')$.  Thus $A\sim_{\mu} \tilde{I}$ and $B\sim_{\mu} \tilde{J}$, whence Corollary \ref{cor:EssentialMoveInterval} implies $A, B$ has type (III).

	\noindent \textbf{Case 2: $A\subseteq A', B\subseteq B'$.}     In this case, if $A', B', \mb K'$ has type (I) or type (III), then we immediately conclude that $A, B$ also has type (I) or type (III).  If $A', B', \mb K'$ has type (II), the containments $A+B\subseteq A'+B'\subseteq A'+B'+\mb K'$ and the similarities $A\sim_{\mu} A'$, $B\sim_{\mu} B'$ easily imply that $A, B$ has type (I) or type (II).

	\noindent \textbf{Case 3:} the general case.   Let $A'':= A\cap A'$, $B'':=B\cap B'$, so that $A\sim_{\mu} A''$, $B\sim_{\mu} B''$.  If $\mu_{*}(A''+B'')<\mu(A'')+\mu(B'')$, then $A, B$ is reducible, contrary to our assumptions.  If $\mu_{*}(A''+B'')=\mu(A'')+\mu(B'')$, then we may apply the result from Case 2 with $A'', B''$ in place of $A, B$ to conclude that $A'', B''$ is a tame pair.  Then Case 1 with $A'', B''$ in place of $A', B'$ implies  $A, B$ is a tame pair. \end{proof}

\begin{proof}[Proof of Proposition \ref{prop:Reduction}]  With the terminology introduced in Definition \ref{def:Tame}, Proposition \ref{prop:Reduction} can be stated as ``If $A, B\subseteq \mb G$ satisfy $\mu_{*}(A+B)= \mu(A)+\mu(B)$, then $A, B$ is a tame pair''.
	
	Assume $A, B\subseteq \mb G$ satisfy $\mu(A), \mu(B)>0$ and $\mu_{*}(A+B)=\mu(A)+\mu(B)$. By Lemmas \ref{lem:Extendible} and \ref{lem:Reducible}, we may assume that $A,B$ is nonextendible and irreducible.  Apply Corollary \ref{cor:ProjectAndPullBack} to find a compact metrizable quotient $G$ of $\mb G$ with $\mu$-measure preserving quotient map $\pi:\mb G\to G$ and Borel sets $C, D\subseteq G$ such that $C+D$ is Borel while $A':=\pi^{-1}(C)$, $B':=\pi^{-1}(D)$ satisfy $A\subset_{\mu} A'$, $B\subset_{\mu} B'$, and $A'+B'\subset_{\mu} A+B$. Let $A''=A\cap A'$ and $B''=B\cap B'$, so that $A''\sim_{\mu} A$ and $B''\sim_{\mu} B$.  The irreducibility of $A,B$ then implies $\mu_{*}(A''+B'')=\mu_{*}(A+B)$, and hence $\mu_{*}(A''+B'')=\mu(A'')+\mu(B'')$. We now consider two cases.
	
	\noindent \textbf{Case 1:} $A''\nsim_{\mu} A'$ or $ B''\nsim_{\mu} B'$. In this case $A'', B''$ is extendible, as $A''\subseteq A'$, $B''\subseteq B'$, and $\mu(A'+B')\leq \mu_{*}(A+B)=\mu_{*}(A''+B'')$. Then Lemma \ref{lem:Extendible} implies $A'',B''$ is a tame pair.  Lemma \ref{lem:TameSimilar} and the similarities $A''\sim_{\mu} A$, $B''\sim_{\mu} B$ then imply $A, B$ is tame.
	
	\noindent \textbf{Case 2:}  $A''\sim_{\mu}A'$ and $B''\sim_{\mu}B'$.  In this case we also have $A\sim_{\mu} A'$ and $B\sim_{\mu} B'$.  Now the containments $A''+B''\subseteq A'+B'\subset_{\mu} A+B$ and irreducibility of $A,B$ imply $\mu(A'+B') = \mu_{*}(A+B)=\mu(A)+\mu(B)$, and therefore $\mu(A'+B')=\mu(A')+\mu(B').$  Then $m(C+D)=m(C)+m(D)$, and Theorem \ref{thm:LCAInverse} implies $C,D$ is a tame pair.  Lemma \ref{lem:LiftTame} now implies $A', B'$ is a tame pair, so we may apply Lemma \ref{lem:TameSimilar} to conclude that $A, B$ is a tame pair. \end{proof}

The equation $\mu_{*}(A_{0}+B_{0})=\mu(A_{0})+\mu(B_{0})$ in conclusion (II) of Proposition \ref{prop:Reduction}  allows one to apply Proposition \ref{prop:Reduction} to $A_{0}, B_{0}$ and obtain Corollary \ref{cor:Iterate}.  The next lemma is required to perform the iteration.  We use the term ``type'' as in Definition \ref{def:Tame}.

\begin{lemma}\label{lem:IterateQP}
	Suppose $\mu_{*}(A+B)=\mu(A)+\mu(B)$ and conclusion (II) holds in Proposition \ref{prop:Reduction} with some subgroup $\mb K\leq \mb G$ and quasi-periodic decompositions $A=A_{1}\cup A_{0}$, $B=B_{1}\cup B_{0}$.  Suppose further that $A_{0}$ and $B_{0}$ are $\mu$-nonnull, and that $A_{0},B_{0}$ has type (II) with respect to a subgroup $\mb K'\leq \mb G$, with quasi-periodic decompositions $A_{0}=A_{1}'\cup A_{0}'$, $B_{0}= B_{1}'\cup B_{0}'$.  Then $A, B, \mb K'$ has type (II), with quasi-periodic decompositions $A_{1}''\cup A_{0}''$, where $A_{0}''=A_{0}'$, and $A_{1}'':= A\setminus A_{0}'$, and similarly for $B$.
\end{lemma}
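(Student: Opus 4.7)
The plan is to verify the four ingredients required for the triple $A, B, \mb K'$ to satisfy conclusion (II) of Proposition \ref{prop:Reduction}: (a) that $A = A_1'' \cup A_0''$ and $B = B_1'' \cup B_0''$ are genuine quasi-periodic decompositions with respect to $\mb K'$; (b) that at least one of $A_1'', B_1''$ is nonempty; (c) that $\mu_*(A_0'' + B_0'') = \mu(A_0'') + \mu(B_0'')$; and (d) that $A+B \nsim_\mu A+B+\mb K'$.  Items (b) and (c) are immediate: (b) follows because conclusion (II) for $A, B, \mb K$ supplies at least one of $A_1, B_1$ nonempty, and $A_1 \subseteq A_1''$, $B_1 \subseteq B_1''$; (c) is simply the equation given in the hypothesis on $A_0, B_0, \mb K'$, since $A_0'' = A_0'$ and $B_0'' = B_0'$.

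The pivotal structural observation is the genuine (not merely essential) inclusion $\mb K' \subseteq \mb K$. To establish it, fix any $a \in A_0$, so $A_0 \subseteq a + \mb K$; after possibly swapping the roles of $A$ and $B$ we may assume $A_1' \neq \varnothing$. From $A_1' + \mb K' \sim_\mu A_1'$ and $A_1' \subseteq a + \mb K$ we deduce $\mu((A_1' + \mb K') \setminus (a + \mb K)) = 0$.  Both $A_1' + \mb K'$ and $a + \mb K$ are unions of cosets of the $\mu$-measurable finite-index subgroup $\mb K \cap \mb K'$, whose measure is strictly positive; hence any coset of $\mb K \cap \mb K'$ meeting $(A_1' + \mb K') \setminus (a + \mb K)$ would contribute positive measure, so there are none.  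Consequently, for every $x \in A_1'$ one has $x + \mb K' \subseteq a + \mb K$, and subtracting $x \in a + \mb K$ yields $\mb K' \subseteq \mb K$.

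With $\mb K' \subseteq \mb K$ in hand, verification of (a) is a short computation.  The set-theoretic disjointness $(A_1'' + \mb K') \cap A_0'' = \varnothing$ splits across $A_1'' = A_1 \cup A_1'$: the piece $(A_1' + \mb K') \cap A_0'$ is empty by the $\mb K'$-decomposition of $A_0$, while $(A_1 + \mb K') \cap A_0' \subseteq (A_1 + \mb K) \cap A_0 = \varnothing$ by $\mb K' \subseteq \mb K$ and the $\mb K$-decomposition of $A$.  Essential periodicity $A_1'' + \mb K' \sim_\mu A_1''$ follows from $A_1' + \mb K' \sim_\mu A_1'$ (given) combined with $A_1 + \mb K' \sim_\mu A_1$, which holds because $A_1 \subseteq A_1 + \mb K' \subseteq A_1 + \mb K \sim_\mu A_1$.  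Containment of $A_0'' = A_0'$ in a coset of $\mb K'$ and its nonemptyness are given; the analogous statements for $B$ are symmetric.

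Finally, for (d), we localize to the coset $A_0 + B_0 + \mb K$.  Remark \ref{rem:UEE} applied to $A, B, \mb K$ gives $(A+B) \cap (A_0 + B_0 + \mb K) = A_0 + B_0$, and using $\mb K' \subseteq \mb K$ a direct calculation yields $(A+B+\mb K') \cap (A_0 + B_0 + \mb K) = (A_0 + B_0) + \mb K'$.  Hence $A+B \sim_\mu A+B+\mb K'$ would force $A_0 + B_0 \sim_\mu A_0 + B_0 + \mb K'$, contradicting the hypothesis that $A_0, B_0, \mb K'$ has type (II).  The only substantive obstacle is the set-theoretic inclusion $\mb K' \subseteq \mb K$; once that is established the remaining steps reduce to transferring the two quasi-periodic decompositions coset by coset and invoking the unique expression property.
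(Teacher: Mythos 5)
Your proof is correct and follows the same route as the paper's: establish $\mb K'\subseteq\mb K$, deduce essential $\mb K'$-periodicity of $A_1''$ and $B_1''$, and use the unique-expression property to show $A+B\nsim_\mu A+B+\mb K'$. The one genuine addition is that you flesh out the step $\mb K'\subseteq\mb K$, which the paper asserts in a single sentence; your argument via cosets of the positive-measure subgroup $\mb K\cap\mb K'$ is a valid way to justify it. One small redundancy: once you have fixed $A_1'\neq\varnothing$ and shown $A_1'\subseteq A_1''$, the nonemptiness requirement (b) is already settled without appealing to $A_1,B_1$.
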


\begin{proof}
	Since $A_{0}$ and $B_{0}$, and therefore $A_{0}'$ and $B_{0}'$, are each contained in cosets of $\mb K$, and $A_{1}'\sim_{\mu} A_{1}'+\mb K'$, $B_{1}'\sim_{\mu} B_{1}'+\mb K'$, we have $\mb K'\subseteq \mb K$.  It follows that $A_{1}\sim_{\mu} A_{1}+\mb K'$, $B_{1}\sim_{\mu} B_{1}+\mb K'$.  Consequently, $A_{1}''$ and $B_{1}''$, as defined in the statement of the lemma, satisfy the same similarities.  To see that $A+B\nsim_{\mu} A+B+\mb K'$, note that our hypothesis implies $A_{0}+B_{0}\nsim_{\mu} A_{0}+B_{0}+\mb K'$.  Remark \ref{rem:UEE} implies $A+B$ can be written as $C\cup (A_{0}+B_{0})$, where $C=(A_{1}+B)\cup(A+B_{1})$ is disjoint from $A_{0}+B_{0}+\mb K$, so the last non-similarity implies $A+B\nsim_{\mu} A+B+\mb K'$.
\end{proof}

We now iterate Proposition \ref{prop:Reduction} and obtain a more detailed conclusion with the same hypothesis.

\begin{corollary}\label{cor:Iterate}
	If $A, B\subseteq \mb G$ are $\mu$-measurable sets such that $\mu(A)>0$, $\mu(B)>0$, and $\mu_{*}(A+B)=\mu(A)+\mu(B)$, then at least one of the following is true.
	\begin{enumerate}
		\item[(a)]  $A+B\sim_{\mu} A+B+\mb K$ for some $\mu$-measurable finite index internal subgroup $\mb K\leq \mb G$.
		
		\smallskip
		
		\item[(b)] $A, B$ has type (III).
		
		\smallskip
		
		\item[(c)] For all $\eta>0$, there is a $\mu$-measurable finite index internal subgroup $\mb K\leq \mb G$ with $\mu(\mb K)<\eta$ such that $A, B,\mb K$ has type (II).
		
		\smallskip
		
		\item[(d)] $A, B$ has type (II) with quasi-periodic decompositions $A=A_{1}\cup A_{0}$, $B=B_{1}\cup B_{0}$ where at least one of $\mu(A_{0})=0$ or $\mu(B_{0})=0$.
		
		\smallskip
		
		\item[(e)]  $A, B$  has type (II) with quasi-periodic decompositions $A=A_{1}\cup A_{0}$, $B=B_{1}\cup B_{0}$ and  $A_{0}, B_{0}$ has type (III) with $\mu(A_{0})>0$ and $\mu(B_{0})>0$. \end{enumerate}
\end{corollary}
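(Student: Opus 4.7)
The plan is to iterate Proposition \ref{prop:Reduction}. Applied once to $(A, B)$, it yields case (a) under conclusion (I) and case (b) under (III); otherwise it produces a type (II) triple $(A, B, \mb K_{1})$ with quasi-periodic decompositions $A = A_{1} \cup A_{0}$, $B = B_{1} \cup B_{0}$ satisfying $\mu_{*}(A_{0}+B_{0}) = \mu(A_{0})+\mu(B_{0})$. If $\mu(A_{0}) = 0$ or $\mu(B_{0}) = 0$ we are done with (d); otherwise apply Proposition \ref{prop:Reduction} to the inner pair $(A_{0}, B_{0})$.

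Type (III) for this inner pair is exactly case (e). Type (I) for it with subgroup $\mb K'$ forces $\mb K' \subseteq \mb K_{1}$: since $A_{0} + B_{0}$ has positive measure and is contained in a single $\mb K_{1}$-coset, any $\mb K'$-coset straddling two $\mb K_{1}$-cosets would make $A_{0} + B_{0} + \mb K'$ exceed $A_{0} + B_{0}$ by a set of positive measure, contradicting $A_{0}+B_{0} \sim_{\mu} A_{0}+B_{0}+\mb K'$. Combined with the observation (from Remark \ref{rem:UEE}) that $A + B$ essentially fills every $\mb K_{1}$-coset of $A + B + \mb K_{1}$ except the one containing $A_{0} + B_{0}$, this gives $A + B \sim_{\mu} A + B + \mb K'$, so case (a) holds. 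Type (II) for the inner pair with some subgroup $\mb K_{2}$ produces, via Lemma \ref{lem:IterateQP}, a type (II) decomposition of $(A, B)$ with subgroup $\mb K_{2}$ whose new inner pair has either a null component (case (d)) or both components of positive measure, in which case we continue the iteration.

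The main technical point is strict descent of subgroups: $\mb K_{2} \subsetneq \mb K_{1}$. If instead $\mb K_{2} = \mb K_{1}$, each of the $\mb K_{1}$-essentially-periodic pieces in the inner decomposition of $(A_{0}, B_{0})$ would lie in a single $\mb K_{1}$-coset and hence be either empty or essentially equal to that full coset; the latter alternative forces $A_{0}$ (resp.~$B_{0}$) to essentially equal its containing coset, which would imply $A + B \sim_{\mu} A + B + \mb K_{1}$ and contradict type (II) for $(A, B, \mb K_{1})$. So both pieces would be empty, violating the nondegeneracy clause in the definition of type (II). Hence $[\mb K_{1}:\mb K_{2}] \geq 2$ and $\mu(\mb K_{2}) \leq \mu(\mb K_{1})/2$. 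Consequently the iteration either terminates in case (a), (d), or (e), or produces an infinite chain $\mb K_{1} \supsetneq \mb K_{2} \supsetneq \cdots$ with $\mu(\mb K_{n}) \to 0$ and each $(A, B, \mb K_{n})$ of type (II), which is case (c).
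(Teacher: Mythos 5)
Your proof is correct and takes essentially the same approach as the paper: apply Proposition \ref{prop:Reduction} to $(A,B)$, then apply it again to the inner pair $(A_0,B_0)$, invoking Lemma \ref{lem:IterateQP} to convert type (II) of the inner pair into type (II) for $(A,B)$ with a strictly smaller subgroup. The only organizational difference is that the paper replaces your explicit descending chain with the infimum $\eta_0 := \inf\{\mu(\mb K) : A, B, \mb K \text{ has type (II)}\}$, handling $\eta_0=0$ as case (c) and deriving a contradiction from type (II) of the inner pair when $\eta_0>0$; and you spell out two steps the paper asserts tersely — that type (I) for $(A_0,B_0)$ forces $\mb K' \subseteq \mb K_1$ and then case (a), and that $\mb K_2 \subsetneq \mb K_1$ is strict — both of which are genuinely needed and which your arguments correctly supply.
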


\begin{proof}
	Assuming $A$ and $B$ are as in the hypothesis, we apply Proposition \ref{prop:Reduction} and consider the possible conclusions.  In conclusions (I) or (III) of the proposition, we have conclusion (a) or (b), respectively, of the present corollary.  We therefore assume $A,B$ satisfies  (II) in Proposition \ref{prop:Reduction}.   Let $\eta_{0}$ be the infimum of all the numbers $\mu(\mb K)$ such that $A, B$ and $\mb K$ satisfy (II) in Proposition \ref{prop:Reduction}.  If $\eta_{0}=0$, then conclusion (c) holds here.  If $\eta_{0}>0$ we choose a subgroup $\mb K\leq \mb G$ with $\mu(\mb K)<2\eta_{0}$ such that $A,B,\mb K$ has type (II) and fix the corresponding  quasi-periodic decompositions $A=A_{1}\cup A_{0}$ and $B=B_{1}\cup B_{0}$, where $\mu_{*}(A_{0}+B_{0})=\mu(A_{0})+\mu(B_{0})$.    If one of $A_{0}$ or $B_{0}$ has measure $0$, then $A, B$ satisfies conclusion (d) here. Otherwise, we apply Proposition \ref{prop:Reduction} to $A_{0},B_{0}$.  If this pair satisfies conclusion (I) or (III) in the proposition, then $A,B$ satisfies conclusion (a) or (e), respectively, in the present corollary.  If $A_{0}, B_{0}$ has type (II), then we will derive a contradiction: in this case Lemma \ref{lem:IterateQP} implies $A,B,\mb K'$ has type (II), with $\mb K'$ a proper subgroup of $\mb K$.  Thus $\mu(\mb K')\leq \frac{1}{2}\mu(\mb K)<\eta_{0}$, contradicting the definition of $\eta_{0}$.
\end{proof}

\section{Proof of Theorem \ref{thm:Precise}}\label{sec:PreciseProof}

Suppose, to get a contradiction, that Theorem \ref{thm:Precise} fails for a given $\varepsilon>0$.  Then for all  $n\in \mathbb N$, there exists a compact abelian group $G_{n}$ with Haar probability measure $m_{n}$, inner Haar measure $m_{n*}$, and sets $A_{n}, B_{n} \subseteq G_{n}$ such that $m_{n}(A_{n}), m_{n}(B_{n})>\varepsilon$,
\begin{equation}\label{eqn:OneOverN}
m_{n*}(A_{n}+B_{n})\leq m_{n}(A_{n})+m_{n}(B_{n})+\tfrac{1}{n},
\end{equation}
while the pair $A_{n},B_{n}$ satisfies none of the conclusions (I)-(III) of Theorem \ref{thm:Precise} with a subgroup $K=K_{n}\leq G_{n}$ having index at most $n$.   We will derive a contradiction by showing that for some $n$ (in fact for infinitely many $n$) the pair $A_{n}, B_{n}$ satisfies the conclusion of Theorem \ref{thm:Precise} with a subgroup $K=K_{n}$ having index less than $n$.

Let $\mathcal U$ be a nonprincipal ultrafilter on $\mathbb N$. Form the ultraproduct $\mb G:=\prod_{n\to \mathcal U} G_{n}$ and the corresponding Loeb measure $\mu$ and consider the internal sets $\mb A:=\prod_{n\to \mathcal U} A_{n}$, $\mb B:=\prod_{n\to \mathcal U} B_{n}$ with $\mu(\mb A)=\lim_{n\to \mathcal U} m_{n}(A_{n})\geq \varepsilon$, $\mu(\mb B)=\lim_{n\to\mathcal U} m_{n}(B_{n})\geq \varepsilon$, and $\mu_{*}(\mb A+\mb B)=\lim_{n\to \mathcal U} m_{n*}(A_{n}+B_{n})$ by Lemma \ref{lem:InnerLoeb}.  Together with Inequality (\ref{eqn:OneOverN}) this implies $\mu_{*}(\mb A+\mb B) \leq \mu(\mb A)+\mu(\mb B)$.  We consider several cases based on the structure of $\mb A$ and $\mb B$.

\noindent \textbf{Case 1: $\mu(\mb A+\mb B)<\mu(\mb A)+\mu(\mb B)$.}   In this case Proposition \ref{prop:LiftSubcritical} implies that the stabilizer of $\mb A+\mb B$  is an internal $\mu$-measurable finite index subgroup $\mb K = \prod_{n\to \mathcal U} K_{n} \leq \mb G$ satisfying $\mb A+\mb B=\mb A+\mb B+\mb K$ and
\begin{equation}\label{eqn:2.1K}
\mu(\mb A+\mb B)=\mu(\mb A+\mb K)+\mu(\mb B+\mb K)-\mu(\mb K).
\end{equation}
Then Lemma \ref{lem:StabilizeInLimit} implies $A_{n}+B_{n}=A_{n}+B_{n}+K_{n}$ and \[m_{n}(A_{n}+B_{n})=m_{n}(A_{n}+K_{n})+m_{n}(B_{n}+K_{n})-m_{n}(K_{n}) \quad \text {for } \mathcal U\text{-many }n,\] so $A_{n}, B_{n}, K_{n}$ satisfies conclusion (I) of Theorem \ref{thm:Precise} for $\mathcal U$-many $n$, where the index of $K_{n}$ is $1/\mu(\mb K)<n$.

\noindent \textbf{Case 2: $\mu(\mb A+\mb B)=\mu(\mb A)+\mu(\mb B)$.}  In this case apply Proposition \ref{prop:Reduction} to the pair $\mb A,\mb B$ and fix the finite index internal subgroup $\mb K=\prod_{n\to \mathcal U} K_{n}\leq \mb G$ therein. We now consider subcases based on the type of $\mb A$, $\mb B$, $\mb K$ (Definition \ref{def:Tame}).  In each subcase we will find that for $\mathcal U$-many $n$, the triple $A_{n}, B_{n}, K_{n}$ satisfies the conclusion of Theorem \ref{thm:Precise}.  Note that by Corollary \ref{cor:FiniteIndexInternal}, $K_{n}$ has index $1/\mu(\mb K)$ in $G_{n}$, for $\mathcal U$-many $n$.

\noindent \textbf{Subcase 2.1:}  $\mb A,\mb B, \mb K$ has type (I), meaning $\mb A+\mb B\sim_{\mu} \mb A+\mb B+\mb K$.  We will show that $A_{n}, B_{n}, K_{n}$ satisfy conclusion (I) of Theorem \ref{thm:Precise} for $\mathcal U$-many $n$.  In this case, Lemma \ref{lem:FiniteIsomorphism} implies that for $\mathcal U$-many $n$, $m_{n}(A+B+K_{n})- m_{n*}(A_{n}+B_{n})<(1-\varepsilon)m_{n}(K_{n})$, meaning $A_{n}+B_{n}$ is $\varepsilon$-periodic. Then the assumption $\mu_{*}(\mb A+\mb B)=\mu(\mb A)+\mu(\mb B)$ implies $\mu(\mb A+\mb B+\mb K)\leq \mu(\mb A+\mb K)+\mu(\mb B+\mb K)$, so Lemma \ref{lem:FiniteIsomorphism} implies \[m_{n}(A_{n}+B_{n}+K_{n})\leq m_{n}(A_{n}+K_{n})+m_{n}(B_{n}+K_{n})\] for $\mathcal U$-many $n$.

\noindent \textbf{Subcase 2.2:}  $\mb A, \mb B, \mb K$ has type (II). In this case we will prove that for $\mathcal U$-many $n$, either $A_{n}, B_{n}, K_{n}$ satisfies conclusion (I) or conclusion (II) in Theorem \ref{thm:Precise}.  Since $\mb A, \mb B, \mb K$ has type (II), Corollary \ref{cor:StabilizerUltra} implies \[\mu(\mb A+\mb B+\mb K)=\mu(\mb A+\mb K)+\mu(\mb B+\mb K)-\mu(\mb K),\] so in particular $\mu(\mb A+\mb B+\mb K)<\mu(\mb A+\mb K)+\mu(\mb B+\mb K)$. Then Lemma \ref{lem:FiniteIsomorphism} implies $m_{n}(A_{n}+B_{n}+K_{n})<m_{n}(A_{n}+K_{n})+m_{n}(B_{n}+K_{n})$ for $\mathcal U$-many $n$.  If $A_{n}+B_{n}$ is $\varepsilon$-periodic with respect to $K_{n}$ for $\mathcal U$-many $n$, then the last inequality implies $A_{n}, B_{n}$, and $K_{n}$ satisfy  conclusion (I) of Theorem \ref{thm:Precise} for $\mathcal U$-many $n$.  Otherwise, Lemmas \ref{lem:FiniteIsomorphism} and \ref{lem:PullQPdown} imply $A_{n}, B_{n}$, and $K_{n}$ satisfy conclusion (II) of Theorem \ref{thm:Precise} for $\mathcal U$-many $n$.

\noindent \textbf{Subcase 2.3:}  $\mb A, \mb B$ has type (III), meaning there are Bohr intervals $A', B'\subseteq \mb K$, and $\mb a, \mb b\in G$ such that $\mu(A')=\mu(\mb A)$, $\mu(B')=\mu(\mb B)$, and $\mb A\subseteq \mb a+A'$, $\mb B\subseteq \mb b+ B'$.   Then for $\mathcal U$-many $n$, we have that $A_{n}$ and $B_{n}$ are each contained in a coset of $K_{n}$. If for $\mathcal U$-many $n$ we have $m_{n*}(A_{n}+B_{n})\geq (1-\varepsilon)m_{n}(K_{n})$, then $A_{n}, B_{n}$, and $K_{n}$ satisfy conclusion (I) of Theorem \ref{thm:Precise} for these $n$. Otherwise, we apply Corollary \ref{cor:PullPreintervalsDown} to the sets $\mb A-\mb a, \mb B-\mb b$ (considered as subsets of $\mb K$), and find that $A_{n}, B_{n}$, and $K_{n}$ satisfy conclusion (III) of Theorem \ref{thm:Precise} for $\mathcal U$-many $n$.

In each case, we have shown that for infinitely many $n$, the sets $A_{n}, B_{n}$ satisfy the conclusion of Theorem \ref{thm:Precise} with a subgroup $K_{n}\leq G_{n}$ having index $1/\mu(\mb K)<n$ for $\mathcal U$-many $n$, contradicting our assumption to the contrary.  This completes the proof of Theorem \ref{thm:Precise}. \hfill $\square$

\section{Proof of Theorem \ref{thm:Popular}}\label{sec:PopularProof}

Theorem \ref{thm:Popular} is proved immediately after the proof of Lemma \ref{lem:FirstHalf}.

For the lemmas in this section, let $\mathcal U$ be a nonprincipal ultrafilter on $\mathbb N$, $(G_{n})_{n\in \mathbb N}$ a sequence of compact abelian groups with Haar probability measure $m_{n}$, and $\mb G=\prod_{n\to \mathcal U} G_{n}$ the ultraproduct with corresponding Loeb measure $\mu$.

\begin{lemma}\label{lem:NonAtomic} If $\mb G$ has infinite cardinality and $A\subseteq \mb G$ is $\mu$-measurable with $\mu(A)>0$, then for all $c$ such that $0\leq c<\mu(A)$, there is an internal set $\mb C\subseteq A$ such that $\mu(\mb C)=c$.
\end{lemma}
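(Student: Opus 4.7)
The plan is to approximate $A$ from below by an internal set of slightly larger Loeb measure, then carve out an internal subset of exact Loeb measure $c$ by making coordinate-wise choices. The crux is that the hypothesis $|\mb G|=\infty$ forces the cardinalities $|G_n|$ to be unbounded along $\mathcal U$, so any small atomic defects in individual coordinates wash out in the ultralimit.

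First, by the inner regularity built into the construction of Loeb measure (as the completion of the premeasure on internal measurable sets), I would pick an internal set $\mb D = \prod_{n\to\mathcal U} D_n \subseteq A$ with $\mu(\mb D) > c$; then $U := \{n : m_n(D_n) > c\} \in \mathcal U$. For each $n \in U$ I select a measurable $C_n \subseteq D_n$ with $m_n(C_n)$ as close to $c$ as possible. If $G_n$ is infinite, then $G_n$ is not discrete (a compact discrete group is finite), so $m_n$ is atomless and the standard intermediate-value property for atomless measures produces $C_n \subseteq D_n$ with $m_n(C_n) = c$ exactly. If $G_n$ is finite of order $N_n$, I take any $C_n \subseteq D_n$ with $|C_n| = \lfloor c N_n \rfloor$, which is permissible since $|D_n| \geq c N_n$ on $U$. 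In all cases $0 \leq c - m_n(C_n) \leq 1/N_n$, with the convention $1/N_n = 0$ when $G_n$ is infinite. Setting $C_n = \varnothing$ for $n \notin U$, the product $\mb C := \prod_{n\to\mathcal U} C_n$ is internal and satisfies $\mb C \subseteq \mb D \subseteq A$.

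To verify $\mu(\mb C) = c$, I use that $|\mb G|=\infty$ implies $|G_n|$ is unbounded along $\mathcal U$: otherwise $|G_n| \leq K$ for some fixed $K$ on a set in $\mathcal U$, which would force $|\mb G| \leq K$. Hence for every $\varepsilon > 0$ the set $\{n \in U : 1/N_n < \varepsilon\}$ lies in $\mathcal U$, on which $|m_n(C_n) - c| < \varepsilon$, so $\lim_{n\to\mathcal U} m_n(C_n) = c$, giving $\mu(\mb C) = c$ by the definition of Loeb measure on internal sets. The only subtlety is the atomic case, where one cannot hit $c$ exactly in an individual coordinate; the unboundedness of $|G_n|$ along $\mathcal U$ is precisely what dissolves this issue in the ultralimit, so it is not really an obstacle once identified.
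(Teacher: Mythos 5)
Your proposal is correct and follows essentially the same strategy as the paper: use Proposition \ref{prop:LoebApprox} to find an internal set of measure slightly larger than $c$ inside $A$, then carve out a subset coordinate-by-coordinate and use the unboundedness of $|G_n|$ along $\mathcal U$ (forced by $|\mb G|=\infty$) to make the ultralimit hit $c$. The one cosmetic difference is that the paper organizes the coordinate-wise approximations via a nested family $U_N\in\mathcal U$ with a diagonal assignment $C_n=C_{N,n}'$, whereas you simply pick, for each $n$, the best available $C_n$ (exactly $c$ in the atomless case, $\lfloor cN_n\rfloor/N_n$ in the finite case) and observe directly that the defect is $O(1/N_n)\to 0$ along $\mathcal U$; this avoids the diagonalization entirely, and your explicit split into atomless versus finite coordinates is just making visible what the paper's phrase ``in either case'' compresses.
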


\begin{proof}
	Apply Proposition \ref{prop:LoebApprox} to find an internal set $\mb A'=\prod_{n\to \mathcal U} A_{n}'\subseteq \mb G$ such that $\mb A'\subseteq A$ and $\mu(\mb A')>c$. Since $\mb G$ has infinite cardinality, then for all $N\in \mathbb N$ and $\mathcal U$-many $n$ the group $G_{n}$ is infinite, or $G_{n}$ is finite and $|G_{n}|>N$.  In either case we may choose, for each $N\in \mathbb N$, an element $U_{N}\in \mathcal U$ and for each $n\in U_{N}$, compact sets $C_{N,n}'\subseteq A_{n}'$ such that $|m_{n}(C_{N,n}')-c|< \frac{1}{N}$. Choosing $U_{N}$ so that $U_{N+1}\subseteq U_{N}$ for each $N$, we may define $C_{n}=C_{N,n}'$ if $n\in U_{N}\setminus U_{N+1}$, and  $C_{n}=\varnothing$ if $n\notin U_{1}$.  Then $\mb C=\prod_{n\to \mathcal U}C_{n}$ is an internal subset of $A$ with $\mu(\mb C)=c$.
\end{proof}

\begin{corollary}\label{cor:Atomless}
	If $\mb G$ has infinite cardinality and $A\subseteq \mb B\subseteq \mb G$ are $\mu$-measurable sets, $\mb B$ is an internal set, and $\mu(A)<c<\mu(\mb B)$, then there is an internal set $\mb C$ such that $A\subseteq \mb C\subseteq \mb B$ and $\mu(\mb C)=c$.
\end{corollary}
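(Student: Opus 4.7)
The plan is to reduce Corollary \ref{cor:Atomless} directly to Lemma \ref{lem:NonAtomic} by sandwiching $A$ between two internal sets inside $\mb B$ and then filling in the remaining measure. Concretely, I would first find an internal set $\mb A'$ with $A\subseteq \mb A'\subseteq \mb B$ whose measure is slightly less than $c$, and then adjoin an internal subset of $\mb B\setminus \mb A'$ of measure exactly $c-\mu(\mb A')$, using Lemma \ref{lem:NonAtomic} to produce it.

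For the first step, since $A$ is $\mu$-measurable and $\mu(A)<c$, the outer regularity of Loeb measure (Proposition \ref{prop:LoebApprox}) furnishes an internal set $\mb A''\supseteq A$ with $\mu(\mb A'')<c$. Intersecting with $\mb B$ gives an internal set $\mb A':=\mb A''\cap \mb B$ satisfying $A\subseteq \mb A'\subseteq \mb B$ and $\mu(\mb A')\leq \mu(\mb A'')<c$. The set $\mb B\setminus \mb A'$ is then internal and has $\mu$-measure $\mu(\mb B)-\mu(\mb A')$, which is strictly greater than $c-\mu(\mb A')\geq 0$ because $c<\mu(\mb B)$. Lemma \ref{lem:NonAtomic}, applied to $\mb B\setminus \mb A'$ with the parameter $c-\mu(\mb A')$, then produces an internal set $\mb D\subseteq \mb B\setminus \mb A'$ with $\mu(\mb D)=c-\mu(\mb A')$.

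Finally, $\mb C:=\mb A'\cup \mb D$ is internal (being a union of two internal sets), satisfies $A\subseteq \mb A'\subseteq \mb C\subseteq \mb B$, and by disjointness $\mu(\mb C)=\mu(\mb A')+\mu(\mb D)=c$. I do not expect a genuine obstacle here: the non-atomicity argument is already packaged inside Lemma \ref{lem:NonAtomic}, and the only subtlety is verifying the strict inequality $c-\mu(\mb A')<\mu(\mb B\setminus \mb A')$ needed to invoke that lemma, which is immediate from $c<\mu(\mb B)$.
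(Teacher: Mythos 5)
Your proof is correct and follows essentially the same route as the paper: approximate $A$ from outside by an internal set of measure below $c$ using Proposition \ref{prop:LoebApprox}, intersect with $\mb B$, and then fill in the remaining measure inside $\mb B$ with an internal set produced by Lemma \ref{lem:NonAtomic}. The only difference is cosmetic relabeling of the intermediate sets.
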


\begin{proof}
	Choose, by Proposition \ref{prop:LoebApprox}, an internal set $\mb A'\supseteq A$ such that $\mu(\mb A')<c$.  Then $\mb A'':=\mb A'\cap \mb B$ is an internal set containing $A$ and having $\mu(\mb A'')<c$.   Use Lemma \ref{lem:NonAtomic} to choose an internal set $\mb C'\subseteq \mb B\setminus \mb A''$ such that $\mu(\mb C')=c-\mu(\mb A'')$.  Then $\mb C:=\mb C'\cup \mb A''$ is the desired set.
\end{proof}

\begin{lemma}\label{lem:FirstHalf}
	Assume $A, B\subseteq \mb G$ have $\mu(A), \mu(B)>0$, and $\mu_{*}(A+B)\leq \mu(A)+\mu(B)$.  Then for all $\varepsilon>0$ there are $\mu$-measurable internal sets $\mb S=\prod_{n\to \mathcal U}S_{n}$ and $\mb T=\prod_{n\to \mathcal U} T_{n}$ such that $\mu(A \triangle \mb S)+\mu(B \triangle \mb T)<\varepsilon$ and for $\mathcal U$-many $n$, $S_{n}+T_{n}$ is $m_{n}$-measurable and $m_{n}(S_{n}+T_{n})\leq m_{n}(S_{n})+m_{n}(T_{n})$.
\end{lemma}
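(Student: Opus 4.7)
The plan is to invoke the classification of subcritical pairs via Proposition~\ref{prop:LiftSubcritical} (when $\mu_*(A+B) < \mu(A) + \mu(B)$) and Corollary~\ref{cor:Iterate} (when $\mu_*(A+B) = \mu(A) + \mu(B)$), and then construct internal approximations $\mathbf{S}, \mathbf{T}$ in each resulting structural case. I assume $\mathbf{G}$ has infinite cardinality, since otherwise the $G_n$ are uniformly bounded in size for $\mathcal{U}$-many $n$ and the lemma reduces to a direct finite argument; under this assumption Lemma~\ref{lem:NonAtomic} and Corollary~\ref{cor:Atomless} give the atomless property of Loeb measure. Since $\mathbf{S}, \mathbf{T}, \mathbf{S}+\mathbf{T}$ will all be internal, their Loeb measures equal ultralimits of the corresponding $m_n$-measures, so a strict inequality $\mu(\mathbf{S}+\mathbf{T}) < \mu(\mathbf{S})+\mu(\mathbf{T})$, or exact equality combined with a direct $m_n$-level verification, translates to $m_n(S_n+T_n) \leq m_n(S_n)+m_n(T_n)$ for $\mathcal{U}$-many $n$.

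In the strictly subcritical case, Proposition~\ref{prop:LiftSubcritical} furnishes the internal finite-index subgroup $\mathbf{H} := H(A+B)$, and setting $\mathbf{S} := A+\mathbf{H}$, $\mathbf{T} := B+\mathbf{H}$ (finite unions of cosets, hence internal) gives $\mu(\mathbf{S}+\mathbf{T}) = \mu(\mathbf{S})+\mu(\mathbf{T}) - \mu(\mathbf{H})$ by the Kneser identity. In the critical case, cases (c) and (d) of Corollary~\ref{cor:Iterate} (type II with small $\mathbf{K}$, respectively with $\mu(A_0)=0$ or $\mu(B_0)=0$) admit the same construction $\mathbf{S} = A+\mathbf{K}$, $\mathbf{T} = B+\mathbf{K}$, with Corollary~\ref{cor:StabilizerUltra} supplying the strict sumset inequality. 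Controlling $\mu(\mathbf{S}\triangle A) + \mu(\mathbf{T}\triangle B)$ in these constructions reduces to showing $\mu(\mathbf{K})$ (respectively $\mu(\mathbf{H})$) is below $\varepsilon$ up to the appropriate periodicity defect; cases (c) and (d) supply this directly.

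In case (b) (type III), I would follow the Bohr-interval argument from the proof of Theorem~\ref{thm:TaoInverse1}: Corollary~\ref{cor:PullPreintervalsDown} lifts the parallel Bohr intervals $A', B' \subseteq \mathbf{K}$ to internal parallel Bohr intervals or $N_n$-cyclic progressions $\tilde{I}_n, \tilde{J}_n \subseteq G_n$, and setting $\mathbf{S} = \mathbf{a} + \prod_{n\to\mathcal{U}} \tilde{I}_n$, $\mathbf{T} = \mathbf{b} + \prod_{n\to\mathcal{U}} \tilde{J}_n$ yields the approximations; the $m_n$-level sumset inequality holds directly from the formulas recorded around Lemma~\ref{lem:CyclicMeetInterval}. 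In case (e) (type II with $A_0, B_0$ of type III), I would combine the two constructions: the periodic parts give $\mathbf{S}_1 = A_1+\mathbf{K}$, $\mathbf{T}_1 = B_1+\mathbf{K}$, while case (b) applied inside the cosets containing $A_0, B_0$ gives internal approximations $\mathbf{S}_0, \mathbf{T}_0$. Taking $\mathbf{S} = \mathbf{S}_1 \cup \mathbf{S}_0$ and $\mathbf{T} = \mathbf{T}_1 \cup \mathbf{T}_0$, and using that $A_0+B_0+\mathbf{K}$ is a unique expression element of $A+B+\mathbf{K}$, a direct measure calculation in the spirit of Lemma~\ref{lem:BuildQP} gives $\mu(\mathbf{S}+\mathbf{T}) \leq \mu(\mathbf{S})+\mu(\mathbf{T})$.

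The main obstacle is case (a) of Corollary~\ref{cor:Iterate}, where $\mathbf{K}$ may have large Loeb measure while $A, B$ are not close to periodic. I would resolve this by enlarging $\mathbf{K}$ to the true stabilizer $\mathbf{K}' := H(A+B+\mathbf{K})$ (still internal and finite-index), so that on the finite quotient $\mathbf{G}/\mathbf{K}'$ the stabilizer of the image of $A+B$ becomes trivial. Kneser's theorem in $\mathbf{G}/\mathbf{K}'$ then forces a dichotomy: either the Kneser defect is zero, in which case $A \sim_\mu A+\mathbf{K}'$ and $B \sim_\mu B+\mathbf{K}'$ so that $\mathbf{S} = A+\mathbf{K}'$, $\mathbf{T} = B+\mathbf{K}'$ have zero symmetric difference with $A, B$; or $A+B \sim_\mu \mathbf{G}$, in which case Corollary~\ref{cor:Atomless} lets me pick internal supersets $\mathbf{S} \supseteq A$, $\mathbf{T} \supseteq B$ with $\mu(\mathbf{S})+\mu(\mathbf{T})$ slightly exceeding $1 \geq \mu(\mathbf{S}+\mathbf{T})$ and $\mu(\mathbf{S}\triangle A)+\mu(\mathbf{T}\triangle B) < \varepsilon$.
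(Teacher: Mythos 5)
Your construction $\mb S = A+\mb K$, $\mb T = B+\mb K$ (resp.\ $A+\mb H$, $B+\mb H$) yields the required sumset inequality, but it does \emph{not} control $\mu(A\triangle\mb S)+\mu(B\triangle\mb T)$ when the finite-index subgroup is large, and this gap is real in the strictly subcritical case, in case (d), and in case (a). In the strictly subcritical case the Kneser identity only bounds $\mu((A+\mb H)\setminus A)+\mu((B+\mb H)\setminus B)$ by $\mu(\mb H)$, which need not be below $\varepsilon$. In case (d), if $\mu(A_0)=0$ but $A_0\neq\varnothing$ then $A_0+\mb K$ is a full coset of $\mb K$, so $\mu((A+\mb K)\setminus A)=\mu(\mb K)$ exactly; the periodicity defect is not small. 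And the dichotomy you assert in case (a) is false: after passing to $\mb K'=H(A+B+\mb K)$, Kneser's theorem in $\mb G/\mb K'$ forces either $A\sim_\mu A+\mb K'$, $B\sim_\mu B+\mb K'$, \emph{or} $\mu(A+B+\mb K')=\mu(A+\mb K')+\mu(B+\mb K')-\mu(\mb K')$, and the second alternative does not imply $A+B\sim_\mu\mb G$. (For instance, with $\mb K'$ of index $2$ and $A$, $B$ each contained in a single $\mb K'$-coset with $A+B$ filling its coset up to null sets, one has $\mu(A+B)=\mu(\mb K')$, $\mu_*(A+B)=\mu(A)+\mu(B)$, and $\mu(A+\mb K')-\mu(A)$ bounded away from zero, so neither of your alternatives holds.)

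The missing idea is to exploit the slack in $\mu_*(A+B)\leq\mu(A)+\mu(B)$ via \emph{small internal perturbations} of $A$, $B$, rather than full $\mb K$-saturation. In the strictly subcritical case one may first shrink $\varepsilon$ so that $\mu_*(A+B)+\varepsilon<\mu(A)+\mu(B)$ and then use Proposition~\ref{prop:LoebApprox} to choose internal $\mb S\subseteq A$, $\mb T\subseteq B$ with $\mu(\mb S)>\mu(A)-\varepsilon/2$, $\mu(\mb T)>\mu(B)-\varepsilon/2$; then $\mu_*(\mb S+\mb T)\leq\mu_*(A+B)<\mu(\mb S)+\mu(\mb T)$ and Observation~\ref{obs:Done} finishes. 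In case (a), if $A\nsim_\mu A+\mb K$ or $B\nsim_\mu B+\mb K$, Corollary~\ref{cor:Atomless} provides internal sets with $A\subseteq\mb S\subseteq A+\mb K$, $B\subseteq\mb T\subseteq B+\mb K$, and $0<\mu(\mb S\setminus A)+\mu(\mb T\setminus B)<\varepsilon$; the key observation is that $\mb S+\mb T\subseteq A+B+\mb K\sim_\mu A+B$, so $\mu(\mb S+\mb T)\leq\mu_*(A+B)=\mu(A)+\mu(B)<\mu(\mb S)+\mu(\mb T)$. Case (d) then reduces to case (a) by replacing $A$ with $A_1\sim_\mu A$ (which is essentially $\mb K$-periodic), not by setting $\mb S=A+\mb K$ directly.
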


\begin{proof}  First observe that the conclusion of the lemma is trivial when $\mb G$ is a finite group, as every subset of such a group $\mb G$ is internal and represented by a sequence of sets constant on some element of $\mathcal U$.   We therefore assume that $\mb G$ has infinite cardinality.
	
	Note that if we find $S_{n}, T_{n}$ satisfying $m_{n*}(S_{n}+T_{n})\leq m_{n}(S_{n})+m_{n}(T_{n})$, we may obtain the $m_{n}$-measurability of $S_{n}+T_{n}$ by replacing $S_{n}$ and $T_{n}$ with countable unions of compact subsets of $S_{n}$ and $T_{n}$ having the same $m_{n}$-measure as $S_{n}$ and $T_{n}$.  So we will not address $m_{n}$-measurability further.
	
	We will use the following observation repeatedly.
	
	\begin{observation}\label{obs:Done} If there are internal sets $\mb S=\prod_{n\to \mathcal U} S_{n}$ and $ \mb T=\prod_{n\to \mathcal U}T_{n}$ such that $\mu_{*}(\mb S+\mb T)<\mu(\mb S)+\mu(\mb T)$ and $\mu(A \triangle \mb S)+\mu(B \triangle \mb T)<\varepsilon$, then $\mb S, \mb T$ satisfy the conclusion of Lemma \ref{lem:FirstHalf}: under this assumption Lemma \ref{lem:InnerLoeb}  implies $m_{n*}(S_{n}+T_{n})<m_{n}(S_{n})+m_{n}(T_{n})$ for $\mathcal U$-many $n$.
	\end{observation}
	
	Now we prove Lemma \ref{lem:FirstHalf} in the case where $\mu_{*}(A+B)<\mu(A)+\mu(B)$.  Fix $\varepsilon>0$ such that $\mu_{*}(A+B)+\varepsilon<\mu(A)+\mu(B)$, and apply Proposition \ref{prop:LoebApprox} to find internal sets $\mb S\subseteq A$, $\mb T\subseteq B$ so that $\mu(\mb S)>\mu(A)-\frac{\varepsilon}{2}$ and $\mu(\mb T)>\mu(B)-\frac{\varepsilon}{2}$.  The containments $\mb S\subseteq A$, $\mb T\subseteq B$ and our choice of $\varepsilon$ then imply $\mu_{*}(\mb S+\mb T)<\mu(\mb S)+\mu(\mb T)$, so we are done by Observation \ref{obs:Done}.
	
	To prove Lemma \ref{lem:FirstHalf} in the case where $\mu_{*}(A+B)=\mu(A)+\mu(B)$, we appeal to Corollary \ref{cor:Iterate} and consider the five possibilities therein.
	
	\noindent \textbf{Case 1:} conclusion (a) holds in Corollary \ref{cor:Iterate}.   In this case, $A+B\sim_{\mu} A+B+\mb K$ for an internal $\mu$-measurable finite index subgroup $\mb K=\prod_{n\to \mathcal U} K_{n} \leq \mb G$.  If $A\sim_{\mu} A+\mb K$ and $B\sim_{\mu} B+\mb K$, we let $\mb S=A+\mb K$ and $\mb T=B+\mb K$.  Then $\mb S$ and $\mb T$ are internal, and Lemma \ref{lem:FiniteIsomorphism} implies \[m_{n}(S_{n}+T_{n})=\mu(A+B+\mb K)\leq \mu(A+\mb K)+\mu(B+\mb K)\leq m_{n}(S_{n})+m_{n}(T_{n}),\]
	so $m_{n}(S_{n}+T_{n})\leq m_{n}(S_{n})+m_{n}(T_{n})$ for $\mathcal U$-many $n$. If $A\nsim_{\mu} A+\mb K$ or $B\nsim_{\mu} B+\mb K$, Corollary \ref{cor:Atomless} provides internal sets $\mb S$, $\mb T$ with $A\subseteq \mb S\subseteq A+\mb K$ and $B\subseteq \mb T\subseteq B+\mb K$ such that $0<\mu(\mb S\setminus A)+\mu(\mb T\setminus B)<\varepsilon$.  Then $\mb S+\mb T\subset_{\mu} A+B+\mb K$, so $\mb S+\mb T\subset_{\mu} A+B$, and $\mu(\mb S+\mb T)<\mu(\mb S)+\mu(\mb T)$.  We are done by Observation \ref{obs:Done}.
	
	\noindent \textbf{Case 2:}  Conclusion (b) holds in Corollary \ref{cor:Iterate}.  In this case there are parallel Bohr intervals $\tilde{I}, \tilde{J}\subseteq \mb G$ such that $A\subseteq \tilde{I}, B\subseteq \tilde{J}$, and $\mu(A)=\mu(\tilde{I})$, $\mu(B)=\mu(\tilde{J})$.  Lemma \ref{lem:InternalIntervals} then implies that for $\mathcal U$-many $n$, there are sets $S_{n}\supseteq A_{n}$, $T_{n}\supseteq B_{n}$, such that $\mb S:=\prod_{n\to \mathcal U}S_{n}$ and $\mb T:=\prod_{n\to \mathcal U} T_{n}$ satisfy $\mb S\sim_{\mu} \tilde{I}$, $\mb T\sim_{\mu} \tilde{J}$, and $S_{n}$ and $T_{n}$ are either parallel Bohr intervals or parallel $N$-cyclic progressions for some $N$. We then have $m_{n}(S_{n}+T_{n})\leq m_{n}(S_{n})+m_{n}(T_{n})$ for $\mathcal U$-many $n$, while $\mb S\sim_{\mu}A$ and $\mb T\sim_{\mu} B$.

	\noindent \textbf{Case 3:}  $A, B$ satisfies conclusion (c) of Corollary \ref{cor:Iterate}.  In this case  choose an internal $\mu$-measurable finite index subgroup $\mb K=\prod_{n\to \mathcal U} K_{n}\leq \mb G$ such that $\mu(\mb K)<\frac{\varepsilon}{2}$ and $A$, $B$ have quasi-periodic decompositions $A= A_{1}\cup A_{0}$, $B=B_{1}\cup  B_{0}$, with respect to $\mb K$.   Let $\mb S=A+\mb K$ and $\mb T=B+\mb K$, so that $\mu(\mb S\setminus A)\leq \mu(\mb K)-\mu(A_{0})<\frac{\varepsilon}{2}$ and similarly $\mu(\mb T\setminus B)< \frac{\varepsilon}{2}$.  Since $A, B,\mb K$ has type (II), Corollary \ref{cor:StabilizerUltra} implies \[\mu(A+B+\mb K)=\mu(A+\mb K)+\mu(B+\mb K)-\mu(\mb K).\] Thus $\mu(\mb S+\mb T)<\mu(\mb S)+\mu(\mb T)$, and we are done by Observation \ref{obs:Done}.
	
	\noindent \textbf{Case 4:}  $A, B$ satisfies conclusion (d) of Corollary \ref{cor:Iterate}.   Suppose, without loss of generality, that $A$ has a quasi-periodic decomposition $A=A_{1}\cup A_{0}$ with respect to $\mb K$ such that $\mu(A_{0})=0$.  Now $A\sim_{\mu} A_{1}\sim_{\mu} A_{1}+\mb K$ and $A_{1}+B\sim_{\mu} A_{1}+B+\mb K$.  We can then argue as in Case 1 with $A_{1}$ in place of $A$, and this will suffice as $A_{1}\sim_{\mu} A$.

	\begin{observation}\label{obs:III} We have already proved the present lemma in the case where $A, B$ has type (III).  In the remainder of the proof, we may apply the lemma to any pair having type (III).\end{observation}
	
	\noindent\textbf{Case 5:}  $A, B$ satisfies conclusion (e) of Corollary \ref{cor:Iterate}.    In this case $A$ and $B$ have quasi-periodic decompositions $A=A_{1}\cup A_{0}$, $B=B_{1}\cup B_{0}$ with respect to an internal $\mu$-measurable finite index subgroup $\mb K=\prod_{n\to \mathcal U} K_{n}$, and $A+B\nsim_{\mu} A+B+\mb K$.  Furthermore $\mu(A_{0}), \mu(B_{0})>0$, $\mu_{*}(A_{0}+B_{0})=\mu(A_{0})+\mu(B_{0})$, and $A_{0}, B_{0}$ satisfies conclusion (III) in Proposition \ref{prop:Reduction}. By Observation $\ref{obs:III}$, there are internal sets $\mb S_{0}=\prod_{n\to \mathcal U} S_{n,0}$, $\mb T_{0}=\prod_{n\to \mathcal U} T_{n,0}$ such that $\mu(A_{0}\triangle \mb S_{0})+\mu(B_{0}\triangle \mb T_{0})<\varepsilon$ and
	\begin{align}\label{eqn:SnTncritical}
	m_{n}(S_{n,0}+T_{n,0})\leq m_{n}(S_{n,0})+m_{n}(T_{n,0}) && \text{for } \mathcal U\text{-many } n.
	\end{align} We also have $A_{0}+\mb K=\mb S_{0}+\mb K$ and $B_{0}+\mb K=\mb S_{0}+\mb K$, as $A_{0}$ and $B_{0}$ are each contained in a single coset of $\mb K$.
	Setting $\mb S_{1}=A_{1}+\mb K$, $\mb T_{1}=B_{1}+\mb K$ and $\mb S=\mb S_{1}\cup \mb S_{0}$, $\mb T=\mb T_{1}\cup \mb T_{0}$, we then have that $\mb S$ and $\mb T$ are internal, and $\mu(A\triangle \mb S)+\mu(B\triangle \mb T)<\varepsilon$.  We will show that $\mb S$ and $\mb T$ satisfy the conclusion of the present lemma.
	
	Writing $\mb S_{1}=\prod_{n\to \mathcal U} S_{n,1}$, Lemma \ref{lem:StabilizeInLimit} implies $S_{n,1}=S_{n,1}+K_{n}$ for $\mathcal U$-many $n$. Note that $A+\mb K=\mb S+\mb K$ and $B+\mb K=\mb T+\mb K$, and Corollary \ref{cor:StabilizerUltra} implies $\mu(A+B+\mb K)=\mu(A+\mb K)+\mu(B+\mb K)-\mu(\mb K)$, so the same identity holds with $\mb S, \mb T$ in place of $A,B$.  Lemma \ref{lem:StabilizeInLimit} then implies \begin{align}\label{eqn:SnTnKneser}
	m_{n}(S_{n}+T_{n}+K_{n})=m_{n}(S_{n}+K_{n})+m_{n}(T_{n}+K_{n})-m_{n}(K_{n}) && \text{ for } \mathcal U\text{-many } n.
	\end{align}Lemma \ref{lem:PullQPdown} shows that the decompositions $S_{n}=S_{n,1}\cup S_{n,0}$ and $T_{n}=T_{n,1}\cup T_{n,0}$ are indeed quasi-periodic decompositions with respect to $K_{n}$.   As observed in Remark \ref{rem:UEE}, $A_{0}+B_{0}+\mb K$ is a unique expression element of $A+B+\mb K$ in $\mb G/\mb K$, so $\mb S_{0}+\mb T_{0}+\mb K$ is as well. The isomorphism $\phi_{n}: G_{n}/K_{n}\to \mb G/\mb K$ provided by Lemma \ref{lem:FiniteIsomorphism} shows that $S_{n,0}+T_{n,0}+K_{n}$ is a unique expression element of $S_{n}+T_{n}+K_{n}$ in $G_{n}/K_{n}$ for $\mathcal U$-many $n$. Now (\ref{eqn:SnTncritical}),  (\ref{eqn:SnTnKneser}), and Lemma \ref{lem:BuildQP} imply $m_{n}(S_{n}+T_{n})\leq m_{n}(S_{n})+m_{n}(T_{n})$ for $\mathcal U$-many $n$, as desired. \end{proof}

\begin{proof}[Proof of Theorem \ref{thm:Popular}]  Suppose, to get a contradiction, that Theorem \ref{thm:Popular} is false. Then for all sufficiently large $n\in \mathbb N$ there is a compact abelian group $G_{n}$ with Haar probability measure $m_{n}$ and sets $A_{n}, B_{n}\subseteq G_{n}$ having $m_{n}(A_{n})>\varepsilon$, $ m_{n}(B_{n})>\varepsilon$, and
	\begin{equation}\label{eqn:SmallPopular}
	m_{n}(A_{n}+_{\frac{1}{n}} B_{n})\leq m_{n}(A_{n})+m_{n}(B_{n})+\tfrac{1}{n},
	\end{equation} while
	\begin{equation}\label{eqn:LastContradiction}  m_{n}(A_{n} \triangle S_{n}) + m_{n}(B_{n}\triangle T_{n})\geq \varepsilon
	\end{equation}
	for every pair $S_{n}, T_{n}\subseteq G_{n}$ satisfying $m_{n}(S_{n}+T_{n})\leq m_{n}(S_{n})+m_{n}(T_{n})$.
	
	Let $\mathcal U$ be a nonprincipal ultrafilter on $\mathbb N$, and let $\mb G=\prod_{n\to \mathcal U} G_{n}$, with $\mu$ the Loeb measure on $\mb G$ corresponding to $m_{n}$.  Consider the following sets and functions.
	
	Let $f_{n}:=1_{A_{n}}$, $g_{n}:=1_{B_{n}}$, and $h_{n}=f_{n}*g_{n}$. Write $C_{n}$ for  $A_{n}+_{\frac{1}{n}}B_{n}:=\{t\in G_{n}:f_{n}*g_{n}(t)>\frac{1}{n}\}$, the $\frac{1}{n}$-popular sumset of $A_{n}$ and $B_{n}$.
	Let $\mb A:=\prod_{n\to \mathcal U} A_{n}$, $\mb B:=\prod_{n\to \mathcal U} B_{n}$, and $\mb C:=\prod_{n\to \mathcal U} C_{n}$. The assumption (\ref{eqn:SmallPopular}) implies
	\begin{equation}\label{eqn:CAB}
	\mu(\mb C)\leq \mu(\mb A)+\mu(\mb B).
	\end{equation}

	Let $f:=\lim_{n\to \mathcal U} f_{n}$, $g:=\lim_{n\to \mathcal U} g_{n}$, and $h:=\lim_{n\to \mathcal U} f_{n}*g_{n}$. Then $h=f*g$, by Lemma \ref{lem:Ergodicity}.

	\begin{claim}
		$\{\mb x\in \mb G: f*g(\mb x)>0\}\subseteq \mb C$.
	\end{claim}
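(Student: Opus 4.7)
The plan is to unravel the definitions and use the fact that the threshold $1/n$ in the definition of $C_n$ shrinks to $0$, while a value of $f*g(\mathbf{x})$ being strictly positive gives a \emph{fixed} positive lower bound on $h_n(x_n)$ for $\mathcal{U}$-many $n$.

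First I would fix $\mathbf{x} \in \mathbf{G}$ with $f*g(\mathbf{x}) > 0$ and invoke Lemma \ref{lem:Ergodicity} (as the excerpt already does) to replace $f*g(\mathbf{x})$ by $h(\mathbf{x})$, so that $c := h(\mathbf{x}) > 0$. Next I would choose a representative $(x_n)_{n \in \mathbb{N}}$ of $\mathbf{x}$ and unpack the definition $h = \lim_{n \to \mathcal{U}} h_n$: this means $\lim_{n \to \mathcal{U}} h_n(x_n) = c$. By the definition of an ultralimit in the compact space $[0,1]$, for every neighborhood of $c$ the set of $n$ for which $h_n(x_n)$ lies in that neighborhood belongs to $\mathcal{U}$; in particular, picking any $c' \in (0,c)$, the set $U_1 := \{n \in \mathbb{N} : h_n(x_n) > c'\}$ lies in $\mathcal{U}$.

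Then I would use the fact that $U_2 := \{n \in \mathbb{N} : 1/n < c'\}$ is cofinite, hence in $\mathcal{U}$. On $U_1 \cap U_2 \in \mathcal{U}$ we have $h_n(x_n) > c' > 1/n$, so $x_n \in C_n$. This shows $\{n : x_n \in C_n\} \in \mathcal{U}$, which by the definition of the internal set $\mathbf{C} = \prod_{n \to \mathcal{U}} C_n$ means $\mathbf{x} \in \mathbf{C}$, as desired.

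There is no real obstacle here; the only subtlety is noticing that the definition of $C_n$ uses a threshold $1/n$ depending on $n$, not a fixed positive threshold, so one gains the conclusion $\mathbf{x} \in \mathbf{C}$ even though no fixed-threshold popular sumset would suffice.
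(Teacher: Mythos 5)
Your proposal is correct and follows essentially the same route as the paper's proof: the paper also observes that $h(\mb x)>0$ gives $h_n(x_n)>s$ for $\mathcal U$-many $n$ for some fixed $s>0$, and then concludes $x_n\in A_n+_{\frac{1}{n}}B_n$ for $\mathcal U$-many $n$. You have merely made explicit the intersection with the cofinite set $\{n:1/n<s\}$, a step the paper leaves implicit.
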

	\begin{proof}[Proof of Claim.]
		If $h(\mb x)>0$ and $\mb x \sim_{\mathcal U} (x_{n})_{n\in \mathbb N}$, then for some $s>0$, $h_{n}(x_{n})>s$ for $\mathcal U$-many $n$.  Thus $x_{n}\in A_{n}+_{\frac{1}{n}}B_{n}$ for $\mathcal U$-many $n$, so $\mb x\in \mb C$. \end{proof}
	Apply Lemma \ref{lem:ProjectAndPullBack} to find $\mu$-measurable sets $A', B' \subseteq \mb G$ such that $\mb A\subset_{\mu} A'$, $\mb B\subset_{\mu} B'$, and $A'+B'\subset_{\mu} \mb \{\mb x: f*g(\mb x)>0\}$, so that $\mu(A'+B')\leq \mu(\mb C)$.    Let $A''=\mb A\cap A'$, $B''=\mb B\cap B'$, so that $A''\sim_{\mu} \mb A$, $B''\sim_{\mu} \mb B$. Then
	\begin{align*}
	\mu_{*}(A''+B'') &\leq \mu(A'+B')  && \text{since $A''\subseteq A'$ and $B''\subseteq B'$}\\
	& \leq \mu(\mb C) && \text{since $A'+B'\subseteq \mb C$}\\
	&\leq \mu(\mb A)+\mu(\mb B) && \text{by (\ref{eqn:CAB})}\\
	&= \mu(A'')+\mu(B''), && \text{since $A''\sim_{\mu}\mb A, B''\sim_{\mu} \mb B$}
	\end{align*}
	so $\mu_{*}(A''+B'')\leq \mu(A'')+\mu(B'')$. We may therefore apply Lemma \ref{lem:FirstHalf} to $A''$ and $B''$ to find internal sets $\mb S=\prod_{n\to \mathcal U} S_{n}$ and $\mb T=\prod_{n\to \mathcal U} T_{n}$ such that $\mu(A''\triangle \mb S)+\mu(B''\triangle \mb T)<\varepsilon$, while
	\begin{equation}\label{eqn:STsmall}
	m_{n}(S_{n}+T_{n})\leq m_{n}(S_{n})+m_{n}(T_{n}) \quad \text{for } \mathcal U\text{-many } n.
	\end{equation}  Since $\mb A\sim_{\mu} A''$ and $\mb B\sim_{\mu} B''$, we also have $\mu(\mb A\triangle \mb S)+\mu(\mb B\triangle \mb T)<\varepsilon$, meaning
	\begin{equation}\label{eqn:STcloseAB}
	m_{n}(A_{n}\triangle S_{n})+m_{n}(B_{n}\triangle T_{n})<\varepsilon \quad \text{for } \mathcal U\text{-many } n.
	\end{equation}  Thus for $\mathcal U$-many $n$, both (\ref{eqn:STsmall}) and (\ref{eqn:STcloseAB}) hold, contradicting our assumptions on $A_{n}$ and $B_{n}$. This completes the proof of Theorem \ref{thm:Popular} \end{proof}

\section{Questions}\label{sec:Questions and remarks}

In \S\ref{sec:Niveau} we list some classical examples related to the hypothesis of Theorem \ref{thm:Precise}.  These motivate some of the questions posed in \S\ref{sec:Problems}.

\subsection{Possible lack of structure in \texorpdfstring{$A+B$}{A+B}}\label{sec:Niveau}   Conclusion (I) of Theorem \ref{thm:Precise} permits the following: $A+B$ is contained in a coset of $K$ and $m_{*}(A+B)\approx m(K)$.  This imposes almost no structure on $A$, $B$, or $A+B$, raising the question of whether additional detail can be obtained under these circumstances.   Here are a few known examples showing that some natural candidates  for stronger conclusions in Theorem \ref{thm:Precise} are impossible.

\subsubsection{Cyclic groups }\label{sec:ZpExample}  The techniques of \cite{Ruzsa91} show that for all $\varepsilon>0$ and every sufficiently large prime $p$ there is a set $A\subseteq \mathbb Z/p\mathbb Z$ such that $|A|\approx \frac{p}{2}$ while $A-A$ does not contain an arithmetic progression of length at least $\exp((\log p)^{\frac{2}{3}+\varepsilon})$.

\subsubsection{Tori}\label{sec:T2Example} For all  $d\in \mathbb N$ and $\varepsilon>0$, there is a set $A\subseteq \mathbb T^{d}$ having $m(A)>\frac{1}{2}-\varepsilon$, such that $A=-A$ and $A+A$ does not contain a Bohr interval.  In fact $A$ may be constructed so that $A+A$ does not contain coset of a nontrivial connected subgroup of $\mathbb T^{d}$. This is a folklore theorem, sometimes attributed to J. Bourgain answering a question of Y. Katznelson.

\subsubsection{Finite vector spaces}\label{sec:VecExample} For all $k\in \mathbb N$ and $N$ sufficiently large depending on $k$ there is a set $A\subseteq (\mathbb Z/2\mathbb Z)^{N}$ with $|A|>\bigl(\frac{1}{2}-\frac{1}{k}\bigr)2^{N}$ such that $A-A$ does not contain a coset of a subgroup of index less than $k$.  See Theorem 9.4 of \cite{GreenFiniteField} for such an example\footnote{The example in \cite{GreenFiniteField} produces $A$ with $|A| \approx \frac{1}{4}2^{N}$; the argument is easily adapted to produce densities approaching $\frac{1}{2}$ by replacing the quantity $\sqrt{n}$ therein by $n^{1/3}$, for example.} and \cite{SandersSumsetProblem} for discussion.  A similar construction for finite and countably infinite vector spaces over fields of odd order appears in \cite{GriesmerBohrDifference}.

\medskip

These constructions all rely on the same basic idea, sometimes called ``niveau sets'', introduced independently in \cite{Kriz} and \cite{Ruzsa87}. A very clear exposition appears in \cite{WolfPopular}, which constructs sets $A$ in $\mathbb Z/N\mathbb Z$ and $(\mathbb Z/2\mathbb Z)^{N}$ whose popular difference sets $A+_{\delta}(-A)$ are remarkably lacking in structure.

\subsection{Open problems}\label{sec:Problems}

\subsubsection{When \texorpdfstring{$A+B$}{A+B} nearly covers \texorpdfstring{$G$}{G}}  The examples listed in \S\ref{sec:Niveau} lead to the following open-ended problem: given a compact abelian group $G$ and small $\varepsilon, \delta>0$, find a complete description of all pairs of sets $A, B\subseteq G$ satisfying  $m(A), m(B)>\varepsilon$ and $m(A)+m(B)>1-\varepsilon$ such that $A+B$ does not contain any of the following: a coset of a subgroup of index at most $\delta^{-1}$, a relative Bohr interval of measure at least $\delta$, or a relative $N$-cyclic progression of measure at least $\delta$.  As mentioned in \S\ref{sec:Niveau}, all known examples of such pairs rely on the idea of niveau sets.  It would be very interesting to prove that all such examples must be based on this idea, but we do not  have a precise conjecture to that effect.  It would be equally interesting to find a fundamentally different example.  Progress in this direction could lead to  resolution of some open problems on difference sets $A-A$ where $A$ is a subset of a countable abelian group having positive upper Banach density, cf.~\cite{BergelsonRuzsa,GrPreprint,HegyvariRuzsa}.

\subsubsection{Quantifying the dependence of \texorpdfstring{$\delta$}{d} on \texorpdfstring{$\varepsilon$}{e}}\label{sec:QuantitativeDependence}  We would prefer to state a specific $\delta$ in terms of $\varepsilon$ in Theorems \ref{thm:Popular} and \ref{thm:Precise}, but our present methods will not yield any quantitative dependence. With additional hypotheses on the ambient group $G$, and in some cases on $m(A)$ and $m(B)$, such quantitative results are provided in  \cite{Bilu,CandelaDeRoton,CandelaSerraSpiegel,DeshouillersFrieman,MazurZp,MazurInZ,SerraZemor}, and Chapters 19 and 21 of \cite{GrynkBook}.  A prototypical example of such a result is \cite[Theorem 2.11]{NathansonBook} (originally in \cite{Freiman1}). An optimistic hope is that for all $\eta<1$ and $\varepsilon>0$, the hypothesis \[m_{*}(A+B)\leq m(A)+m(B)+\eta\min\{m(A),m(B)\}<1-\varepsilon\] implies a conclusion resembling that of Theorem \ref{thm:Precise}.  Even in the group $\mathbb T$ with $\eta=\frac{1}{3}$ this is too optimistic, as shown by Example 2.17 of \cite{CandelaDeRoton}.  That example is based on a similar one for $\mathbb Z/p\mathbb Z$ in \cite{SerraZemor}.

\subsubsection{The hypothesis \texorpdfstring{$m(A), m(B)>\varepsilon$}{m(A), m(B)>e}.}  In Theorem \ref{thm:Precise}, can we change the hypothesis ``$m_{*}(A+B)\leq m(A)+m(B)+\delta$'' to  ``$m_{*}(A+B)\leq m(A)+m(B)+\delta \min\{m(A),m(B)\}$'' and replace  ``$m(A),m(B)>\varepsilon$'' with ``$m(A), m(B)>0$'', without affecting the conclusion? Of course the bound on the index of $K$ would still depend on $m(A)$ and $m(B)$. Perhaps a method of rectification like those of \cite{GreenRuzsaRectification} can be combined with our techniques to  yield such a result.

\subsubsection{Semicontinuity in general}  Given $\varepsilon, \delta>0$, which pairs of subsets $A,B$ of a compact abelian group $G$ have the property that there are sets $A',B'\subseteq G$ such that
\[m(A\triangle A')+m(B\triangle B')<\varepsilon\] and $m_{*}(A'+B')-(m(A')+m(B'))\leq m_{*}(A+B)-(m(A)+m(B))-\delta$?

\subsubsection{Hope for a simpler proof of Theorem \ref{thm:Popular}.}  Is there a proof of Theorem \ref{thm:Popular} that does not use the detailed structural information contained in Theorem \ref{thm:LCAInverse}?  A simpler proof might generalize more easily to the nonabelian setting.

\subsubsection{The exceptional set in Proposition \ref{prop:Kronecker}}\label{sec:Exceptional}

As mentioned in Remark \ref{rem:AEnotE}, the equation $f*g \equiv_{\mu} (\tilde{f}*\tilde{g})\circ \pi$ in Proposition \ref{prop:Kronecker} cannot be improved to assert genuine equality. It may be interesting to characterise exactly which subsets of $\mb G$ can be the set of $\mb x$ where $f*g(\mb x) \neq (\tilde{f}*\tilde{g})\circ \pi(\mb x)$. Here is an example of $f$, $g$ where the exceptional set has some interesting structure, based on the sets mentioned in \S\ref{sec:VecExample}. Let $G_{n}=(\mathbb Z/2\mathbb Z)^{n}$, and let $\mb G=\prod_{n\to \mathcal U} G_{n}$.   Let $d_{n}$ be a sequence of natural numbers slowly tending to $\infty$, and choose $A_{n}\subseteq  G_{n}$ so that $|A_{n}|\geq (\frac{1}{2}-\frac{1}{d_{n}})|G_{n}|$ while $A_{n}+A_{n}$ does not contain a coset of a subgroup of index $d_{n}$.  The standard construction of such a set $A_{n}$ will also guarantee that $\lim_{n\to \mathcal U}\sup_{\chi \in \widehat{G}_{n}\setminus \{1\}} |\widehat{1}_{A_{n}}(\chi)|=0$. Letting $\mb A=\prod_{n\to \mathcal U}A_{n}$ and $f=1_{\mb A}$, we then have that $\mb G\setminus (\mb A+\mb A)$ has nonempty intersection with every coset of every internal finite index subgroup of $\mb G$.  Furthermore $f*f(\mb x)=0$ for all $\mb x\in \mb G\setminus (\mb A+\mb A)$, while $\tilde{f}\equiv_{m}\frac{1}{2}$, so $(\tilde{f}*\tilde{f})\circ \pi(\mb x)=\frac{1}{4}$ for all $\mb x\in \mb G$.  This construction guarantees that $\{\mb x: (f*f)(\mb x)\neq (\tilde{f}*\tilde{f})\circ \pi(\mb x)\}$  has nonempty intersection with every coset of every $\mu$-measurable finite index subgroup of $\mb G$.

\subsubsection{Nonabelian groups}

Kemperman \cite{KempermanCompact} partially extended Theorem \ref{thm:Satz1} to arbitrary locally compact (not necessarily abelian) groups: if $G$ is a locally compact group and $A, B\subseteq G$ satisfy $m(AB)<m(A)+m(B)$, then $AB$ is a union of cosets of a compact open subgroup of $G$.  No analogue to Equation (\ref{eqn:Kneser}) is proved there. Our current inability to extend Theorems \ref{thm:Popular} and \ref{thm:Precise} to non-abelian groups is partly due to the failure of Equation (\ref{eqn:Kneser}) in that setting, as shown by an example in \S5 of \cite{OlsonSum}.

For compact nonabelian groups with nontrivial abelian identity component, pairs $A,B$ satisfying $m(AB)=m(A)+m(B)$ and an additional hypothesis (called ``spread out'') are classified in \cite{BjorklundCompact} (here $AB$ denotes the product set $\{ab: a\in A, b\in B\}$).  It may be possible to extend the results of \cite{BjorklundCompact} to handle the hypothesis $m(AB)\leq m(A)+m(B)+\delta$ for very small $\delta$, perhaps with an additional hypothesis on the measure of $AB\cap cK$ for open subgroups $K$, in the following way: extend the ultraproduct machinery we use to nonabelian groups, and use the results of \cite{BjorklundCompact} in place of Theorem \ref{thm:LCAInverse}.

\appendix

\section{Ultraproducts and Loeb measure}\label{sec:Ultraproducts}

We summarize the ultraproduct construction and  properties of Loeb measure.  Our framework is basically identical to the one developed in \cite{SzegedyHOFA,SzegedyLimits}, and similar to that of \cite{BergelsonTaoQuasirandom} and \cite{TserunyanMixing}. The undergraduate thesis \cite{WarnerUltraproducts} is a very clear introduction to these topics.   The presentation here is to fix terminology and to state some important results from the literature.  We will assume familiarity with the basic properties of an ultrafilter on $\mathbb N$.

\subsection{Limits along ultrafilters}\label{Ulimits}
If $\mathcal U$ is an  ultrafilter on $\mathbb N$ and $P$ is a statement depending on $n\in \mathbb N$, we write ``$P(n)$ is true for $\mathcal U$-many $n$'' if $\{n\in \mathbb N: P(n) \text{ is true}\}\in \mathcal U$.

If $X$ is a topological space and $(x_{n})_{n\in \mathbb N}$ is a sequence of elements of $X$, we say that $\lim_{n\to \mathcal U} x_{n}=x$ if for every neighborhood $V$ of $x$, $\{n\in \mathbb N:x_{n}\in V\}\in \mathcal U$. With the above terminology we may say that $\lim_{n\to \mathcal U} x_{n}=x$ if for every neighborhood $V$ of $x$, $x_{n}\in V$ for $\mathcal U$-many $n$. When $X$ is a compact Hausdorff space, there is always a unique $x\in X$ such that $\lim_{n\to \mathcal U} x_{n}=x$.

\subsection{The ultraproduct construction}\label{sec:UPconstruction} Fix a nonprincipal ultrafilter $\mathcal U$ on $\mathbb N$.  If $(S_{n})_{n\in \mathbb N}$ is a sequence of sets, we form an equivalence relation $\sim_{\mathcal U}$ on the set of sequences $(s_{n})_{n\in \mathbb N}$ where $s_{n}\in S_{n}$, defined by $(s_{n})_{n\in \mathbb N} \sim_{\mathcal U} (t_{n})_{n\in \mathbb N}$ iff $\{n: s_{n}=t_{n}\}\in \mathcal U$.  The \emph{ultraproduct} $\prod_{n\to \mathcal U} S_{n}$, denoted $\mathbf S$, is the set of equivalence classes under the relation $\sim_{\mathcal U}$.  If $\mb s\in \mb S$ and $(a_{n})_{n\in \mathbb N}$ is a member of the equivalence class $\mb s$, we say that $\mb s$ is \emph{represented by} $(a_{n})_{n\in \mathbb N}$, and by abuse of notation we write $(a_{n})_{n\in \mathbb N}\sim_{\mathcal U} \mb s$.

Given a constant sequence of sets, $S_{n}=S$ for all $n$, we write $\leftidx^{*}S$ for the ultraproduct $\prod_{n\to \mathcal U} S$, which is then called an \emph{ultrapower} of the underlying set.

A word of warning: we use the symbols ``$\prod_{n\to \mathcal U}$'' and ``$\lim_{n\to \mathcal U}$'' to denote different operations.  Some articles use the symbol ``$\lim_{n\to \mathcal U}$'' to denote what we call ``$\prod_{n\to \mathcal U}$''.

\subsubsection{Internal sets} Given a sequence of sets $S_{n}$ and the corresponding ultraproduct $\mathbf S=\prod_{n\to\mathcal U} S_{n}$, a sequence of subsets $A_{n}\subseteq S_{n}$ determines an ultraproduct $\mathbf A=\prod_{n\to \mathcal U} A_{n}$ which is itself a subset of $\mathbf S$.  We call a set $\mb A\subseteq \mathbf S$ \emph{internal} if it arises in this way, i.e.~if there is a sequence of sets $A_{n}\subseteq S_{n}$ such that $\mb A=\prod_{n\to \mathcal U} A_{n}$.

We will always use boldface letters such as $\mb S$, $\mb A$, etc., to denote ultraproducts and internal sets.

\subsubsection{Standard elements} We call an element of an ultrapower $\leftidx^{*}S$ \emph{standard} if it has a constant representative $(s_{n})_{n\in \mathbb N}$, meaning there is an $s$ such that $s_{n}=s$ for all $n$.  The standard elements of $\leftidx^{*}S$ are naturally identified with elements of $S$.

\subsubsection{Infinitesimals and bounded elements}\label{sec:inf} The set of complex numbers $\mathbb C$ may be identified with the set of standard elements of the ultrapower $\leftidx^{*}\mathbb C$.  An element $\mb a$ of $\leftidx^{*}\mathbb C$ is called

\begin{enumerate}
	\item[$\cdot$] \emph{infinitesimal} if $\mb a$ has a representative $(a_{n})_{n\in \mathbb N}$  such that $\lim_{n\to \mathcal U} a_{n}=0$.
	
	\item[$\cdot$] \emph{bounded} if there is a $c \in \mathbb R$ and a representative $(a_{n})_{n\in \mathbb N}$ of $\mb a$ such that $|a_{n}|\leq c$ for $\mathcal U$-many $n$.
\end{enumerate}
For every bounded $a\in \leftidx^{*}\mathbb C$, there is a unique standard element $b\in \leftidx^{*}\mathbb C$ such that $a-b$ is infinitesimal.  Identifying $b$ with an element of $\mathbb C$, we write $\st(a)$ for $b$ and call $\st(a)$ the \emph{standard part} of $a$.

\subsubsection{Functions from \texorpdfstring{$\mb S$}{S} into compact spaces}  Given a compact Hausdorff topological space $Y$ and a sequence of functions $f_{n}:S_{n}\to Y$, consider the ultraproduct $\mb S=\prod_{n\to \mathcal U} S_{n}$ and let $\lim_{n\to \mathcal U} f_{n}$ denote the function $f:\mb S\to Y$ given by $f(\mb s)=\lim_{n\to \mathcal U} f_{n}(s_{n})$, where $(s_{n})_{n\in \mathbb N}$ represents $\mb s$.

\subsubsection{Internal functions}\label{sec:InternalFunctions}  If $(S_{n})_{n\in \mathbb N}$, $(T_{n})_{n\in \mathbb N}$ are sequences of sets with corresponding ultraproducts $\mb S$ and $\mb T$, and $f_{n}:S_{n}\to T_{n}$ is a function for each $n$, we may define a function $\mathbf f: \mb S\to \mb T$ as follows: $\mathbf f(\mathbf s)$ is the element of $\mb T$ represented by the sequence $(f_{n}(s_{n}))_{n\in \mathbb N}$.  Functions from $\mb S$ to $\mb T$ arising this way are called \emph{internal} functions.\footnote{It is routine to check that this definition of $\mb f$ agrees with the definition of $\mb f$ as the ultraproduct $\prod_{n\to \mathcal U} f_n$, where each $f_n$ is considered as a subset of $S_n\times T_n$.  Thus the terminology ``internal'' is appropriate.}
If $f_{n}:S_{n}\to \mathbb C$ for each $n$ and the $f_{n}$ are uniformly bounded, then $\mathbf f$ is a bounded function from $\mathbf S$ into $\leftidx^{*}\mathbb C$, and the function $\leftidx^{\circ}\mathbf f$ defined by $\leftidx^{\circ}\mathbf f(\mathbf s) := \st(\mathbf f(\mathbf s))$ is a function from $\mathbf S$ to $\mathbb C$.

When $\mb f: \mb S\to \leftidx^{*}\mathbb C$ is bounded, there is an another way to define $\lcirc f$.  If $(f_{n})_{n\in \mathbb N}$ is a sequence of uniformly bounded functions from $S_{n}$ to $\mathbb C$ and $\mb f=\prod_{n\to \mathcal U} f_{n}$, then for each $\mathbf s\in \mathbf S$ and each representative $(s_{n})_{n\in \mathbb N}$ of $\mathbf s$ we have $\lcirc f(\mathbf s) = \lim_{n\to \mathcal U} f_{n}(s_{n})$.  Thus $\lcirc f=\lim_{n\to \mathcal U} f_{n}$.

\subsection{Loeb measure}  We summarize the development of Loeb measure as used in \cite{SzegedyHOFA}, \cite{SzegedyLimits}, and \cite{BergelsonTaoQuasirandom}, for example.  See Chapter 1 of \cite{CutlandLoebMeasure} for an overview of the construction and references.

For each $n$, let $X_{n}$ be a set, $\mathscr X_{n}$ an algebra of subsets of $X_{n}$, and $\mu_{n}$ a finitely additive measure defined on $\mathscr X_{n}$.  Consider the ultraproduct $\mathbf X = \prod_{n\to \mathcal U} X_{n}$ and the
$\sigma$-algebra $\mathscr X$ of subsets of $\mb X$ generated by the internal sets of the form
$\prod_{n\to \mathcal U} A_{n}$, where $A_{n}\in \mathscr X_{n}$ for $\mathcal U$-many $n$.
Then there is a unique countably additive measure $\mu$ on $(\mb X,\mathscr X)$ such that for every internal set $\mathbf A=\prod_{n\to \mathcal U} A_{n}$ where $A_{n}\in \mathscr X_{n}$ for $\mathcal U$-many $n$, we have
\begin{equation}\label{eqn:Loeb}
\mu(\mathbf A) = \lim_{n\to \mathcal U} \mu_{n}(A_{n}).
\end{equation}
Extending $\mathscr X$ to its completion $\mathscr X'$ under $\mu$, the measure $\mu$ is called the \emph{Loeb measure} associated to $(X_{n},\mathscr X_{n},\mu_{n})$, and $(\mb X, \mathscr X',\mu)$ is the \emph{Loeb measure space} associated to $(X_{n},\mathscr X_{n},\mu_{n}) $. Given such a measure $\mu$ and a function $f:\mb X\to \mathbb C$, we say that $f$ is \emph{$\mu$-measurable} (or \emph{Loeb measurable}, if there is no ambiguity) if $f$ is measurable with respect to $\mathscr X'$.  We say that a set $A\subseteq \mathbf X$ is \emph{$\mu$-measurable} if $A\in \mathscr X'$.

We need some natural approximation results for Loeb measurable functions, summarized in the following proposition.  Here $D\subseteq \mathbb C$ is the closed unit disk.

\begin{proposition}\label{prop:LoebApprox}  Let $(X_{n},\mathscr X_{n},\mu_{n})$ be a sequence of probability measure spaces and let $(\mb X, \mathscr X,\mu)$ be the associated Loeb measure space.
	\begin{enumerate}
		\item[(i)] \cite[Theorem 1]{LoebMeasure} If $A\subseteq \mb X$ is $\mu$-measurable then for all $\varepsilon>0$ there are internal sets $\mb A'$, $\mb A''$ such that $\mb A'\subseteq A\subseteq \mb A''$ and $\mu(\mb A''\setminus \mb A')<\varepsilon$.
		\item[(ii)]  \cite[Theorem 3]{LoebMeasure} If $f_{n}:X_{n}\to D$ is $\mathscr X_{n}$-measurable for $\mathcal U$-many $n$ and  $f=\lim_{n\to \mathcal U} f_{n}$, then $f$ is $\mu$-measurable and $\int f \,d\mu = \lim_{n\to \mathcal U} \int  f_{n}\, d\mu_{n}$.
	\end{enumerate}
\end{proposition}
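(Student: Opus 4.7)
For Part (i), the plan is to build the Loeb measure through the standard Carathéodory extension and exploit the saturation property of the ultraproduct to upgrade approximations by countable unions of internal sets into approximations by single internal sets. First I would observe that the collection $\mathscr B$ of internal sets of the form $\prod_{n\to\mathcal U} A_n$ is an algebra, and that the set function $\mu(\mb A):=\lim_{n\to\mathcal U}\mu_n(A_n)$ is finitely additive on $\mathscr B$. Next I would verify that $\mu$ is countably additive on $\mathscr B$: if $(\mb B_k)$ is a decreasing sequence of internal sets with $\bigcap_k \mb B_k=\varnothing$, then by $\aleph_1$-saturation of the ultrapower some finite intersection $\bigcap_{k\leq N}\mb B_k$ is already empty, so $\mu(\mb B_k)\to 0$. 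Then Carathéodory extension yields a countably additive measure on $\sigma(\mathscr B)$, whose completion is the Loeb $\sigma$-algebra.

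Given a $\mu$-measurable $A$ and $\varepsilon>0$, outer regularity of the Carathéodory extension supplies a cover $A\subseteq\bigcup_k \mb B_k$ of internal sets with $\mu(\bigcup_k \mb B_k)<\mu(A)+\varepsilon/2$. The hard step is to replace this countable union by a single internal set $\mb A''\supseteq A$ with $\mu(\mb A'' \setminus \bigcup_k \mb B_k)$ small: writing $\mb B_k=\prod_{n\to\mathcal U} B_{k,n}$, I would set $\mb A''=\prod_{n\to\mathcal U} \bigcup_{k\leq N_n} B_{k,n}$ for a sequence $N_n\to \infty$ chosen so that the map $n\mapsto \mu_n(\bigcup_{k\leq N_n} B_{k,n})$ approaches the target limit along $\mathcal U$; a diagonal argument guarantees such $N_n$ exist. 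The inner approximation $\mb A'$ is then obtained by applying the outer approximation to the complement of $A$.

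For Part (ii), I would approximate $f=\lim_{n\to\mathcal U} f_n$ uniformly by simple internal functions and then pass to the limit in the integral. Given $\varepsilon>0$, partition the closed unit disk $D$ into finitely many Borel pieces $E_1,\dots,E_m$ of diameter less than $\varepsilon$, pick $c_i\in E_i$, and set $\mb A_i:=\prod_{n\to\mathcal U} f_n^{-1}(E_i)$. Each $\mb A_i$ is internal, so $\mu(\mb A_i)=\lim_{n\to\mathcal U}\mu_n(f_n^{-1}(E_i))$ by (\ref{eqn:Loeb}). By construction, for every $\mb x=(x_n)_{n\in\mathbb N}\in \mb A_i$, one has $f_n(x_n)\in E_i$ for $\mathcal U$-many $n$, so $f(\mb x)\in \overline{E_i}$ and in particular $|f(\mb x)-c_i|\leq \varepsilon$. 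Hence $g_\varepsilon:=\sum_i c_i 1_{\mb A_i}$ is $\mu$-measurable and $\|f-g_\varepsilon\|_\infty \leq \varepsilon$; letting $\varepsilon\to 0$ shows $f$ is $\mu$-measurable.

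For the integral identity, the finite additivity of $\mu$ on internal sets gives $\int g_\varepsilon\,d\mu=\sum_i c_i\mu(\mb A_i)=\lim_{n\to\mathcal U}\sum_i c_i\mu_n(f_n^{-1}(E_i))=\lim_{n\to\mathcal U}\int g_{\varepsilon,n}\,d\mu_n$, where $g_{\varepsilon,n}$ is the analogous simple function on $X_n$. Since $\|f_n-g_{\varepsilon,n}\|_\infty\leq \varepsilon$ and $\|f-g_\varepsilon\|_\infty\leq \varepsilon$, both $|\int f\,d\mu - \int g_\varepsilon\,d\mu|$ and $|\int f_n\,d\mu_n-\int g_{\varepsilon,n}\,d\mu_n|$ are at most $\varepsilon$; letting $\varepsilon\to 0$ yields $\int f\,d\mu=\lim_{n\to\mathcal U}\int f_n\,d\mu_n$. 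The main obstacle in the whole argument is the saturation step inside Part (i): every other ingredient is routine measure theory, but the ability to replace countable unions of internal sets by single internal sets without sacrificing the measure is exactly where the nonstandard structure is indispensable.
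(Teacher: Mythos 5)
The paper does not prove this proposition; both parts are quoted directly from Loeb's 1975 paper, so there is no in-paper argument to compare against. Your proof is essentially the canonical one that appears in Loeb's original article: Carath\'eodory extension after verifying countable additivity of the internal premeasure via $\aleph_1$-saturation, then saturation again (the ``countable comprehension'' diagonalisation) to collapse a countable cover by internal sets to a single internal superset; and for the integral, uniform approximation by internal simple functions. Both halves are correct.

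A few details you compress but which do check out. In Part (i), the diagonalised set $\mb A''=\prod_{n\to\mathcal U}\bigcup_{k\leq N_n}B_{k,n}$ genuinely contains $\bigcup_k\mb B_k$ because for each fixed $k$ the set $\{n:N_n\geq k\}$ lies in $\mathcal U$, so if $x_n\in B_{k,n}$ for $\mathcal U$-many $n$ then $x_n\in\bigcup_{j\leq N_n}B_{j,n}$ for $\mathcal U$-many $n$; and the inner approximation does follow by applying the outer one to $\mathbf X\setminus A$ since internal sets form an algebra and $\mu(\mathbf X)=1$. In Part (ii), the step $\mb A_i=\prod_{n\to\mathcal U}f_n^{-1}(E_i)$ presumes $f_n^{-1}(E_i)\in\mathscr X_n$, which is unproblematic when each $\mathscr X_n$ is a $\sigma$-algebra (the case relevant to the paper's applications and to Loeb's theorem); if $\mathscr X_n$ were merely an algebra one would have to restrict the allowed Borel cells $E_i$. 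Your closing remark that saturation is the only genuinely nonstandard ingredient, with everything else being routine measure theory, is exactly the right takeaway.
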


\subsection{Ultraproducts of compact abelian groups}

If $(G_{n})_{n\in \mathbb N}$ is a sequence of abelian groups, the ultraproduct $\mb G=\prod_{n\to \mathcal U} G_{n}$ is an abelian group with addition defined by $\mb x+\mb y \sim_{\mathcal U} (x_{n}+y_{n})_{n\in \mathbb N}$, where $\mb x\sim_{\mathcal U} (x_{n})_{n\in \mathbb N}$ and $\mb y\sim_{\mathcal U} (y_{n})_{n\in \mathbb N}$.  The inverse and identity are defined similarly.  If each $G_{n}$ is a compact abelian group with Haar probability  measure $m_{n}$, then the Loeb measure $\mu$ corresponding to $m_{n}$ is translation invariant: $\mu(A+\mb t)=\mu(A)$ for all $\mu$-measurable $A\subseteq \mb G$ and all $\mb t\in \mb G$.  We will consider inner Haar measure $m_{n*}$ on $G_{n}$, so we need to relate $m_{n*}$ to the inner measure $\mu_{*}$ associated to $\mu$.  The next lemma says that the expected relationship holds.

\begin{lemma}\label{lem:InnerLoeb}
	If $\mb G$ and $\mu$ are as in the preceding paragraph and $A\subseteq \mb G$ is any set, then $\mu_{*}(A)=\sup\{\mu(\mb C):\mb C\in \mathcal S, \mb C\subseteq A\}$, where $\mathcal S$ is the class of internal sets of the form $\prod_{n\to \mathcal U} C_{n}\subseteq \mb G$ such that $C_{n}$ is compact for all $n$.  Consequently, if $\mb A=\prod_{n\to \mathcal U} A_{n}$ is an internal (not necessarily $\mu$-measurable) subset of $\mb G$, then $\mu_{*}(\mb A)=\lim_{n\to \mathcal U} m_{n*}(A_{n})$.
\end{lemma}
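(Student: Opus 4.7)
The plan is to establish the first (supremum) assertion and then deduce the explicit formula for internal sets as a consequence. The easy direction $\mu_{*}(A) \geq \sup\{\mu(\mb C) : \mb C \in \mathcal S,\ \mb C \subseteq A\}$ is immediate once one observes that every $\mb C = \prod_{n \to \mathcal U} C_{n} \in \mathcal S$ has compact (hence closed, hence $m_{n}$-measurable) constituents, so $\mb C$ lies in the Loeb $\sigma$-algebra and is a legitimate inner approximant for $A$.

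For the reverse inequality, given a $\mu$-measurable $\mb B \subseteq A$ and $\varepsilon > 0$, I will first apply Proposition \ref{prop:LoebApprox}(i) to obtain an internal set $\mb A' = \prod_{n \to \mathcal U} A'_{n}$ with $\mb A' \subseteq \mb B$ and $\mu(\mb A') > \mu(\mb B) - \varepsilon$. Because the Loeb $\sigma$-algebra is by construction generated by internal sets whose constituents lie in $\mathscr{X}_{n}$ for $\mathcal U$-many $n$, I may arrange that the $A'_{n}$ themselves are $m_{n}$-measurable for $\mathcal U$-many $n$. Inner regularity of Haar measure on the compact group $G_{n}$ then supplies, for each such $n$, a compact set $C_{n} \subseteq A'_{n}$ with $m_{n}(C_{n}) > m_{n}(A'_{n}) - \varepsilon$; set $C_{n} = \varnothing$ on the exceptional indices. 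The set $\mb C := \prod_{n \to \mathcal U} C_{n}$ then lies in $\mathcal S$, is contained in $\mb A' \subseteq A$, and the defining identity of Loeb measure gives $\mu(\mb C) = \lim_{n \to \mathcal U} m_{n}(C_{n}) \geq \mu(\mb A') - \varepsilon > \mu(\mb B) - 2\varepsilon$. Taking the supremum over $\mb B$ completes this direction.

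With the first assertion in hand, the formula $\mu_{*}(\mb A) = \lim_{n \to \mathcal U} m_{n*}(A_{n})$ for internal $\mb A = \prod_{n \to \mathcal U} A_{n}$ will follow from two short arguments. For $\geq$, I use the definition of inner Haar measure to pick, for each $n$, a compact $C_{n} \subseteq A_{n}$ with $m_{n}(C_{n}) > m_{n*}(A_{n}) - \varepsilon$; then $\prod_{n \to \mathcal U} C_{n} \in \mathcal S$ witnesses $\mu_{*}(\mb A) \geq \lim_{n \to \mathcal U} m_{n*}(A_{n}) - \varepsilon$. For $\leq$, whenever $\mb C = \prod_{n \to \mathcal U} C_{n} \in \mathcal S$ is contained in $\mb A$, it must be the case that $C_{n} \subseteq A_{n}$ for $\mathcal U$-many $n$, since otherwise a choice function on the $\mathcal U$-large set $\{n : C_{n} \not\subseteq A_{n}\}$ would produce a representative of an element of $\mb C \setminus \mb A$; hence $m_{n}(C_{n}) \leq m_{n*}(A_{n})$ on a $\mathcal U$-large set and $\mu(\mb C) \leq \lim_{n \to \mathcal U} m_{n*}(A_{n})$. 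The one point requiring care throughout is the bookkeeping that lets one pass between Loeb-measurability of an internal set and $m_{n}$-measurability of its constituents; this is a feature of the construction of the Loeb $\sigma$-algebra rather than a genuine difficulty, but it should be invoked explicitly so that the classical inner regularity of Haar measure on each $G_{n}$ can be applied.
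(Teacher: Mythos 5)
Your argument for the supremum characterization is essentially the same as the paper's: fix a $\mu$-measurable inner approximant of $A$, replace it by an internal set with $m_{n}$-measurable constituents via Proposition \ref{prop:LoebApprox}, and use inner regularity of Haar measure on each $G_{n}$ to shrink to compact constituents. The paper does not spell out the derivation of $\mu_{*}(\mb A)=\lim_{n\to\mathcal U}m_{n*}(A_{n})$, and your argument for it is correct, including the key observation that if $\mb C=\prod_{n\to\mathcal U}C_{n}\subseteq \mb A=\prod_{n\to\mathcal U}A_{n}$ with both internal, then $C_{n}\subseteq A_{n}$ for $\mathcal U$-many $n$ (by the choice-function argument on the complementary index set).
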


\begin{proof}
	Let $A\subseteq \mb G$ be an arbitrary set, let $\varepsilon>0$, and choose a $\mu$-measurable set $A'\subseteq A$ such that $\mu(A')>\mu_{*}(A)-\varepsilon$.  By Proposition \ref{prop:LoebApprox} choose an internal set $\mb B=\prod_{n\to \mathcal U} B_{n}\subseteq A'$ such that each $B_{n}$ is $m_{n}$-measurable and $\mu(\mb B)>\mu_{*}(A)-\varepsilon$.  For each $n$, choose a compact set $C_{n}\subseteq B_{n}$ such that $m_{n}(C_{n})>m_{n}(B_{n})-\frac{1}{n}$.  Let $\mb C=\prod_{n\to \mathcal U} C_{n}$, so that $\mb C\in \mathcal S$, $\mb C\subseteq A$, and $\mu(\mb C)=\mu(\mb B)>\mu_{*}(A)-\varepsilon$. This shows that $\mu_{*}(A)\leq \sup\{\mu(\mb C): \mb C\in \mathcal S, \mb C\subseteq A\}$, and the reverse inequality is obvious.
\end{proof}

\subsection{Convolutions on ultraproducts}\label{sec:Convolutions}

The ultraproduct $\mb G$ is not (in any nontrivial case) a locally compact group, so we need to prove some properties of convolutions on $\mb G$ which are inherited from the constituent groups.  We assume familiarity with basic properties of convolution on compact groups, see \cite{FollandAbstract} or \cite{RudinFourier}, for example. Recall the notation $f_{t}$ from Definition \ref{def:Translate}: $f_{t}(x):=f(x-t)$.

\begin{lemma}\label{lem:Ergodicity}
	Let $\mathcal U$ be a nonprincipal ultrafilter on $\mathbb N$, $(G_{n})_{n\in \mathbb N}$ a sequence of compact abelian groups with Haar probability measure $m_{n}$, and $\mb G=\prod_{n\to \mathcal U} G_{n}$ the ultraproduct with corresponding Loeb measure $\mu$. Let $f_{n}, g_{n}: G_{n}\to \mathbb C$ be uniformly bounded functions, and consider the internal functions $\mathbf f=\prod_{n\to \mathcal U} f_{n}$, $\mathbf g=\prod_{n\to \mathcal U} g_{n}$. Then
	\begin{enumerate}
		\item[(i)] $(\lcirc f)_{\mb t}= \leftidx^\circ(\mathbf f_{\mb t})$ for all $\mb t\in \mb G$.
		
		\smallskip
		
		\item[(ii)] $\lcirc f *_{\mu} \lcirc g=\lim_{n\to \mathcal U} f_{n}*_{m_{n}} g_{n}$.
		
	\end{enumerate}

\end{lemma}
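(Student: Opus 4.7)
The plan is to evaluate both sides pointwise, invoking Proposition \ref{prop:LoebApprox}(ii) for the integration step in Part (ii). Part (i) amounts to unpacking the definitions. Fixing representatives $(x_{n})$ of $\mb x$ and $(t_{n})$ of $\mb t$, the element $\mb x - \mb t$ is represented by $(x_{n}-t_{n})$, so by the definition of the internal function $\mb f$,
\[
\mb f_{\mb t}(\mb x) = \mb f(\mb x - \mb t) \sim_{\mathcal U} \bigl(f_{n}(x_{n}-t_{n})\bigr)_{n\in\mathbb N}.
\]
Taking standard parts yields $\leftidx^{\circ}(\mb f_{\mb t})(\mb x) = \lim_{n\to \mathcal U} f_{n}(x_{n}-t_{n})$ in the compact disc bounding the $f_{n}$. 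The same definition gives $(\lcirc f)_{\mb t}(\mb x) = \lcirc f(\mb x - \mb t) = \lim_{n\to \mathcal U} f_{n}(x_{n}-t_{n})$, so the two functions agree everywhere.

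For Part (ii), I fix $\mb x \in \mb G$ with representative $(x_{n})$ and consider the sequence $h_{n}: G_{n}\to \mathbb C$ defined by $h_{n}(s) := f_{n}(s)\, g_{n}(x_{n}-s)$. Each $h_{n}$ is $m_{n}$-measurable and the family is uniformly bounded, so Proposition \ref{prop:LoebApprox}(ii) guarantees that $h := \lim_{n\to \mathcal U} h_{n}$ is $\mu$-measurable on $\mb G$ and
\[
\int h \, d\mu = \lim_{n\to \mathcal U} \int h_{n}\, dm_{n} = \lim_{n\to \mathcal U}\bigl(f_{n} *_{m_{n}} g_{n}\bigr)(x_{n}).
\]

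It remains to identify $h$. For $\mb t$ represented by $(t_{n})$,
\[
h(\mb t) = \lim_{n\to \mathcal U} f_{n}(t_{n})\, g_{n}(x_{n}-t_{n}) = \Bigl(\lim_{n\to \mathcal U} f_{n}(t_{n})\Bigr)\Bigl(\lim_{n\to \mathcal U} g_{n}(x_{n}-t_{n})\Bigr) = \lcirc f(\mb t)\cdot \lcirc g(\mb x - \mb t),
\]
the middle equality following from continuity of multiplication on the bounded disc that contains the ranges of all $f_{n}, g_{n}$. Therefore $\int h\, d\mu = (\lcirc f *_{\mu}\lcirc g)(\mb x)$, which combined with the previous display gives the asserted pointwise equality $(\lcirc f *_{\mu}\lcirc g)(\mb x) = \lim_{n\to \mathcal U}(f_{n} *_{m_{n}} g_{n})(x_{n})$. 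The only mildly delicate point is choosing the right $\mb x$-dependent sequence $h_{n}$ and justifying the factorisation $\st(ab) = \st(a)\st(b)$ for bounded internal elements; both are routine consequences of uniform boundedness and the continuity of multiplication, so no serious obstacle is expected.
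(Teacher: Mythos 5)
Your proof is correct and takes essentially the same route as the paper: reduce to a pointwise statement, identify the integrand of the convolution at $\mb x$ as the ultralimit of $h_n(s)=f_n(s)g_n(x_n-s)$, and apply Proposition \ref{prop:LoebApprox}(ii). You spell out the factorisation of the ultralimit and the rescaling to the unit disc a bit more explicitly than the paper, but the argument is structurally identical.
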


\begin{proof}
	Part (i) follows from the relevant definitions.   To prove Part (ii) let $\mb x\in \mb G$ and choose $x_{n}\in G_{n}$ so that $\mb x\sim_{\mathcal U} (x_{n})_{n\in \mathbb N}$. Then $\lcirc f *_{\mu} \lcirc g(\mb x)=\int \lcirc f(\mb t)\lcirc g(\mb x-\mb t) d\mu(\mb t)$, which equals $\lim_{n\to \mathcal U} \int f_{n}(t)g(x_{n}-t)\, dm_{n}(t)$ by Part (ii) of Proposition \ref{prop:LoebApprox} and Part (i) of the present lemma.  The last integral can be rewritten as $\lim_{n\to \mathcal U} (f_{n}*_{m_{n}}g_{n})(x_{n})$, as desired. \end{proof}

\providecommand{\bysame}{\leavevmode\hbox to3em{\hrulefill}\thinspace}
\providecommand{\href}[2]{#2}


\begin{dajauthors}
\begin{authorinfo}[john]
John T. Griesmer\\
Department of Applied Mathematics and Statistics\\
 Colorado School of Mines\\
 Golden, CO,  USA\\
  jtgriesmer\imageat{}gmail\imagedot{}com
\end{authorinfo}
\end{dajauthors}

\end{document}